\DeclareMathOperator{\cD}{\mathcal{D}}
\DeclareMathOperator{\N}{\mathbb{N}}
\DeclareMathOperator{\Sprocess}{\mathbb{S}}
\DeclareMathOperator{\Z}{\mathbb{Z}}
\DeclareMathOperator{\R}{\mathbb{R}}
\DeclareMathOperator{\E}{\mathbb{E}}
\DeclareMathOperator{\Wkeit}{P}
\DeclareMathOperator{\Varianz}{Var}
\DeclareMathOperator{\Covarianz}{Cov}
\DeclareMathOperator{\tdist}{t}
\DeclareMathOperator{\Exp}{Exp}
\DeclareMathOperator{\vol}{vol}
\renewcommand{\le}{\leqslant}
\renewcommand{\ge}{\geqslant}
\newtheorem{Theorem}{Theorem}[section]
\newtheorem{Lemma}[Theorem]{Lemma}
\newtheorem{Assumption}{Assumption}[section]
\theoremstyle{definition}
\newtheorem{Definition}{Definition}[section]
\newtheorem{Remark}{Remark}[section]
\newcommand*{\fullref}[1]{\hyperref[{#1}]{\hyperref{#1}(\ref*{#1})}}
\newcommand{\closure}[1]{\overline{#1}}
\newcommand{\inner}[1]{{#1}^{\circ}}
\newcommand{\balpha}{\boldsymbol{\alpha}}
\newcommand{\bz}{\boldsymbol{z}}
\newcommand{\bx}{\boldsymbol{x}}
\newcommand{\by}{\boldsymbol{y}}
\newcommand{\bt}{\boldsymbol{t}}
\newcommand{\bs}{\boldsymbol{s}}
\newcommand{\ba}{\boldsymbol{a}}
\newcommand{\bk}{\boldsymbol{k}}
\newcommand{\bl}{\boldsymbol{l}}
\newcommand{\bd}{\mathbf{d}}
\newcommand{\Tstar}{\ceil{T^{1/2}}}
\newcommand{\geschweift}[1]{\left\lbrace #1 \right\rbrace}
\newcommand{\ceil}[1]{\left\lceil #1 \right\rceil}
\newcommand{\floor}[1]{\left\lfloor #1 \right\rfloor}
\newcommand{\linksoffen}[1]{\left( #1 \right]}
\newcommand{\rechtsoffen}[1]{\left[ #1 \right)}
\newcommand{\nB}{\operatorname{n\kern-0.12em B}}
\newcommand{\F}[1]{\operatorname{{}_{#1}\kern-0.14em F}}
\newcommand{\FnB}[1]{\operatorname{{}_{#1}\kern-0.14em F\kern-0.12em n\kern-0.12em B}}
\newcommand{\dto}{\overset{\cD}{\rightarrow}}
\newcommand{\pto}{\overset{P}{\rightarrow}}
\newcommand{\wto}{\overset{w}{\rightarrow}}
\newcommand{\EW}[1]{\E\left[#1\right]}
\newcommand{\Var}[1]{\Varianz\left[#1\right]}
\newcommand{\Cov}[2]{\Covarianz\left[#1,#2\right]}
\newcommand{\Prob}[1]{\Wkeit\left(#1\right)}
\newcommand{\supp}[1]{\underset{#1}{\sup}}
\newcommand{\inff}[1]{\underset{#1}{\inf}}
\newcommand{\maxx}[1]{\underset{#1}{\max}}
\newcommand{\abs}[1]{\left|#1\right|}
\newcommand{\thr}{\beta}
\newcommand{\danobipixel}{Y}
\newcommand{\danobisum}{S}
\newcommand{\danobimean}{\overline{S}}
\newcommand{\scanset}{A}
\newcommand{\tscanset}{\widetilde{A}}
\newcommand{\anomaly}{F}
\newcommand{\Hoelder}{G}
\newcommand{\largeblock}{B}
\newcommand{\bigblock}{L}
\newcommand{\smallblock}{N}
\newcommand{\inside}{I}
\title{\textbf{Scan statistics for the detection of anomalies in $M$-dependent random fields with applications to image data}}
\author{Claudia Kirch$^1$ \quad Philipp Klein$^2$\quad Marco Meyer$^3$}
\date{\today}
\begin{document}
	
	\maketitle
	
	\footnotetext[1]{Department of Mathematics, Otto-von-Guericke University; Center for Behavioral Brain Sciences (CBBS); Magdeburg, Germany.
		Email: \url{claudia.kirch@ovgu.de}.
	}
	
	\footnotetext[2]{Department of Mathematics, Otto-von-Guericke University; Magdeburg, Germany.
		Email: \url{philipp.klein@ovgu.de}.
	}
	
	\footnotetext[3]{Department of Mathematics, Leibniz University; Hannover, Germany.
		Email: \url{marco.meyer@math.uni-hannover.de}}

	\begin{abstract}
Anomaly detection in random fields is an important problem in many applications including the detection of cancerous cells in medicine, obstacles in autonomous driving and cracks in the construction material of buildings. Such anomalies are often visible as areas with different expected values compared to the background noise. Scan statistics based on local means have the potential to detect such local anomalies by enhancing relevant features. We derive limit theorems for a general class of such statistics over $M$-dependent random fields of arbitrary but fixed dimension. By allowing for a variety of combinations and contrasts of sample means over differently-shaped local windows, this yields a flexible class of scan statistics that can be tailored to the particular application of interest. The latter is demonstrated for crack detection in 2D-images of different types of concrete. Together with a simulation study this indicates the potential of the proposed methodology for the detection of anomalies in a variety of situations. 
	\end{abstract}

	\section{Introduction}

	Scan statistics in spatial data were first introduced by \cite{Kulldorf97} using a Likelihood-ratio test in order to detect clusters.
	Under the assumption of independence of the noise, there exist methods using 2D windows (see \cite{HaimanPreda}), hypercubes with arbitrary dimension (see \cite{Kabluchko} for a single-window approach and \cite{Jiang,SharpnackAriasCastro} for multiple window approaches) and spatial scan statistics for point processes (\cite{Glaz}). Under the assumption of a weak invariance principle, \cite{Bucchia} introduced a test for an epidemic change. \cite{ProkschWernerMunk} and \cite{MunkProksch} introduced tests on inverse regression models with independent, but not necessarily identically distributed noise. Other approaches include the use of EM algorithms (see \cite{Moonetal}),  spatial scan statistics (see \cite{Kulldorf}) and deformable models (see \cite{McInerneyTerzopoulos}).
	 \cite{Glaz} provides an overview of scan statistics in general. 

  Scan statistics based on a suitable combination of windows can be used to detect local structure (which will be called anomaly from now on) by enhancing relevant features taking their geometric properties into account. The family-wise error rate to detect voxels close to an anomaly can be controlled
  using a threshold based on quantiles of the corresponding limit distribution for stationary $M$-dependent random fields.
Such anomaly detection has been shown to be an important problem in many applications including the detection of cancerous cells in medicine (see  \cite{JamesClymerSchmalbrock}, \cite{Moonetal}), obstacles in autonomous driving (see \cite{Koopman}) and cracks in the construction material of buildings (see \cite{ItoAoki}, \cite{TangGu}).
  
	We will concentrate on the specific task of the detection of anomalies (e.g.\ cracks or fissures) in building material such as concrete. In this situation  CT scans have been used in order to analyze the structure of materials, compare e.g.\ \cite{Weiseetal} or \cite{Baranowskietal}. Early methods  focus on the analysis of the gray value distribution (see e.g.\ \cite{AcostaFigueroa}), other methods use algorithms from image processing for the anomaly detection in 2D  data including Template Matching (\cite{Roseman}), algorithms to detect minimal paths (\cite{Amhaz}) and percolation based on the Hessian matrix (\cite{YamaguchiHashimoto}). For three-dimensional data, filtering methods like Frangi filters (see \cite{Frangi}, \cite{Frangi2}), sheet filters (\cite{Satoetal}) and algorithms to find minimal paths (\cite{Musebeck}) are used. See \cite{Ehrigetal}, \cite{Ehrigetal2} for a comparison of methods as well as \cite{KL} for a comprehensive overview over methods for both 2D and 3D data.
	Furthermore, Machine and Deep Learning methods like convolutional neural networks (see \cite{UNet}, \cite{UNet3D} and \cite{SegNet}) and random forests (\cite{Furat}, \cite{Shietal}) are frequently used for the segmentation of both 2D  and 3D image data.
	However, for large image data, these methods are computationally too expensive to be applied on the whole data set.
	On the other hand, they are computationally feasible if they are applied only locally in areas including cracks. Such areas can be determined by computationally fast scan statistics that come with certain statistical guarantees which are the focus of this paper. By construction, such scan statistics only use local information to determine pixels/voxels that are likely to contain an anomaly, which is computationally advantageous  in large image data.

	\subsection{Outline}
	
In Section \ref{sec_stat_mod}, we formulate a suitable model for the problem.
 This is followed in Section \ref{danobi_motivation} by examples of 2D slices of  concrete scans containing fissures (both artificial and non-artificial). These examples will serve as motivation for the definition of the general class of scanning statistics considered in the theoretic part.  In Section \ref{danobi_theory}, we develop asymptotic theory for this large class of local sums of stationary random fields. Our main result delivers a functional central limit theorem (FCLT) for combinations (via uniformly continuous functions) of a finite number of such local sums of potentially different shapes. This result extends clearly beyond existing theory on local sums of random fields (which is basically restricted to correctly oriented hypercubes) and is of its own interest. The main application for our FCLT is the derivation of thresholds for scan statistics based on convex sets with linear size.
 Coming back to the motivating examples, in Section \ref{sec_danobi_statistics}, we propose specific scanning procedures from the general class considered theoretically for the detection of fissures in concrete image data on 2D slices. Based on the geometric properties of fissures, we use a combination of scan statistics on rectangles and circular segments in order to find areas that potentially contain fissures.
	Since concrete contains aggregates with different geometric properties than fissures, we introduce alternative statistics in Sections \ref{sec_fnb2} and \ref{sec_fnb1} that aim at the detection of (potentially dangerous) fissures while simultaneously ignoring other structures which are not safety-related. In particular, this illustrates how to tailor scan statistics to the specific application at hand: Because the class of statistics investigated theoretically is quite large and allows for a lot of flexibility in choosing a specific scan statistic for a given application, this is important to transfer the methodology to other fields where anomaly detection plays a role.
	In Section \ref{danobi_simstudy} we analyze the performance of our procedure in a simulation study and furthermore illustrate the procedure based on 2D images with fissures.

	\subsection{Statistical model}\label{sec_stat_mod}
	To set the statistical framework for the theoretic analysis consider the following (sequence of) random fields, with spatial dimension $p\in \N$ and taking values in $\mathbb{R}$:
	\begin{align}\label{eq_model}
		Y_{\bk,T}=\mu_{\bk,T}+\epsilon_{\bk}, \quad \bk=(k_1,\ldots,k_p)^{\prime}, \quad k_i=1,\ldots,T\quad\text{ for } i=1,\ldots,p\,.
	\end{align}
	Here, $ \left(\epsilon_{\bk}\right)_{\bk\in\Z^p} $, is a stationary and centered sequence of $ M $-dependent random variables  fulfilling certain moment conditions. To elaborate,  $\left(\epsilon_{\bk}\right)_{\bk\in\Z^p} $ are called $ M $-dependent w.r.t.~a norm $ \left\|\cdot\right\| $ on $\R^p$ if for all $ \bs,\bt\in \Z^p $ with $ \left\|\bs-\bt\right\|>M $, $ \epsilon_{\bs} $ and $ \epsilon_{\bt} $ are independent. In the following, we always consider $M$-dependence w.r.t.\ to the supremum norm. By Lemma~\ref{lem_equivalence_M_dependence} in the appendix this is equivalent to $\widetilde{M}$-dependence for any equivalent norm, with  $\widetilde{M}$ depending on that norm.

	\begin{Assumption}\label{ass_errors}
		Let $\left(\epsilon_{\bk}\right)_{\bk\in\Z^p} $ be a sequence of (strictly) stationary $ M $-dependent random variables with $ \EW{\epsilon_{\bk}}=0 $, $\EW{\epsilon_{\bk}^2}\in(0,\infty)  $ and  $ \EW{\abs{\epsilon_{\bk}}^r}<\infty $ for some $ r>2p $.
		Define
		\begin{align}
			\sigma^2=\Var{\epsilon_{\boldsymbol{0}}}+\sum\limits_{\bk\ne 0}\Cov{\epsilon_{\boldsymbol{0}}}{\epsilon_{\bk}}, \label{long_run_variance}
		\end{align}
		where the latter sum is finite due to  the $M$-dependence.
	\end{Assumption}
	
	The $ \bk $ in \eqref{eq_model} constitute a regular $ p $-dimensional grid of $ T^p $ locations within $ \Z^p $. The asymptotic results in this paper will be for $ T \rightarrow \infty $. Effectively, this corresponds to so-called infill asymptotics: We can switch to the corresponding \emph{rescaled} locations $ \bk/T:=(k_1/T,\ldots,k_p/T) $ which form a regular grid of $ T^p $ points within the fixed set $ [0,1]^p $. Increasing $ T $ corresponds to a finer sampling within $ [0,1]^p $, and asymptotics via $ T \rightarrow \infty $ entail scanning on an arbitrarily fine grid.\\
	
	The goal is to test the null hypothesis of no anomaly
	\begin{align*}
		H_0:  \mu_{\bk,T}=\mu_{0}  \quad \text{for all }  \bk
	\end{align*}
	for some $\mu_0\in\R$
	against the alternative of an anomaly (in expectation)
	\begin{align*}
		H_1:\quad &\text{There exists } \anomaly\subset[0,1]^p \text{ with Lebesgue measure }0<\lambda(\anomaly)<1 \\
		&\text{and }  \mu_{\bk,T}\begin{cases}
			=\mu_0, &\text{ for } \bk/T\not\in \anomaly,\\
			\ne\mu_{0}, & \text{ for } \bk/T\in \anomaly.
		\end{cases}
	\end{align*}
	
	Here, $ \anomaly\subset[0,1]^p $ denotes an anomaly, that is, an area within the sample space $ [0,1]^p $ with different expectation than the default value $ \mu_0 $. These hypotheses are plausible in the specific context of anomaly detection in concrete, since in practice, it can be observed that anomalies like cracks and natural anomalies such as gravel particles, air bubbles etc.~have different average gray values than areas without anomalies. In this context, Assumption~\ref{ass_errors} on the errors is plausible as well, since it can be observed that the error of a pixel usually is correlated with the errors of pixels in a small neighboring environment.
	
	To this end, we consider scan windows $ \scanset\subset[0,1]^p $ with $ 0<\lambda(\scanset)<1$, where $\lambda$ denotes the Lebesgue measure, and define the corresponding scan statistics as local means taken over all elements of the random field that fall within the shifted scan window $ \scanset\left(\floor{\bs}_T\right) $ with anchoring point $\bs$; these are determined by the local sums
	\begin{align} \label{eq_def_sums}
		&\danobisum_{\scanset}\left(\floor{\bs}_T\right)=	S_{\scanset}\left(Y;\floor{\bs}_T\right)= \sum_{\frac{\bk}{T}\in \scanset\left(\floor{\bs}_T\right)}Y_{\bk,T},\\
		&\text{where }\floor{\bs}_T= \left(\frac{\floor{s_1T}}{T},\ldots,\frac{\floor{s_pT}}{T}\right)^{\prime}, \qquad 	\frac{\bk}{T}=\ \left(\frac{k_1}{T},\ldots,\frac{k_p}{T}\right)^{\prime},\notag\\
		&\phantom{\text{where }}	\scanset(\bs)= \scanset+\bs=\geschweift{\bx\in\R^p| \bx-\bs\in \scanset}\qquad \text{for }\bs\in[0,1]^p.\notag
	\end{align}
	
	In the following,
	we drop the dependency on $ T $ for ease of notation except in situations where it helps to clarify the argument.

Previous	literature related to this problem  focuses on oriented hypercubes as scan windows. Indeed, for scan statistics based on $ p $-dimensional hyperrectangles, there exists some theory on the distribution of scan statistics:  \cite{Kabluchko}, \cite{AriasCastroDonoho} and \cite{SharpnackAriasCastro} analyze various multiscale procedures with sublinear bandwidths for i.i.d.~Gaussian noise. \cite{HaimanPreda} analyze the behavior for positive integer-valued noise while \cite{Jaruskova}, \cite{Zemlys} assume independent centered noise with existing variance. \cite{Bucchia} studies a CUSUM-type scan statistic under the assumption of an existing functional central limit theorem on the noise.
	
	However,  using only oriented hypercubes as scan windows is not sufficient for many applications. Instead, combinations of more general window shapes, as discussed in this paper, are necessary for a good practical performance. In the next section, we will illustrate this using 2D-slices of concrete data with cracks as motivating examples.

	\subsection{Motivating example for the use of scan statistics based on the mean}\label{danobi_motivation}
	
	\begin{figure}
		\includegraphics*[width=0.32\textwidth]{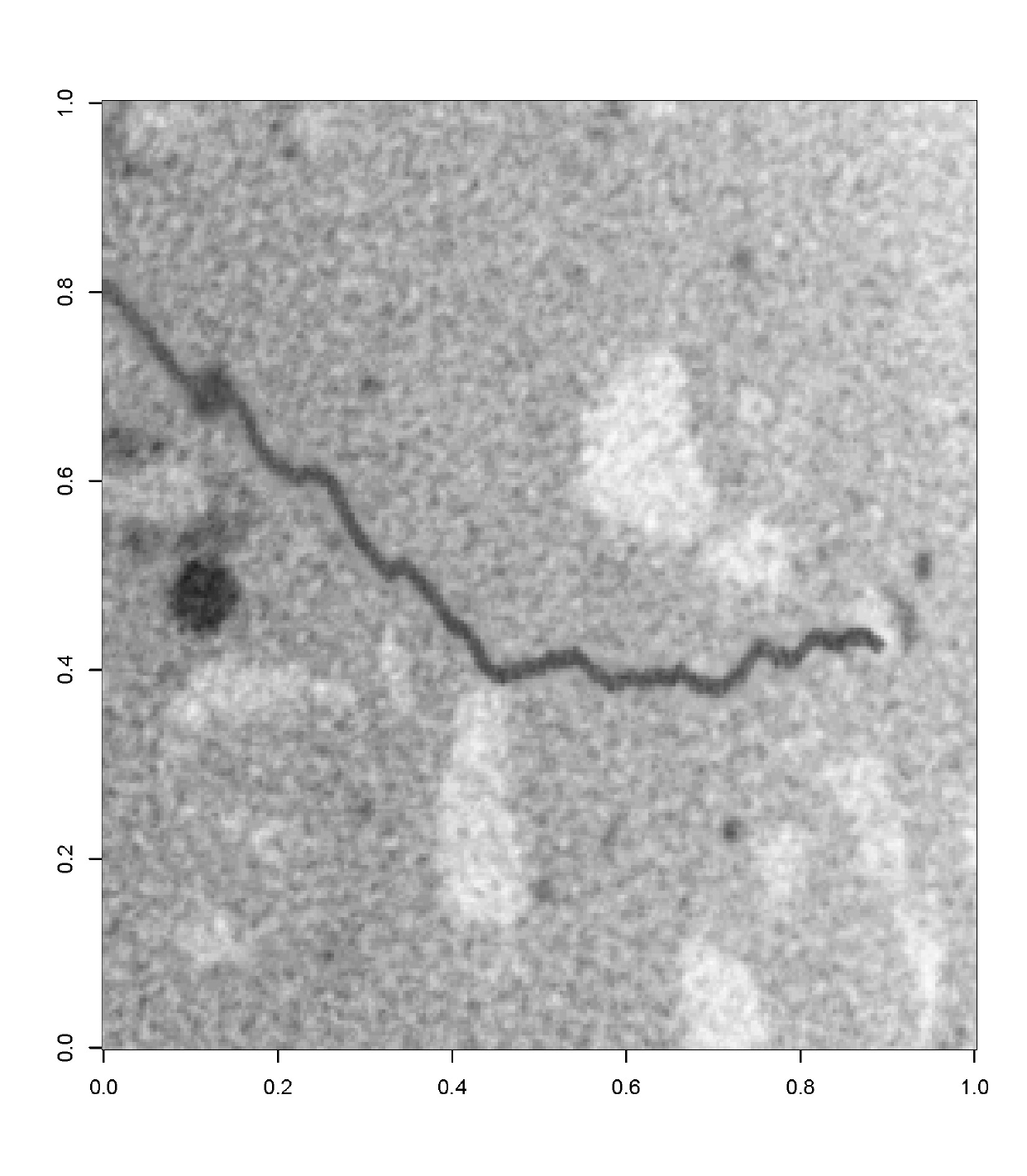}
		\includegraphics*[width=0.32\textwidth]{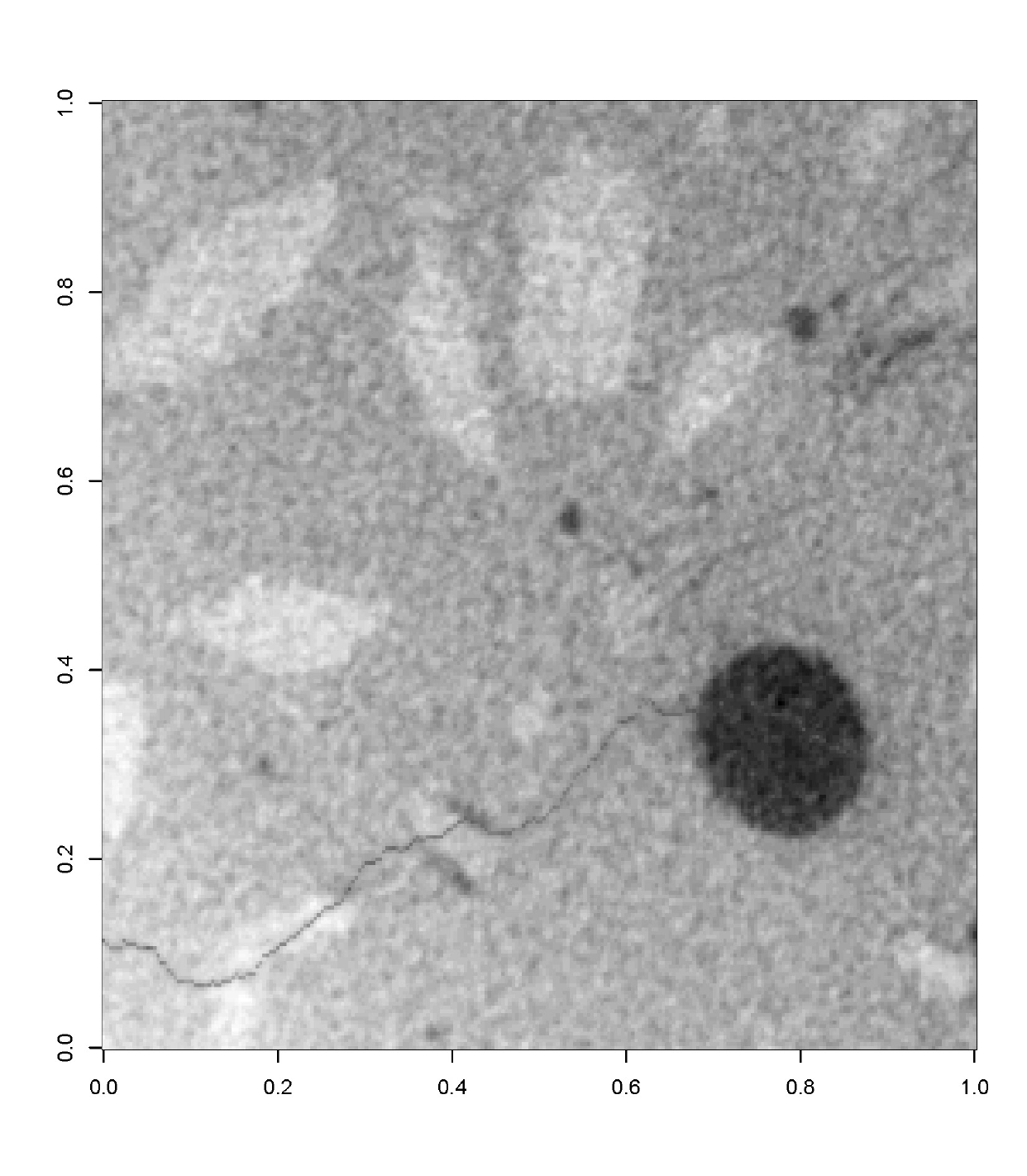}
		\includegraphics*[width=0.32\textwidth]{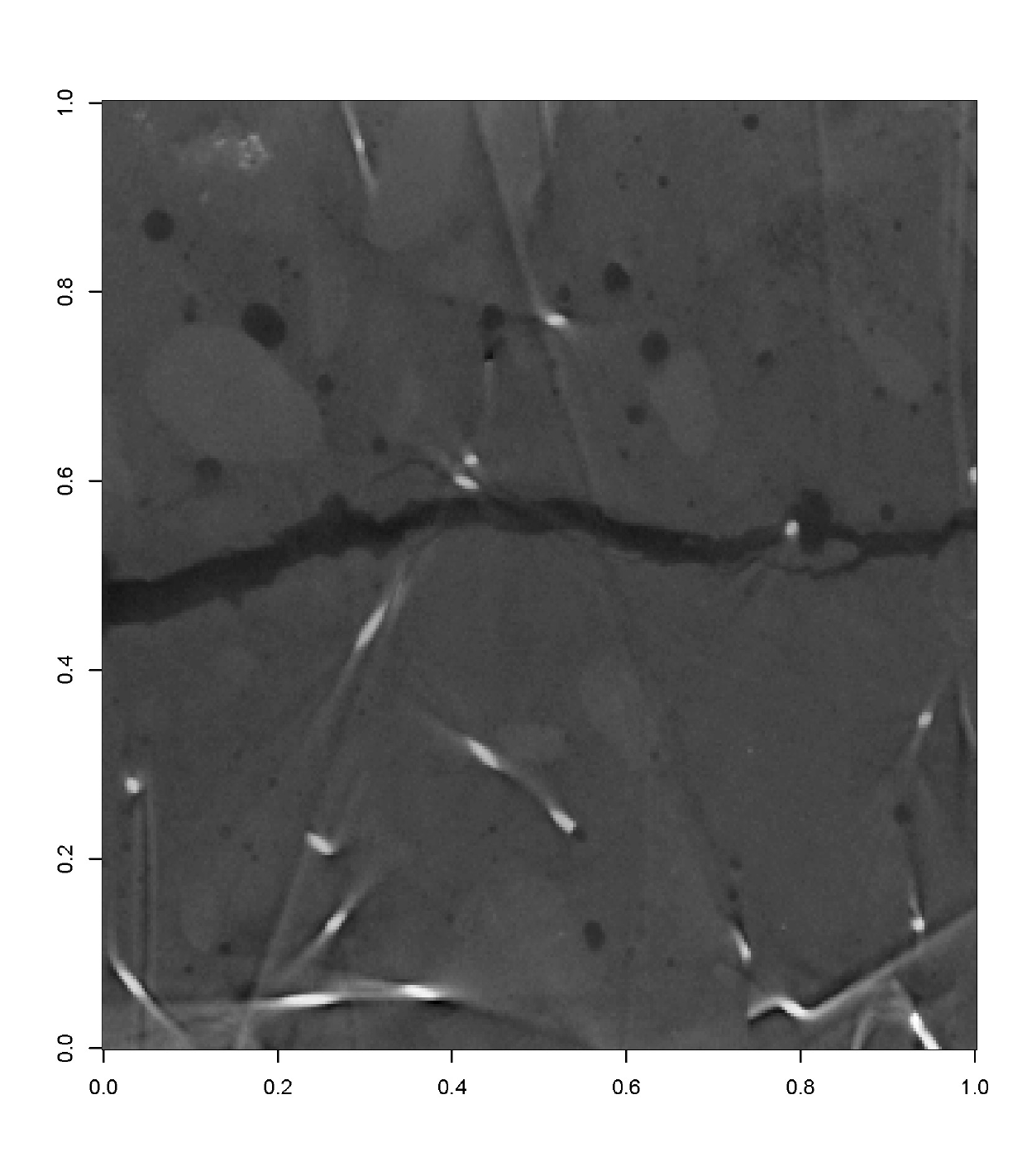}
		\caption[Motivational example based on concrete with an artificial fissure for Part \ref{Part_2}]{The left panel and the middle panel show 2D slices of concrete blocks that contain artificial fissures of width $ 5 $ and $ 1 $ pixels, respectively. The right panel shows a 2D slice of steel fiber-reinforced concrete with an actual (not artificial) fissure.
		}
		\label{figure_motivation}	 	
	\end{figure}
	
	Anomalies are frequently visible as areas with different gray values (and thus, different expected values in \eqref{eq_model}) compared to the background noise. The geometric properties of these areas typically depend on the type of anomaly. Such geometric properties can be taken into account by  combinations and contrasts of sample means over differently-shaped local windows.

	As an illustrative example consider (semi-artificial) 2D image data of concrete as in Figure~\ref{figure_motivation}. For the first two images, an artificial crack of constant width, which is generated by a fractional Brownian motion, is added to a 3D image of uncracked concrete. This was done by Franziska M\"usebeck (TU Kaiserslautern). The last image contains an actual crack that occurred after a bending test. We have changed the gray scale for better visibility as compared to the original CT-image which was mainly black. The images contain both potentially dangerous anomalies such as the cracks that we aim to detect, as well as natural anomalies which are an integral part of the material (obtained by various aggregates such as gravel or sand and often contain small pores of air). In all cases, anomalies can be modeled as  areas with different expected values in the image.

\begin{figure}[b]
		\includegraphics*[width=0.24\textwidth]{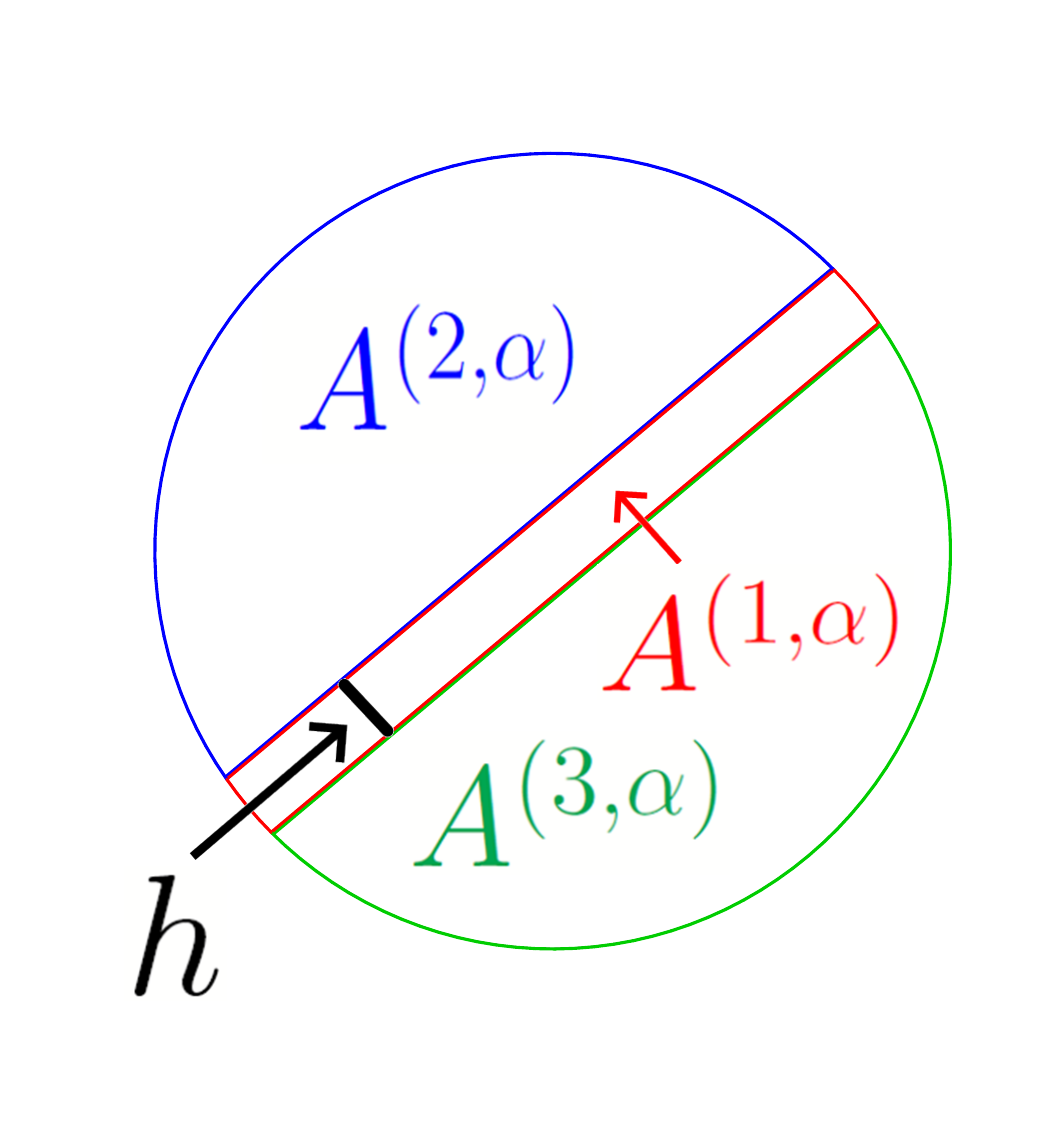}
		\includegraphics*[width=0.37\textwidth]{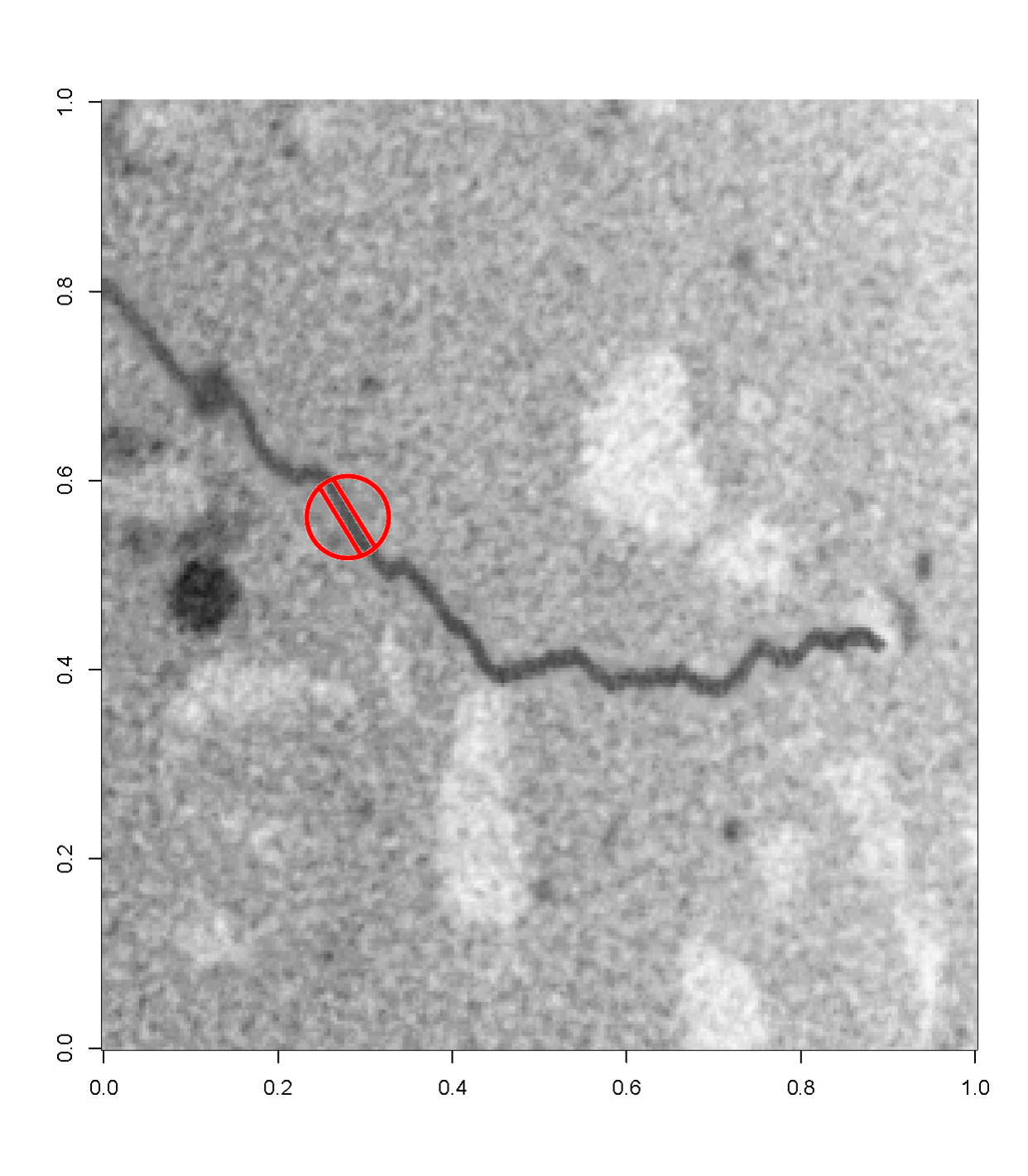}
		\includegraphics*[width=0.37\textwidth]{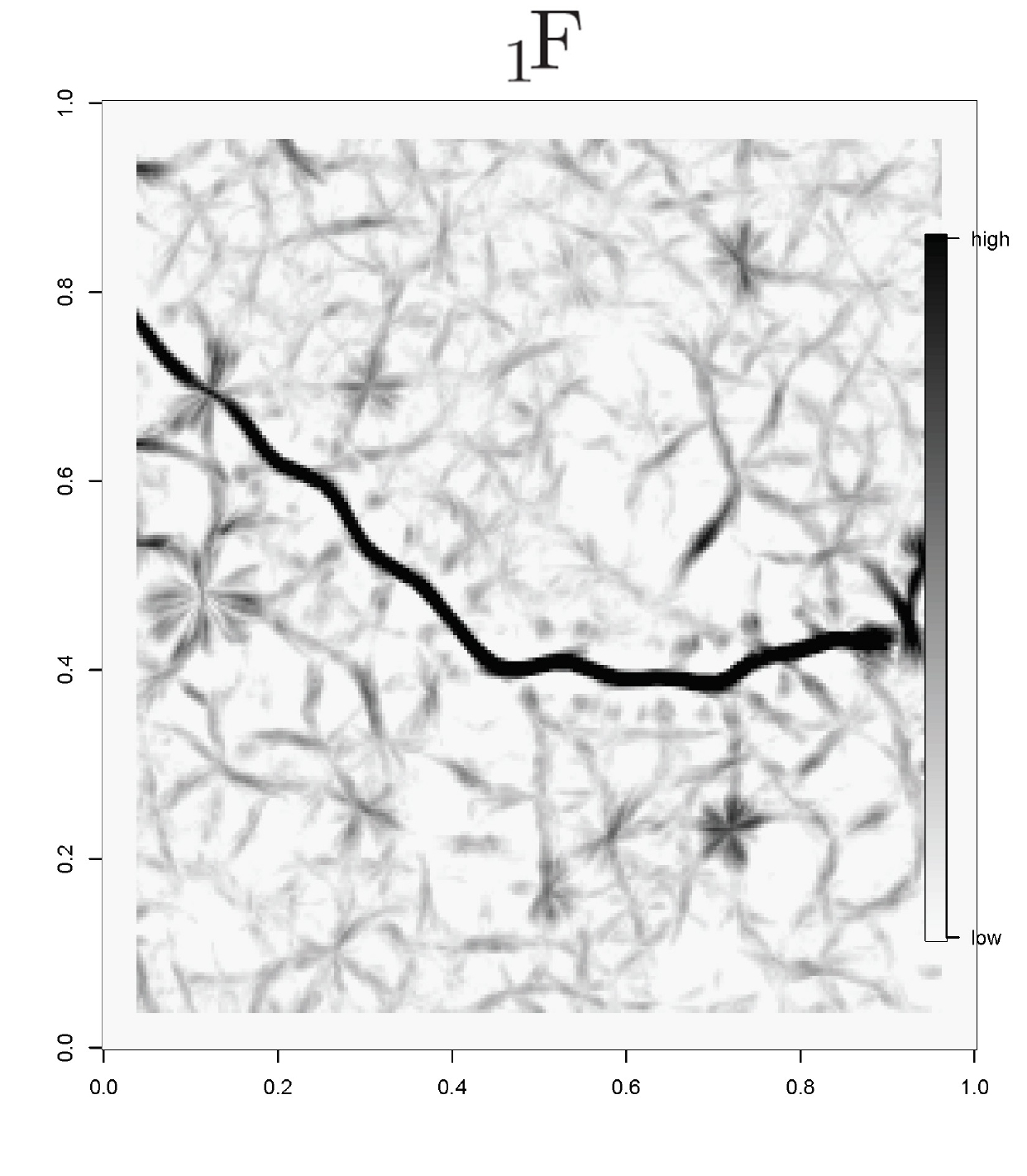}
		\caption[Geometric properties of fissures and bubbles together with scan statistics.]{
			The left panel shows the scan window used by the scan statistic $ \F1 $ in \eqref{f1}.
			The middle panel illustrates the local geometric properties of fissures.
			In the right panel, a heat map of  $ \F1 $ is displayed, showing that in this example, \eqref{f1} enhances the fissure.
		}\label{figure_danobi_basesetting}
	\end{figure}

	On an abstract level cracks, which we will henceforth call fissures, are lines with small width and potentially changing direction such that, locally, they resemble rectangles with small width. We illustrate this fact   in the  middle panel of Figure \ref{figure_danobi_basesetting} using the left-most image of Figure~\ref{figure_motivation}. Furthermore, the gray values of the fissures are lower than the gray values of their neighboring environment. Consequently, the mean over the inner segment $\scanset^{(1,\alpha)}$ of a split circle as depicted in the left-most panel of Figure~\ref{figure_danobi_basesetting} is smaller than the mean over each of the outer segments $\scanset^{(j,\alpha)}$, $j=2,3$, if the circle lies properly (and correctly oriented) above a fissure. This is in contrast to what happens away from such anomalies, since then the aforementioned means produce similar values. This shows the need for combining different scan windows with different orientation and motivates the use of the maximum over different angles of such contrasts as scan statistics, where we refer to Section \ref{sec_danobi_statistics} for the mathematical details. The right panel of Figure~\ref{figure_danobi_basesetting} illustrates that such a scan statistic does indeed enhance the fissures.
 
	In conclusion, the statistical theory needs to be flexible enough to (a) allow for the use of multiple scan windows (different angles, different shapes) and (b) include the option to combine the information from several scan windows (contrasting, maximizing).

	\section{Asymptotic results}

	In this section we develop asymptotic theory for a large class of local sums of stationary random fields. As a first result we obtain a general central limit theorem for sums over Jordan-measurable sets of $M$-dependent random variables. Our main result delivers a functional central limit theorem (FCLT) for combinations (via uniformly continuous functions) of a finite number of  local sums over  sets that fulfill slightly stronger assumptions. As a main application of this theory we then derive null asymptotics for scan statistics such as the ones  introduced in Sections \ref{sec_stat_mod} and \ref{danobi_motivation}, from which thresholds can be derived that go along with statistical guarantees. However, our theory is much more general than this. As we will point out in Remark \ref{rem_rectangles}, previous theory on local sums of random fields is restricted to correctly oriented (that is, parallel to a fixed set of coordinate axes) hypercubes. Since our results hold for a much larger class of shapes, we present the asymptotic theory in this subsection in a very general way, before looking at specific examples of scan statistics in Section \ref{sec_danobi_statistics}.

 \subsection{Universal asymptotic theory for combined local sums} \label{danobi_theory}
	We start with a general central limit theorem over bounded Jordan-measurable sets in random fields.
	A bounded set $A$ is called Jordan-measurable if and only if the function $\mathds{1}_A$ is Riemann-integrable. This allows for an arbitrarily close approximation of the sums  by sums over finite unions of rectangles. For a detailed treatment of Jordan-measurability we refer to ~\cite{DuistermattKolk}, Chapter 6. In particular, bounded convex sets $A$ with $0<\lambda(A)<\infty$ are Jordan-measurable, compare Lemma \ref{convex_jordan} in the appendix. In the following, $\lambda(\cdot)$ will always be the Lebesgue measure as all sets considered are Lebesgue measurable. In particular, Jordan-measurable sets are Lebesgue measurable, with the Jordan and the Lebesgue measure coinciding (compare e.g.\ \cite{Salamon}, Theorem 2.24).
	
	\begin{Theorem}
		\label{clt_m_dependence}\ \\
		Let model \eqref{eq_model} be given under the null hypothesis with $\mu_{\bk,T}=0$ for all $ \bk,T $, and  errors fulfilling Assumption~\ref{ass_errors}.
		Let $ A\subset[0,1]^p $ be a compact Jordan-measurable set with $\lambda(A)>0$ and let $ \sigma^2 $ be the long-run variance defined in \eqref{long_run_variance}. Then it holds that
		\[ 
		\frac{1}{T^{p/2}}S_A\dto\mathcal{N}(0,\sigma^2\lambda(A))
		\]
		as $ T\to\infty $ with $S_A=S_A(Y;0)$ as in \eqref{eq_def_sums}.
	\end{Theorem}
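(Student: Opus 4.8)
The plan is to prove the CLT by approximating the sum over the Jordan-measurable set $A$ by a sum over a finite union of small cubes, applying a CLT for $M$-dependent random fields to the cube-approximation, and then controlling the approximation error uniformly in $T$.

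\medskip

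\textbf{Step 1: Reduction to blocks via Jordan-measurability.} Fix $\delta>0$. By Jordan-measurability of $A$, we can sandwich $\mathds{1}_A$ between step functions: there exist finite collections of (say, dyadic) cubes whose union $A^-_\delta$ is contained in $A$ and whose union $A^+_\delta$ contains $A$, with $\lambda(A^+_\delta\setminus A^-_\delta)<\delta$. The inner set $A^-_\delta$ is a disjoint union of finitely many cubes, and $A\setminus A^-_\delta\subset A^+_\delta\setminus A^-_\delta$ has small Lebesgue measure. Correspondingly $S_A = S_{A^-_\delta} + S_{A\setminus A^-_\delta}$, where the first term is a finite sum of block-sums over cubes.

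\medskip

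\textbf{Step 2: CLT for the block approximation.} For $S_{A^-_\delta}/T^{p/2}$: since $A^-_\delta$ is a finite union of disjoint rectangles with rational (dyadic) corners, the number of grid points $\bk/T$ in each rescaled cube is $\sim (\text{side})^p T^p$, so this is (up to $o(T^{p/2})$) a fixed linear combination / concatenation of partial sums of the stationary $M$-dependent field over growing rectangular index sets. A CLT for partial sums of stationary $M$-dependent random fields with a moment of order $r>2p$ (this is precisely why Assumption~\ref{ass_errors} demands $r>2p$; one typically proves it by a big-block/small-block argument, grouping the field into blocks of side $\gg M$ separated by corridors of width $M$ so that the large blocks are independent, then using a Lindeberg/Lyapunov CLT for the independent large blocks and bounding the negligible small-block remainder in $L^2$) gives
\[
\frac{1}{T^{p/2}}S_{A^-_\delta}\dto\mathcal N\!\left(0,\sigma^2\lambda(A^-_\delta)\right),
\]
with $\sigma^2$ the long-run variance from \eqref{long_run_variance}; the covariance structure across the finitely many cubes combines, via $M$-dependence and disjointness, into $\sigma^2\lambda(A^-_\delta)$.

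\medskip

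\textbf{Step 3: Controlling the remainder and concluding.} The error term $S_{A\setminus A^-_\delta}$ is a sum of $\varepsilon_{\bk}$ over at most $O(\delta\, T^p)$ grid points (for $T$ large, by the Lebesgue-measure bound plus a boundary correction term that is itself $o(T^p)$). By $M$-dependence and stationarity, $\Var(S_{A\setminus A^-_\delta})\le C\,\sigma^2 \cdot \#\{\text{points}\} \le C'\,\sigma^2\,\delta\,T^p$ for $T$ large, hence $\E[(T^{-p/2}S_{A\setminus A^-_\delta})^2]\le C'\sigma^2\delta$. A standard $3\varepsilon$-argument (let $T\to\infty$ with $\delta$ fixed, then $\delta\to0$; note $\lambda(A^-_\delta)\to\lambda(A)$), e.g.\ via characteristic functions or Slutsky together with the fact that $\mathcal N(0,\sigma^2\lambda(A^-_\delta))\to\mathcal N(0,\sigma^2\lambda(A))$ weakly, yields $T^{-p/2}S_A\dto\mathcal N(0,\sigma^2\lambda(A))$.

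\medskip

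The main obstacle is Step 2: establishing the CLT for $M$-dependent random-field partial sums over general rectangular index sets (not just cubes) with the sharp moment condition $r>2p$, keeping track of the limiting variance as the exact Lebesgue measure $\sigma^2\lambda(A^-_\delta)$ rather than an unspecified constant. The Jordan-measurability reduction (Step 1) and the variance bound on the remainder (Step 3) are comparatively routine once the cube-level CLT is in hand.
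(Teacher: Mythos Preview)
Your outline is correct, but the organization differs from the paper's. You fix $\delta$, approximate $A$ from inside by a \emph{fixed} finite union of cubes $A^-_\delta$, invoke a rectangle CLT (deferred to a big-block/small-block lemma), and then take the double limit $T\to\infty$ followed by $\delta\to 0$. The paper instead uses a \emph{$T$-dependent} outer approximation: it covers $A$ by a union $A_T$ of cubes of side $\lceil T^{1/2}\rceil/T\sim T^{-1/2}$ (so roughly $\lambda(A)\,T^{p/2}$ cubes), shows $S_{A_T}-S_A=o_P(T^{p/2})$ via the Jordan property, then carves out an $M/T$-wide corridor from each cube so that the resulting $\sim T^{p/2}$ ``big'' blocks are mutually independent by $M$-dependence, and applies Lyapunov directly to this triangular array. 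Your route is more modular (it cleanly separates the geometric approximation from the probabilistic CLT, at the price of needing a rectangle CLT as an external ingredient and a $3\varepsilon$ double-limit argument); the paper's route is more self-contained (a single limit, and the big-block/small-block decomposition is carried out explicitly rather than quoted).

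One minor correction: your parenthetical ``this is precisely why Assumption~\ref{ass_errors} demands $r>2p$'' is not right. The pointwise CLT here needs only a moment of order $2+\delta$ for some $\delta>0$ (for the Lyapunov condition); the stronger requirement $r>2p$ enters only later, in the modulus-of-continuity bound for the functional CLT, not in this theorem.
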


	We impose the following assumption on the sets that define the local sums in our FCLT. This assumption is stronger than Jordan-measurability (compare Lemma \ref{ass21_implies_jordan} in the appendix) and required to prove a modulus of continuity (see Lemma \ref{modulus_of_continuity} in the appendix) while the fidis-convergence follows from Theorem~\ref{clt_m_dependence}.
	
	\begin{Assumption}\label{ass_sets}
		Let $A\subset[0,1]^p$ be a compact set such that there exists $K\in\N$ and Lipschitz continuous functions $f_1,\ldots,f_K: [0,1]^{p-1}\to\R^p$ such that
		\[ 
		\partial A\subset\bigcup\limits_{i=1}^K f_i\left([0,1]^{p-1}\right),
		\]
		i.e.\ the boundary of $A$ is a subset of the image of finitely many Lipschitz continuous functions from $[0,1]^{p-1}$ to $\R^p$.
	\end{Assumption}

	The assumption is fulfilled for all bounded convex sets and in particular for the scan windows as discussed in Section~\ref{danobi_motivation} including rectangles, circles and circle segments, as the following Lemma shows:

	\begin{Lemma}(Compare e.g.\ \cite{Widmer}, Lemma 3.1) \label{convex_sets}
		Convex sets $ \scanset\subset[0,1]^p $ with $ 0<\lambda(\scanset)<\infty $  fulfill Assumption \ref{ass_sets}.
	\end{Lemma}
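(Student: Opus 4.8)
The plan is as follows. Since $\lambda(\scanset)>0$, the convex set $\scanset$ has non-empty interior (otherwise it would lie in a hyperplane and have Lebesgue measure zero), so I may pass to its closure $\overline{\scanset}$: this is compact, still contained in $[0,1]^p$, still convex, and has the same boundary, because for a convex set with non-empty interior one has $\inner{\overline{\scanset}}=\inner{\scanset}$ and hence $\partial\,\overline{\scanset}=\overline{\scanset}\setminus\inner{\scanset}=\partial\scanset$. After a fixed translation — which preserves Lipschitz continuity and commutes with taking boundaries, and which I undo at the very end — I may moreover assume $0\in\inner{\scanset}$. Then there are radii $0<\rho\le R$ with $\{x\in\R^p:\|x\|\le\rho\}\subset\scanset\subset\{x\in\R^p:\|x\|\le R\}$, $\|\cdot\|$ denoting the Euclidean norm. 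For $p=1$ the boundary of a compact interval consists of two points and the claim is trivial, so I take $p\ge 2$ from now on.

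The central device is the Minkowski gauge $\mu(x)=\inf\{t>0:x/t\in\scanset\}$. It is finite with $\mu(x)\le\|x\|/\rho$, positively homogeneous of degree one, and subadditive since $\scanset$ is convex; subadditivity yields $|\mu(x)-\mu(y)|\le\max\{\mu(x-y),\mu(y-x)\}\le\|x-y\|/\rho$, so $\mu$ is globally Lipschitz on $\R^p$. Because $\scanset$ is closed with $0\in\inner{\scanset}$ we also have $\scanset=\{x:\mu(x)\le 1\}$ and $\partial\scanset=\{x:\mu(x)=1\}$, and $\mu(u)\ge 1/R>0$ for every $u$ on the unit sphere $S^{p-1}$. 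Consequently the radial map $r\colon S^{p-1}\to\partial\scanset$, $r(u)=u/\mu(u)$, is well defined, surjective onto $\partial\scanset$ (for $x\in\partial\scanset$ one checks $r(x/\|x\|)=x$), and Lipschitz, as a quotient of the Lipschitz maps $u\mapsto u$ and $u\mapsto\mu(u)$ whose denominator is bounded below by $1/R$ and whose numerator is bounded by $1$ on $S^{p-1}$.

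It remains to write $S^{p-1}$ as a finite union of Lipschitz images of $[0,1]^{p-1}$; composing each such map with $r$, and then with the translation undone, yields the functions $f_1,\dots,f_K$ required by Assumption~\ref{ass_sets} with $K=2p$. For this I cover $S^{p-1}$ by the $2p$ caps $C_i^{\pm}=\{u\in S^{p-1}:\pm u_i\ge 1/\sqrt p\}$, $i=1,\dots,p$, which do cover $S^{p-1}$ since some coordinate of a unit vector always has modulus at least $1/\sqrt p$. On $C_i^{\pm}$ the remaining $p-1$ coordinates range over the closed ball $\{v\in\R^{p-1}:\|v\|^2\le 1-1/p\}$ and the $i$-th coordinate is $\pm\sqrt{1-\|v\|^2}$; since the radicand stays $\ge 1/p$, this graph parametrization is Lipschitz. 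Finally that closed ball in $\R^{p-1}$ is a Lipschitz image of $[0,1]^{p-1}$: compose an affine bijection of $[0,1]^{p-1}$ onto the cube $[-\sqrt p,\sqrt p]^{p-1}$ (which contains the ball) with the $1$-Lipschitz nearest-point projection of $\R^{p-1}$ onto the ball. Chaining the cube-to-ball map, the graph map onto $C_i^{\pm}$, the radial map $r$, and the reverse translation produces $2p$ Lipschitz functions whose images cover $\partial\scanset$. The only part of this that is more than bookkeeping is the global Lipschitz bound for $\mu$; once that is available, the smoothness of the sphere is transported to $\partial\scanset$ and everything else is a standard covering argument.
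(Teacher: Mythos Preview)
Your proof is correct. The paper itself does not supply a proof of this lemma at all; it simply cites Widmer, Lemma~3.1, and moves on (the same citation reappears in the proof of Lemma~\ref{convex_jordan}). So there is nothing to compare against beyond the bare reference.

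Your argument is the standard one and is carried out cleanly: pass to the closure (same boundary, now compact), translate so that the origin is interior, use the Minkowski gauge to obtain a globally Lipschitz radial parametrization $r\colon S^{p-1}\to\partial\scanset$, and then cover the sphere by $2p$ Lipschitz graph charts over balls which in turn are Lipschitz images of $[0,1]^{p-1}$ via nearest-point projection. Every step is justified, and the composition yields the required $K=2p$ Lipschitz functions. One pedantic remark: Assumption~\ref{ass_sets} as written asks for $A$ itself to be compact, so strictly speaking a non-closed convex set does not ``fulfil'' it; your passage to the closure shows that $\overline{\scanset}$ does, and since $\partial\scanset=\partial\overline{\scanset}$ this is exactly what the rest of the paper needs. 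In all applications the scan windows are closed anyway, so this is harmless.
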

	
	Furthermore, as motivated in Section~\ref{danobi_motivation}, it is desirable to be able to consider statistics that combine information obtained from several scanning windows (e.g.\ for the purpose of local contrasting). As a mathematical framework we assume that this combination occurs via uniformly continuous functions.
	
	As a main result, which is applicable beyond the specific application discussed in Section~\ref{danobi_motivation}, allowing for a large range of local scan statistics based on varying scan windows, we derive the following functional central limit theorem (over the rescaled image):

	\begin{Theorem} \label{danobi_convergence_map}
		Let model \eqref{eq_model} be given under the null hypothesis with $\mu_{\bk,T}=0$, errors fulfilling Assumption~\ref{ass_errors}, and let $ \sigma^2>0 $ be the long-run variance as in \eqref{long_run_variance}.
		Let $ \scanset_1,\ldots,\scanset_P\subset[0,1]^p $ fulfill Assumption \ref{ass_sets} and  let $ \Hoelder:\R^P\to\R $ be a uniformly continuous function.\\
		Denote \[
		\Sprocess_T(\bs)=\left(\frac{1}{T^{p/2}}\danobisum_{\scanset_1}(\floor{\bs}_T),\ldots,\frac{1}{T^{p/2}}\danobisum_{\scanset_P}(\floor{\bs}_T)\right)^{\prime},
		\]
		where $ \danobisum_{\scanset}(\floor{\bs}_T)$ is as in \eqref{eq_def_sums}.
		
		Then there exists a $ P $-dimensional centered Gaussian process \\ $ (Z(\bs))_{\bs\in[0,1]^p}=((Z_1(\bs),\ldots,Z_{P}(\bs))^{\prime})_{\bs\in[0,1]^p} $ with
		\[
		\Cov{Z_i(\bs)}{Z_j(\bt)}=\sigma^2\lambda\left(\scanset_i(\bs)\cap\scanset_j(\bt)\right)
		\]
		such that
		\[
		\left(\Hoelder\left(\Sprocess_T(\bs)\right)\right)_{\bs\in[0,1]^p}\wto \left(\Hoelder\left(Z(\bs)\right)\right)_{\bs\in[0,1]^p}
		\]
		on $ \cD([0,1]^p)  $ (as in Definition \ref{Wichura_def} in the appendix) equipped with the topology induced by the maximum norm on $ \mathcal{D}([0,1]^p) $ (called \emph{U-topology} by \cite{Wichura}).
	\end{Theorem}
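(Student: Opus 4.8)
The plan is to obtain the functional central limit theorem for the $P$-dimensional step-function process $\Sprocess_T=\big(T^{-p/2}\danobisum_{\scanset_1}(\floor{\cdot}_T),\ldots,T^{-p/2}\danobisum_{\scanset_P}(\floor{\cdot}_T)\big)^{\prime}$, which is a random element of $\big(\cD([0,1]^p)\big)^P$, in the two classical steps --- convergence of the finite-dimensional distributions (via Theorem~\ref{clt_m_dependence}) and tightness in $\cD([0,1]^p)$ under the U-topology (via a modulus-of-continuity estimate, Lemma~\ref{modulus_of_continuity}) --- and then to push the result through the composition with $\Hoelder$ by the continuous mapping theorem, the key point being that uniform continuity of $\Hoelder$ makes post-composition continuous for the supremum-norm topology.

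\textbf{Step 1 (finite-dimensional distributions).} Fix $\bs^{(1)},\ldots,\bs^{(m)}\in[0,1]^p$. By the Cram\'er--Wold device it suffices to prove asymptotic normality of an arbitrary linear combination $L_T=\sum_{l=1}^m\sum_{i=1}^P c_{l,i}\,T^{-p/2}\danobisum_{\scanset_i}(\floor{\bs^{(l)}}_T)$. Writing out the $\mathds{1}$-indicators, $L_T=T^{-p/2}\sum_{\bk}w_T(\bk/T)\,Y_{\bk,T}$ with $w_T(\bk/T)=\sum_{l,i}c_{l,i}\mathds{1}\{\bk/T\in\scanset_i(\floor{\bs^{(l)}}_T)\}$. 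Replacing $\floor{\bs^{(l)}}_T$ by $\bs^{(l)}$ changes the weights only on the grid points lying in the symmetric differences $\scanset_i(\floor{\bs^{(l)}}_T)\,\triangle\,\scanset_i(\bs^{(l)})$; since the $\scanset_i$ are Jordan-measurable (Lemma~\ref{ass21_implies_jordan}) and the shift vanishes as $T\to\infty$, these sets carry $o(T^p)$ grid points, so the induced change in $L_T$ is a sum of centered $M$-dependent variables over $o(T^p)$ sites, with variance $o(T^p)/T^p\to0$, hence negligible. Thus $L_T=T^{-p/2}\sum_{\bk}w(\bk/T)Y_{\bk,T}+o_P(1)$ with $w=\sum_{l,i}c_{l,i}\mathds{1}_{\scanset_i(\bs^{(l)})}$, a bounded function that is constant on the atoms of the finite partition of $[0,1]^p$ generated by the Jordan-measurable sets $\scanset_i(\bs^{(l)})$, hence Riemann-integrable. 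The $M$-dependent CLT underlying Theorem~\ref{clt_m_dependence} applies to $T^{-p/2}\sum_{\bk}w(\bk/T)Y_{\bk,T}$ for any such Riemann-integrable $w$, the only modification being that the limiting variance is $\sigma^2\lim_T T^{-p}\sum_{\bk}w(\bk/T)^2=\sigma^2\int_{[0,1]^p}w^2\,d\lambda$. Since $\int w^2=\sum_{l,l',i,i'}c_{l,i}c_{l',i'}\lambda\big(\scanset_i(\bs^{(l)})\cap\scanset_{i'}(\bs^{(l')})\big)$, this is exactly the variance of the corresponding linear combination of the claimed Gaussian vector; as the kernel $(\bs,i),(\bt,j)\mapsto\sigma^2\lambda(\scanset_i(\bs)\cap\scanset_j(\bt))$ is positive semidefinite --- its quadratic forms equal $\sigma^2\int\big|\sum_a z_a\mathds{1}_{\scanset_{i_a}(\bs_a)}\big|^2\,d\lambda\ge0$ --- Kolmogorov's extension theorem produces the centered Gaussian process $Z$, and the computation shows that the finite-dimensional distributions of $\Sprocess_T$ converge weakly to those of $Z$.

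\textbf{Step 2 (tightness).} Here the stronger Assumption~\ref{ass_sets} is essential. Because $\partial\scanset_i$ is covered by the images of finitely many Lipschitz functions on $[0,1]^{p-1}$, the set $\scanset_i$ has finite perimeter, so $\lambda\big(\scanset_i(\bs)\,\triangle\,\scanset_i(\bt)\big)\lesssim\maxnormarg{\bs-\bt}$; consequently $\scanset_i(\floor{\bs}_T)$ and $\scanset_i(\floor{\bt}_T)$ differ on $\lesssim T^p\maxnormarg{\bs-\bt}+O(T^{p-1})$ grid points. Applying a Rosenthal-type moment inequality for sums of centered $M$-dependent random variables to the difference sum, together with $\EW{\abs{\epsilon_{\bk}}^r}<\infty$ for some $r>2p$, gives $\EW{\maxnormarg{\Sprocess_T(\bs)-\Sprocess_T(\bt)}^r}\lesssim\maxnormarg{\bs-\bt}^{r/2}$ for $\maxnormarg{\bs-\bt}\ge1/T$ (with the trivial bound otherwise), where the exponent satisfies $r/2>p$. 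This modulus-of-continuity estimate --- the content of Lemma~\ref{modulus_of_continuity} --- is precisely the multiparameter analogue of the Kolmogorov--Chentsov tightness condition needed for tightness of $(\Sprocess_T)_T$ in $\cD([0,1]^p)$ under the U-topology (Definition~\ref{Wichura_def}), and in the limit it also shows that $Z$ admits a version with continuous sample paths. Together with Step~1 this yields $\Sprocess_T\wto Z$ on $\cD([0,1]^p)$.

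\textbf{Step 3 (continuous mapping) and the main obstacle.} The map $(\mathbf{f}(\bs))_{\bs}\mapsto(\Hoelder(\mathbf{f}(\bs)))_{\bs}$ sends $\big(\cD([0,1]^p)\big)^P$ into $\cD([0,1]^p)$, and uniform continuity of $\Hoelder$ makes it (uniformly) continuous for the supremum-norm topologies; the continuous mapping theorem then delivers $\big(\Hoelder(\Sprocess_T(\bs))\big)_{\bs}\wto\big(\Hoelder(Z(\bs))\big)_{\bs}$ on $\cD([0,1]^p)$, with $\Hoelder(Z(\cdot))$ almost surely continuous. The crux of the whole argument is the tightness step: upgrading the pointwise CLT of Theorem~\ref{clt_m_dependence} to a genuinely functional statement requires uniform control of the increments of $\Sprocess_T$, and this is exactly where mere Jordan-measurability is insufficient --- the Lipschitz-boundary Assumption~\ref{ass_sets} is imposed so that $\lambda\big(\scanset_i(\bs)\,\triangle\,\scanset_i(\bt)\big)$, hence the number of grid points over which $\Sprocess_T(\bs)$ and $\Sprocess_T(\bt)$ disagree, is genuinely of order $\maxnormarg{\bs-\bt}$ rather than merely $o(1)$. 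A secondary technical layer is that one works in the multiparameter Skorokhod space $\cD([0,1]^p)$ instead of $\cD([0,1])$, so the correct tightness criterion and the definition of the U-topology (following \cite{Wichura}) must be invoked with care.
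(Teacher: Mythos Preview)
Your argument is correct and follows the same three-step architecture as the paper (fidis via Cram\'er--Wold and the $M$-dependent CLT, tightness via Lemma~\ref{modulus_of_continuity}, then the continuous mapping through $\Hoelder$), but the implementation differs in two places worth noting. For the fidis you apply the big-block--small-block CLT directly to the weighted sum $T^{-p/2}\sum_{\bk}w(\bk/T)\epsilon_{\bk}$ with simple Riemann-integrable $w$; the paper (Lemma~\ref{convergence_fidis}) instead partitions $\bigcup_{l,i}\scanset_i(\bt_l)$ into its atoms $D_l$, shrinks each to an $M$-separated version $D_l^{(-M)}$ so that the partial sums become genuinely independent, applies Theorem~\ref{clt_m_dependence} to each atom separately, and reassembles --- your route is shorter but tacitly extends Theorem~\ref{clt_m_dependence} to bounded weights. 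For tightness you derive the moment bound $\EW{\maxnormarg{\Sprocess_T(\bs)-\Sprocess_T(\bt)}^r}\lesssim\maxnormarg{\bs-\bt}^{r/2}$ and invoke a multiparameter Kolmogorov--Chentsov criterion; the paper's Lemma~\ref{modulus_of_continuity} instead writes out the dyadic chaining explicitly, telescoping through grids of mesh $2^{-k}$ together with fattenings $\scanset_i^{(2^{-k})}$ and controlling each link by Markov's inequality plus the moment bound~\eqref{moments} --- this is more laborious but avoids citing a K--C criterion for step-function processes in $\cD([0,1]^p)$ as a black box. Finally, the paper does not first prove $\Sprocess_T\wto Z$ in $(\cD([0,1]^p))^P$ and then apply CMT; it verifies Wichura's two conditions (Theorem~\ref{convergence_D}) directly for the scalar process $\Hoelder(\Sprocess_T)$, using uniform continuity of $\Hoelder$ to transfer the modulus bound, which sidesteps any measurability subtleties in the non-separable product space.
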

	The proof of Theorem \ref{danobi_convergence_map} is quite involved requiring tools from real analysis, convex geometry and probability theory. It is postponed to Section \ref{sec_proofs} in the appendix and requires auxiliary results developed in Sections \ref{sec_limit_thms} - \ref{sec_algebraic_properties}. In Section \ref{sec_danobi_statistics}, we illustrate how to choose both $G$ and $A_1,\ldots,A_P$ for a given application. The corresponding limit results are then a mere corollary of the above general result (see Section \ref{sec_size_power} in the appendix) providing the tools necessary for statistical tests. Additionally, we prove that these tests have asymptotic power one in a variety of situations including misspecified situations (see Theorem  \ref{power_one} and Remark \ref{rem_power_one} as well as Theorems \ref{power_one_fnb2} and \ref{power_one_fnb1} in the appendix).    
	\begin{Remark}\label{domain_fclt}
		For ease of notation, we consider all anchoring points $\bs\in[0,1]^p$ 	in Theorem~\ref{danobi_convergence_map}, which requires observations  $\geschweift{Y_{\bk,T}}$ in a slightly larger hypercube (depending on the sets $A_1,\ldots,A_P$).
		On the other hand, in the practical application in Section \ref{sec_danobi_statistics}, we use a slightly different scaling, which is more convenient for implementation purposes: We assume that we observe $\geschweift{Y_{\bk,T}}$ only on the grid $\bk/T\in[0,1]^p$ with the scan statistics $S_{A_i}(\bs)$ calculated only for anchoring points $\bs\in [d/2,1-d/2]^p$ (see also Section  \ref{sec_danobi_statistics}), so that all grid points within the shifted scan windows used in the application lie within $[0,1]^p$. Nevertheless, this is inline with the theory above by an appropriate rescaling of the grid and scan windows.
	\end{Remark}
	
	\begin{Remark}\label{rem_rectangles}
		Previous literature concerned with functional central limit theorems for random fields focuses on the case of oriented hypercubes (see e.g.\ \cite{Kabluchko}, \cite{AriasCastroDonoho}, \cite{SharpnackAriasCastro}, \cite{Bucchia}, \cite{HaimanPreda}, \cite{Jaruskova}, \cite{Zemlys}), i.e.\ they obtained the following functional central limit theorem
		\begin{align}
			\left(\frac{1}{T^{p/2}}\sum_{k_1=1}^{\floor{s_1T}}\ldots\sum_{k_p=1}^{\floor{s_pT}}\epsilon_{k_1,\ldots,k_p}\right)_{\bs\in[0,1]^p}\wto\sigma\left(W_{\bs}\right)_{\bs\in[0,1]^p} \label{FCLT}
		\end{align}	
		on $ \cD\left([0,1]^p\right) $, for some $ p $-parameter Wiener process $ \left(W_{\bs}\right)_{\bs\in[0,1]^p} $ (compare Definition \ref{Wiener_ppar} in the appendix) and some $ \sigma>0 $.
		
		In such a case, and if additionally it holds for any bounded hyperrectangle $ I\subset\R^p $ and any $ (c_1,\ldots,c_p)^{\prime}\in\N_0^p $  that
		\begin{align}
			\frac{1}{T^{p/2}}\ \supp{\bs\in I}\abs{\sum_{k_1=1}^{\floor{s_1T}}\ldots\sum_{k_p=1}^{\floor{s_pT}}\epsilon_{k_1,\ldots,k_p}-\sum_{k_1=1}^{\floor{s_1T}+c_1}\ldots\sum_{k_p=1}^{\floor{s_pT}+c_p}\epsilon_{k_1,\ldots,k_p}}=o_P(1), \label{FCLT2}
		\end{align}
		then the limit process of Theorem~\ref{danobi_convergence_map} for (correctly oriented) rectangular scanning windows $ \scanset=\linksoffen{-a_1,a_1}\times\ldots\times\linksoffen{-a_p,a_p} $, $a_j\in (0,1/2)$ for $j=1,\ldots,p$, can be written obtained explicitly in terms of the above limit  process $ \left(W_{\bs}\right)_{\bs\in[0,1]^p} $. 
To elaborate, it holds,
		\begin{align*}
			\left(\frac{1}{T^{p/2}}\danobisum_{\scanset}\left(\epsilon;\floor{\bs}_T\right)\right)_{\bs\in[0,1]^p}\wto\left(\Lambda_{\bs}\right)_{\bs\in[0,1]^p}
		\end{align*}	
		on $ \cD\left([0,1]^p\right) $ as $ T\to\infty $, where (with $ (-1)^{\bs}:=\big((-1)^{s_1},\ldots,(-1)^{s_p}\big) $ and $\bs\odot\bt=(s_1t_1,\ldots,s_pt_p)^{\prime} $ for $\bs=(s_1,\ldots,s_p)^{\prime}$, $\bt=(t_1,\ldots,t_p)^{\prime}$)
		\begin{align*}
			\Lambda_{\bs}=\sigma\left(\sum_{\bd=(d_1,\ldots,d_p)^{\prime}\in\geschweift{0,1}^p} (-1)^{\sum_i d_i} W_{\bs+(-1)^{\bd}\odot\ba}\right), 
		\end{align*}
  
		However, even the case of rotated rectangular windows is not covered by this case, such that the joint asymptotic for several rotated rectangular windows cannot be obtained from a functional central limit theorem as in \eqref{FCLT}.
	\end{Remark}

	\subsection{Scan statistic for the detection of fissures in concrete}\label{sec_danobi_statistics}
 While the results of the previous section provide a flexible framework for the detection of anomalies based on scan statistics, in this section we illustrate the choice of window shapes and how to combine them for the motivating examples in Section~\ref{danobi_motivation}.
	Here, the main goal is the identification of areas that potentially contain fissures (without too many false alarms due to natural anomalies). These image data are in 2D (to be precise, 2D slices of 3D images). Therefore we will restrict ourselves to the case of $ p=2 $ in this section, while the previous theory is valid in arbitrary (but fixed) dimension.
	\begin{figure}
		\begin{center}
			\includegraphics*[width=0.25\textwidth]{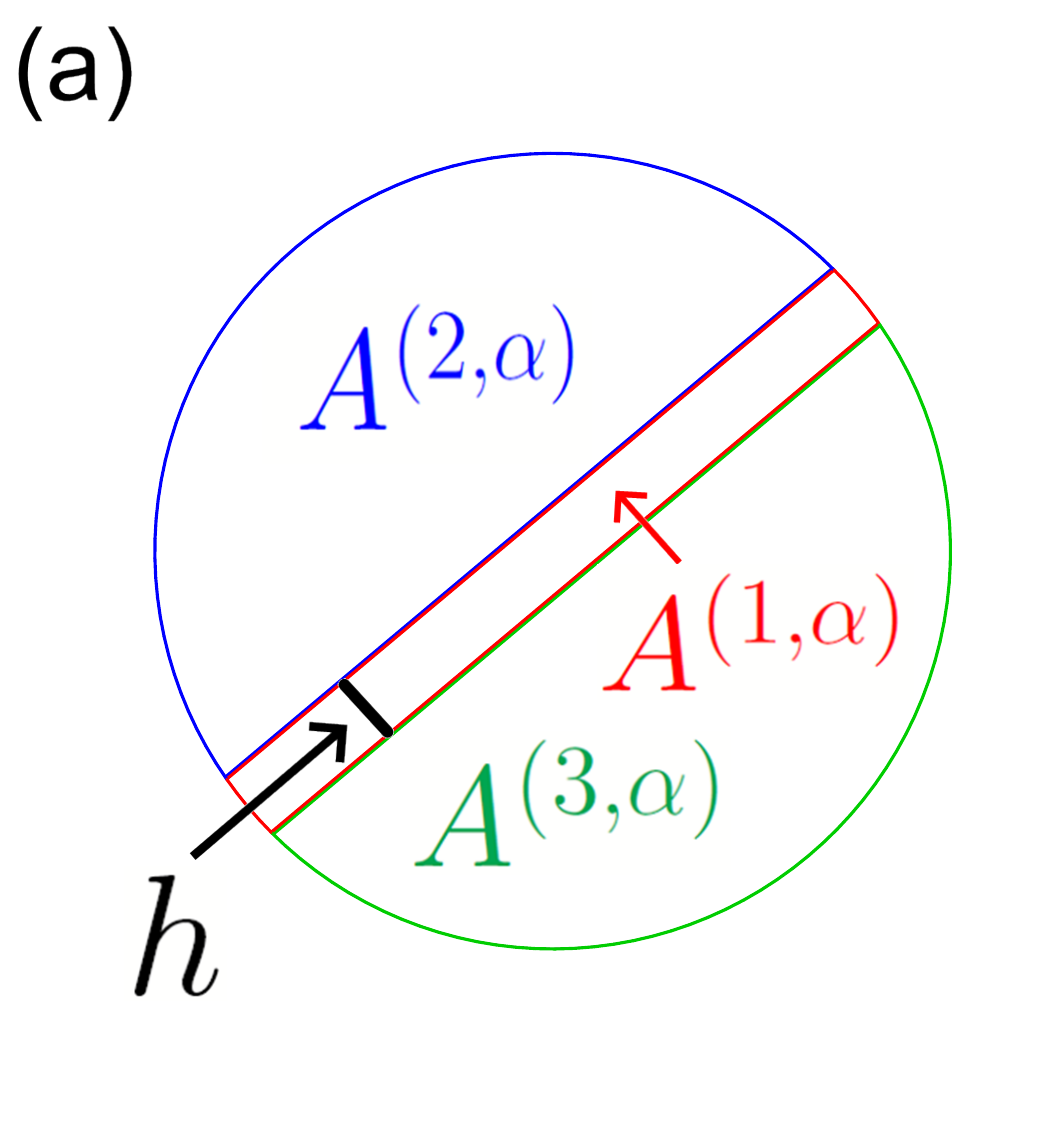}
			\includegraphics*[width=0.25\textwidth]{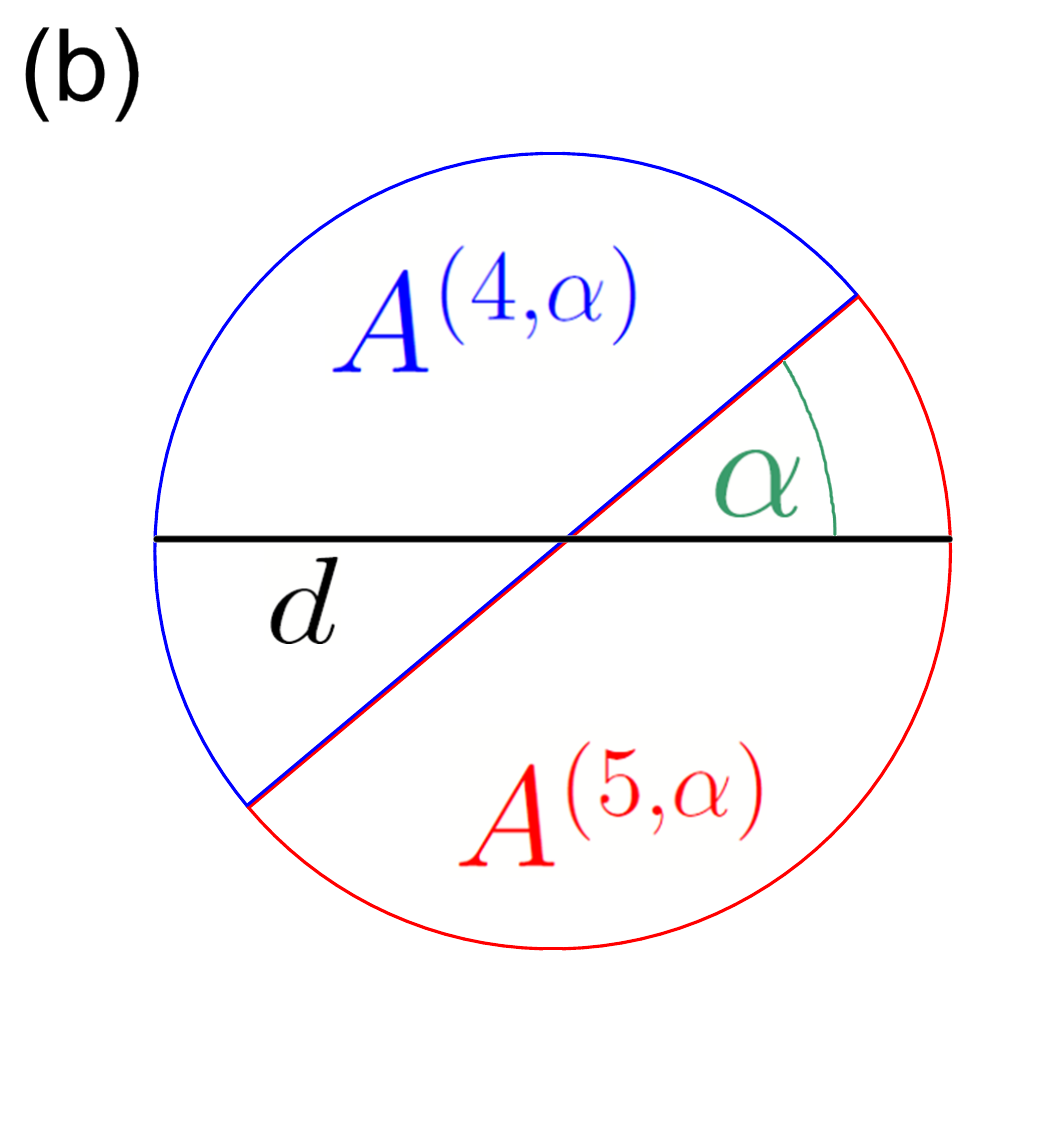}
		\end{center}
		\caption{Panels (a)-(b) illustrate the sets that we use in our scan statistics introduced in \eqref{f1} and \eqref{nB}, respectively. }
		\label{figure_danobi_method}
	\end{figure}
	
	\subsubsection{$\F1$-statistic}\label{sec_f1}
	As already indicated such an anomaly detection can be achieved by local  contrasts between suitable circle segments.
	To elaborate, let $ \scanset $ be a circle of diameter $ d\in(0,1) $ centered around 0. For some angle $ \alpha\in\rechtsoffen{0,\pi} $, we split this circle by having a "strip" of width $ h\in(0,d) $ turned by $ \alpha $ (see Figure \ref{figure_danobi_method} (a)). We denote this inner "strip" by $ \scanset^{(1,\alpha)} $ and the two remaining segments by $ \scanset^{(2,\alpha)} $ and $ \scanset^{(3,\alpha)} $, respectively.  Denote by $ (Y_{\bk,T})_{\bk\in\geschweift{1,\ldots,T}^2} $ the process of gray values and let $ \danobisum_{\scanset}\left(\floor{\bs}_T\right)= \danobisum_{\scanset}\left(Y;\floor{\bs}_T\right) $ be as in \eqref{eq_def_sums}. The sets $\scanset^{(j,\alpha)}$, $j=4,5$, will play a role later in Sections~\ref{sec_fnb2} and \ref{sec_fnb1} and are already included here for completeness.

	While the sums over the scanning windows in the universal result as given in Theorem~\ref{danobi_convergence_map} are implicitly normalized with their Lebesgue measure, in practice, we want to use sample means over the scanning windows, such that we normalize with the actual number of points within the scanning windows. As can be seen in Lemma~\ref{lem_order} in the appendix, both approaches are asymptotically equivalent.
	To elaborate, define
	for $ i=1,\ldots,5 $, $ s\in[d/2,1-d/2]^2 $ (recall that $ d $ denotes the diameter of the scan window)
	\begin{align*}
		\danobimean_{\scanset^{(i,\alpha)}}\left(\floor{\bs}_T\right)=\danobimean_{\scanset^{(i,\alpha)}}\left(Y;\floor{\bs}_T\right)=&\ \frac{T}{\abs{\geschweift{\frac{\bk}{T}\in \scanset^{(i,\alpha)}\left(\floor{\bs}_T\right)}}} \danobisum_{\scanset^{(i,\alpha)}}\left(\floor{\bs}_T\right).
	\end{align*}

	We will now give precise formulas for the proposed scan statistic as  motivated in Section~\ref{danobi_motivation}:
	Since a fissure can change directions, we need to consider multiple angles when trying to detect the fissure. As $ \scanset^{(1,\alpha)}=\scanset^{(1,\alpha+k \pi)} $, $  \scanset^{(2,\alpha)}=\scanset^{(3,\alpha+(2k+1) \pi)} $ for all $  k\in \N $, we only need to consider angles in $ \rechtsoffen{0,\pi} $ in the following statistic. Therefore, for $ T>0 $, $ 0\le\alpha_1<\ldots<\alpha_P<\pi $, $ \bs\in[d/2,1-d/2]^2 $, define the one-sided  (\textbf{F}issures-) $ \F1 -$statistic by
	\begin{align}
		& \F1_T(\bs)=\maxx{i=1,\ldots,P} \F1_T(\bs,\alpha_i),  \label{f1}\\
		&\text{where }\F1_T(\bs,\alpha_i)=\  \frac{1}{\sigma}\min\geschweift{\danobimean^{(12,\alpha_i)}(\bs),\danobimean^{(13,\alpha_i)}(\bs)}, \label{f1alpha}\\
		&\phantom{\text{where }}	\danobimean^{(12,\alpha)}(\bs)=\ \danobimean_{\scanset^{(2,\alpha)}}\left(\floor{\bs}_T\right)-\danobimean_{\scanset^{(1,\alpha)}}\left(\floor{\bs}_T\right),\notag\\
		&\phantom{\text{where }}	\danobimean^{(13,\alpha)}(\bs)=\ \danobimean_{\scanset^{(3,\alpha)}}\left(\floor{\bs}_T\right)-\danobimean_{\scanset^{(1,\alpha)}}\left(\floor{\bs}_T\right),\notag
	\end{align}
	where $ \sigma^2>0 $ is the long-run variance as in \eqref{long_run_variance}
	
	In $	\danobimean^{(12,\alpha)}(\bs)$ and $	\danobimean^{(13,\alpha)}(\bs)$ we  take the one-sided difference subtracting the (rescaled) average in the inner "strip" $ \scanset^{(1,\alpha)} $  from
	the (rescaled) averages in the two sections $ \scanset^{(2,\alpha)} $ and $ \scanset^{(3,\alpha)}$.  Both of these differences can be expected to be larger at a fissure because a fissure is known to have lower gray values than the surrounding pixels, thus
	taking the minimum of those two differences will increase the contrast.  The maximization in $\F1_T(\bs) $ over all considered angles $ \alpha_1,\ldots,\alpha_P $ is necessary to enhance fissures irrespective of their (possibly varying) direction.  Indeed, as illustrated in the right panel of Figure \ref{figure_danobi_basesetting},  the statistic $\F1_T(\bs)$ tends to enhance fissures while simultaneously discarding noise.

	\begin{Remark}\label{rem_variance_est} 
		In practice, $ \sigma $ has to be replaced by an estimator $ \hat{\sigma}_T $ that is asymptotically consistent under the null hypothesis (ideally without overestimating $\sigma$ too much under alternatives to avoid power loss). 	As will be discussed in Section \ref{danobi_simstudy}, a proper choice of $ \hat{\sigma}_T$ is necessary  to detect fissures with large probability  without having too many false positives. Real data typically contains larger proportions of bubbles with average gray values differing from the rest of the image as displayed in Figure \ref{figure_motivation}, which will significantly influence non-robust global variance estimators. Therefore, in the simulations we use a global robust estimator for $ \sigma $ (as given in \eqref{var_est}), which works very well under Gaussianity. Alternatively, local estimators $ (\hat{\sigma}_{\alpha_i,T}(\bs))_{\bs} $ for $ \sigma $ can be used, that may also depend on the angle $\alpha_i$, as long as they fulfill under the null hypothesis that
		\begin{align*}
			\supp{\bs\in[d/2,1-d/2]^2}\abs{\hat{\sigma}_{\alpha_i,T}(\bs)-\sigma}=o_P(1).
		\end{align*}
		The use of such estimators, however, comes at both the cost of computational and statistical efficiency -- the latter due to the smaller sample size it is based on for each grid point, compare Section 12.2 of \cite{Diss}.
	\end{Remark}
	
	As motivated in Section~\ref{danobi_motivation} we need a threshold $ \thr=\thr_{d,h,T,P} $ in order to determine areas that can potentially contain fissures. Based on such a threshold we can  distinguish between \emph{significant} points $ \bs\in[d/2,1-d/2]^2 $, for which\[
	\F1_T(\bs)\ge\thr_{d,h,T,P}
	\]
	and \emph{non-significant} points. The significant points are likely to contain a fissure in their environment such that they can then be used by post-processing procedures to track  the exact paths of the fissures (see e.g.\ \cite{KL}, \cite{KL2}, \cite{KL3}). Typically, significant points are not isolated pixels because a fissure influences the statistic at multiple points in a local environment.
	
	A common choice for the threshold is obtained as the asymptotic $ (1-\alpha) $-quantile of the maximum of the statistic defined in \eqref{f1}.
	Such a choice has the additional benefit that -- by construction --  it controls the family-wise error rate at level $\alpha$ of a pixel being significant (in the usual testing sense) under the null hypothesis of a stationary random field without anomalies.
	
	Since all functions involved in $\F1_T(\bs)$ are uniformly continuous,  a limit process for  $ \left(	\F1_T(\bs)\right)_{\bs\in[d/2,1-d/2]^2} $ can be obtained as a Corollary to Theorem~\ref{danobi_convergence_map}  in the case of a stationary random field with no anomalies, compare Theorem \ref{theorem_danobi_size} (a) in the appendix.
	
	Using a simulated $95\%$-quantile  from this theorem nicely recovers the fissures from the example at the left-hand side of Figure~\ref{figure_motivation}; see the middle panel of Figure~\ref{figure_threshold_w5} below.
	In the following theorem, we will show that  we obtain asymptotic power $ 1 $ if the signal is large enough for a correctly specified angle.
	
	\begin{Theorem}\label{power_one}
		Let $ \anomaly\subset[0,1]^2 $ be a rectangle of length $ l\ge d $, width $ w<h $ turned by degree $ \alpha_0\in\rechtsoffen{0,\pi} $ against the $ x $-axis and with center point $ \bs_0 $.
		Let $ \left(\epsilon_{\bk}\right)_{\bk\in\Z^2} $ be a sequence of random variables fulfilling Assumption \ref{ass_errors}. Let $ Y_{\bk,T}=\mu_{\bk,T}+\epsilon_{\bk} $ be as in \eqref{eq_model} with
		\begin{align}
			\mu_{\bk,T}=\mu_0-\begin{cases}
				0, &\text{ for } \bk/T\not\in \anomaly,\\
				\delta_T, & \text{ for } \bk/T\in \anomaly
			\end{cases}\label{signal}
		\end{align}
		for some bounded sequence $ \delta_T>0$. Let $ \F1_T(\bs) $ be as in \eqref{f1} and let angles $ 0\le\alpha_1<\ldots<\alpha_P $ be such that $ \alpha_i=\alpha_0 $ for some $ i $. Then,
		\[
		\F1_T(\bs_0)\pto \infty, \qquad \text{hence also }\maxx{\bs\in[d/2,1-d/2]^2} \F1_T(\bs)\pto \infty,
		\]
		if $ \delta_TT\to\infty $ as $ T\to\infty $.
	\end{Theorem}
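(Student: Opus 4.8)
The plan is to show that each of the two one-sided differences $\danobimean^{(12,\alpha_0)}(\bs_0)$ and $\danobimean^{(13,\alpha_0)}(\bs_0)$ diverges to $+\infty$ in probability, since $\F1_T(\bs_0)\ge \F1_T(\bs_0,\alpha_0)=\sigma^{-1}\min\{\danobimean^{(12,\alpha_0)}(\bs_0),\danobimean^{(13,\alpha_0)}(\bs_0)\}$. I would decompose each local mean $\danobimean_{\scanset^{(i,\alpha_0)}}(\floor{\bs_0}_T)$ into a deterministic (signal) part and a stochastic (noise) part, using $Y_{\bk,T}=\mu_{\bk,T}+\epsilon_{\bk}$. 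For the noise part, Theorem~\ref{clt_m_dependence} (applied to each of the sets $\scanset^{(i,\alpha_0)}$, which are convex and hence Jordan-measurable by Lemma~\ref{convex_sets}) gives $T^{-p/2}\danobisum_{\scanset^{(i,\alpha_0)}}(\epsilon;\floor{\bs_0}_T)=O_P(1)$; combined with the fact (Lemma~\ref{lem_order} in the appendix) that the cardinality $|\{\bk/T\in\scanset^{(i,\alpha_0)}(\floor{\bs_0}_T)\}|$ is asymptotically $\lambda(\scanset^{(i,\alpha_0)})T^p$, the rescaled noise averages $\danobimean_{\scanset^{(i,\alpha_0)}}(\epsilon;\floor{\bs_0}_T)$ are of order $O_P(T^{p/2}/T^p)=O_P(T^{-p/2})$ with $p=2$, hence $o_P(1)$.

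The heart of the argument is the deterministic signal part. With $\mu_{\bk,T}=\mu_0-\delta_T\mathds{1}_{\{\bk/T\in\anomaly\}}$, the rescaled mean of the signal over $\scanset^{(i,\alpha_0)}(\floor{\bs_0}_T)$ equals $\mu_0$ minus $\delta_T$ times the fraction of grid points of that window lying in $\anomaly$. In the differences $\danobimean^{(12,\alpha_0)}$ and $\danobimean^{(13,\alpha_0)}$ the constant $\mu_0$ cancels, leaving $\delta_T$ times the difference of the two occupation fractions. Here I use the geometry described in Section~\ref{sec_f1}: when the anomaly is a rectangle of length $l\ge d$, width $w<h$, turned by exactly $\alpha_0$ and centered at $\bs_0$, it lies entirely inside the inner strip $\scanset^{(1,\alpha_0)}(\bs_0)$ (since $w<h$ and $l\ge d$ means it spans the strip), so the outer segments $\scanset^{(2,\alpha_0)}$, $\scanset^{(3,\alpha_0)}$ contain no points of $\anomaly$, while a fixed positive fraction $c>0$ (depending only on $d,h,w,l$, not on $T$) of the grid points of $\scanset^{(1,\alpha_0)}(\floor{\bs_0}_T)$ lies in $\anomaly$ — this again follows from the occupation-count asymptotics of Lemma~\ref{lem_order}, applied to the Jordan-measurable set $\scanset^{(1,\alpha_0)}(\bs_0)\cap\anomaly$ which has positive Lebesgue measure. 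Consequently $\danobimean^{(12,\alpha_0)}(\bs_0)$ and $\danobimean^{(13,\alpha_0)}(\bs_0)$ each equal $c\,\delta_T(1+o_P(1))+o_P(1)$, and since $\delta_T T\to\infty$ forces $\delta_T\cdot(\text{effective sample size})^{1/2}\to\infty$ — more precisely, the noise fluctuation in the \emph{mean} is $O_P(T^{-1})$ in the $p=2$ case while the signal contribution is of exact order $\delta_T$, and $\delta_T T\to\infty$ guarantees $\delta_T/T^{-1}\to\infty$ — the signal dominates and $\min\{\danobimean^{(12,\alpha_0)}(\bs_0),\danobimean^{(13,\alpha_0)}(\bs_0)\}\pto\infty$. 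Dividing by the fixed constant $\sigma>0$ and taking the maximum over $i$ (which only makes things larger) yields both displayed conclusions.

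The main obstacle is making the geometric claim precise and uniform: one must verify that for the stated parameter regime ($l\ge d$, $w<h$, turned by exactly $\alpha_0$, centered at $\bs_0$) the rotated rectangle $\anomaly$ really is contained in the inner strip $\scanset^{(1,\alpha_0)}$ and disjoint from the outer segments, and that $\lambda(\scanset^{(1,\alpha_0)}(\bs_0)\cap\anomaly)>0$, so that the occupation fraction $c$ is a genuine positive constant. Once this containment is established, the probabilistic part is routine: it is just the central limit theorem of Theorem~\ref{clt_m_dependence} together with the counting lemma to control the denominators, and a comparison of the order $\delta_T$ (signal in the mean) against $T^{-1}$ (noise fluctuation in the mean), which is exactly the content of the hypothesis $\delta_T T\to\infty$.
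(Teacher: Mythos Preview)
Your overall strategy matches the paper's proof exactly: bound $\F1_T(\bs_0)$ from below by $\F1_T(\bs_0,\alpha_0)$, split each $\danobimean_{\scanset^{(i,\alpha_0)}}$ into a signal and a noise part, use the geometry ($w<h$, $l\ge d$, matching angle) to see that $\anomaly\cap\scanset^{(2,\alpha_0)}(\bs_0)=\anomaly\cap\scanset^{(3,\alpha_0)}(\bs_0)=\emptyset$ while $\lambda(\scanset^{(1,\alpha_0)}(\bs_0)\cap\anomaly)>0$, and then let $\delta_T T\to\infty$ do the work.

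However, there is a genuine scaling error that breaks the argument as written. By the paper's definition,
\[
\danobimean_{\scanset^{(i,\alpha)}}(\floor{\bs}_T)=\frac{T}{\bigl|\{\bk:\bk/T\in\scanset^{(i,\alpha)}(\floor{\bs}_T)\}\bigr|}\,\danobisum_{\scanset^{(i,\alpha)}}(\floor{\bs}_T),
\]
so $\danobimean$ is $T$ times the sample mean, not the sample mean itself. With $p=2$ the denominator is of order $T^2$, hence the noise contribution is $T\cdot O_P(T)/T^2=O_P(1)$, not $O_P(T^{-1})$, and the signal contribution is $-\delta_T T\,\lambda(\scanset^{(i,\alpha_0)}\cap\anomaly)/\lambda(\scanset^{(i,\alpha_0)})$, not $-c\delta_T$. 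This is precisely what the paper obtains:
\[
\danobimean^{(12,\alpha_0)}(\bs_0)=\delta_T T\,\frac{\lambda(\scanset^{(1,\alpha_0)}(\bs_0)\cap\anomaly)}{\lambda(\scanset^{(1,\alpha_0)})}+O_P(1),
\]
and divergence follows because $\delta_T T\to\infty$. Under your scaling you arrive at $\danobimean^{(12,\alpha_0)}(\bs_0)=c\,\delta_T(1+o_P(1))+o_P(1)$; since $\delta_T$ is assumed \emph{bounded}, this expression is bounded and cannot diverge. Your observation that the signal-to-noise ratio $\delta_T/T^{-1}=\delta_T T\to\infty$ is correct but irrelevant: a large signal-to-noise ratio does not by itself force the statistic to $\infty$. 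Once you restore the missing factor of $T$ in the definition of $\danobimean$, the rest of your outline goes through and coincides with the paper's proof.
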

	\begin{figure}
		\includegraphics[width=0.8\textwidth]{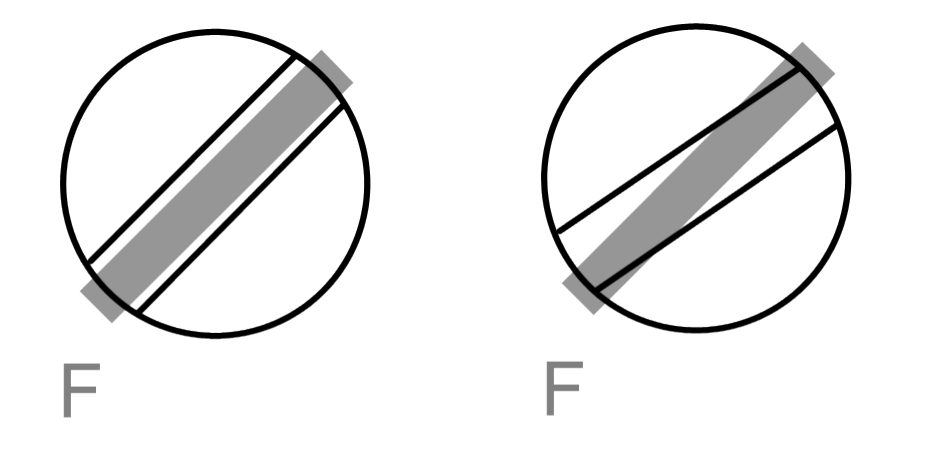}
		\caption{In this figure, the intersection between a rectangular-shaped fissure and the scan window from \eqref{f1} is illustrated. On the left-hand side of the figure, the angle of the fissure is correctly specified by the "inner strip", therefore displaying the situation of Theorem \ref{power_one}. On the right-hand side, the angle is slightly misspecified. Nevertheless, the mean of the inner circle segment is dominated by the gray area, while the mean of the outer segments is dominated by the white area.} \label{figure_power1}
	\end{figure}
	
	\begin{Remark} \label{rem_power_one}
		Due to the complicated shapes of the intersection between a rectangular fissure and the scan sets $ \scanset^{(1,\alpha_i)}  -  \scanset^{(3,\alpha_i)} $, it is non-trivial to give an explicit form for $ \lambda(\scanset^{(1,\alpha_i)}\cap \anomaly) - \lambda(\scanset^{(3,\alpha_i)}\cap \anomaly)  $ in the case of a misspecified angle. If such explicit forms are available, analogous arguments as in Theorem~\ref{power_one} give a corresponding  asymptotic power result. 
  See Figure \ref{figure_power1} for an illustration.
	\end{Remark}

	However, as can be observed in Figure \ref{figure_motivation}, fissures are not the only anomalies that can occur in concrete. By construction, concrete is a heterogeneous material that  can contain various different aggregates such as gravel, air pores, steel fibers etc.\ depending on the type of concrete.  As those natural anomalies are by no means dangerous but actually help increase the stability of the material, they should not result in (too many) significant points, i.e.\ the signal from these natural anomalies needs to be discarded. This can be done explicitly by making use of the geometric nature of these different types of natural anomalies.
	Furthermore, in other examples it might not be a priori clear that the anomaly of interest has lower gray values such that two-sided versions of the statistic are preferable.
	In the following, we present two statistics that aim to enhance fissures and discard these natural anomalies at the same time.
	
	\subsubsection{$ \FnB{2} $-statistic}\label{sec_fnb2}
	
	\begin{figure}
		\includegraphics*[width=0.32\textwidth]{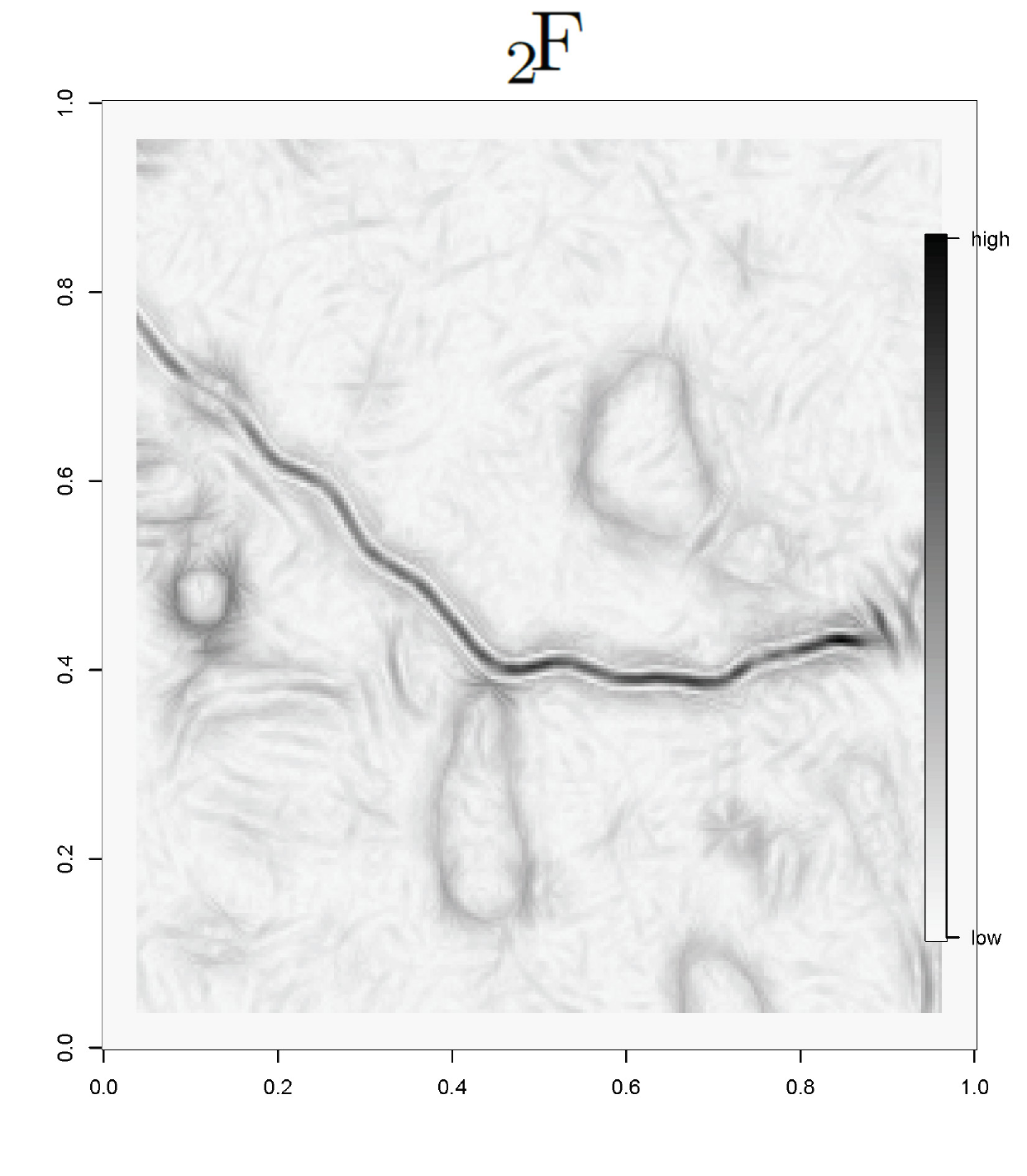}
		\includegraphics*[width=0.32\textwidth]{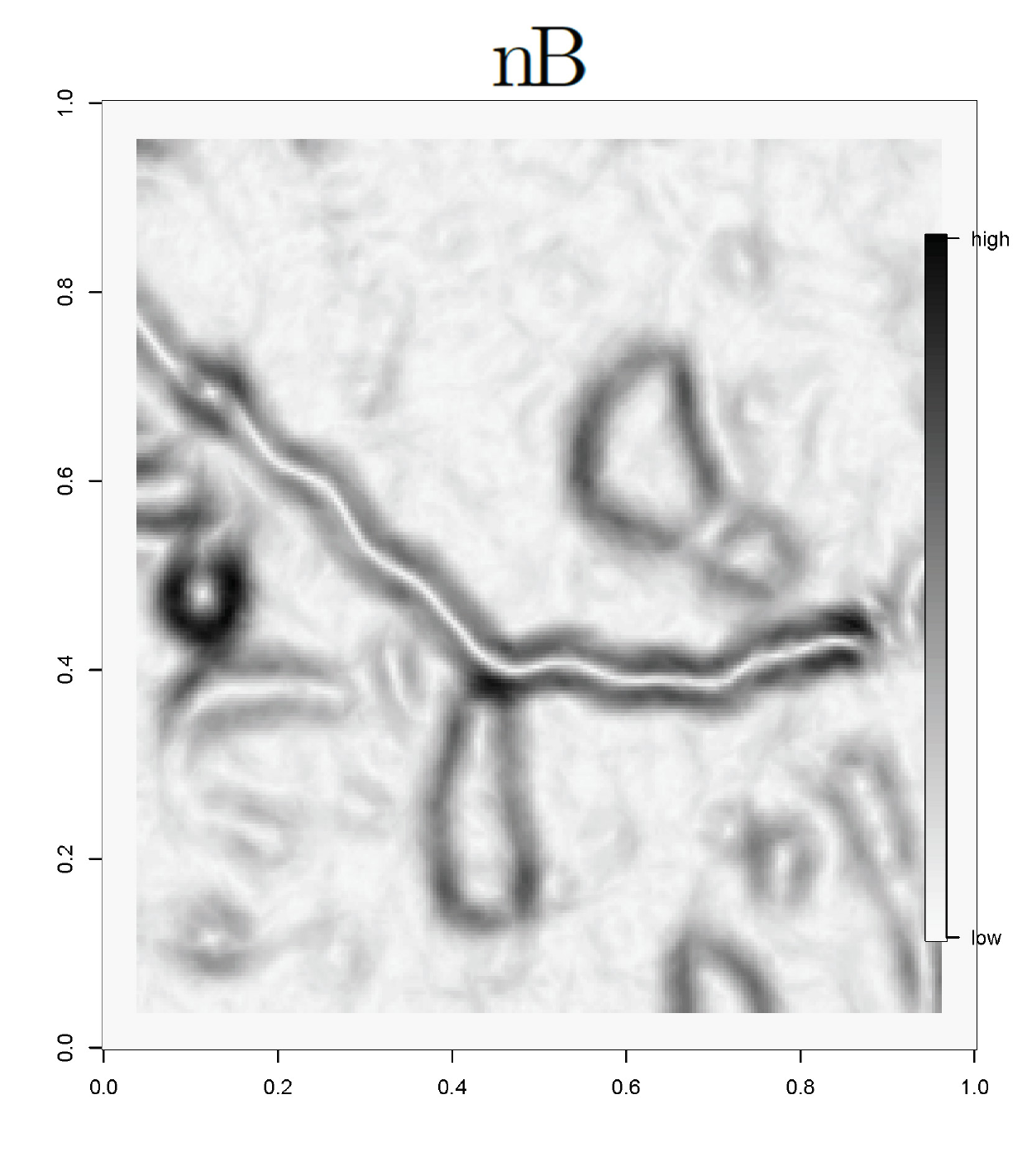}
		\includegraphics*[width=0.32\textwidth]{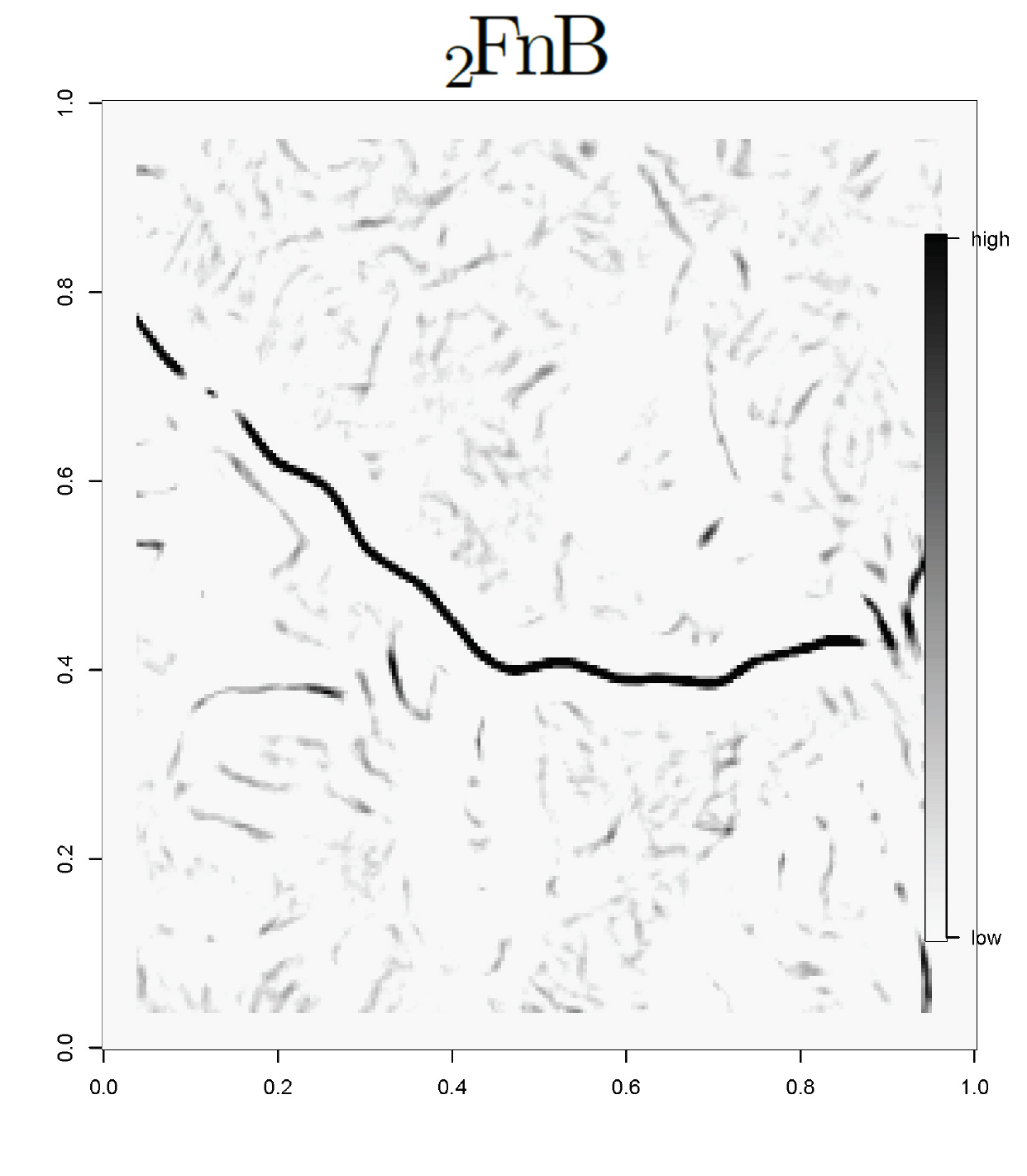}
		\caption[Geometric properties of fissures and bubbles together with scan statistics.]{The image shows heat maps for various statistics applied to the left panel of Figure \ref{figure_motivation}. The left panel shows a heat map of $ \F2 $ from \eqref{f2}, the middle panel shows a heat map of $ \nB $ from \eqref{nB} and the right panel shows a heat map for $ \FnB2 $ from \eqref{fnb2}, illustrating that $ \FnB{2} $ enhances fissures while discarding most of the bubbles and noise.}\label{figure_danobi_stat1}
	\end{figure}

	When the direction of the mean change for the fissure is not clear, we can use a two-sided version of the statistic by taking absolute differences in \eqref{f1}. Applying such a two-sided version $\F2$ as in \eqref{f2} to the left example in Figure~\ref{figure_motivation} does indeed enhance the fissure but also  (to a lesser degree) the borders of the naturally occurring anomalies in addition to some \emph{shadow} effects around the fissure (see left panel in Figure~\ref{figure_danobi_stat1}).
	Because these natural anomalies have a different geometric structure than the fissures,  their borders can be enhanced by contrasting the means of the two semi-circles (maximized over various angles) as in Figure~\ref{figure_danobi_method} (b). Indeed, the middle panel of Figure~\ref{figure_danobi_stat1} illustrates that the edges of the natural anomalies are enhanced by this statistic.
	
	Consequently, we define in the following the $ \FnB{2} $-statistic by subtracting these two scan statistics which enhances the fissures while at the same time discarding to a large degree the natural anomalies as can be seen by the right panel of Figure~\ref{figure_danobi_stat1}. Thresholding this last picture  results in a good detection of the fissure (see the left panel of Figure~\ref{figure_threshold_w5} below).

	To elaborate, for $ T>0 $, $ 0\le\alpha_1<\ldots<\alpha_P<\pi $, $ \bs\in[d/2,1-d/2]^2 $, define the two-sided \textbf{F}issures-\textbf{n}o-\textbf{B}ubbles $ \FnB{2} $- statistic by
	\begin{align}
		&\FnB2_T(\bs)=\ \max\geschweift{\F2_T(\bs)-\nB_T(\bs),0}, \label{fnb2}\\
		&\text{where }\F2_T(\bs)=\max_{i=1,\ldots,P}\frac{1}{\sigma}\min\geschweift{\abs{\danobimean^{(12,\alpha_i)}(\bs)},\abs{\danobimean^{(13,\alpha_i)}(\bs)}}\label{f2}\\	&\phantom{\text{where }} \nB_T(\bs)=\max_{i=1,\ldots,P}\nB(\bs,\alpha_i)=\  \max_{i=1,\ldots,P}\frac{1}{\sigma}\abs{\danobimean^{(45,\alpha_i)}(\bs)}, \label{nB}\\
		&\phantom{\text{where }\nB_T(\bs)=}\danobimean^{(45,\alpha)}(\bs)=\ \danobimean_{\scanset^{(4,\alpha)}}\left(\floor{\bs}_T\right)-\danobimean_{\scanset^{(5,\alpha)}}\left(\floor{\bs}_T\right)\notag
	\end{align}
	and $\F1_T(\bs,\alpha_i) $ is as in \eqref{f1alpha}.
    We obtain the asymptotic behavior of 
	$ \FnB{2} $ in the case of no anomalies analogously to the asymptotic behavior of $\F1$, as can be seen in Theorem \ref{theorem_danobi_size} (b) in the appendix. Hence we obtain thresholds for the $ \FnB{2}$ analogously to the thresholds for the $ \F1$-statistics.
	
	Furthermore, analogously to Theorem \ref{power_one}, we can show that for a correctly specified angle of the fissure, we obtain asymptotic power $ 1 $ if the signal is large enough, i.e.\ the subtraction of the $\nB$-statistic does not affect the detection of the fissures, as can be seen in Theorem \ref{power_one_fnb2} in the appendix.

	\subsubsection{$ \FnB{1} $-statistic} \label{sec_fnb1}
	\begin{figure}
		\includegraphics[width=0.32\textwidth]{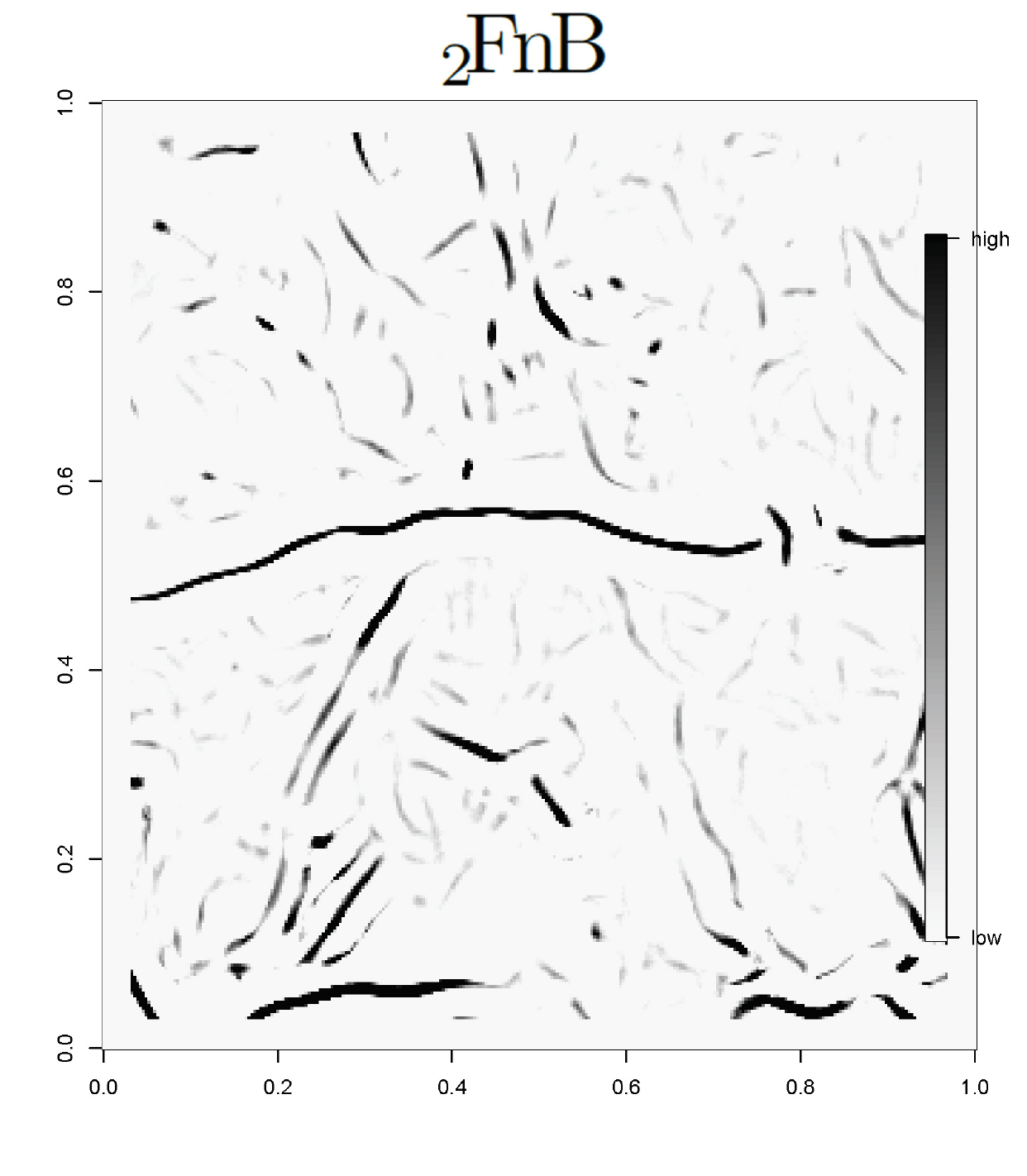}
		\includegraphics[width=0.32\textwidth]{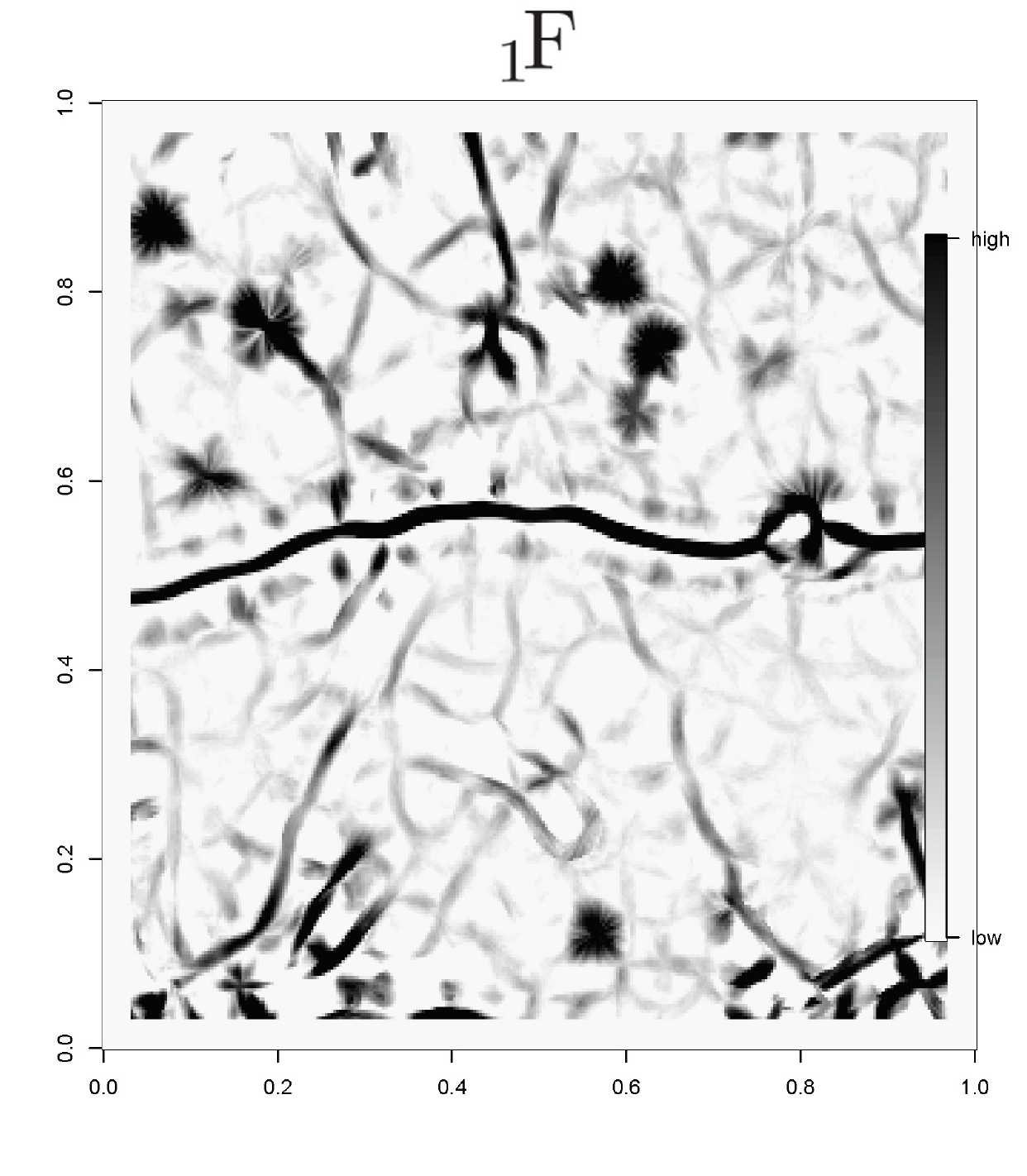}
		\includegraphics[width=0.32\textwidth]{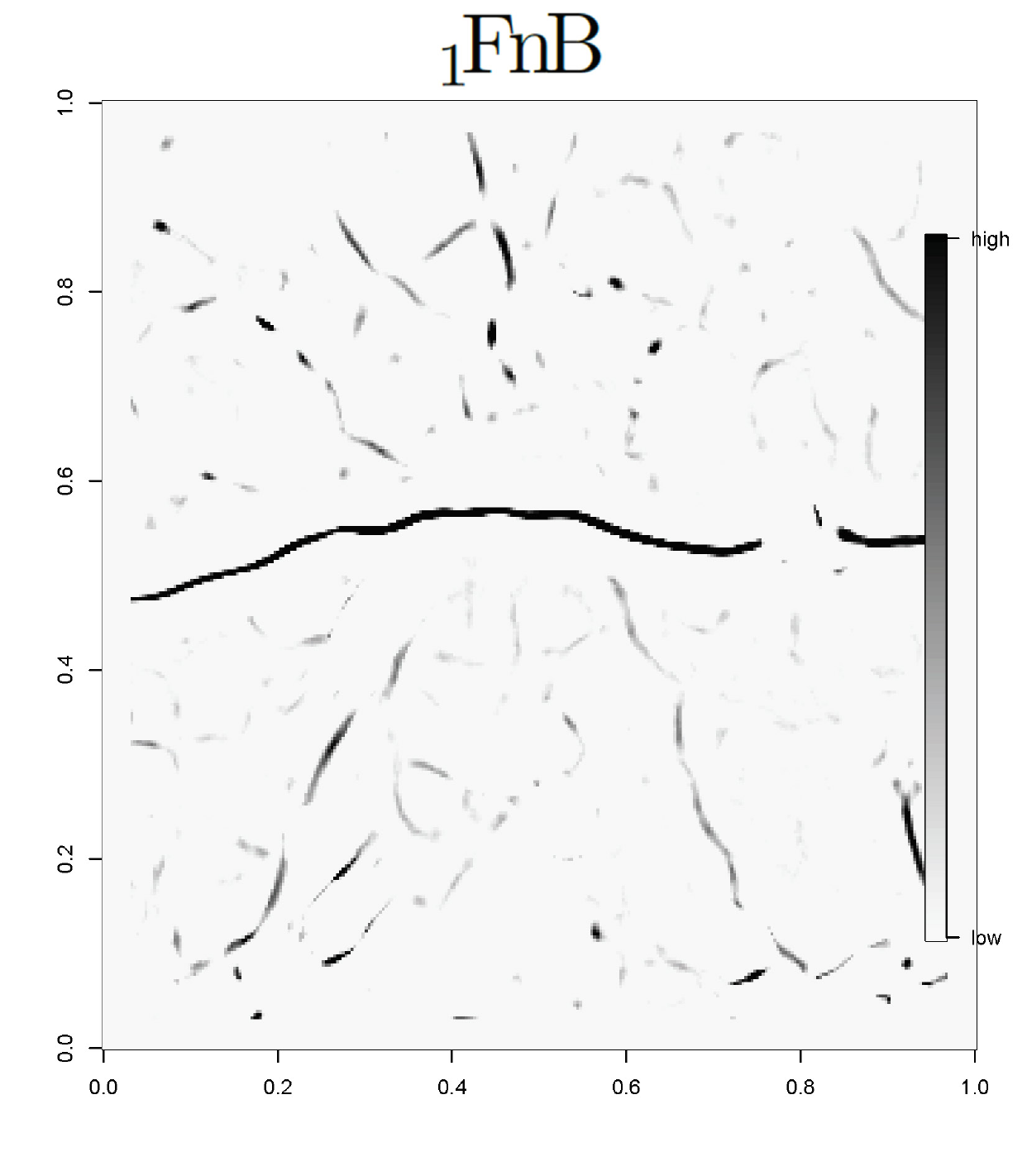}
		\caption{The heat maps from $ \FnB{2} $, $ \F1 $ and $ \FnB{1} $ applied to the steel fiber-reinforced concrete in the right panel of Figure \ref{figure_motivation} are displayed. 
			The statistics are maximized over $ 9 $ angles.}	 \label{figure_steelfiber_signif1}
	\end{figure}
	
	The right panel of Figure~\ref{figure_motivation} shows an example of steel fiber-reinforced concrete. In this case, the steel fibers have similar geometric properties as fissures.
	Not surprisingly, as a two-sided statistic, the $ \FnB{2} $-statistic detects both the fissures and steel fibers (see the left panel of Figure~\ref{figure_steelfiber_signif1}) as it does not make use of the fact that -- in contrast to the steel fibers --  the gray values of fissures are lower than the gray values of their neighboring environment.
	The $\F1$-statistic does take this into account but produces a series of artifacts due to other natural anomalies (see the middle panel of Figure~\ref{figure_steelfiber_signif1}).
	
	We can deal with this by subtracting the $ \nB $-statistic as in \eqref{nB} from the  $ \F1 $-statistic as in \eqref{f1} and define
	\begin{align*}
		\FnB{1}_T(\bs)=\max\left\{\F1_T(\bs)-\nB_T(\bs),0\right\}. 
	\end{align*}
	The corresponding heat map is given in the right panel of Figure~\ref{figure_steelfiber_signif1}.
	
We obtain the asymptotic behavior of 
	$ \FnB{1} $ in the case of no anomalies analogously to the asymptotic behavior of $\F1$, as can be seen in Theorem \ref{theorem_danobi_size} (c) in the appendix.

		Furthermore, analogously to Theorem \ref{power_one}, we can show that if the angle of the fissure is correctly specified, we obtain asymptotic power $ 1 $ for a positive and large enough signal $\delta_T$, i.e.\ the subtraction of the $\nB$-statistic does not affect the detection of the fissures, as can be seen in Theorem \ref{power_one_fnb1} in the appendix.

	\begin{figure}
		\includegraphics[width=0.32\textwidth]{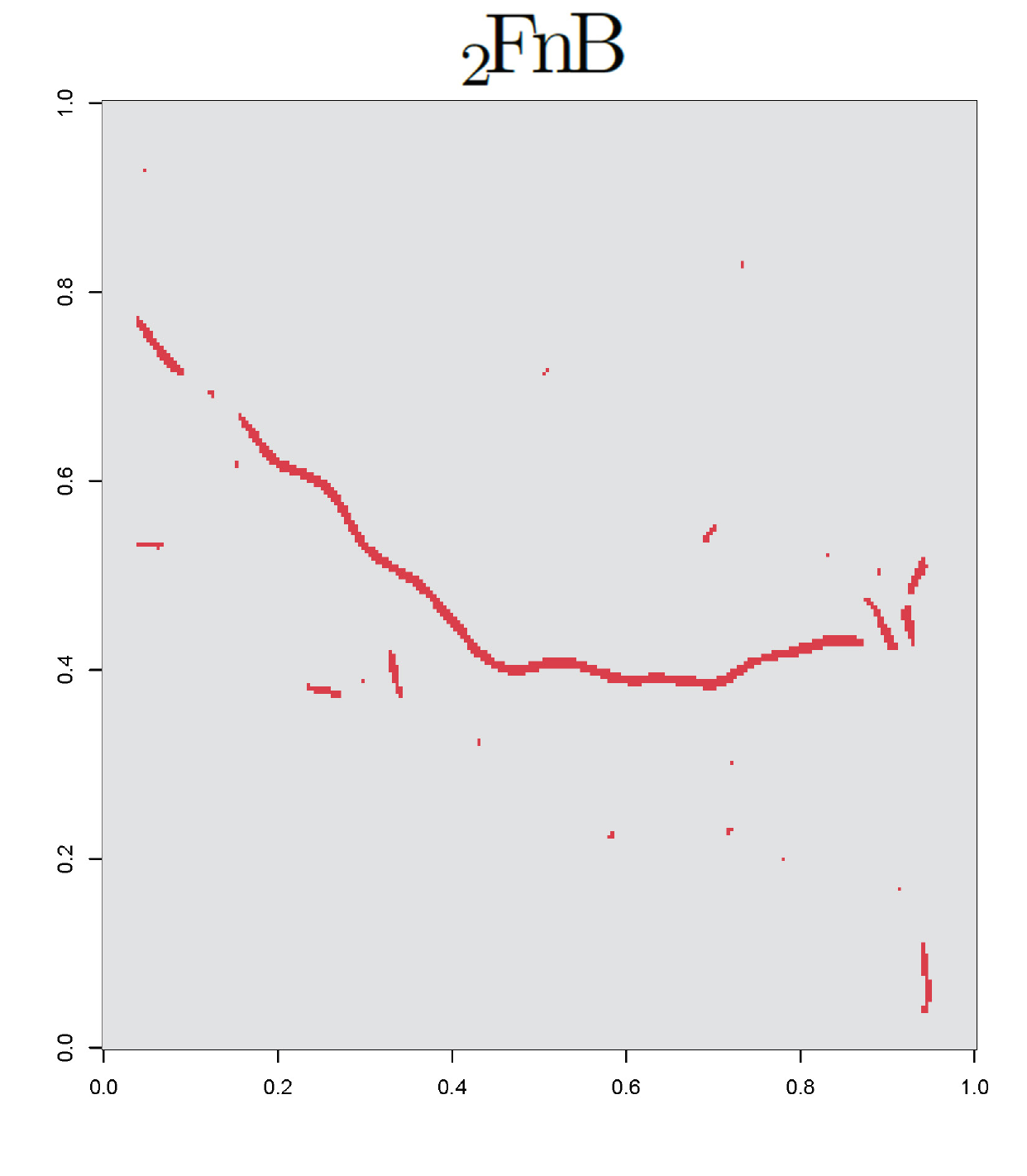}
		\includegraphics[width=0.32\textwidth]{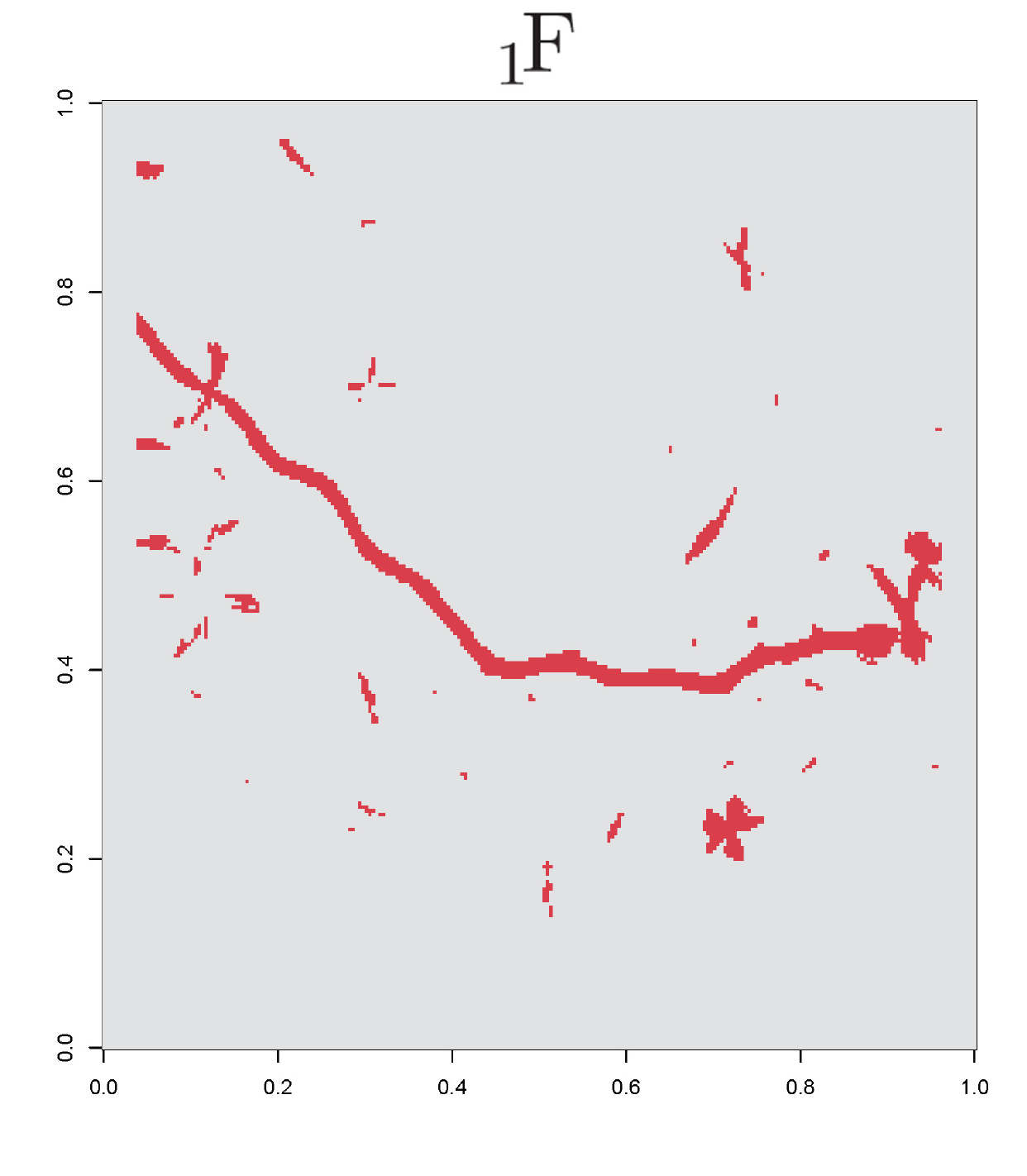}
		\includegraphics[width=0.32\textwidth]{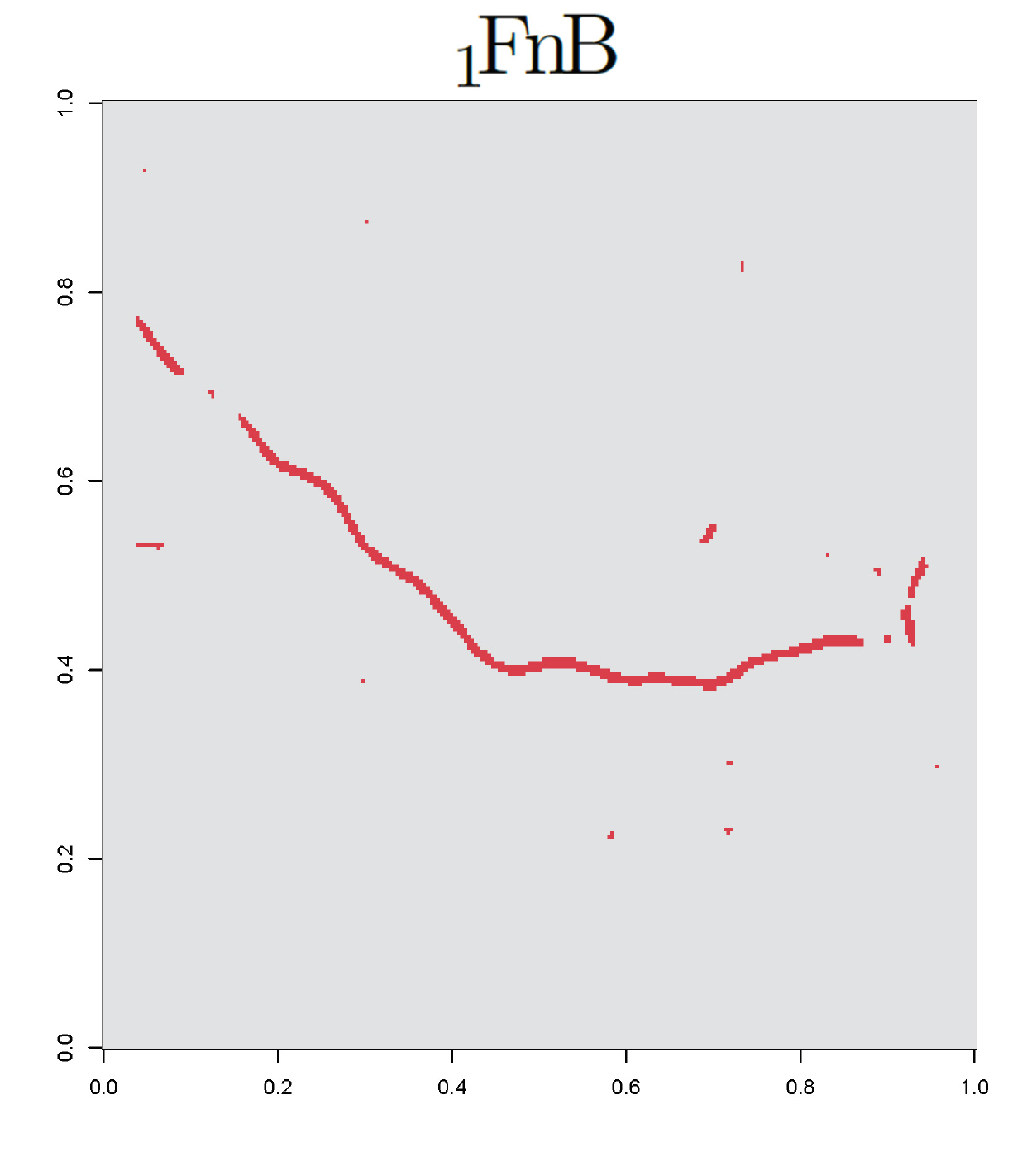}
		\caption{The thresholded heat maps from  $ \FnB{2} $, $ \F1 $ and $ 			\FnB{1} $ applied to the left panel of Figure \ref{figure_motivation} are displayed. 
			The statistics are maximized over $ 9 $ angles.}	 \label{figure_threshold_w5}
	\end{figure}

	\section{Further empirical results and data analysis} \label{danobi_simstudy}
	As the $ \FnB{1}$-statistic  has proven to do the task of detecting fissures and discarding bubbles best, we will perform the following empirical study with this statistic. In this simulation study we consider only i.i.d.\ standard normal noise. We aim to use the asymptotic $95\%$-quantiles of the maximum of the  $\FnB{1}$-statistic as thresholds $ \thr_{d,h,T,P} $.
	Since these  are not known analytically, we use empirical versions instead. To this end,  we generate $ 10\ 000 $ images of size $ T\times T $ with i.i.d.\ standard normal noise, scan each field with $ \FnB{1}_T(\bs,0) $ using relative width $ h $ for the width of the "inner strip" and calculate the empirical $ 95\% $-quantile of the maximal value of each image.
	
	All other empirical results (such as size or detection rates) in this section are based on $1000$ repetitions.
	
	In practice, the standard deviation $\sigma$ is not known and needs to be estimated (see also Remark~\ref{rem_variance_est}). Since anomalies lead to different expected values of the affected pixels, this will inevitably lead to distortions of corresponding estimators of the standard deviation unless robust estimators are employed.
	Therefore, we use the following global robust estimator (from here on referred to as "Silverman's estimator") which is based on an idea by \cite{Silverman} for the estimation of the optimal bandwidth in kernel density estimation:
	\begin{align}
		\hat{\sigma}_{T}(\bs)=\frac{\hat{q}_{T\times T}\left(0.75\right)-\hat{q}_{T\times T}\left(0.25\right) }{\Phi^{-1}\left(0.75\right)-\Phi^{-1}\left(0.25\right)}, \label{var_est}
	\end{align}
	where $ \Phi$  is the CDF of $ \mathcal{N}(0,1) $ and $\hat{q}_{T\times T}\left(\gamma\right)$ is the empirical $\gamma$-quantile of all pixels in the given $T\times T$-image.
	\begin{table}
		\centering

		\begin{tabular}{c|c|c|c|c|c}
			Emp. mean & Minimum & Lower quartile & Median & Upper quartile & Maximum  \\\hline
        $ 0.9998 $ & $0.9554$ & $0.9918$ &
			$ 0.9998 $ & $ 1.0076 $&  $ 1.0481 $
		\end{tabular}

		\caption{Empirical mean and various quantiles of the estimator \eqref{var_est} for $100\times 100$ images with i.i.d.\ standard normal pixels.}  \label{var_est_table_normal}
	\end{table}

	\subsection{Size control}\label{section_size_control}
	The estimator  \eqref{var_est} is asymptotically consistent if the noise is normally distributed.
	Therefore, it is not surprising  that it is a very precise estimator for the standard deviation for
	images of size $ 100\times100 $ with i.i.d.~$ \mathcal{N}(0,1) $-distributed pixels, see Table \ref{var_est_table_normal}.
	As a consequence,
	it also controls the relative number of false positives very precisely at $ 5\% $, as can be seen in Table \ref{var_est_fp}.
	\begin{table}
		\centering
		\begin{tabular}{c|cccc}
			$ P $ & $ 1 $ & $ 3 $ & $ 6 $ & $ 9 $\\\hline
			FP rate & $ 0.0508 $  & $ 0.0506 $  & $ 0.0498 $ & $ 0.0506 $
		\end{tabular}
		\caption{Relative number of false positives when using the estimator \eqref{var_est} for images of $ 100\times100 $ pixels without anomalies with i.i.d.~$ \mathcal{N}(0,1) $-distributed noise for the maximization over $ P $ angles.}  \label{var_est_fp}
	\end{table}
	It is important to note that this estimator is not consistent for the standard deviation if the underlying distribution is not normal.
	We illustrate this fact by the theoretic value (towards which the estimator converges) in Table \ref{var_est_table_notnormal}. For all considered distributions this theoretic value is smaller than the true standard deviation. As a consequence, the corresponding procedure will no longer control the number of images with spurious significant pixels at a level of $ 5\% $ but at (much) higher levels instead, and therefore lead to too many spurious significant pixels. Therefore, the estimator \eqref{var_est} should only be applied if one is sufficiently sure that the data is close enough to normal. Otherwise, more suitable robust variance estimators should be employed.
	\begin{table}
		\centering
			\begin{tabular}{c|ccccccc}
				Distribution & 	$ \tdist(3) $ & $ \tdist(4) $ & $ \tdist(5) $ & $ \tdist(6) $ & $ \tdist(7) $ & $ \Exp(1) $ & $ \Gamma(4,2) $ \\\hline
				$ \sigma $  & $ 1.732 $ & $ 1.414 $ & $ 1.291 $ & $ 1.225 $ &  $ 1.183 $ & $ 1 $ &   $ 1 $ \\\hline
				$\frac{q(0.75)-q(0.25)}{\Phi^{-1}\left(0.75\right)-\Phi^{-1}\left(0.25\right)}$ & $ 1.134 $ & $ 1.098 $ & $ 1.077 $ & $ 1.064 $ & $ 1.054$ & $ 0.814 $ & $ 0.954 $
			\end{tabular}
			\caption{True standard deviation and theoretical values of "Silverman's estimator" \eqref{var_est} for various distributions. $ \tdist(n) $ denotes Student's $ t $-distribution with $ n $ degrees of freedom while $ \Gamma(s,\lambda) $ denotes the $ \Gamma $-distribution with shape $ s $ and rate $ \lambda $.} \label{var_est_table_notnormal}
		\end{table}

		\subsection{Angle misspecification}
		\begin{figure}
			\includegraphics[width=0.32\textwidth]{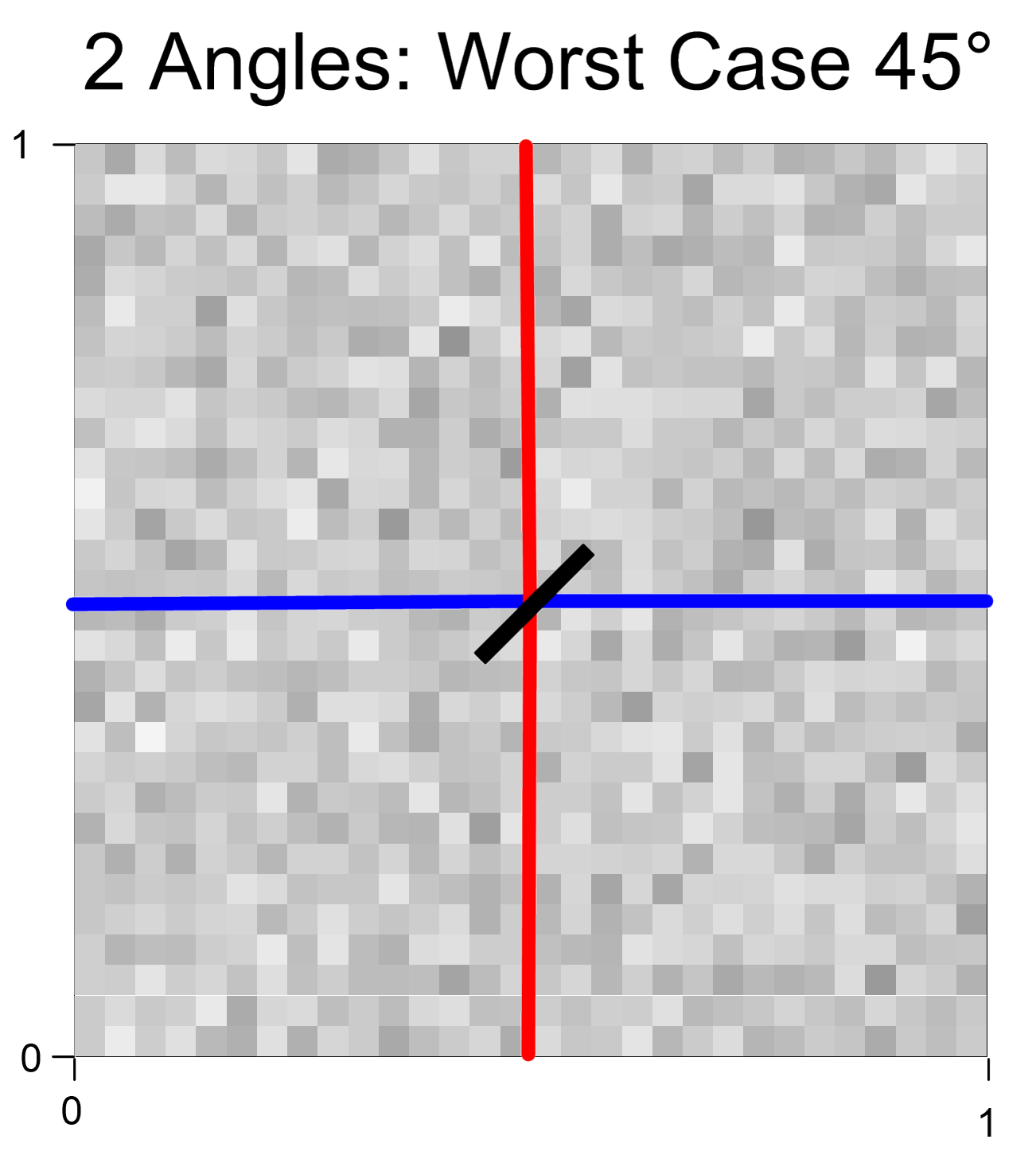}
			\includegraphics[width=0.32\textwidth]{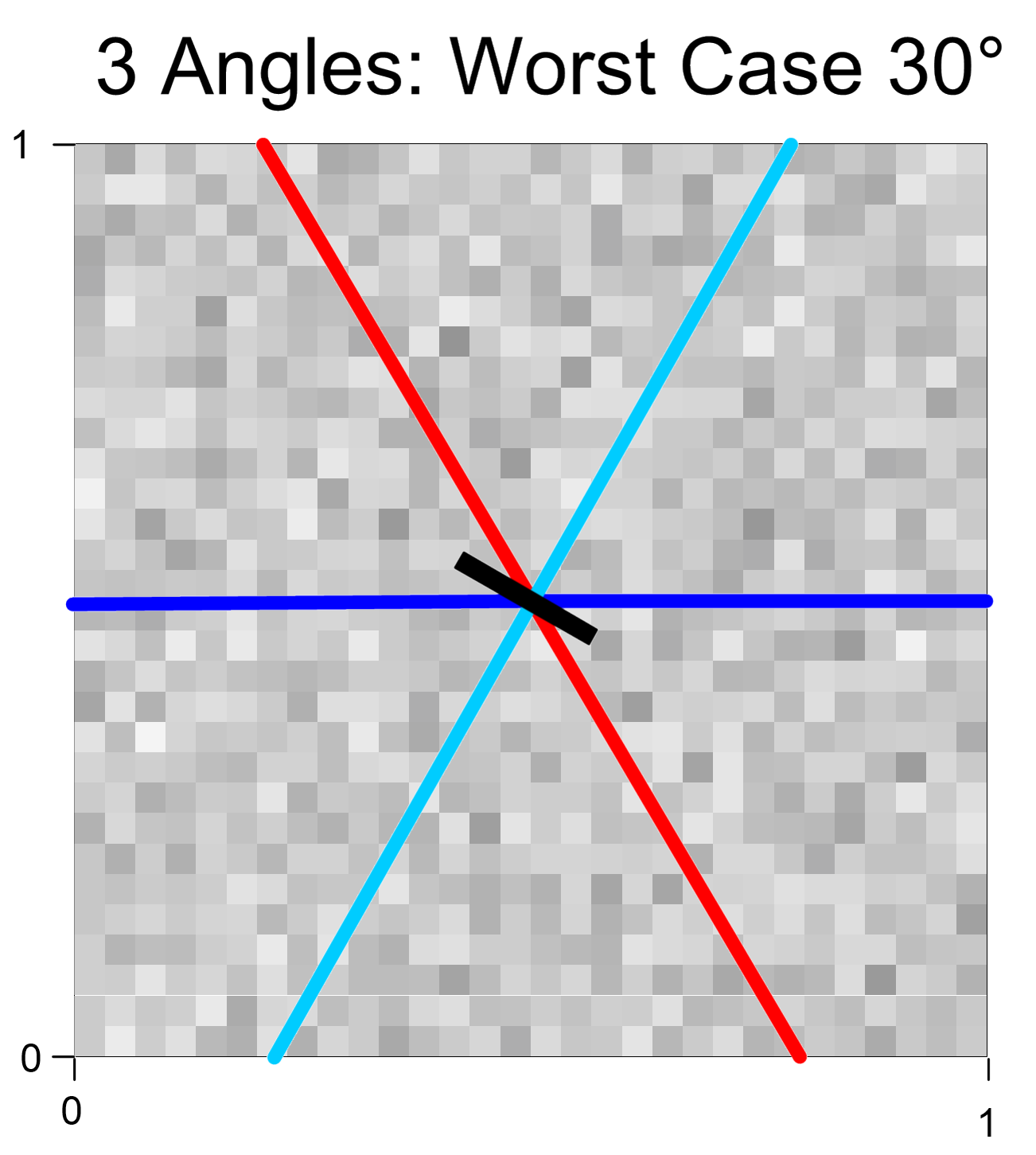}
			\includegraphics[width=0.32\textwidth]{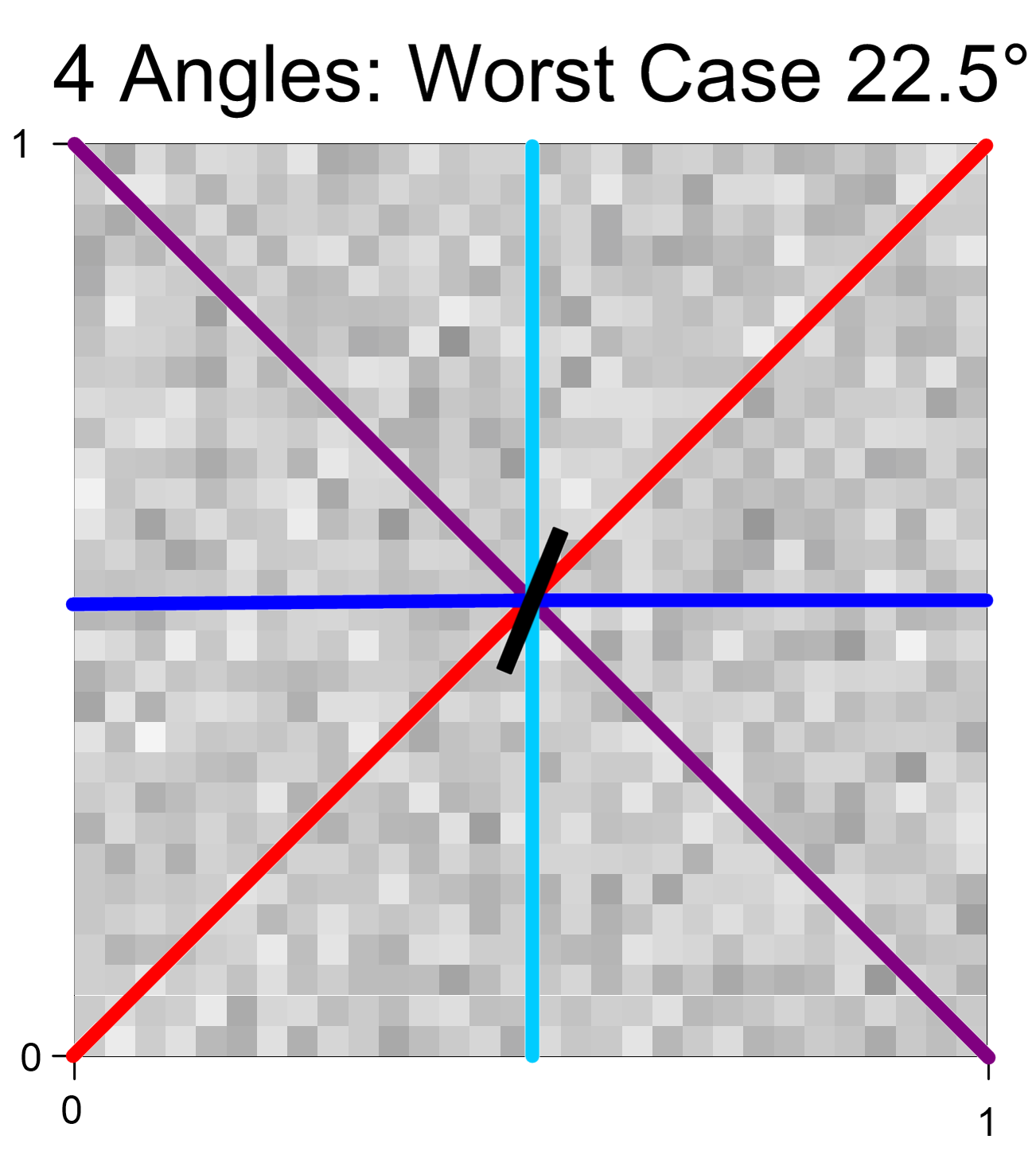}
			\caption{Schematic illustration of a worst-case scenario for the direction of a fissure for various numbers of angles $ P $ if  these angles are chosen equidistantly on $ \rechtsoffen{0^{\circ},180^{\circ}} $. The fissure is schematically illustrated as a thin black rectangle, while the colored lines represent the angles of the scan sets from Figure \ref{figure_danobi_method}. } \label{misspecification}
		\end{figure}
		
		The question, how many angles are necessary for a reliable detection of a fissure, is central for the setup of the methodology. If the number of angles is fixed, then the angles should be chosen as an equidistant partitioning of $ \rechtsoffen{0^{\circ},180^{\circ}} $, because this choice minimizes the worst-case misspecification (see  Figure \ref{misspecification} for an illustration).
		
		A very natural question, in particular in view of computation time, is the number of  angles $ P $  needed to detect a fissure at a "reasonably high" rate.
		If $ \alpha_1,\ldots,\alpha_P $ are chosen equidistantly on $ \rechtsoffen{0^{\circ},180^{\circ}} $, the direction of a fissure can be misspecified  by an angle of $ (90/P)^{\circ} $ in the worst case (see Figure \ref{misspecification}).
		
		In this subsection we investigate empirically the detection rate with respect to misspecification of the angle $\Delta$ as  given by the difference between the angle of the "inner strip" $ A^{(1,\alpha)} $ from Figure \ref{figure_danobi_method} and the fissure.
		Indeed, if a misspecification of at most $\Delta$ still leads to  sufficiently high detection rates, then
		at most $ \ceil{90/\Delta} $ angles are needed to obtain this detection rate because this choice guarantees that the worst-case misspecification is given by $\Delta$.

		\subsubsection*{Simulation setup}
		We simulate data according to the setting of \eqref{signal} with $ \mu_0=0 $ such that $Y_{\bk}= -\mathds{1}_{\geschweift{\bk/T\in \anomaly}}\delta+\epsilon_{\bk}$,
		where $ \left(\epsilon_{\bk}\right)_{\bk\in\Z^2} $ are i.i.d.~$ \mathcal{N}(0,1) $.
		\begin{figure}[b!]
			\includegraphics[width=\textwidth]{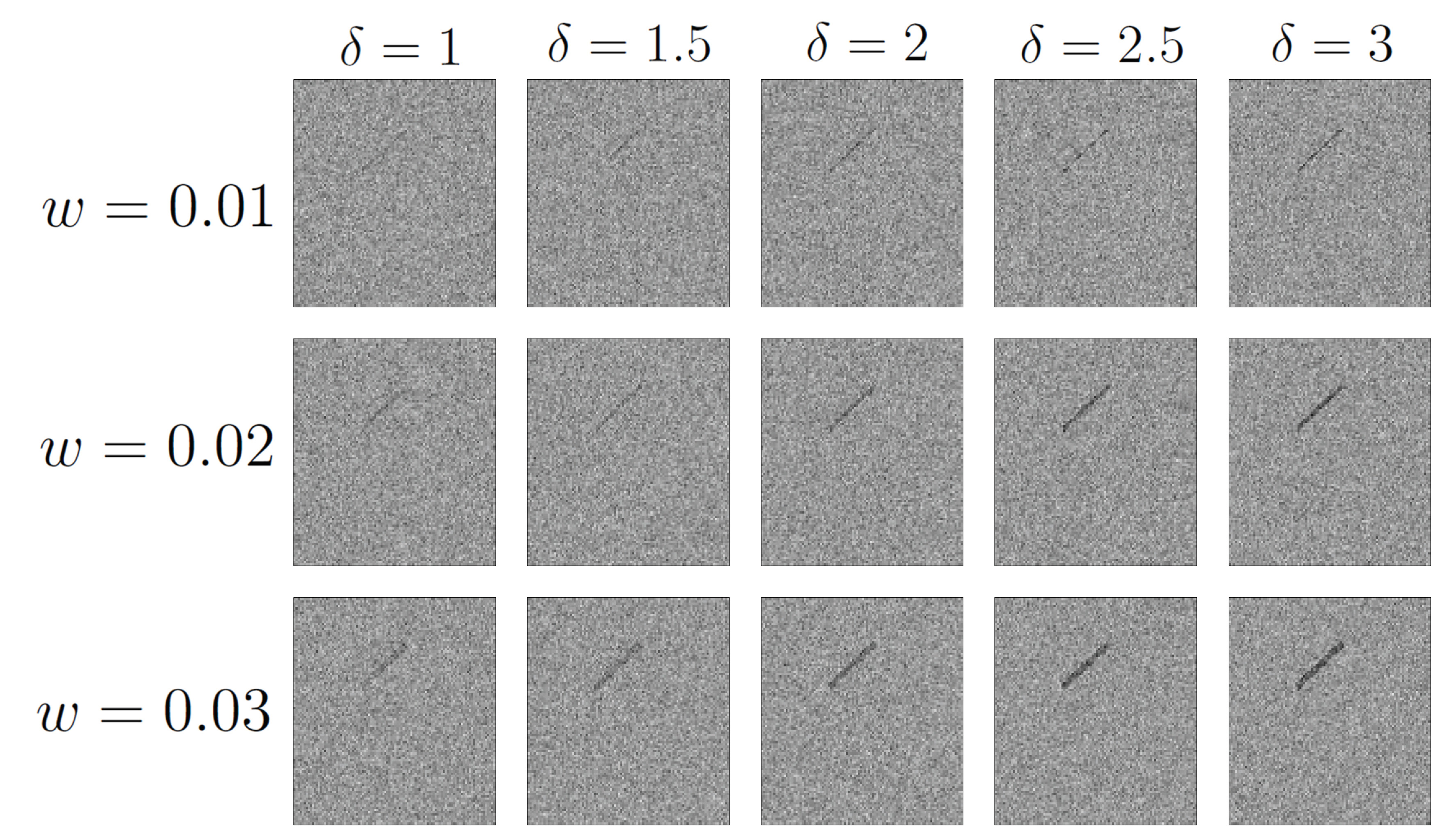}
			\caption[Examples of fissures with varying signal-width-combinations.]{One realization for each signal-width-combination $ (\mu,w) $ that is considered in this simulation study.}
			\label{danobi_bsp}
		\end{figure}
		
		In all simulations we use an inner strip of width $h=0.02$ and then discuss the misspecification with respect to the width by varying the actual width of the simulated fissure between $w=0.01, 0.02, 0.03$. For the misspecification of the angle, we consider   $\Delta=0^{\circ},5^{\circ},\ldots,25^{\circ}$.
  We use varying signal strengths as indicated by Figure~\ref{danobi_bsp}, which shows one realization of each scenario.

		\subsubsection*{Findings}\label{subsec_results}
		
		Figure \ref{figure_results} shows the corresponding empirical detection rates, where a fissure is considered \emph{detected}, if the corresponding statistic for at least one pixel $\bk/T$ is above the threshold and the corresponding scan window $ \scanset(\bk/T) $ intersects with the fissure.  Identifying such a  pixel  is sufficient for the purposes described in this paper and can be used e.g.\ as the starting point for machine learning methods (see e.g.\ \cite{KL}, \cite{KL2}, \cite{KL3}) to trace the fissure.
		
		\begin{figure}
			\includegraphics*[width=\textwidth]{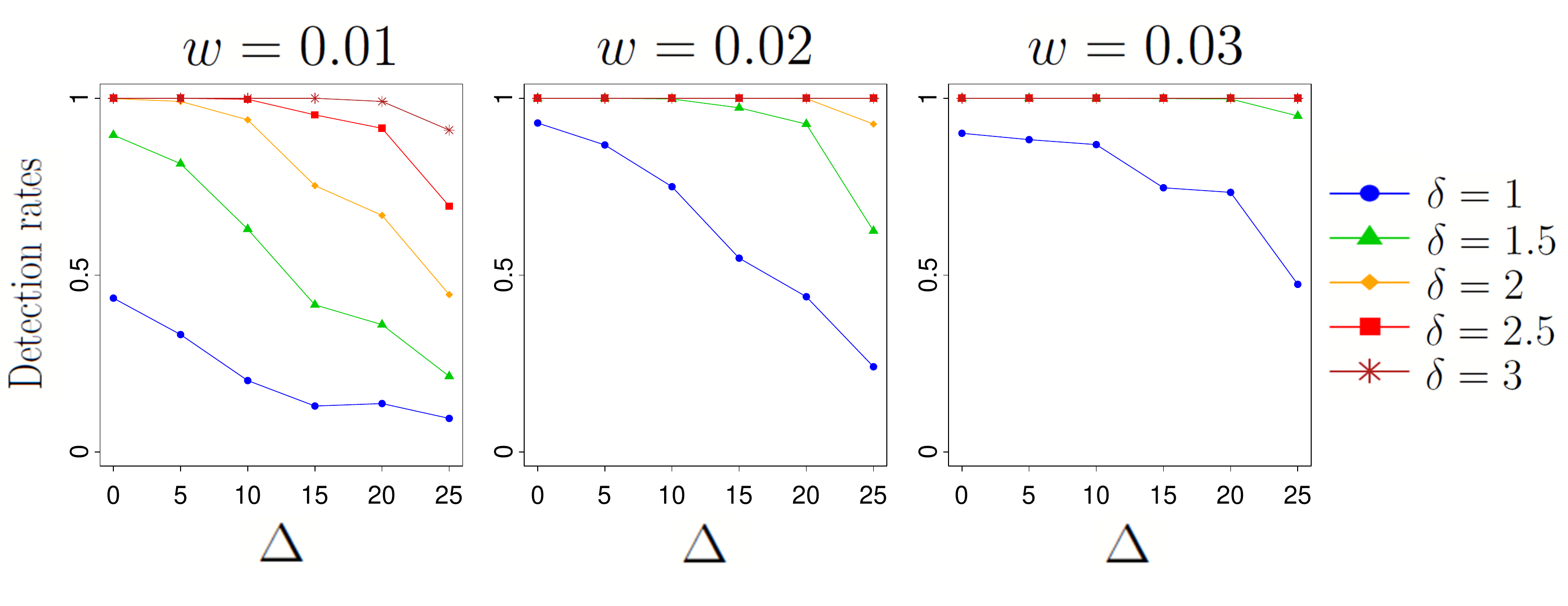}
			\caption[Detection rates with respect to angle misspecification.]{The image shows the detection rates for all parameter combinations $ (w,\mu,\Delta) $ when considering a single angle.} \label{figure_results}
		\end{figure}

		Even for moderate signal-to-noise ratios, where the fissure can  barely be observed by visual inspection (see Figure \ref{danobi_bsp}), fissures are detected at rates of at least $ 75\% $ with at most $ 9 $ angles. To elaborate, Table \ref{Table_75} lists all parameter combinations with an empirical detection rate of at least $75\%$ as well as the corresponding number $P$ of required angles to guarantee this detection rate.  With two exceptions (the first two scenarios in the first row in Figure~\ref{danobi_bsp}) choosing 9 angles is sufficient, where in most cases even $ 6 $ or less angles are enough.
		In the application the fissure can be expected to change direction so that a relatively small number of angles should be sufficient for detection in most cases. This is because -- due to the variation in direction -- at least part of the fissure should be only slightly misspecified with respect to one of the angles, which leads to a high likelihood of the fissure being detected.
		
		Our procedure also proves to be robust against slight misspecifications of the width of the fissure except in situations where the signal-to-noise ratio becomes too small to be reliably detected.

		\begin{table}[t]
			\begin{tabular}{c||c|c|c|c||c|c|c||c|c}
				$ w $ & $ 0.01 $ & $ 0.01 $ &$ 0.01 $ &$ 0.01 $ & $ 0.02 $ & $ 0.02 $ & $ 0.02 $ & $ 0.03 $ & $ 0.03 $\\
				$ \delta $ & $ 1.5 $ & $ 2 $ & $ 2.5 $ & $ 3 $ & $ 1 $ & $ 1.5 $ & $\ge 2 $ & $ 1 $ & $ \ge1.5 $ \\
				max. $ \Delta $ & $ 5 $ &$ 15 $ & $ 20 $ &$ 25 $  & $ 10 $& $ 20 $ & $ 25 $& $ 10 $ & $ 25 $ \\\hline
				min. $ P $ & $ 18 $ & $ 6 $ & $ 5 $ & $ 4 $  & $ 9 $ & $ 5 $ & $ 4 $ & $ 9 $ & $ 4 $
			\end{tabular}
			\vspace*{2mm}
			\caption[Parameter combinations for which detection rates in Section \ref{danobi_simstudy} are higher than $ 75\%$.]{The table lists all combinations  $ (w,\delta)$ and the maximum $\Delta$  for which the detection rates are higher than $ 75\% $.}
			\label{Table_75}
		\end{table}

		\subsection{Real data analysis} \label{real_data}
		In this subsection, we take another look at the data examples showing that the observations concerning the required number of angles from the previous subsection are also consistent with what happens in real data.
		
		In order to illustrate the performance of $ \FnB{1} $ on real data with respect to the number of angles needed for reliable fissure detection, we analyze the three images of Figure \ref{figure_motivation} with $3$, $6$ and $9$ equidistant angles starting at $0^{\circ}$. For the left image, which contains an obvious fissure, the $ \FnB{1}$-statistic is able to detect most of the pixels belonging to a fissure even when maximizing over just three angles, as can be seen in Figure \ref{figure_w5_signif_angles}.
		We can observe the same for the steel fiber-reinforced concrete from the right panel of Figure \ref{figure_motivation}, as can be seen in Figure \ref{figure_steelfiber_signif}. Additionally, as has been discussed in Subsection \ref{sec_fnb1}, the $ \FnB{1} $-statistic is able to eliminate the steel fibers. For the middle panel in Figure \ref{figure_motivation}, which contains a thin fissure that can barely be observed by visual inspection,  the  $ \FnB{1} $-statistic can detect some pixels belonging to the fissure even for $ 3 $ angles. Furthermore, with an increasing number of angles parts of the fissure are enhanced better, as can be observed in the upper row of Figure \ref{figure_w1_stats_angles}, and that more pixels belonging to the fissure are significant, as can be seen in the lower panels of Figure \ref{figure_w1_stats_angles}.
		
		These findings are consistent with the simulation results in Subsection \ref{subsec_results}, where even for thin fissures that can barely be observed by visual inspection, $ 9 $ angles are sufficient to have some significant pixels adjacent to the fissure.
		
		\begin{figure}
			\includegraphics[width=0.32\textwidth]{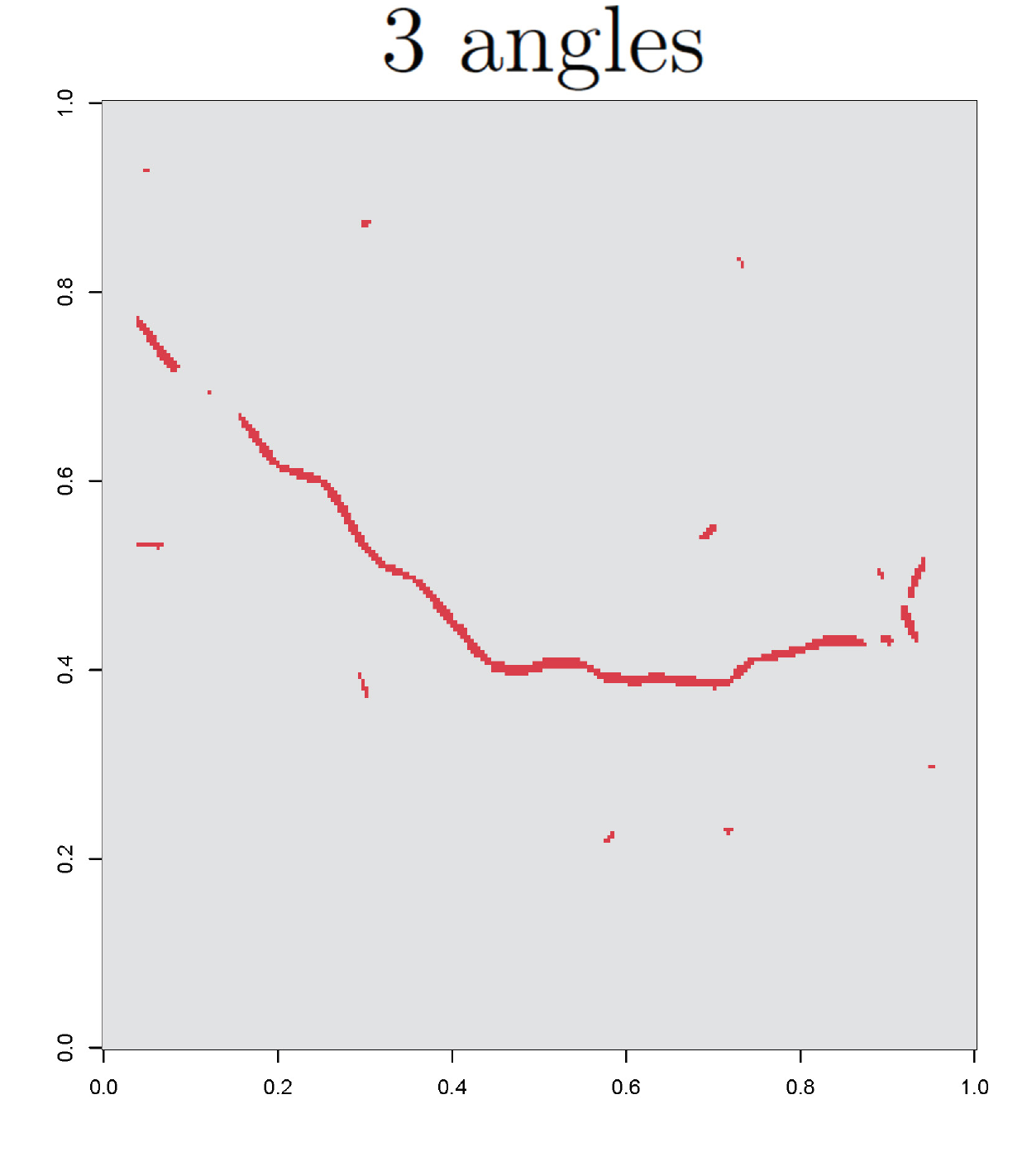}
			\includegraphics[width=0.32\textwidth]{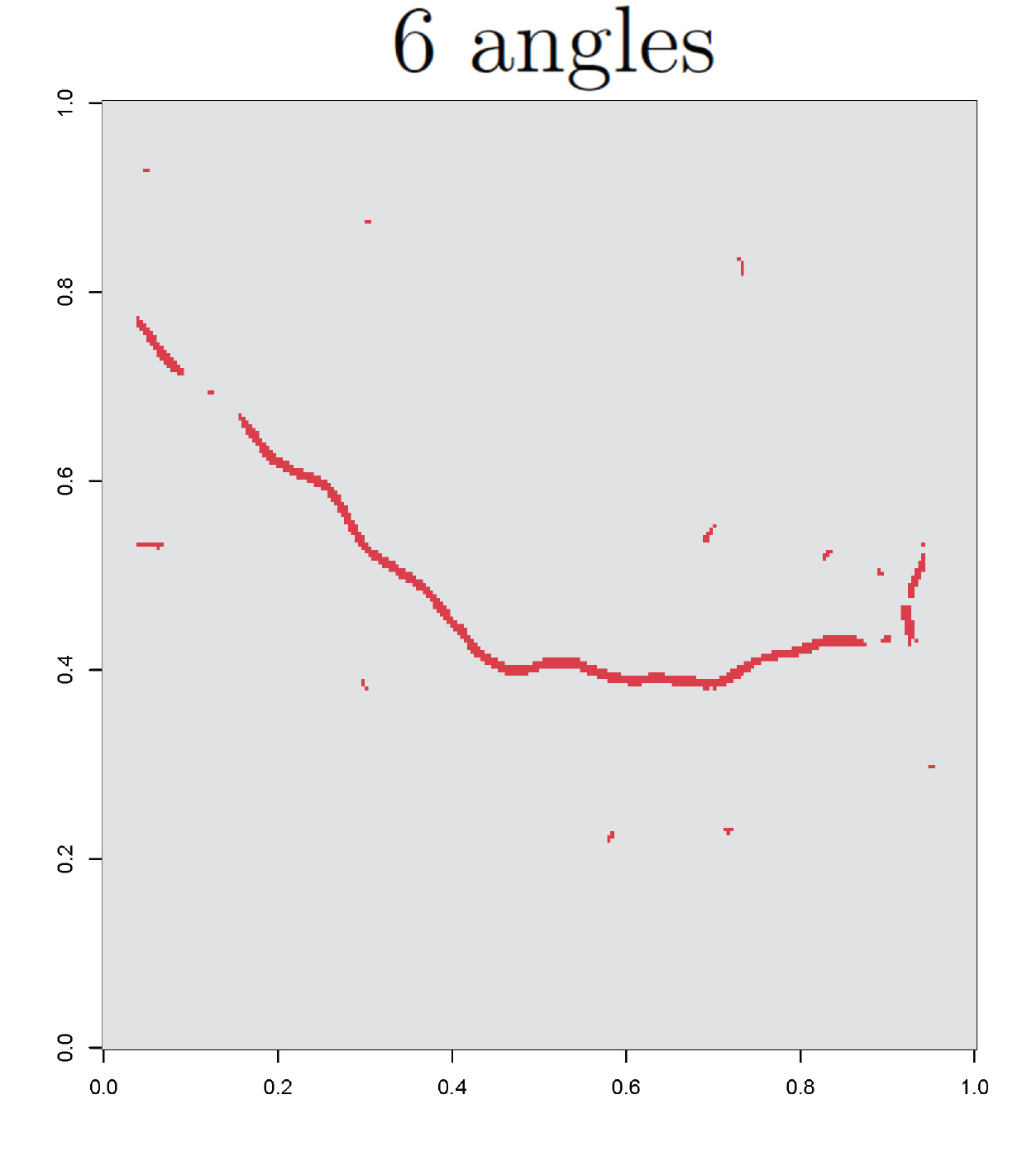}
			\includegraphics[width=0.32\textwidth]{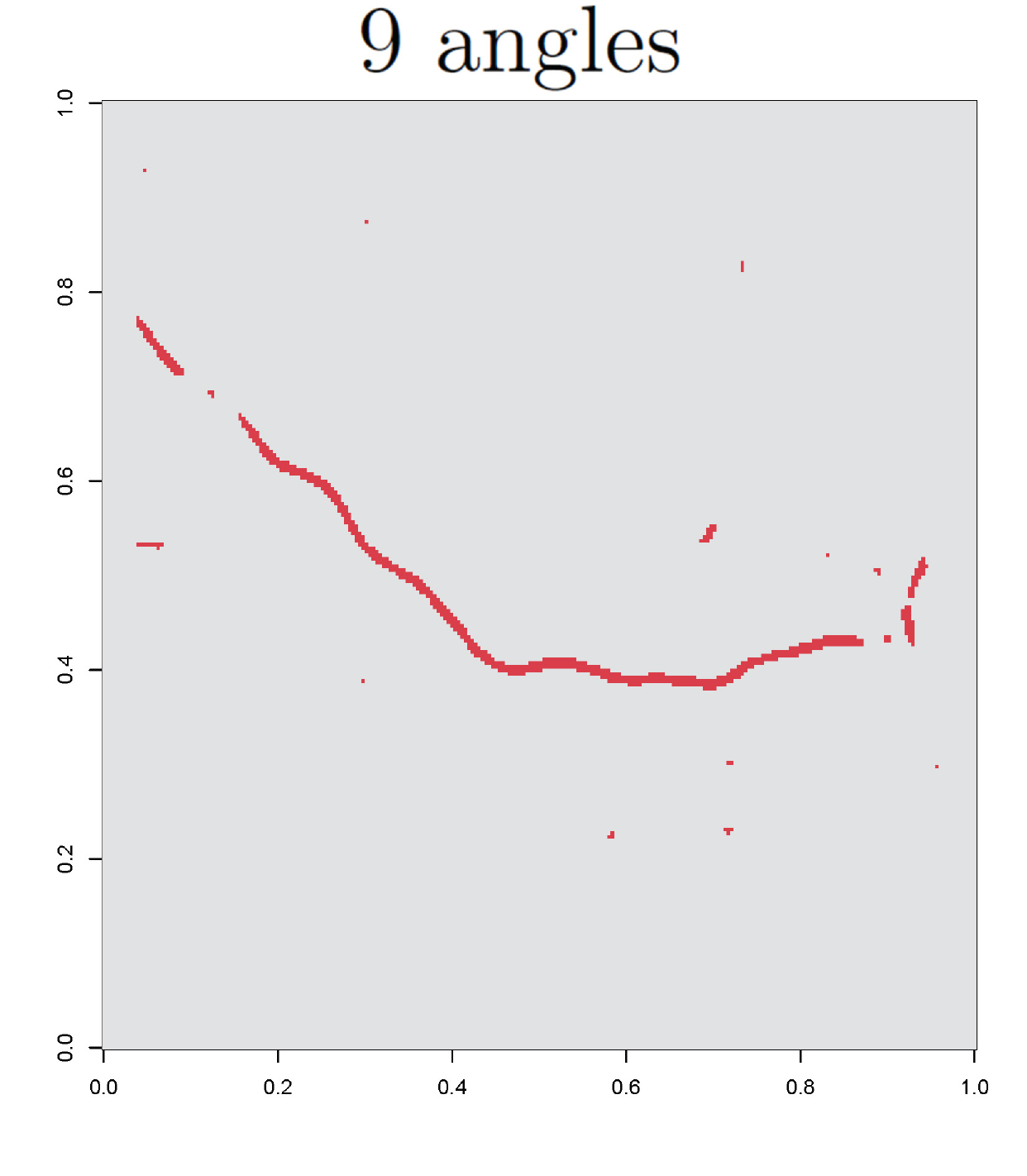}
			\caption{The thresholded $ \FnB{1} $ statistics  applied to the left panel of Figure \ref{figure_motivation} are displayed, where the statistic was maximized over $ 3 $, $ 6 $ and $ 9 $ angles, respectively.} \label{figure_w5_signif_angles}
		\end{figure}
		\begin{figure}
			\includegraphics[width=0.32\textwidth]{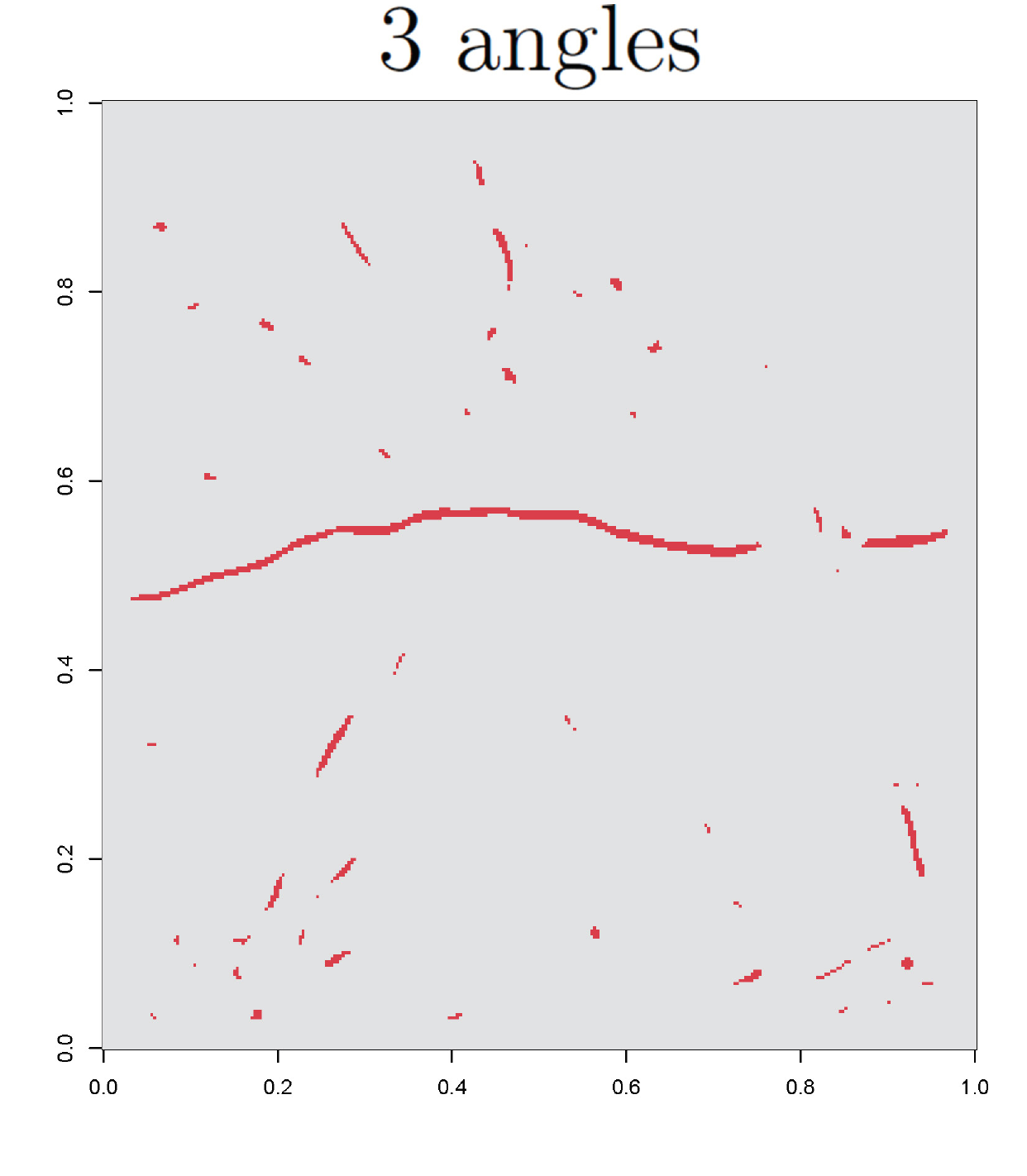}
			\includegraphics[width=0.32\textwidth]{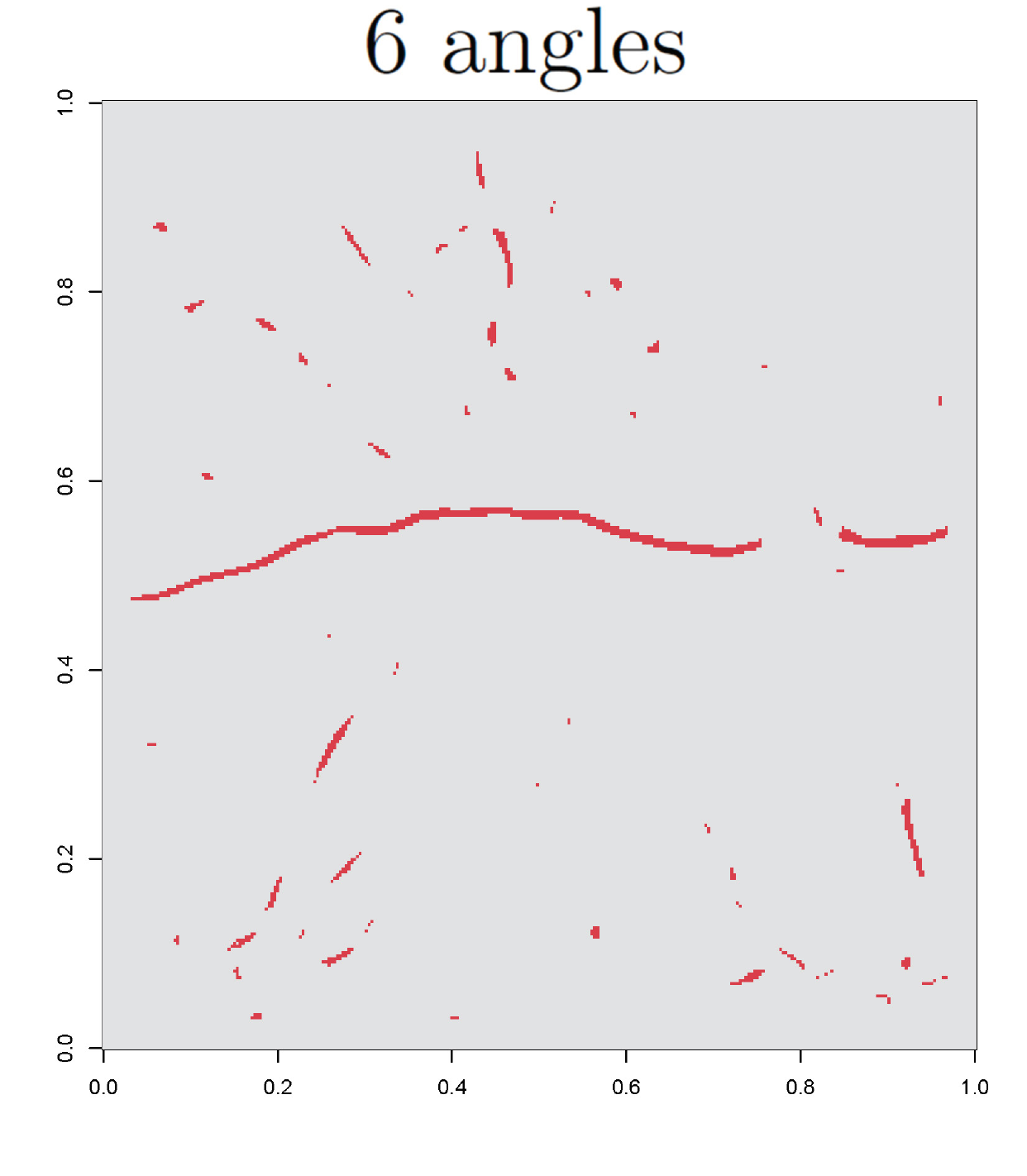}
			\includegraphics[width=0.32\textwidth]{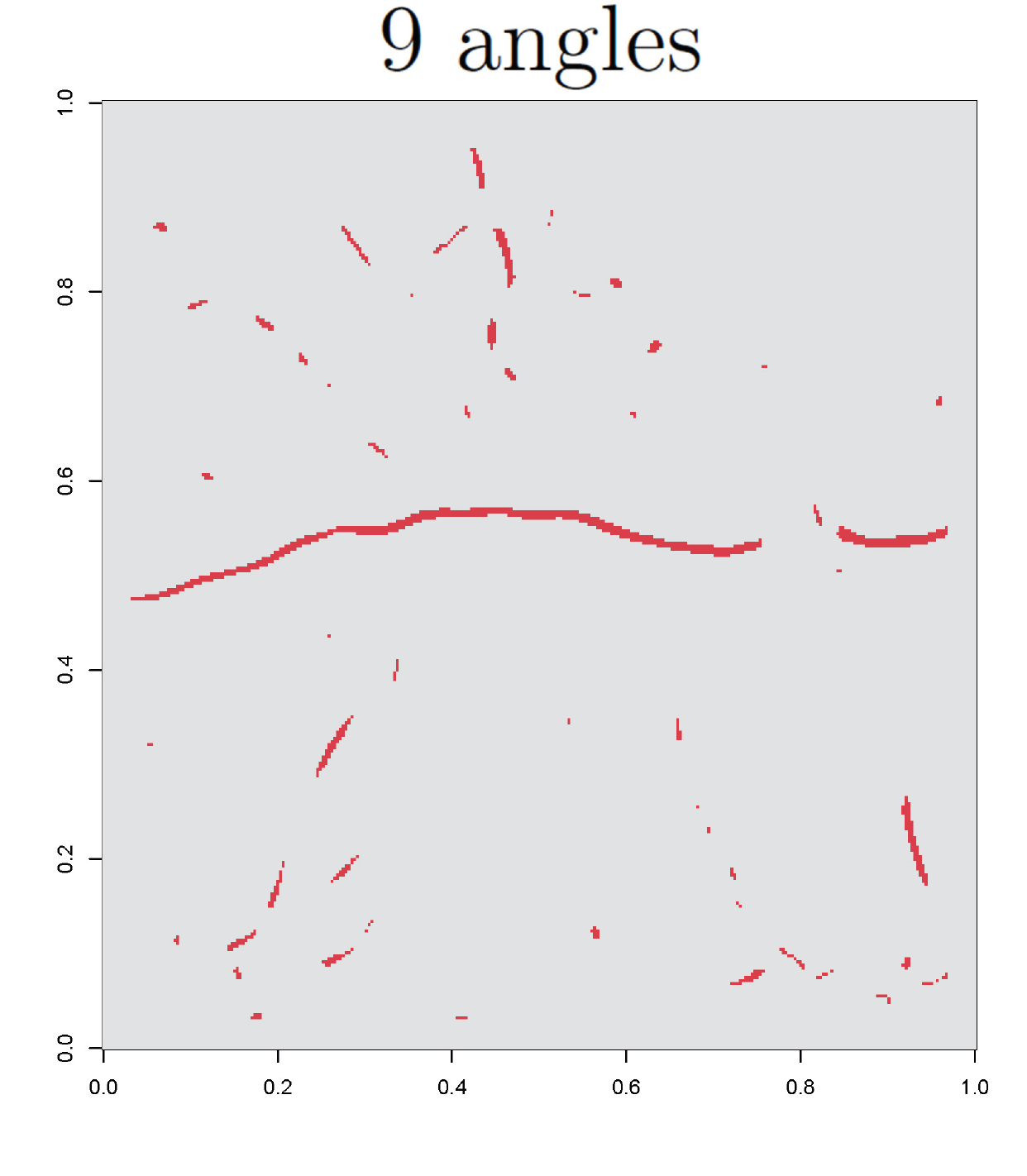}
			\caption{In this figure, the thresholded statistics from $ \FnB{1} $ applied to steel fiber-reinforced concrete in the right panel of Figure \ref{figure_motivation} are displayed, where the statistic was maximized over $ 3 $, $ 6 $ and $ 9 $ angles, respectively.} \label{figure_steelfiber_signif}
		\end{figure}

		\begin{figure}
			\includegraphics[width=0.32\textwidth]{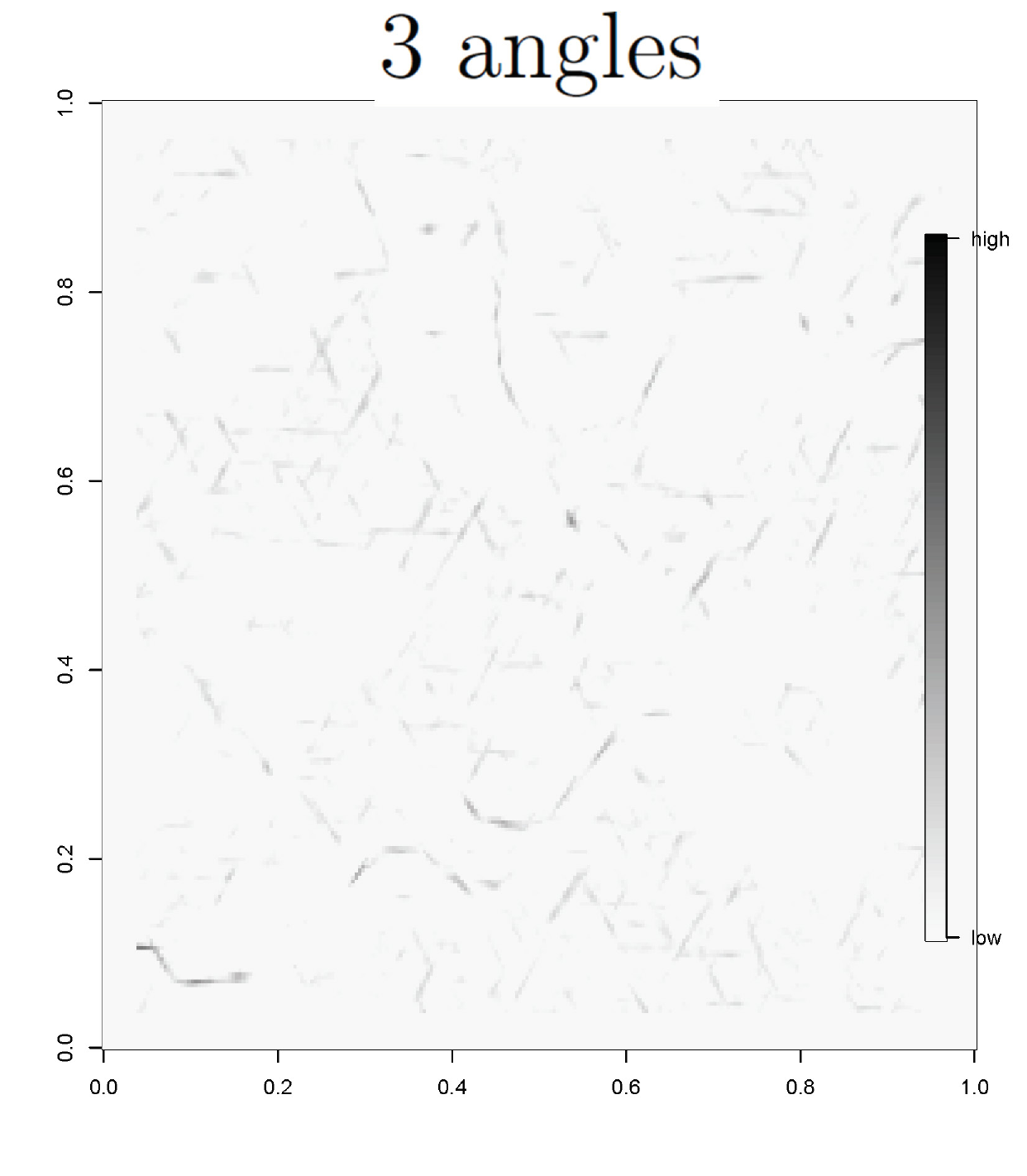}
			\includegraphics[width=0.32\textwidth]{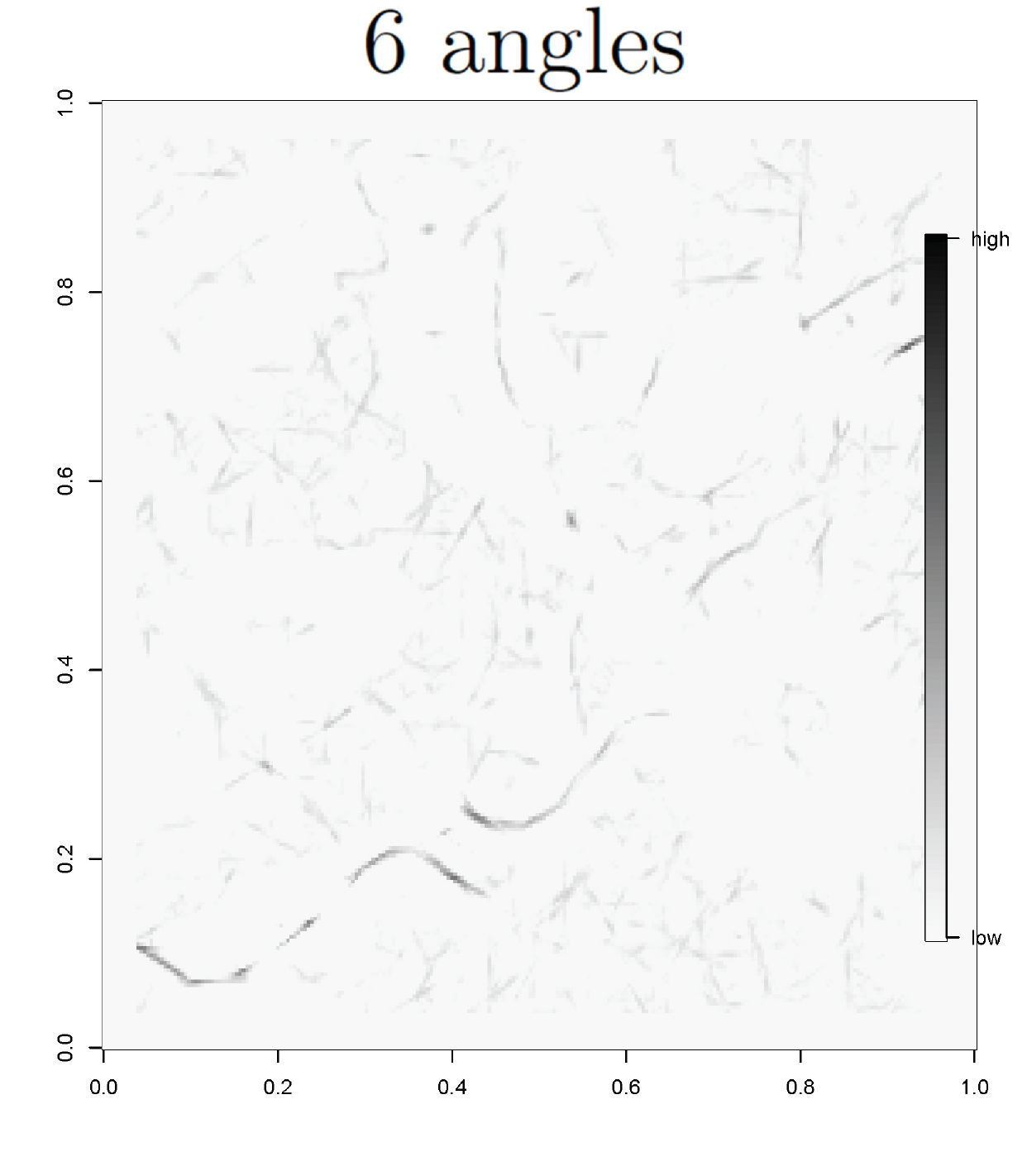}
			\includegraphics[width=0.32\textwidth]{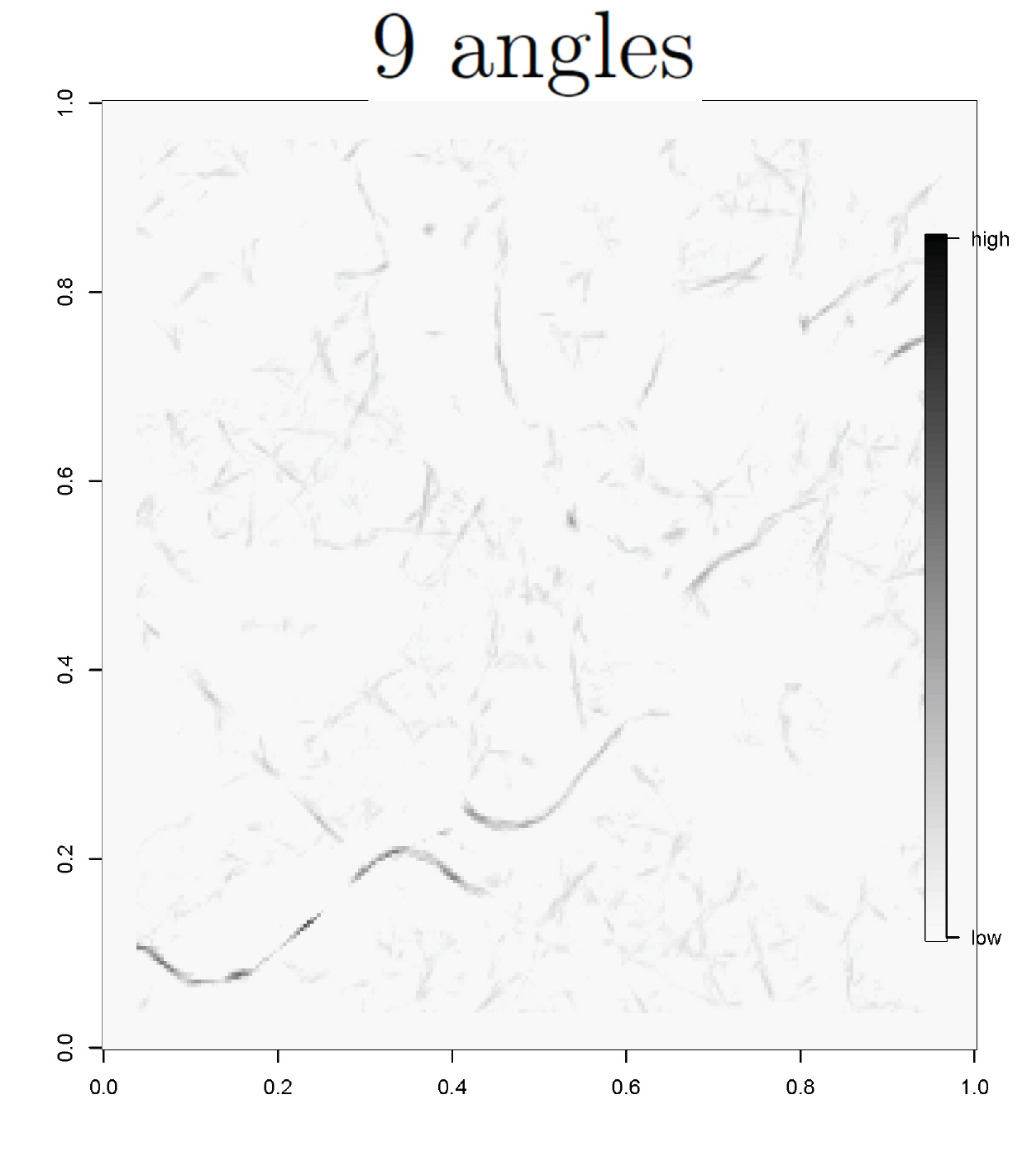}\\
			\includegraphics[width=0.32\textwidth]{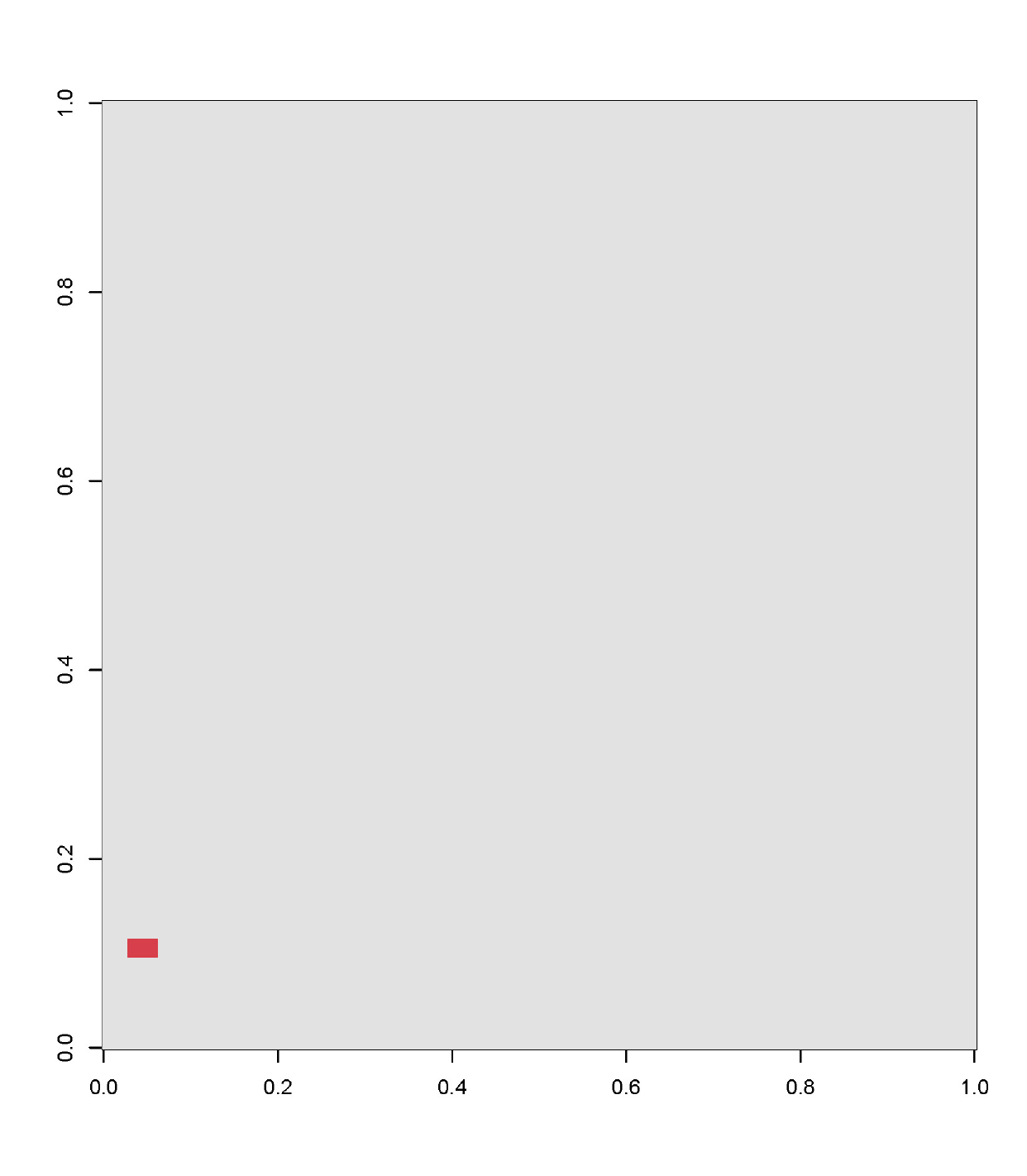}
			\includegraphics[width=0.32\textwidth]{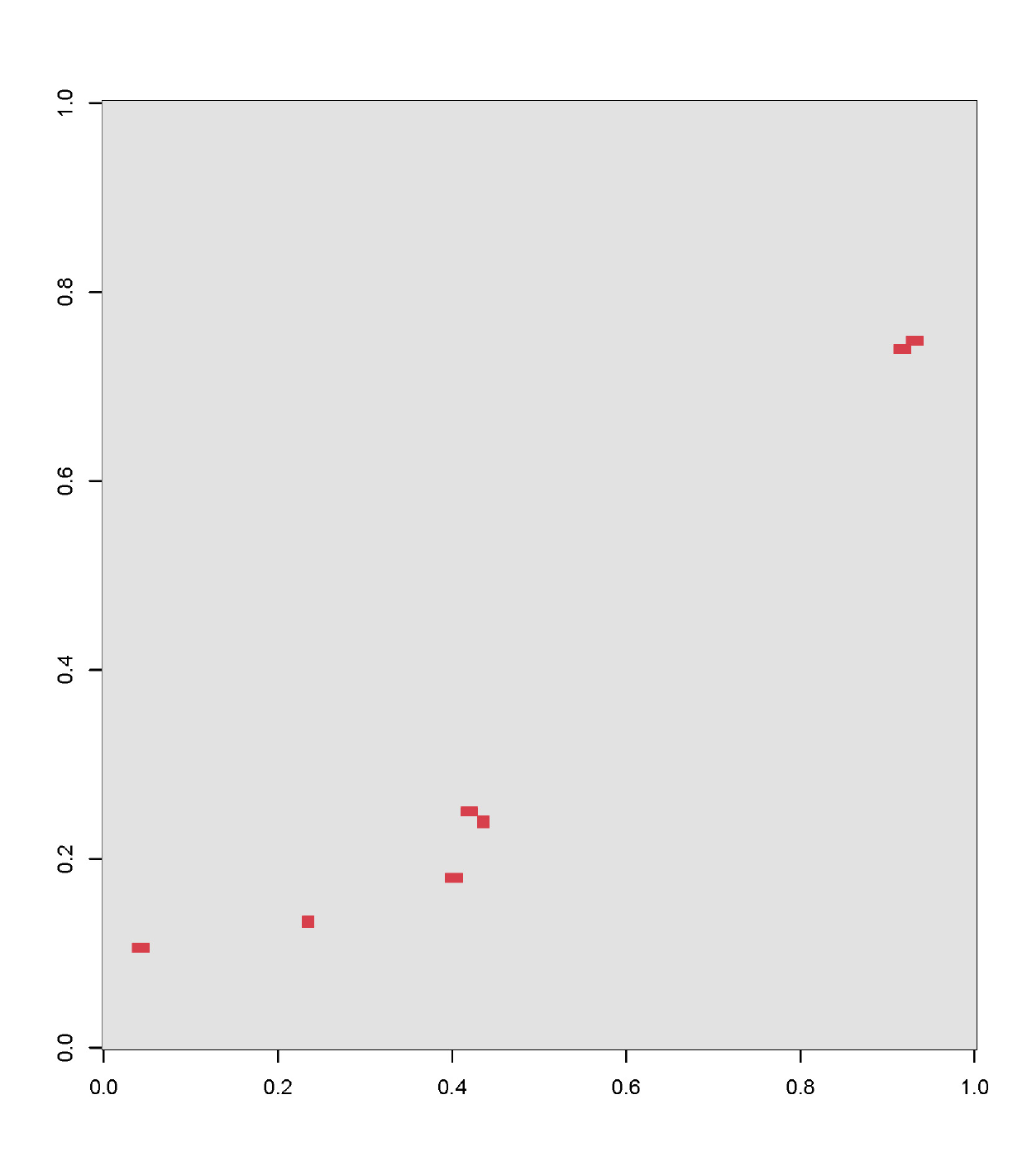}
			\includegraphics[width=0.32\textwidth]{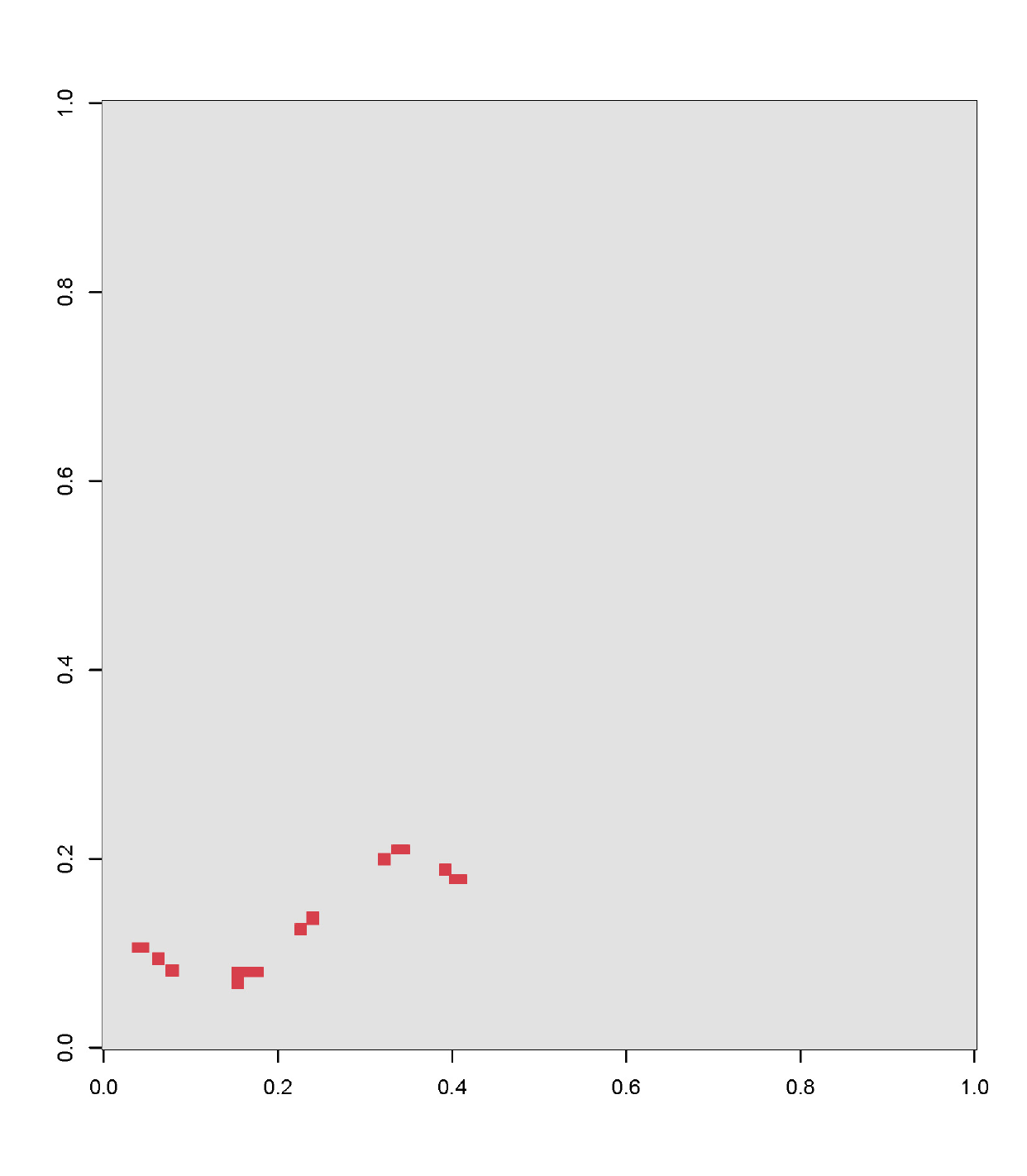}
			\caption{In the upper row, the heat maps from $ \FnB{1} $ applied to the middle panel of Figure \ref{figure_motivation} are displayed, where the statistic was maximized over $ 3 $, $ 6 $ and $ 9 $ angles, respectively. In the lower row, the corresponding thresholded statistics are displayed.} \label{figure_w1_stats_angles}
		\end{figure}

		\subsection{Algorithmic aspects}	
		A naive implementation with linear scan windows as discussed in this work has a computational complexity of $O(T^p\cdot T^p)$. In some situations an improvement can be obtained if means are calculated based on the difference with previously calculated means. At best as e.g.\ for correctly oriented rectangular scanning windows it can be calculated linearly in the number of voxels, i.e.\ $O(T^p)$ but this requires some additional memory of order $O(T^{p-1})$.
		
		The above naive computational complexity is actually given by $O(T^p\cdot N)$, where $N$ denotes the number of pixels in the scan window.
		To derive the theoretic results in this paper, we assumed that $N= c T^p (1+o(1))$ for some constant $c$, i.e.\ that the size of the scan window is linear in the image size. In practice, however, in particular for large data sets, this is not a realistic assumption and $N$ will be significantly smaller than this. Mathematically, this can be dealt with by considering scan windows which extend sublinearly relative to the image size. However, this requires a different mathematical theory and results in a different type of limit results.
		
		For the application discussed in this paper a significant amount of computational time can be saved by only calculating the statistics for pixels that are sufficiently dark.  For such pixels we can first calculate $\F{1}$  with a few angles. If the statistic is above a liberal threshold (e.g.\ the corresponding $80\%$-quantile) for at least one pixel, one can increase the number of angles in a second step. If then, the statistic is above a more conservative threshold (e.g.\ the corresponding $95\%$-quantile) for at least one pixel, one can then subtract the $\nB$-statistics (possibly again iteratively).
		For example, if we only consider the $ 10\% $ darkest pixels in the steel-fiber example, we can  reduce the runtime and simultaneously eliminate a significant amount of spurious pixels.

		\section{Conclusions}
In this paper, we have introduced and mathematically investigated a wide class of scan statistics under rather general assumptions. For practical purposes, these scan statistics have to be tailored to the particular application at hand by appropriately choosing and combining scan windows. We illustrate how to do this by the example of the detection of anomalies in large image data using 2D-slices of (semi-artificial) scans of different kinds of concrete. Indeed, these examples show that different types of anomalies are detected with different local contrasts tailored to the geometric properties of the anomaly of interest. In particular, this indicates how to adapt and develop scan statistics suitable for anomaly detection tasks that are different from the ones considered in this paper.  The theoretical framework investigated in this paper is general enough to cover many of these statistics.   

Indeed, we obtain statistical guarantees for tests based on our wide class of scan statistics from using thresholds associated to the corresponding limit distributions under rather general assumptions and in arbitrary dimension for stationary $M$-dependent random fields. The corresponding proofs are mathematically involved and the obtained central limit result as well as functional central limit theorem may also be of independent interest. On the downside, the theory is only valid for fixed size windows (with respect to rescaling the data to the unit hypercube), while in large data examples with relatively small windows asymptotic results obtained from shrinking windows (with respect to rescaling to the unit hypercube) may yield more realistic thresholds.  Such an analysis to obtain the corresponding extreme-value asymptotics for shrinking windows is left for future work.

		\section*{Acknowledgment}
		This work was supported by the German Federal Ministry of Education	and Research (BMBF) [grant number 05M2020 (DAnoBi)] and  by the Deutsche Forschungsgemeinschaft (DFG, German Research Foundation) - 314838170, GRK 2297 MathCoRe. The examples in the left and middle panel of Figure \ref{figure_motivation} are 2D slices of semi-synthetic 3D images published in \cite{KL}, where artificial cracks were added to 3D images of uncracked concrete by Franziska M\"usebeck (TU Kaiserslautern). This is open data under the CC BY license http://creativecommons.org/licenses/by/4.0/. 
  The example of steel fiber-reinforced concrete in the right panel of Figure \ref{figure_motivation} is one of the concrete samples investigated in \cite{Schuler}, imaged by Franz Schreiber at ITWM.
		\newpage
  \part*{Appendix}
		\appendix
\section{Proofs}\label{sec_proofs}

	For $ \gamma>0 $, $ \bs=(s_1,\ldots,s_p)^{\prime},$ $\bt=(t_1,\ldots,t_p)^{\prime}\in\R^p $ denote by
\begin{align*}
	\left[\bs,\bs+\bt\right)=&\ \left[s_1,s_1+t_1\right)\times\ldots\times\left[s_p,s_p+t_p\right)\\
	\left[\bs,\bs+\gamma\right)=&\ \left[s_1,s_1+\gamma\right)\times\ldots\times\left[s_p,s_p+\gamma\right),
\end{align*}
$ p- $dimensional hyperrectangles and -cubes, where $ \left[s_i,s_i+t_i\right):=\left[s_i+t_i,s_i\right) $ if $ t_i<0 $. Furthermore, for $c\in\R$ let $c\bs=(cs_1,\ldots,cs_p)^{\prime}$.

	 \begin{proof}[Proof of Theorem \ref{clt_m_dependence}]

	For the proof, we will replace $A$ by a  set $A_T$ which has an asymptotically negligible difference to $A$ and is a finite union of hypercubes. Let		
		\begin{align*}
		\inside_{A,T}=&\ \geschweift{\balpha\in\Z^p\Bigg|\rechtsoffen{\frac{\Tstar}{T}\balpha,\frac{\Tstar}{T}(\balpha+1)}\cap A\ne\emptyset}\\
		A_{T}=&\ \bigcup_{\balpha\in\inside_{A,T}}\rechtsoffen{\frac{\Tstar}{T}\balpha,\frac{\Tstar}{T}(\balpha+1)}.\notag
	\end{align*}
		By construction, $A\subset A_T$ and it holds that for any $\bx\in A_T$, there exists $\by\in A$ such that
		$\left\|\bx-\by\right\|_{\infty}\le \ceil{T^{1/2}}/T\le 2/\sqrt{T}
		$.
		Therefore, it holds that $A_T\setminus A\subset A^{\left(2/\sqrt{T}\right)}\setminus A$, where $A^{(\gamma)}$ is as in Definition \ref{def_fattening}. By Lemma \ref{lem_jordan_assumptions} (c) 
		\[ 
		\abs{\geschweift{\bk\in\Z^p:\frac{\bk}{T}\in A_T\setminus A}}=o(T^p).
		\]
	With $S_A=S_A(\epsilon;0)$, $S_{A_T}=S_{A_T}(\epsilon;0)$ as in \eqref{eq_def_sums} it follows by \eqref{moments} and Markov's inequality that
\[ 
\frac{1}{T^{p/2}}\abs{S_A-S_{A_T}}=o_P(1).
\]
	 We will divide $ A_T $ into "big/large" independent blocks that are separated by "small/narrow" blocks which are asymptotically negligible. 
	To elaborate, let for $\balpha\in\inside_{A,T}$ 
	\begin{align*}
		&\bigblock_{\balpha,T}=\geschweift{\frac{\Tstar\balpha+\bl}{T}\Bigg| \bl\in\geschweift{0,\ldots, \Tstar-1-M }^p}
	\end{align*}
	be the \textbf{L}arge (big) blocks (which contain the information about the dependency within the random field) with $M$ as in Assumption \ref{ass_errors}. For $\bl=(l_1,\ldots,l_p)$  let $\max\bl=\max\geschweift{l_1,\ldots,l_p}$ and for $\balpha\in\inside_{A,T}$ let
	\begin{align*}
		&\smallblock_{\balpha,T}=\geschweift{\frac{\Tstar\balpha+\bl}{T}\Bigg| \bl\in\geschweift{0,\ldots, \Tstar-1 }^p,\ \max \bl\ge\Tstar-M }
	\end{align*}
	be the \textbf{N}arrow (small) blocks that separate the large blocks from each other. Furthermore, let \[ \largeblock_{\balpha,T}=\bigblock_{\balpha,T}\cup\smallblock_{\balpha,T}=\geschweift{\frac{\Tstar\balpha+\bl}{T}\Bigg| \bl\in\geschweift{0,\ldots, \Tstar-1 }^p}. \]
	The sums over the narrow blocks are asymptotically negligible: 
	Indeed, 
for $ \balpha\in\inside_{A,T} $, it holds by the mean value theorem that
\[  \abs{\smallblock_{\balpha,T}}=\abs{\largeblock_{\balpha,T}}-\abs{\bigblock_{\balpha,T}}=\Tstar^p-\left(\Tstar-M\right)^p=O\left(T^{(p-1)/2}\right).
\]
	Furthermore it holds by Lemma \ref{lem_measure_fattening}   that  $\lambda(A_T)=\lambda(A)(1+o(1))$. 
Since $\lambda(A_T)=\abs{\inside_{A,T}}\left(\Tstar/T\right)^p$	it follows that
$\abs{\inside_{A,T}}=\lambda(A)T^{p/2}(1+o(1))$.		
	Therefore it holds by \eqref{moments} and Markov's inequality that
	\begin{align*}
		&\ \frac{1}{T^{p/2}}\abs{\sum\limits_{\balpha\in \inside_{A,T}}\sum\limits_{\frac{\bk}{T}\in \smallblock_{\balpha,T}}\epsilon_{\bk}}=o_P(1).
	\end{align*}
	
	Since the large blocks  $\bigblock_{\balpha,T} $ are separated by the narrow blocks $\smallblock_{\balpha,T}$, it holds for $ \bk,\bl\in\Z^p $ such that $ \bk/T\in\bigblock_{\balpha,T} $, $ \bl/T\in\bigblock_{\balpha^*,T} $ with $ \balpha\ne\balpha^* $ that $ \left\|\bk-\bl\right\|_{\infty}>M $. Therefore, the sums $S_{L_{\balpha,T}}$ over the large blocks are independent.
	Thus, we can use the Lindeberg central limit theorem for triangular arrays in combination with Lyapunov's condition to prove asymptotic normality of the sum over the large blocks:

First note that the set of points $\bk/T\in L_{\balpha,T}$ with $\bl/T\in L_{\balpha,T}$ for $\left\|\bk-\bl\right\|_{\infty}<M$ has the form 
	\[ 
	\geschweift{\frac{\Tstar\balpha+\bl}{T}\Bigg| \bl\in\geschweift{M,\ldots, \Tstar-1-2M }^p}.
	\]
	Hence it holds by the mean value theorem for the set of all points $\bk/T\in L_{\balpha,T}$ that have at least one point $\bl/T\notin L_{\balpha,T}$ with $\left\|\bk-\bl\right\|_{\infty}<M$ that
	\begin{align*}
		&\ \abs{L_{\balpha,T}\setminus\geschweift{\frac{\Tstar\balpha+\bl}{T}\Bigg| \bl\in\geschweift{M,\ldots, \Tstar-1-2M }^p}}\\
		=&\left(\Tstar-M\right)^p-\left(\Tstar-3M\right)^p= O\left(T^{(p-1)/2}\right).
	\end{align*} 
Hence
	\begin{align*}
		&\ \Var{\sum\limits_{\frac{\bk}{T}\in \bigblock_{\balpha,T}}\epsilon_{\bk}}= \sum\limits_{\frac{\bk}{T}\in \bigblock_{\balpha,T}}\Var{\epsilon_{\bk}}+\sum\limits_{\stackrel{\frac{\bk}{T},\frac{\bl}{T}\in\bigblock_{\balpha,T}:}{ 1\le\left\|\bk-\bl\right\|_{\infty}\le M}}\Cov{\epsilon_{\bk}}{\epsilon_{\bl}}\\
		=&\ \sum\limits_{\frac{\bk}{T}\in \bigblock_{\balpha,T}}\left(\Var{\epsilon_{\bk}}+\sum\limits_{\bl: 1\le\left\|\bk-\bl\right\|_{\infty}\le M}\Cov{\epsilon_{\bk}}{\epsilon_{\bl}}\right)+O\left(T^{(p-1)/2}\right)
		= \sigma^2T^{p/2}(1+o(1)).
	\end{align*}
	Therefore, 
	\begin{align*}
		s_{A,T}^2=\sum\limits_{\balpha\in \inside_{A,T}}\Var{\sum\limits_{\frac{\bk}{T}\in \bigblock_{\balpha,T}}\epsilon_{\bk}}=\sigma^2\lambda(A)T^p(1+o(1)).
	\end{align*}
	By \eqref{moments} and due to the fact that all $ \bigblock_{\balpha,T} $ have the same size, it holds for $ \delta>0 $ with suitable constants $ C,C^*>0 $
		\begin{align*}
			&\ \frac{1}{s_{A,T}^{2+\delta}}\sum\limits_{\balpha\in \inside_{A,T}}\EW{\abs{\sum\limits_{\frac{\bk}{T}\in \bigblock_{\balpha,T}}\epsilon_{\bk}}^{2+\delta}}\le\frac{1}{s_{A,T}^{2+\delta}}\sum\limits_{\balpha\in \inside_{A,T}} C\abs{\bigblock_{\balpha,T}}^{\frac{2+\delta}{2}}=\frac{1}{s_{A,T}^{2+\delta}}C \abs{\inside_{A,T}}\abs{\bigblock_{\balpha,T}}^{\frac{2+\delta}{2}}\\
			\le& C^*\frac{T^{p/2}\left(T^{p/2}\right)^{\frac{2+\delta}{2}}}{\left(T^p\right)^{\frac{2+\delta}{2}}}=C^*T^{(2+\delta/2)p/2-(2+\delta)p/2}=C^*T^{-p\delta/4}\to 0
		\end{align*}
	as $ T\to\infty $.
	By the Lyapunov condition, 
 it follows that
		\begin{align*}
			&\ \frac{1}{T^{p/2}} \sum_{\balpha\in \inside_{A,T}}\sum\limits_{\frac{\bk}{T}\in \bigblock_{\balpha,T}}\epsilon_{\bk}
			=\frac{s_{A,T}}{T^{p/2}}\left(\frac{1}{s_{A,T}}
			\sum\limits_{\balpha\in \inside_{A,T}} \sum\limits_{\frac{\bk}{T}\in \bigblock_{\balpha,T}} \epsilon_{\bk}\right)\notag\\
			=&\ \sigma\sqrt{\lambda(A)}(1+o(1))\left(\frac{1}{s_{A,T}}
			\sum\limits_{\balpha\in \inside_{A,T}} \sum\limits_{\frac{\bk}{T}\in \bigblock_{\balpha,T}} \epsilon_{\bk}\right)\to\mathcal{N}\left(0,\sigma^2\lambda(A)\right), 
		\end{align*}
		thus showing the assertion.
	\end{proof}

		\begin{proof}[Proof of Theorem \ref{danobi_convergence_map}]
		By Theorem \ref{convergence_D} in the appendix, we need to show (i) that for all $ n\in\N $, $ \bt_1,\ldots,\bt_n\in[0,1]^p $
		\[ 
		\left(\Hoelder\left(\Sprocess_T(\bt_1)\right),\ldots,\Hoelder\left(\Sprocess_T(\bt_n)\right)\right)^{\prime}\dto\left(\Hoelder(Z(\bt_1)),\ldots,\Hoelder(Z(\bt_n))\right)^{\prime}
		\]
		and (ii) for all $ x>0 $, it holds that
		\[ 
		\lim_{\delta\to 0}\limsup_{T\to\infty}\Prob{\supp{\left\|\bs-\bt\right\|_{\infty}<\delta}\abs{\Hoelder\left(\Sprocess_T(\bs)\right)-\Hoelder\left(\Sprocess_T(\bt)\right)}\ge x}=0.
		\]
		Proof of (i): 
		By Lemma \ref{ass21_implies_jordan}, $A_1,\ldots,A_P$ are Jordan-measurable. 
		Hence by Lemma \ref{convergence_fidis} in the appendix it holds that
		\[
		\left(\Sprocess_T(\bt_1),\ldots\Sprocess_T(\bt_n)\right)\dto\left(Z(\bt_1),\ldots,Z(\bt_n)\right). 
		\]
		Denote for $n\in\N $, $ \bt_1,\ldots,\bt_n\in[0,1]^p $ by $ \pi_{\bt_1,\ldots,\bt_n}: \left(\mathcal{D}\left([0,1]^p\right)\right)^P\to \left(\R^{P}\right)^n $, $ \pi_{\bt_1,\ldots,\bt_n}(f)=(f(\bt_1),\ldots,f(\bt_n))^{\prime} $
		the projection mapping from $ \left(\mathcal{D}\left([0,1]^p\right)\right)^P $ to $ \left(\R^{P}\right)^n $. 
		Since the U-topology by \cite{Wichura} is induced by the maximum norm on $ \mathcal{D}([0,1]^p) $, it follows by Lemma \ref{projection_continuous}  in the appendix  that $ \pi_{\bt_1,\ldots,\bt_n}(\cdot) $ is continuous if $ \left(\mathcal{D}\left([0,1]^p\right)\right)^P $ is equipped with a proper metric (which we will assume here).   Furthermore, as $ \Hoelder $ is continuous, $ \Hoelder^{(n)}:\left(\R^P\right)^n\to\R^n $, $ \Hoelder^{(n)}(\bt_1,\ldots,\bt_n)=(\Hoelder(\bx_1),\ldots,\Hoelder(\bx_n)) $ is continuous as well. Since $ \Hoelder^{(n)}\circ\pi_{\bt_1,\ldots,\bt_n} $ as a composition of continuous functions is continuous, it follows by the continuous mapping theorem that
		\begin{align*}
			&\ \left(\Hoelder\left(\Sprocess_T(\bt_1)\right),\ldots,\Hoelder\left(\Sprocess_T(\bt_n)\right)\right)^{\prime}=
			\left(\Hoelder^{(n)}\circ \pi_{\bt_1,\ldots,\bt_n}\right)\left(\Sprocess_T(\bt)\right)\\ &\ \dto\left(\Hoelder^{(n)}\circ \pi_{\bt_1,\ldots,\bt_n}\right)\left(Z(\bt)\right)= \left(Z(\bt_1),\ldots,Z(\bt_n)\right)^{\prime}
		\end{align*}
		as $ T\to\infty. $\\\\
		Proof of (ii): 
  As $ \Hoelder $ is uniformly continuous, it holds by the definition of uniform continuity that for $ x>0 $, there exists $y=y(x)$ such that for $\ba_1,\ba_2\in\R^P$ with $\abs{G(\ba_1)-G(\ba_2)}\ge x$ it holds that $\left\|\ba_1-\ba_2\right\|_{\infty}\ge y$. Hence
		\begin{align*}
			\Prob{\supp{\left\|\bs-\bt\right\|_{\infty}<\delta}\abs{\Hoelder\left(\Sprocess_T(\bs)\right)-\Hoelder\left(\Sprocess_T(\bt)\right)}\ge x}\le&\  	\Prob{\supp{\left\|\bs-\bt\right\|_{\infty}<\delta}\left\|\Sprocess_T(\bs)-\Sprocess_T(\bt)\right\|_{\infty}\ge y}
		\end{align*}
		and the assertion follows immediately from Lemma \ref{modulus_of_continuity}  in the appendix 
	\end{proof}
	
			\begin{proof}[Proof of Remark \ref{rem_rectangles}]
			Let $ \bs=(s_1,\ldots,s_p)^{\prime}\in[0,1]^p $.  Recall from \eqref{eq_def_sums} that
			\[ 
			\danobisum_{\scanset}\left(\epsilon;\floor{\bs}_T\right)=\sum\limits_{\frac{\bk}{T}\in\scanset(\floor{\bs}_T)}\epsilon_{\bk}.
			 \]
			Since by definition
			\[ 
				\scanset(\floor{\bs}_T)=\linksoffen{\floor{\bs}_T-\ba,\floor{\bs}_T+\ba},
				\]
				it holds that 
				\begin{align}
				\danobisum_{\scanset}\left(\epsilon;\floor{\bs}_T\right)=&\ \sum_{k_1=\floor{\left(\floor{s_1}_T-a_1\right)T}+1}^{\floor{\left(\floor{s_1}_T+a_1\right)T}}\cdots \sum_{k_p=\floor{\left(\floor{s_p}_T-a_p\right)T}+1}^{\floor{\left(\floor{s_p}_T+a_p\right)T}}\epsilon_{k_1,\ldots,k_p}\notag \\
				=&\ \sum_{(d_1,\ldots,d_p)^{\prime}\in\geschweift{0,1}^p} (-1)^{\sum_i d_i} \sum_{k_1=1}^{\floor{\left(\floor{s_1}_T+(-1)^{d_1}a_1\right)T}}\cdots\sum_{k_p=1}^{\floor{\left(\floor{s_p}_T+(-1)^{d_p}a_p\right)T}} \epsilon_{k_1,\ldots,k_p}, \label{sum_square}
				\end{align}
				where we can see the last equality as follows: We need to distinguish between two cases: If $ (l_1,\ldots,l_p)^{\prime}\in\N^p $ with $ \floor{\left(\floor{s_i}_T-a_i\right)T}<l_i\le \floor{\left(\floor{s_i}_T+a_i\right)T} $ for all $ i $, then $ \epsilon_{l_1,\ldots,l_p} $ is only counted for $ (d_1,\ldots,d_p)^{\prime}=(0,\ldots,0)^{\prime} $ in \eqref{sum_square}.\\
				If there exists some $j=1,\ldots,p  $ such that $ l_j\le \floor{\left(\floor{s_j}_T-a_j\right)T} $, then $ \epsilon_{l_1,\ldots,l_p} $ is counted in both $ (d_1,\ldots,d_{j-1},0,d_{j+1},\ldots,d_p)^{\prime} $ and  $ (d_1,\ldots,d_{j-1},1,d_{j+1},\ldots,d_p)^{\prime} $ with $ d_i\in\geschweift{0,1} $ appropriate for $ i\ne j $. Since $ (-1)^{\sum_{i\ne j}d_i}=-(-1)^{1+\sum_{i\ne j}d_i} $, $ \epsilon_{l_1,\ldots,l_p} $ gets canceled out in all of these summands. This telescoping sum is also in the spirits of \cite{Nelsen}, Def.~2.10.1. for the volume of $ p $-dimensional hyperrectangles.\\ Furthermore, since $ s_i-\floor{s_i}_T=s_i-\floor{s_iT}/T\le 1/T $, it holds for $ d\in\geschweift{0,1} $ that $ \abs{\floor{(s_i+(-1)^d a_i)T}-\floor{(\floor{s_i}_T+(-1)^d a_i)T}}\le 2 $. Thus, it holds by \eqref{FCLT2} for all $ (d_1,\ldots,d_p)^{\prime}\in\geschweift{0,1}^p $ uniformly in $ \bs $ that 
					\begin{align*}
					&\ \left|\frac{1}{T^{p/2}}\sum_{k_1=1}^{\floor{\left(\floor{s_1}_T+(-1)^{d_1}a_1\right)T}}\cdots\sum_{k_p=1}^{\floor{\left(\floor{s_p}_T+(-1)^{d_p}a_p\right)T}} \epsilon_{k_1,\ldots,k_p}\right.\\*
					&\ \left.-\frac{1}{T^{p/2}}\sum_{k_1=1}^{\floor{\left(s_1+(-1)^{d_1}a_1\right)T}}\cdots\sum_{k_p=1}^{\floor{\left(s_p+(-1)^{d_p}a_p\right)T}} \epsilon_{k_1,\ldots,k_p}\right|=o_P(1).
				\end{align*}		 
			The assertion follows by \eqref{FCLT} and the continuous mapping theorem.
			\end{proof}

	\begin{proof}[Proof of Theorem \ref{power_one}]
		It is sufficient to show that $ \F1_T(\bs_0)\pto\infty $ as $ T\to\infty $. Furthermore, because only differences of means are considered, we can assume w.l.o.g.\ that $\mu_0=0$. It holds that
\begin{align*}
	\hspace*{-0.5cm}		 \F1_T(\bs_0)=&\ \max_{i=1\ldots,P} \frac{1}{\sigma}\min\geschweift{\danobimean^{(12,\alpha_i)}(\bs_0),\danobimean^{(13,\alpha_i)}(\bs_0)} \ge \frac{1}{\sigma}\min\geschweift{\danobimean^{(12,\alpha_0)}(\bs_0),\danobimean^{(13,\alpha_0)}(\bs_0)}.
\end{align*}
Since for $ \alpha_0 $, the "inner strip" $ \scanset^{(1,\alpha_0)} $ has the same angle as the rectangle $ \anomaly $ and since $ w<h $, it holds that $ \anomaly\cap \scanset^{(2,\alpha_0)}(\bs_0)=\anomaly\cap \scanset^{(3,\alpha_0)}(\bs_0)=\emptyset  $. Furthermore, as $ w<h $ and $ l\ge d $, $ \scanset^{(1,\alpha_0)}\cap \anomaly $ is an "inner strip" of a circle with diameter $ d $ and width $ w $. 
Therefore, it holds by Theorem \ref{danobi_convergence_map}, Lemma \ref{lem_order} and Lemma \ref{lem_jordan_assumptions} (a) for $j=1,2,3$ that
\begin{align*}
	\danobimean_{\scanset_j}(Y;\floor{\bs}_T) 	=&\  \frac{T}{\abs{\geschweift{\bk\in\Z^p:\frac{\bk}{T}\in \scanset_j\left(\floor{\bs}_T\right)}}}
	\sum_{ \frac{\bk}{T}\in \scanset_j(\floor{\bs}_T)} \left(-\mathds{1}_{\geschweift{\bk/T\in \anomaly}}\delta_T+\epsilon_{\bk}\right)\\
	=&\ -\delta_TT\frac{\abs{\geschweift{\bk\in\Z^p:\frac{\bk}{T}\in \scanset_j\left(\floor{\bs}_T\right)\cap \anomaly}}}{\abs{\geschweift{\bk\in\Z^p:\frac{\bk}{T}\in \scanset_j\left(\floor{\bs}_T\right)}}}
	+O_P(1)\\
	=&\ -\delta_TT\frac{T^2\lambda(\scanset_j(\bs)\cap \anomaly)+o\left(T^2\right)}{T^2\lambda(\scanset_j(\bs))+o\left(T^2\right)}+O_P(1)\\
	=&\ -\delta_TT\frac{\lambda(\scanset_j(\bs)\cap \anomaly)}{\lambda(\scanset_j(\bs))}+O_P\left(1\right).
\end{align*}
Therefore,
\begin{align*}
	\danobimean^{(12,\alpha_0)}(\bs_0)=&\ \delta_TT\left(\frac{\lambda\left(\scanset^{(1,\alpha_0)}\left(\bs_0\right)\cap \anomaly\right)}{\lambda(\scanset^{(1,\alpha_0)})}-
	\frac{\lambda\left(\scanset^{(2,\alpha_0)}\left(\bs_0\right)\cap \anomaly\right)}{\lambda(\scanset^{(2,\alpha_0)})}
	\right)+O_P(1)\\
	=&\ \delta_TT\frac{\lambda\left(\scanset^{(1,\alpha_0)}\left(\bs_0\right)\cap \anomaly\right)}{\lambda(\scanset^{(1,\alpha_0)})}+O_P(1)\\
	\intertext{and similarly}
	\danobimean^{(13,\alpha_0)}(\bs_0)
	=&\ \delta_TT\frac{\lambda\left(\scanset^{(1,\alpha_0)}\left(\bs_0\right)\cap \anomaly\right)}{\lambda(\scanset^{(1,\alpha_0)})}+O_P(1).
\end{align*}
Therefore, it holds that
\begin{align*}
	\F1_T(\bs_0)\ge\delta_TT\frac{\lambda\left(\scanset^{(1,\alpha_0)}\left(\bs_0\right)\cap \anomaly\right)}{\lambda(\scanset^{(1,\alpha_0)})}+O_P(1),
\end{align*}
As $ \frac{\lambda\left(\scanset^{(1,\alpha_0)}\left(\bs_0\right)\cap \anomaly\right)}{\lambda\left(\scanset^{(1,\alpha_0)}\right)}>0 $ and does not depend on $ T $, it follows that
\[ 
\F1_T(\bs_0)\pto\infty
 \]
 if $ \delta_TT\to\infty $ for $ T\to\infty $.
	\end{proof}


\section{Size and power results for Section \ref{sec_danobi_statistics}}\label{sec_size_power}

 \begin{Lemma}\label{lem_order}
		Let model \eqref{eq_model} be given with errors fulfilling Assumption~\ref{ass_errors}.
		Then it holds uniformly in $ \bs\in[d/2,1-d/2]^2 $ that for all $ i=1,\ldots,5 $,
		\begin{align*}
			\danobimean_{\scanset^{(i,\alpha)}}\left(\epsilon;\floor{\bs}_T\right)=&\
			\frac{1}{T\lambda\left(\scanset^{(i,\alpha)}\right)} \danobisum_{\scanset^{(i,\alpha)}}\left(\epsilon;\floor{\bs}_T\right)+o_P(1).
		\end{align*}
  				\begin{proof}
		As by Remark \ref{domain_fclt}, the FCLT from
		Theorem \ref{danobi_convergence_map} can also be applied on $[d/2,1-d/2]^2$, it holds that 
		\[ 
		\supp{\bs\in[d/2,1-d/2]^2} \abs{\frac{1}{T}\danobisum_{\scanset}\left(\epsilon;\floor{\bs}_T\right)}=O_P(1).
		\]
		By Lemma \ref{lem_jordan_assumptions} (a), it holds that
		\[ 
		\abs{T^2\lambda(\scanset^{(i,\alpha)})-\abs{\geschweift{\bk\in\Z^p:\frac{\bk}{T}\in \scanset^{(i,\alpha)}\left(\floor{\bs}_T\right)}}}=o\left(T^2\right).
		\]
		Therefore, it holds that
		\begin{align*}
			&\ \supp{\bs\in[d/2,1-d/2]^2}\abs{\danobimean_{\scanset}\left(\epsilon;\floor{\bs}_T\right)-\frac{1}{T\lambda\left(\scanset^{(i,\alpha)}\right)} \danobisum_{\scanset}\left(\epsilon;\floor{\bs}_T\right)}\\ \le&\  \supp{\bs\in[d/2,1-d/2]^2} \abs{\frac{T^2}{\abs{\geschweift{\bk\in\Z^p:\frac{\bk}{T}\in \scanset^{(i,\alpha)}\left(\floor{\bs}_T\right)}}}-\frac{1}{\lambda\left(\scanset^{(i,\alpha)}\right)} } \supp{\bs\in[d/2,1-d/2]^2} \abs{\frac{1}{T}\danobisum_{\scanset}\left(\epsilon;\floor{\bs}_T\right)}\\
			=&\ \supp{\bs\in[d/2,1-d/2]^2} \frac{\abs{T^2\lambda\left(\scanset^{(i,\alpha)}\right)-\abs{\geschweift{\bk\in\Z^p:\frac{\bk}{T}\in \scanset^{(i,\alpha)}\left(\floor{\bs}_T\right)}}}}{\lambda\left(\scanset^{(i,\alpha)}\right)\abs{\geschweift{\bk\in\Z^p:\frac{\bk}{T}\in \scanset^{(i,\alpha)}\left(\floor{\bs}_T\right)}}} \supp{\bs\in[d/2,1-d/2]^2} \abs{\frac{1}{T}\danobisum_{\scanset}\left(\epsilon;\floor{\bs}_T\right)}\\
			=&\ o\left(\frac{T^2}{T^2}\right)\cdot O_P(1)=o_P\left(1\right).
		\end{align*}
	\end{proof}

	\end{Lemma}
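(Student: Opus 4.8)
The plan is to factor the sample mean $\danobimean_{\scanset^{(i,\alpha)}}(\epsilon;\floor{\bs}_T)$ as the local sum $\danobisum_{\scanset^{(i,\alpha)}}(\epsilon;\floor{\bs}_T)$ times the reciprocal of the number of grid points that fall into the shifted window, and then to compare this reciprocal point count with the deterministic normalization $1/(T\lambda(\scanset^{(i,\alpha)}))$. Writing the difference of the two normalized quantities as a product, the first factor is the (rescaled) discrepancy between the true number of grid points in the shifted window and its Lebesgue-measure proxy $T^2\lambda(\scanset^{(i,\alpha)})$, and the second factor is the rescaled local sum $T^{-1}\danobisum_{\scanset^{(i,\alpha)}}(\epsilon;\floor{\bs}_T)$. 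I would show the first factor is $o(1)$ uniformly in $\bs$ and the second is $O_P(1)$ uniformly in $\bs$; the product is then $o_P(1)$, which is exactly the claim.

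First I would use Remark \ref{domain_fclt} to observe that the FCLT of Theorem \ref{danobi_convergence_map} remains valid when anchoring points range over $[d/2,1-d/2]^2$ (after the appropriate rescaling of grid and windows). Weak convergence of $\bigl(T^{-p/2}\danobisum_{\scanset}(\epsilon;\floor{\bs}_T)\bigr)_{\bs}$ on $\cD$ under the uniform topology forces tightness of the supremum, so with $p=2$ we obtain
\[
\supp{\bs\in[d/2,1-d/2]^2}\Bigl|\tfrac{1}{T}\danobisum_{\scanset^{(i,\alpha)}}(\epsilon;\floor{\bs}_T)\Bigr|=O_P(1).
\]
Next I would invoke Lemma \ref{lem_jordan_assumptions} (a), which gives
\[
\supp{\bs\in[d/2,1-d/2]^2}\Bigl|T^2\lambda(\scanset^{(i,\alpha)})-\bigl|\{\bk\in\Z^2:\bk/T\in\scanset^{(i,\alpha)}(\floor{\bs}_T)\}\bigr|\Bigr|=o(T^2);
\]
this is where Assumption \ref{ass_sets} (boundary covered by finitely many Lipschitz images) enters, ensuring that the lattice cells straddling $\partial\scanset^{(i,\alpha)}$ contribute only a lower-order number of grid points, uniformly in the shift.

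Combining the two, the difference in question equals
\[
\left(\frac{T^2}{\bigl|\{\bk/T\in\scanset^{(i,\alpha)}(\floor{\bs}_T)\}\bigr|}-\frac{1}{\lambda(\scanset^{(i,\alpha)})}\right)\cdot\frac{1}{T}\danobisum_{\scanset^{(i,\alpha)}}(\epsilon;\floor{\bs}_T),
\]
and the first factor is $\dfrac{\bigl|T^2\lambda(\scanset^{(i,\alpha)})-|\{\cdots\}|\bigr|}{\lambda(\scanset^{(i,\alpha)})\,|\{\cdots\}|}=o(1)$ uniformly in $\bs$, using $0<\lambda(\scanset^{(i,\alpha)})<\infty$ together with the fact that the point count is itself of exact order $T^2$. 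Multiplying the uniform $o(1)$ bound by the uniform $O_P(1)$ bound yields $o_P(1)$. The only genuine work is making both ingredients uniform in $\bs$: the uniform $O_P(1)$ control on the rescaled sums comes essentially for free from the already-established FCLT, whereas the uniform $o(T^2)$ lattice-point count is the real geometric input and is precisely the content of the cited Jordan-measurability lemma (ultimately resting on Assumption \ref{ass_sets}).
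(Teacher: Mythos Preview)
Your proposal is correct and follows essentially the same approach as the paper: invoke the FCLT (via Remark~\ref{domain_fclt}) for the uniform $O_P(1)$ bound on $T^{-1}\danobisum_{\scanset^{(i,\alpha)}}(\epsilon;\floor{\bs}_T)$, invoke Lemma~\ref{lem_jordan_assumptions}~(a) for the $o(T^2)$ lattice-point discrepancy, and factor the difference as the product of an $o(1)$ term and an $O_P(1)$ term. One small remark: the uniformity in $\bs$ of the lattice-point count is in fact trivial here, since $\floor{\bs}_T$ is always a grid point and hence the shifted window contains exactly the same number of lattice points as the unshifted one; you attribute it to Assumption~\ref{ass_sets}, but Lemma~\ref{lem_jordan_assumptions}~(a) only requires Jordan-measurability.
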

	
	\begin{Theorem} \label{theorem_danobi_size}
		Let model \eqref{eq_model} be given under the null hypothesis, i.e.\ with $\mu_{\bk,T}=\mu_0$ for all $ \bk,T $ and some (unknown) $\mu_0\in \R$, and  errors fulfilling Assumption~\ref{ass_errors}.
		Then there exists a $ 5P $-dimensional centered Gaussian process
		\[
		\left(\left(W_{1,1}(\bs),\ldots,W_{1,P}(\bs),\ldots,W_{5,1}(\bs),\ldots,W_{5,P}(\bs)\right)^{\prime}\right)_{\bs\in[d/2,1-d/2]^2}
		\]
		with

		\begin{align*}
			\Cov{W_{i,j}(\bs)}{W_{k,l}(\bt)}=\frac{\lambda\left(\scanset^{(i,\alpha_j)}(\bs)\cap\scanset^{(k,\alpha_l)}(\bt)\right)}{\lambda \left(\scanset^{(i,\alpha_j)}\right)\lambda\left(\scanset^{(k,\alpha_l)}\right)}
		\end{align*}
  such that, with
		\begin{align*}
			\Gamma_{\F1}(\bs)=&\ \max_{i=1,\ldots,P}\min\geschweift{W_{2,i}(\bs)-W_{1,i}(\bs),W_{3,i}(\bs)-W_{1,i}(\bs)},\\
   			\Gamma_{\FnB2}(\bs)=&\ \max\left\{0,\max_{i=1,\ldots,P}\min\geschweift{\abs{W_{2,i}(\bs)-W_{1,i}(\bs)},\abs{W_{3,i}(\bs)-W_{1,i}(\bs)}}\right.\\
			&\ \left. - \max_{i=1,\ldots,P} \abs{W_{4,i}(\bs)-W_{5,i}(\bs)}\right\}\\
   		\Gamma_{\FnB1}(\bs)=&\ \max\left\{0,\max_{i=1,\ldots,P}\min\geschweift{W_{2,i}(\bs)-W_{1,i}(\bs),W_{3,i}(\bs)-W_{1,i}(\bs)}\right.\\
		&\ \left. - \max_{i=1,\ldots,P} \abs{W_{4,i}(\bs)-W_{5,i}(\bs)}\right\}.
		\end{align*}
		it holds that
		\begin{align*}
		\text{(a)}\qquad&\	\left(\F1_T(\bs)\right)_{\bs\in[d/2,1-d/2]^2}\wto\left(\Gamma_{\F1}(\bs)\right)_{\bs\in[d/2,1-d/2]^2}\\
  			\text{(b)}\qquad&\	\left(\FnB2_T(\bs)\right)_{\bs\in[d/2,1-d/2]^2}\wto\left(\Gamma_{\FnB2}(\bs)\right)_{\bs\in[d/2,1-d/2]^2}\\
       			\text{(c)}\qquad&\	\left(\FnB1_T(\bs)\right)_{\bs\in[d/2,1-d/2]^2}\wto\left(\Gamma_{\FnB1}(\bs)\right)_{\bs\in[d/2,1-d/2]^2}
		\end{align*}
		in $ \cD([d/2,1-d/2]^2) $ as $ T\to\infty $.
  		\begin{proof}
		Since the $ \scanset^{(i,\alpha)} $ are convex bounded sets with positive Lebesgue measure, they fulfill Assumption \ref{ass_sets} by Lemma \ref{convex_sets}. Furthermore, since maximum, minimum, and subtraction are Lipschitz continuous, as can be seen by simple calculations, the assertions follow by Theorem \ref{danobi_convergence_map}, Remark \ref{domain_fclt}, Lemma \ref{lem_order} and Slutzky's theorem.
	\end{proof}
	\end{Theorem}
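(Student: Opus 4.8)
The strategy is to obtain all three statements as a corollary of the functional central limit theorem in Theorem~\ref{danobi_convergence_map}, applied to the collection of $5P$ scan windows $\scanset^{(i,\alpha_j)}$, $i=1,\ldots,5$, $j=1,\ldots,P$, combined with Lemma~\ref{lem_order} to pass from local sums to sample means and with Slutsky's theorem. As a preliminary reduction, note that under the null hypothesis $\mu_{\bk,T}=\mu_0$ is constant in $\bk$, and each of $\F1_T$, $\FnB2_T$, $\FnB1_T$ depends on $Y$ only through the differenced means $\danobimean^{(12,\alpha)}$, $\danobimean^{(13,\alpha)}$, $\danobimean^{(45,\alpha)}$; hence $\mu_0$ cancels and we may assume $\mu_0=0$, i.e.\ work with the error field $\epsilon$ directly. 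I would then verify Assumption~\ref{ass_sets} for all the windows: $\scanset^{(1,\alpha)}$ is a disk intersected with a band, $\scanset^{(2,\alpha)},\scanset^{(3,\alpha)}$ are circular segments (a disk intersected with a half-plane), and $\scanset^{(4,\alpha)},\scanset^{(5,\alpha)}$ are half-disks, so each is a bounded convex set of positive Lebesgue measure and Lemma~\ref{convex_sets} applies.

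Since $p=2$, the normalization $T^{p/2}=T$ in Theorem~\ref{danobi_convergence_map} is precisely the one appearing in the sample means. I would set $\Sprocess_T(\bs)=\big(\tfrac1T\danobisum_{\scanset^{(i,\alpha_j)}}(\floor{\bs}_T)\big)_{i=1,\ldots,5,\ j=1,\ldots,P}$ and apply Theorem~\ref{danobi_convergence_map} (using Remark~\ref{domain_fclt} to work on $[d/2,1-d/2]^2$) to this $5P$-dimensional process, which yields a centered Gaussian limit process $(\widetilde Z(\bs))_{\bs}$ with $\Cov{\widetilde Z_{i,j}(\bs)}{\widetilde Z_{k,l}(\bt)}=\sigma^2\lambda\big(\scanset^{(i,\alpha_j)}(\bs)\cap\scanset^{(k,\alpha_l)}(\bt)\big)$. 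Setting $W_{i,j}(\bs)=\widetilde Z_{i,j}(\bs)\big/\big(\sigma\lambda(\scanset^{(i,\alpha_j)})\big)$, the factor $\sigma^2$ cancels and a one-line computation produces exactly the covariance claimed in the theorem.

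For the convergence statements, Lemma~\ref{lem_order} gives, uniformly in $\bs\in[d/2,1-d/2]^2$, $\danobimean_{\scanset^{(i,\alpha)}}=\tfrac1{T\lambda(\scanset^{(i,\alpha)})}\danobisum_{\scanset^{(i,\alpha)}}+o_P(1)$, so that each of $\F1_T(\bs)$, $\FnB2_T(\bs)$, $\FnB1_T(\bs)$ equals $g_\bullet(\Sprocess_T(\bs))+o_P(1)$ for a fixed map $g_\bullet:\R^{5P}\to\R$ built from linear combinations followed by $\min$, $\max$, $\abs{\cdot}$ and $t\mapsto\max\{t,0\}$; all of these are $1$-Lipschitz, so $g_\bullet$ is Lipschitz, hence uniformly continuous, and the $o_P(1)$ is uniform in $\bs$. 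Applying Theorem~\ref{danobi_convergence_map} with $g_\bullet$ gives $\big(g_\bullet(\Sprocess_T(\bs))\big)_{\bs}\wto\big(g_\bullet(\widetilde Z(\bs))\big)_{\bs}$ on $\cD([d/2,1-d/2]^2)$, and rewriting $g_\bullet(\widetilde Z(\bs))$ through $W_{i,j}(\bs)$ recovers $\Gamma_{\F1}(\bs)$, $\Gamma_{\FnB2}(\bs)$, $\Gamma_{\FnB1}(\bs)$ respectively (e.g.\ $\tfrac1\sigma(\widetilde Z_{2,i}(\bs)/\lambda(\scanset^{(2,\alpha_i)})-\widetilde Z_{1,i}(\bs)/\lambda(\scanset^{(1,\alpha_i)}))=W_{2,i}(\bs)-W_{1,i}(\bs)$). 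A final application of Slutsky's theorem absorbs the uniform $o_P(1)$ and yields (a)--(c).

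The bulk of the difficulty is already encapsulated in Theorem~\ref{danobi_convergence_map} and its auxiliary results, so no substantial new obstacle arises here; the only points requiring care are the uniformity in $\bs$ of the sum-to-mean replacement of Lemma~\ref{lem_order} (which we invoke) and checking that the combining functionals are globally — not merely locally — uniformly continuous on $\R^{5P}$, which holds because $\min$, $\max$, $\abs{\cdot}$ and $\max\{\cdot,0\}$ are $1$-Lipschitz and the linear parts have bounded operator norm.
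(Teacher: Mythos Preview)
Your proposal is correct and follows essentially the same approach as the paper's proof: verify Assumption~\ref{ass_sets} via convexity (Lemma~\ref{convex_sets}), invoke Theorem~\ref{danobi_convergence_map} on the $5P$ windows over $[d/2,1-d/2]^2$ (Remark~\ref{domain_fclt}), pass from sums to sample means via Lemma~\ref{lem_order}, and conclude with Slutsky after noting that the combining maps are Lipschitz. Your write-up simply spells out the $\mu_0=0$ reduction and the rescaling $W_{i,j}=\widetilde Z_{i,j}/(\sigma\lambda(\scanset^{(i,\alpha_j)}))$ that the paper leaves implicit.
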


	\begin{Theorem}\label{power_one_fnb2}
		In the situation of Theorem~\ref{power_one}, where we can drop the assumption of positivity of $\delta_T$,
		it holds that
		\begin{align*}
			\text{(a)}\qquad
			&\ \maxx{\bs\in[d/2,1-d/2]^2} \F2_T(\bs)\ge\F2_T(\bs_0)\pto\infty,\\
			\text{(b)}\qquad
			&\ \nB_T(\bs_0)=O_P(1),\\
			\text{(c)}\qquad
			&\ \maxx{\bs\in[d/2,1-d/2]^2}\FnB{2}_T(\bs)\ge\FnB{2}_T(\bs_0)\pto\infty,
		\end{align*}	
		if $ \abs{\delta_T}T\to\infty $, as $ T\to\infty $.
  \begin{proof}
(a) Analogous to the proof of Theorem \ref{power_one}.\\\\
(b) 	Because of symmetry, it holds for every angle $ \alpha_1,\ldots,\alpha_P $ that $ \lambda\left(\scanset^{(4,\alpha_0)}\left(\bs_0\right)\cap \anomaly\right)=\lambda\left(\scanset^{(5,\alpha_0)}\left(\bs_0\right)\cap \anomaly\right) $ and $ \lambda\left(\scanset^{(4,\alpha_0)}\right)=\lambda\left(\scanset^{(5,\alpha_0)}\right) $. 
Since it holds analogously to the proof of Theorem \ref{power_one} for $ j=4,5 $ that 
\[ 	\danobimean_{\scanset_j}(Y;\floor{\bs}_T) = \delta_TT\frac{\lambda(\scanset_j(\bs)\cap \anomaly)}{\lambda(\scanset_j(\bs))}+O_P\left(1\right),
 \]
 it holds uniformly in $ i $ that
\begin{align*}
	\danobimean^{(45,\alpha_i)}(\bs_0)=&\ O_P(1).
\end{align*}
and thus
\[ 
\nB_T(\bs_0)=O_P(1).
 \]
 (c) Since
\begin{align*}
	\FnB{2}_T(\bs_0)=\max\geschweift{\F2_T(\bs_0)-\nB_T(\bs_0),0},
\end{align*}
the assertion follows immediately from (a) and (b).
\end{proof}
	\end{Theorem}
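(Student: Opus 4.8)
The plan is to obtain all three parts from the local sample-mean expansion already established in the proof of Theorem~\ref{power_one}, supplemented by a point-symmetry argument for part~(b). Throughout I evaluate everything at the fixed anchoring point $\bs_0$ with $p=2$, and, exactly as in Theorem~\ref{power_one}, I may assume $\mu_0=0$ since only differences of means enter the statistics. For part~(a) I would combine Theorem~\ref{danobi_convergence_map}, Lemma~\ref{lem_order} and Lemma~\ref{lem_jordan_assumptions}~(a) precisely as there to get, for $j=1,2,3$,
\[
\danobimean_{\scanset^{(j,\alpha_0)}}(Y;\floor{\bs_0}_T)=-\delta_TT\,\frac{\lambda\left(\scanset^{(j,\alpha_0)}(\bs_0)\cap\anomaly\right)}{\lambda\left(\scanset^{(j,\alpha_0)}\right)}+O_P(1).
\]
Because $w<h$, at the matching angle $\alpha_0$ the anomaly misses both outer segments, $\anomaly\cap\scanset^{(2,\alpha_0)}(\bs_0)=\anomaly\cap\scanset^{(3,\alpha_0)}(\bs_0)=\emptyset$, while (since additionally $l\ge d$) the set $\scanset^{(1,\alpha_0)}(\bs_0)\cap\anomaly$ has positive Lebesgue measure; hence $\abs{\danobimean^{(12,\alpha_0)}(\bs_0)}$ and $\abs{\danobimean^{(13,\alpha_0)}(\bs_0)}$ each equal $\abs{\delta_T}T$ times a fixed positive constant plus an $O_P(1)$ error. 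Lower-bounding $\F2_T(\bs_0)$ by its $\alpha_0$-summand then gives $\F2_T(\bs_0)\ge\frac1\sigma\min\geschweift{\abs{\danobimean^{(12,\alpha_0)}(\bs_0)},\abs{\danobimean^{(13,\alpha_0)}(\bs_0)}}\pto\infty$ as soon as $\abs{\delta_T}T\to\infty$; the absolute values are exactly what removes the sign restriction on $\delta_T$ present in Theorem~\ref{power_one}.

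For part~(b), the point reflection $\bx\mapsto 2\bs_0-\bx$ leaves both the shifted scan circle $\scanset(\bs_0)$ and the rectangle $\anomaly$ invariant (both are centred at $\bs_0$) and maps the splitting diameter at angle $\alpha_i$ onto itself, so it interchanges the two half-discs $\scanset^{(4,\alpha_i)}(\bs_0)$ and $\scanset^{(5,\alpha_i)}(\bs_0)$. Consequently $\lambda\left(\scanset^{(4,\alpha_i)}(\bs_0)\cap\anomaly\right)=\lambda\left(\scanset^{(5,\alpha_i)}(\bs_0)\cap\anomaly\right)$ and $\lambda\left(\scanset^{(4,\alpha_i)}\right)=\lambda\left(\scanset^{(5,\alpha_i)}\right)$ for each of the finitely many angles $\alpha_i$. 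Substituting the expansion from part~(a) into $\danobimean^{(45,\alpha_i)}(\bs_0)=\danobimean_{\scanset^{(4,\alpha_i)}}(Y;\floor{\bs_0}_T)-\danobimean_{\scanset^{(5,\alpha_i)}}(Y;\floor{\bs_0}_T)$, the $\delta_TT$-terms cancel and only an $O_P(1)$ remainder is left, uniformly over $i$, so $\nB_T(\bs_0)=\max_{i=1,\ldots,P}\frac1\sigma\abs{\danobimean^{(45,\alpha_i)}(\bs_0)}=O_P(1)$.

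Part~(c) is then immediate: since $\FnB2_T(\bs_0)=\max\geschweift{\F2_T(\bs_0)-\nB_T(\bs_0),0}$, combining (a) and (b) gives $\FnB2_T(\bs_0)\pto\infty$, and $\max_{\bs\in[d/2,1-d/2]^2}\FnB2_T(\bs)\ge\FnB2_T(\bs_0)$ finishes it. The only genuinely new ingredient is the symmetry argument for~(b), and the subtle point there is that the cancellation holds for \emph{every} angle $\alpha_i$, not merely for the matched angle $\alpha_0$: it relies on point symmetry about the common centre $\bs_0$ rather than on the mirror symmetry of the fitted configuration, so I expect the one place requiring care to be verifying this equality of intersection volumes (and that the half-disc decomposition really is invariant under the reflection). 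Everything else is a direct transcription of the proof of Theorem~\ref{power_one}.
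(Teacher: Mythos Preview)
Your proposal is correct and follows essentially the same route as the paper: part~(a) is the absolute-value variant of the Theorem~\ref{power_one} computation, part~(b) cancels the signal terms by symmetry of the intersection measures, and part~(c) combines both. Your explicit point-reflection argument $\bx\mapsto 2\bs_0-\bx$ for part~(b) is a welcome clarification of what the paper records only as ``because of symmetry''; in particular you correctly emphasise that the cancellation must (and does) hold for \emph{every} angle $\alpha_i$, not merely for the matched $\alpha_0$, which is exactly what $\nB_T(\bs_0)=\max_{i}\frac1\sigma\abs{\danobimean^{(45,\alpha_i)}(\bs_0)}$ requires.
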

	
	\begin{Theorem}\label{power_one_fnb1}
		In the situation of Theorem~\ref{power_one}
		it holds that
		\begin{align*}
			&\ \maxx{\bs\in[d/2,1-d/2]^2}\FnB{1}_T(\bs)\ge\FnB{1}_T(\bs_0)\pto\infty,
		\end{align*}	
		if $ \delta_TT\to\infty $, as $ T\to\infty $.
  \begin{proof}
 Since
\begin{align*}
	\FnB{1}_T(\bs_0)=\max\geschweift{\F1_T(\bs_0)-\nB_T(\bs_0),0},
\end{align*}
the assertion follows immediately from Theorem \ref{power_one} and \ref{power_one_fnb2} (b).
\end{proof}

	\end{Theorem}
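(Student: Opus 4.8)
The plan is to reduce the claim to two facts that are already available: the divergence $\F1_T(\bs_0)\pto\infty$ from Theorem~\ref{power_one}, and the stochastic boundedness $\nB_T(\bs_0)=O_P(1)$ from Theorem~\ref{power_one_fnb2}(b). First I would exploit the definition of the $\FnB1$-statistic to write
\[
\maxx{\bs\in[d/2,1-d/2]^2}\FnB{1}_T(\bs)\ge\FnB{1}_T(\bs_0)=\max\geschweift{\F1_T(\bs_0)-\nB_T(\bs_0),\,0},
\]
so that, since $x\mapsto\max\{x,0\}$ is non-decreasing, it suffices to show $\F1_T(\bs_0)-\nB_T(\bs_0)\pto\infty$.

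Second, Theorem~\ref{power_one} applies verbatim in the present setting, because the anomaly angle $\alpha_0$ is among $\alpha_1,\ldots,\alpha_P$ and $\delta_T>0$ with $\delta_TT\to\infty$; this yields $\F1_T(\bs_0)\pto\infty$. Third, Theorem~\ref{power_one_fnb2}(b) gives $\nB_T(\bs_0)=O_P(1)$; recall that its proof only uses the symmetry $\lambda(\scanset^{(4,\alpha_i)}(\bs_0)\cap\anomaly)=\lambda(\scanset^{(5,\alpha_i)}(\bs_0)\cap\anomaly)$ and $\lambda(\scanset^{(4,\alpha_i)})=\lambda(\scanset^{(5,\alpha_i)})$ together with Lemma~\ref{lem_order}, so no sign condition on $\delta_T$ enters there. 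Combining the two: given $C>0$, choose $K$ so large that $\Prob{\nB_T(\bs_0)>K}$ eventually stays below any prescribed level, and then use $\Prob{\F1_T(\bs_0)<C+K}\to0$ to obtain $\Prob{\F1_T(\bs_0)-\nB_T(\bs_0)\ge C}\to1$. Hence $\F1_T(\bs_0)-\nB_T(\bs_0)\pto\infty$, and the displayed lower bound finishes the proof.

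There is essentially no real obstacle once Theorems~\ref{power_one} and~\ref{power_one_fnb2} are in hand; the argument is the exact analogue of the proof of Theorem~\ref{power_one_fnb2}(c), with $\F1$ in place of $\F2$. The only point worth emphasising is that here, unlike in Theorem~\ref{power_one_fnb2}, we cannot drop the positivity assumption $\delta_T>0$: the one-sided contrasts $\danobimean^{(12,\alpha_0)}(\bs_0)$ and $\danobimean^{(13,\alpha_0)}(\bs_0)$ behave like $+\delta_TT\,\lambda(\scanset^{(1,\alpha_0)}(\bs_0)\cap\anomaly)/\lambda(\scanset^{(1,\alpha_0)})$, so a negative $\delta_T$ would send $\F1_T(\bs_0)$ to $-\infty$ rather than $+\infty$.
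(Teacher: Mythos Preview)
Your proposal is correct and follows exactly the paper's own argument: use $\FnB{1}_T(\bs_0)=\max\{\F1_T(\bs_0)-\nB_T(\bs_0),0\}$ and combine Theorem~\ref{power_one} with Theorem~\ref{power_one_fnb2}(b). Your additional remarks on why $\delta_T>0$ cannot be dropped are accurate and in the spirit of the paper's discussion.
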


	\section{Limit theorems for $ M $-dependent r.v.'s} \label{sec_limit_thms}

The following Lemma shows that the concept of $ M $-dependence does not depend on the chosen norm as long as equivalent norms are being considered. For simplicity, $ M $ will always refer to the constant in the supremum norm in the following proofs.
		\begin{Lemma}\label{lem_equivalence_M_dependence}
			Let $ \left\|\cdot\right\|_1 $ and $ \left\|\cdot\right\|_2 $ be arbitrary norms for which there exists some $ C>0 $ such that $ C\left\|\cdot\right\|_2\ge\left\|\cdot\right\|_1 $.
			Let $ X=\left(X_{\bt}\right)_{\bt\in S} $ be a stochastic process that is $ M $-dependent w.r.t.~$ \left\|\cdot\right\|_1 $.  Then, $ X $ is $M/C $-dependent w.r.t.~$ \left\|\cdot\right\|_2 $.
			\begin{proof}
				Let $ \bs,\bt $ be such that $ \left\|\bt-\bs\right\|_1>M $. Then it holds that
				\[ 
				M<\left\|\bt-\bs\right\|_1\le C\left\|\bt-\bs\right\|_2.
				 \]
				 Therefore, $ \left\|\bt-\bs\right\|_2>M/C $.
			\end{proof}
		\end{Lemma}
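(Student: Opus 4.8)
The plan is to prove this straight from the definition of $M$-dependence; nothing probabilistic is involved, since $M$-dependence is purely a geometric restriction on which pairs of indices are permitted to carry dependence, and the whole argument reduces to a single norm comparison.

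First I would spell out the target: to establish $(M/C)$-dependence with respect to $\left\|\cdot\right\|_2$, it suffices to take any two indices $\bs,\bt\in S$ with $\left\|\bt-\bs\right\|_2>M/C$ and show that $X_\bs$ and $X_\bt$ are independent. The key step is to turn this $\left\|\cdot\right\|_2$-separation into a $\left\|\cdot\right\|_1$-separation of more than $M$, using the hypothesised comparison between the two norms together with the constant $C$; once $\left\|\bt-\bs\right\|_1>M$ is in hand, the assumed $M$-dependence of $X$ with respect to $\left\|\cdot\right\|_1$ gives independence of $X_\bs$ and $X_\bt$, which is precisely the defining property we wanted. As $\bs,\bt$ were an arbitrary pair satisfying the $\left\|\cdot\right\|_2$-separation, this finishes the proof. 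Equivalently, and perhaps more transparently, one can argue by contraposition: any pair with $\left\|\bt-\bs\right\|_1\le M$ must, by the norm comparison scaled by $C$, also satisfy $\left\|\bt-\bs\right\|_2\le M/C$, so every pair lying outside the $\left\|\cdot\right\|_2$-ball of radius $M/C$ also lies outside the $\left\|\cdot\right\|_1$-ball of radius $M$ and is therefore independent.

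The argument is a couple of lines, so the only genuine point of care -- the closest thing here to an obstacle -- is the bookkeeping of the inequality: one has to make sure the constant $C$ is applied in the direction that makes a large $\left\|\cdot\right\|_2$-distance force a large $\left\|\cdot\right\|_1$-distance, and that the separation threshold transforms to exactly $M/C$ rather than, say, $CM$. Beyond the definitions recalled at the start of this section, no auxiliary results are needed.
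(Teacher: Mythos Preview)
Your direct argument claims that from $\left\|\bt-\bs\right\|_2>M/C$ together with $C\left\|\cdot\right\|_2\ge\left\|\cdot\right\|_1$ one obtains $\left\|\bt-\bs\right\|_1>M$. This step does not go through: the hypothesis only gives the \emph{upper} bound $\left\|\bt-\bs\right\|_1\le C\left\|\bt-\bs\right\|_2$, so knowing $C\left\|\bt-\bs\right\|_2>M$ tells you nothing about a lower bound for $\left\|\bt-\bs\right\|_1$. The contrapositive version has the same defect: from $\left\|\bt-\bs\right\|_1\le M$ and $\left\|\bt-\bs\right\|_1\le C\left\|\bt-\bs\right\|_2$ you cannot conclude $\left\|\bt-\bs\right\|_2\le M/C$. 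You explicitly warn yourself about applying the constant in the wrong direction, and that is exactly the pitfall here.

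In fact the implication you need is false, and so is the lemma as stated. On $\Z^2$ take $\left\|\cdot\right\|_1$ to be the supremum norm and $\left\|\cdot\right\|_2$ the $\ell^1$-norm; then $\left\|\cdot\right\|_1\le\left\|\cdot\right\|_2$, so $C=1$ works. With i.i.d.\ $(\eta_{\bk})$ set $X_{\bk}=\eta_{\bk}+\eta_{\bk+(1,1)}$; this process is $1$-dependent in the supremum norm, yet $X_{(0,0)}$ and $X_{(1,1)}$ are dependent while $\left\|(1,1)\right\|_{\ell^1}=2>1=M/C$. The paper's own proof establishes only $\left\|\bt-\bs\right\|_1>M\Rightarrow\left\|\bt-\bs\right\|_2>M/C$, which is the reverse implication and does not yield the claim either. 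A correct version would either swap the roles of the two norms in the hypothesis (i.e.\ assume $C\left\|\cdot\right\|_1\ge\left\|\cdot\right\|_2$) or conclude $CM$-dependence rather than $M/C$-dependence; the qualitative message---that $M$-dependence transfers between equivalent norms with a suitably adjusted constant---is of course valid.
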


	\begin{Lemma}\label{lem_partitions}
		Let $ \left(X_{\bt}\right)_{\bt\in\Z^p} $ be a sequence of $ M $-dependent random variables and let $ \bl\in\geschweift{1,\ldots,M+1}^p $. Then $ \left(X_{\bl+\boldsymbol{\alpha}(M+1)}\right)_{\boldsymbol{\alpha}\in\Z^p} $ is a sequence of independent random variables.
		\begin{proof}
			Let $ \bs=\bl+\boldsymbol{\alpha}_{\bs}(M+1) $, $ \bt=\bl+\boldsymbol{\alpha}_{\bt}(M+1) $  and let $ \boldsymbol{\alpha}_{\bs}=(\alpha_{1,\bs},\ldots,\alpha_{p,\bs}) $, $ \boldsymbol{\alpha}_{\bt}=(\alpha_{1,\bt},\ldots,\alpha_{p,\bt}) $. If $ \bs\ne\bt $, there exists $ i=1,\ldots,p $ such that $ \alpha_{i,\bs}\ne\alpha_{i,\bt} $. Therefore,
			\[ 
			\left\|\bs-\bt\right\|_{\infty}=(M+1)\left\|\boldsymbol{\alpha}_{\bs}-\boldsymbol{\alpha}_{\bt}\right\|_{\infty}\ge M+1>M.
			 \]
		\end{proof} 
	\end{Lemma}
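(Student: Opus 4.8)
The plan is to deduce the statement directly from the definition of $M$-dependence by means of a one-line distance computation. First I would note that it suffices to prove mutual independence of every finite subfamily $X_{\bl+\boldsymbol{\alpha}_1(M+1)},\ldots,X_{\bl+\boldsymbol{\alpha}_n(M+1)}$ indexed by pairwise distinct $\boldsymbol{\alpha}_1,\ldots,\boldsymbol{\alpha}_n\in\Z^p$, since independence of a family of random variables means, by definition, independence of all of its finite subfamilies.

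The geometric core of the argument is that passing to the coset $\bl+(M+1)\Z^p$ spreads the indices apart by more than $M$: for $\boldsymbol{\alpha}\ne\boldsymbol{\alpha}'$ one has $\left(\bl+\boldsymbol{\alpha}(M+1)\right)-\left(\bl+\boldsymbol{\alpha}'(M+1)\right)=(M+1)(\boldsymbol{\alpha}-\boldsymbol{\alpha}')$, and since $\boldsymbol{\alpha}-\boldsymbol{\alpha}'$ is a nonzero integer vector its supremum norm is at least $1$, so that the two shifted indices are at supremum-norm distance at least $M+1>M$. Hence all indices occurring in the finite subfamily are pairwise separated by more than $M$. Independence now follows from $M$-dependence: the singleton $\{\bl+\boldsymbol{\alpha}_1(M+1)\}$ has supremum-norm distance greater than $M$ from the set $\{\bl+\boldsymbol{\alpha}_j(M+1):2\le j\le n\}$, so $X_{\bl+\boldsymbol{\alpha}_1(M+1)}$ is independent of the $\sigma$-algebra generated by the remaining variables, and an induction on $n$ upgrades this to mutual independence of the whole subfamily.

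I do not anticipate any real obstacle; the argument is elementary, and the only point worth a moment's care is the passage from pairwise separation to mutual independence of a finite collection, which uses $M$-dependence applied to the ``one index versus the rest'' split together with the induction above (and if one reads $M$-dependence in the purely pairwise sense stated before Assumption~\ref{ass_errors}, the distance bound already yields the conclusion directly).
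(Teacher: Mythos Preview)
Your proposal is correct and follows essentially the same approach as the paper: the key step is the identical distance computation $\left\|\bs-\bt\right\|_{\infty}=(M+1)\left\|\boldsymbol{\alpha}-\boldsymbol{\alpha}'\right\|_{\infty}\ge M+1>M$. You are somewhat more careful than the paper in spelling out the reduction to finite subfamilies and the inductive upgrade from pairwise separation to mutual independence, but since the paper's definition of $M$-dependence (stated before Assumption~\ref{ass_errors}) is in fact the pairwise one, the paper's bare distance bound already matches its own definition, exactly as you note in your final parenthetical remark.
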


\begin{Remark} \label{rem_partitions}
Lemma \ref{lem_partitions}   allows a partitioning of $ \Z^p $ into finitely many sets $ D_{\bl}=\geschweift{\bl+\boldsymbol{\alpha}(M+1)|\boldsymbol{\alpha}\in\Z^p} $ on which $ \left(X_{\bt}\right)_{\bt\in D_{\bl}} $ are sequences of independent random variables.
This  shows that moment bounds for sums of independent random variables immediately carry over to the corresponding results for $M$-dependent random variables due to $|\sum_{i=1}^M a_i|^r\le (M \max_{i}|a_i|)^r\le M^r \sum_{i=1}^M| a_i|^r$.\\
 In particular, for an $M$-dependent sequence $ \left(\epsilon_{\bk}\right)_{\bk\in\Z^p} $ with $ \EW{\epsilon_{\bk}}=0 $ and any $ \scanset_n\subset\R^p $ such that $\abs{\geschweift{\bk\in\Z^p:\bk\in \scanset_n}}=n$, it holds for any $r\ge 2$ that there exists $ C_r>0 $ such that 
\begin{align}
	\EW{\abs{\sum_{\bk\in \scanset_n}\epsilon_{\bk}}^r}\le C_rn^{r/2}.\label{moments}
\end{align}
The corresponding result for i.i.d.\ random variables can e.g.\ be found in \cite{LinBai}, 9.4, equation (45) (see also \cite{Yokoyama} for general result for mixing sequences).

\end{Remark}

\begin{Lemma}\label{convergence_fidis}
	Let $ \scanset_1,\ldots,\scanset_P\subset[0,1]^p $ be compact Jordan-measurable sets with $\lambda(A_i)>0$ for all $i$.
Let $ \left(\epsilon_{\bk}\right)_{\bk\in\Z^p} $ fulfill Assumption \ref{ass_errors} and let $ \sigma>0 $ be as in \eqref{long_run_variance}. 
Let $ \danobisum_{\scanset}(\floor{\bs}_T)= \danobisum_{\scanset}(\epsilon;\floor{\bs}_T) $ be as in \eqref{eq_def_sums} and let
\[ 
\Sprocess_T(\bs)=\left(\frac{1}{T^{p/2}}\danobisum_{\scanset_1}(\floor{\bs}_T),\ldots,\frac{1}{T^{p/2}}\danobisum_{\scanset_P}(\floor{\bs}_T)\right)^{\prime}.
\]
Then there exists a $ P $-dimensional centered Gaussian process \\ $ (\mathbb{W}(\bs))_{\bs\in[0,1]^p}= ((W_1(\bs),\ldots,W_{P}(\bs))^{\prime})_{\bs\in[0,1]^p} $  with
	\[ 
	\Cov{W_i(\bs)}{W_j(\bt)}=\sigma^2\lambda\left(\scanset_i(\bs)\cap\scanset_j(\bt)\right)
	\]
	such that for all $ n\in\N $, $ \bt_1,\ldots,\bt_n\in[0,1]^p $ it holds that
	\[ 
	\left(\Sprocess_T(\bt_1),\ldots\Sprocess_T(\bt_n)\right)\dto\left(\mathbb{W}(\bt_1),\ldots,\mathbb{W}(\bt_n)\right).
	\]
\begin{proof}
	By the theorem of Cram\'{e}r-Wold (compare e.g.\ \cite{Durrett}, Theorem 3.10.6), it is  sufficient to show that for arbitrary $ a_{1,1},\ldots,a_{n,1}, \ldots,$ $a_{1,P},\ldots,a_{n,P}\in\R $, it holds that
\begin{align} \label{eq_CramWold}
\sum_{i=1}^{n}\sum_{j=1}^{P}a_{i,j}\frac{1}{T^{p/2}}\danobisum_{\scanset_j}(\floor{\bt_i}_T)
\dto N\left(\mathbf{0},\Var{\sum_{i=1}^{n}\sum_{j=1}^{P}a_{i,j} W_j(\bt_i) }\right).
\end{align}
Denote  by $ \scanset^1=\scanset $, $ \scanset^0=\scanset^{\prime}$, where $\scanset^{\prime} $ is the complement of $\scanset$ in $[0,1]^p$. 
	For ease of notation we are also renumbering the $\scanset_{j}(\floor{\bt_i}_T)  $ successively with a slight abuse of notation in the following way:
	For $ i=1,\ldots,n $, $ j=1,\ldots,P $, let
	$ \scanset_{(j-1)n+i-1}=\scanset_{j}(\floor{\bt_i}_T) $ and let $ m=nP $.
	\\
	For $ l=1,\ldots,2^{m}-1 $ let $ l=\sum_{i=0}^{m-1}l_i2^i $ be the unique binary representation of $l $ with $ l_i\in\geschweift{0,1} $ and corresponding mutually exclusive sets
	\begin{align*}
		D_l=D_{\sum\limits_{i=0}^{m-1}l_i2^i}=&\ \bigcap_{i=0}^{m-1}\scanset_i^{l_i}.
	\end{align*}
	Since we are using the unique binary representation for $ l $, the sets $ D_l $ are pairwise disjoint: For $ k\ne l $, there exists $ i=0,\ldots,m-1 $ such that $ k_i\ne l_i $. It follows that
	$ \scanset_i^{k_i}\cap \scanset_i^{l_i}=\emptyset $ and therefore $ D_k\cap D_l=\emptyset $. 
Furthermore, it holds that
\begin{align}
		\sum\limits_{l: l_i=1} D_l=\scanset_i.
		\label{union}
\end{align}
We first show that $ \scanset_i\subset \sum_{l: l_i=1} D_l $. Let  $ \bx\in\scanset_i $. For $ k=0,\ldots,m-1 $, $ k\ne i $, let $ l_k=\mathds{1}_{\geschweift{\bx\in \scanset_k}} $. Then,
	\begin{align*}
		&\ \bx\in\left(\bigcap_{k\ne i} \scanset_k^{l_k}\right)\cap \scanset_i=D_{\sum\limits_{k=0}^{m-1}l_k2^k}
	\end{align*}
	with $ l_i=1 $.
	Next, we  show that $ \scanset_i\supset \sum_{l: l_i=1} D_l $. Indeed, by definition, it holds for $ l $ with $ l_i=1 $ that
	\[ 
	D_l= \scanset_i\cap\left(\bigcap_{k\ne i}\scanset_k^{l_k}\right)\subset \scanset_i.
	\]
	Due to the $M$-dependence of  $\geschweift{\epsilon_i}$, the sums $S_{D_l}$ over the $D_l$ are not independent. To ensure independence, we define slight modifications of   $D_l$ that have minimal distance of at least $M/T$ in the supremum norm between them. 
Define the sets $ \tscanset_i=\tscanset_{i,T}=\scanset_i^{(M/T)} \cap [0,1]^p$ and define
	\begin{align*}
		&D_l^{(-M)}=\bigcap_{i:l_i=0}\tscanset_i^0\cap \bigcap_{i:l_i=1}\scanset_i.
	\end{align*} Recall that $D_l=\bigcap_{i:l_i=0}\scanset_i^0\cap \bigcap_{i:l_i=1}\scanset_i$.
	Thus it holds that
	\begin{align*}
D_l\setminus D_l^{(-M)}=\bigcap\limits_{l:l_i=1}A_i\cap\left(\left(\bigcap\limits_{l:l_i=0} A_i^0\right)\setminus\left(\bigcap\limits_{l:l_i=0} \tilde{A}_i^0\right)\right)\subset\bigcup\limits_{i=0}^{m-1}A_i^0\setminus\tilde{A}_i^0=\bigcup\limits_{i=0}^{m-1}\tilde{A}_i\setminus A_i.
\end{align*}
By Lemma \ref{lem_jordan_assumptions} (c) it holds for $ \tscanset=\scanset^{(M/T)} \cap [0,1]^p$ that
\begin{align*}
	& \abs{\geschweift{\bk\in\Z^p:\frac{\bk}{T}\in \tscanset\setminus\scanset}}=o(T^p).
\end{align*}	
Thus it holds for all $l=1,\ldots,2^m-1$ by \eqref{moments} and Markov's inequality that
	\begin{align}\label{eq_DmM}
		&\frac{1}{T^{p/2}}S_{D_l}=\frac{1}{T^{p/2}}S_{D^{(-M)}_l}+o_P(1),
	\end{align}
	  where the $o_P$-term is uniform in $l$.
	
Indeed, the $ S_{D^{(-M)}_l} $ are independent: For $ k\ne l $  there exists $ r\in\geschweift{0,\ldots,m-1} $ with $ k_r=1 $, $ l_r=0 $ (or vice-versa) such that $ D^{(-M)}_k \subset \scanset_r $ and $ D^{(-M)}_l \subset \tscanset_r^{\prime} $ (or vice-versa). By definition of $\tscanset_r$ the minimal distance with respect to the supremum norm between $\scanset_r$ and $\tscanset_r^{\prime}$ (and by extension also the minimal distance between $ D^{(-M)}_k$ and $ D^{(-M)}_l$) is more than $M/T$ and thus, the minimal distance between $\bk\in\Z^p$ with $\bk/T\in D^{(-M)}_k$ and $\bl\in\Z^p$ with $\bl/T\in D^{(-M)}_k$ is at least $M+1$.
Consequently, the independence of $ S_{D^{(-M)}_l} $, $l=1,\ldots,2^m-1$, follows from the $M$-dependence of $\{\epsilon_i\}$.
By Theorem \ref{clt_m_dependence}, it holds that  $S_{D_l}/T^{p/2}\dto\mathcal{N}(0,\sigma^2\lambda(D_l))$. By \eqref{eq_DmM} it follows that $S_{D_l^{(-M)}}/T^{p/2}\dto\mathcal{N}(0,\sigma^2\lambda(D_l))$.
Due to the independence of the $S_{D_l^{(-M)}}$, there exist independent $\mathcal{N}(0,\sigma^2\lambda(D_l))$-distributed $\widetilde{W}_l$ such that 
\begin{align*}
	&\ \left(\frac{1}{T^{p/2}}S_{D_1^{(-M)}},\ldots,
	\frac{1}{T^{p/2}}S_{ D_{2^m-1^{(-M)}}} \right)^{\prime}
	\dto\left(\widetilde{W}_1,\ldots,\widetilde{W}_{2^m-1}\right)^{\prime}\\
	\intertext{and by \eqref{eq_DmM}, it holds that}
	&\ \left(\frac{1}{T^{p/2}}S_{D_1},\ldots,
\frac{1}{T^{p/2}}S_{ D_{2^m-1}} \right)^{\prime}\dto\left(\widetilde{W}_1,\ldots,\widetilde{W}_{2^m-1}\right)^{\prime},
\end{align*}
as well. For $i=0,\ldots,m-1$ let 
\[ 
W_i=\sum\limits_{l:l_i=1}\widetilde{W}_l.
 \]
 Due to \eqref{union}, $W_i$ is $\mathcal{N}(0,\sigma^2\lambda(A_i))$-distributed.

	Analogously to above, we are renumbering the $ a_{i,j} $ for the ease of notation with a slight abuse of notation as $ a_{(j-1)n+i-1}=a_{i,j}$ for $ i=1,\ldots,n $, $ j=1,\ldots,P $. Then, \eqref{eq_CramWold} holds as
	\begin{align*}
		&\ \sum_{i=1}^{n}\sum_{j=1}^{P}a_{i,j}\frac{1}{T^{p/2}}\danobisum_{\scanset_j}(\bt_i)= \sum_{i=0}^{m-1} a_i\frac{1}{T^{p/2}} \sum_{ \frac{\bk}{T}\in \scanset_i}\epsilon_{\bk}
		= \sum_{i=0}^{m-1}a_i\sum\limits_{l: l_i=1}\frac{1}{T^{p/2}}  \sum_{ \frac{\bk}{T}\in D_l}\epsilon_{\bk}\\
		=&\ \sum_{i=0}^{m-1}a_i\sum\limits_{l: l_i=1}\frac{1}{T^{p/2}} S_{D_l}
		\dto \sum_{i=0}^{m-1}a_i\sum\limits_{l: l_i=1} \widetilde{W}_l =\sum_{i=0}^{m-1}a_i W_i.
	\end{align*} 
\end{proof}	
\end{Lemma}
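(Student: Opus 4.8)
The plan is to invoke the Cram\'er--Wold device: it suffices to show that every scalar combination $\sum_{i=1}^{n}\sum_{j=1}^{P}a_{i,j}\,T^{-p/2}\danobisum_{\scanset_j}(\floor{\bt_i}_T)$ converges to a centered normal law. First I would freeze the windows, replacing each $\scanset_j(\floor{\bt_i}_T)$ by the fixed set $\scanset_j(\bt_i)=\scanset_j+\bt_i$; since $\|\floor{\bt_i}_T-\bt_i\|_\infty\le 1/T$, the symmetric difference of the two windows lies in an $O(1/T)$-shell around $\partial\scanset_j(\bt_i)$, which by the lattice-point counting for Jordan-measurable sets (Lemma~\ref{lem_jordan_assumptions}) contains only $o(T^p)$ grid points, so this replacement changes the normalized sum by $o_P(1)$ via the moment bound \eqref{moments} and Markov's inequality. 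Relabelling the $m:=nP$ fixed windows $\scanset_0,\dots,\scanset_{m-1}$, the obstruction is that they overlap, so $S_{\scanset_0},\dots,S_{\scanset_{m-1}}$ are neither independent nor does Theorem~\ref{clt_m_dependence} apply to them jointly.

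To circumvent this I would pass to the common refinement: for each binary string $l=\sum_{i=0}^{m-1}l_i2^i$ put $D_l=\bigcap_{i=0}^{m-1}\scanset_i^{l_i}$ with $\scanset^1=\scanset$ and $\scanset^0$ its complement in $[0,1]^p$. The $D_l$ are pairwise disjoint, Jordan-measurable (the boundary of a finite intersection/complement of Jordan sets lies in the union of the boundaries), and $\scanset_i=\bigsqcup_{l:\,l_i=1}D_l$, whence $S_{\scanset_i}=\sum_{l:\,l_i=1}S_{D_l}$. Theorem~\ref{clt_m_dependence} gives $T^{-p/2}S_{D_l}\dto\mathcal N(0,\sigma^2\lambda(D_l))$ for each atom (passing to closures if needed; atoms with $\lambda(D_l)=0$ carry only $o(T^p)$ grid points and vanish in the limit). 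These atom-sums are still dependent because adjacent atoms touch, so to force independence I would shrink them by fattening the complements: set $\tscanset_i=\scanset_i^{(M/T)}\cap[0,1]^p$ and $D_l^{(-M)}=\bigcap_{i:\,l_i=0}\tscanset_i^{0}\cap\bigcap_{i:\,l_i=1}\scanset_i$. For $k\ne l$ the sets $D_k^{(-M)},D_l^{(-M)}$ lie on opposite sides of some $\scanset_r$ versus $\tscanset_r^{0}$ and are hence at supremum-distance exceeding $M/T$, so their grid points have $\ell_\infty$-distance exceeding $M$ and the sums $S_{D_l^{(-M)}}$ are genuinely independent by $M$-dependence (in the spirit of Lemma~\ref{lem_partitions}/Remark~\ref{rem_partitions}). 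Since $D_l\setminus D_l^{(-M)}\subset\bigcup_i(\tscanset_i\setminus\scanset_i)$, which by Lemma~\ref{lem_jordan_assumptions}(c) holds only $o(T^p)$ grid points, $T^{-p/2}(S_{D_l}-S_{D_l^{(-M)}})=o_P(1)$ uniformly in $l$.

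Finally, independence of the $S_{D_l^{(-M)}}$ together with their marginal Gaussian limits yields joint convergence $(T^{-p/2}S_{D_l^{(-M)}})_l\dto(\widetilde W_l)_l$ with independent $\widetilde W_l\sim\mathcal N(0,\sigma^2\lambda(D_l))$ (multiplicativity of characteristic functions of independent blocks), and the $o_P(1)$ comparison together with Slutsky's theorem upgrades this to $(T^{-p/2}S_{D_l})_l\dto(\widetilde W_l)_l$. Setting $W_i=\sum_{l:\,l_i=1}\widetilde W_l$ and applying the continuous mapping theorem to the linear map $(x_l)_l\mapsto\sum_i a_i\sum_{l:\,l_i=1}x_l$ gives $\sum_i a_i T^{-p/2}S_{\scanset_i}\dto\sum_i a_iW_i$, a centered Gaussian, which closes the Cram\'er--Wold argument; hence $(\Sprocess_T(\bt_1),\dots,\Sprocess_T(\bt_n))$ converges to a centered Gaussian vector, and by disjointness of the atoms and independence of the $\widetilde W_l$ its covariance between the $(j,a)$ and $(j',b)$ components equals $\sigma^2$ times $\sum\lambda(D_l)$ over atoms $D_l$ contained in both $\scanset_j(\bt_a)$ and $\scanset_{j'}(\bt_b)$, i.e. $\sigma^2\lambda\big(\scanset_j(\bt_a)\cap\scanset_{j'}(\bt_b)\big)$, exactly the asserted form. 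Positive semidefiniteness of the limiting covariance is automatic, so Kolmogorov's extension theorem produces a process $(\mathbb W(\bs))_{\bs\in[0,1]^p}$ with that covariance. \textbf{The main obstacle} is precisely the dependence bookkeeping in the middle step: extracting genuinely independent building blocks from overlapping, mutually touching windows via the $M/T$-fattening while controlling the $2^m$ error terms uniformly.
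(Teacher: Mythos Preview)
Your proposal is correct and follows essentially the same route as the paper: Cram\'er--Wold reduction, the common-refinement atoms $D_l=\bigcap_i\scanset_i^{l_i}$, the $M/T$-fattening trick $D_l^{(-M)}$ to manufacture genuine independence, Theorem~\ref{clt_m_dependence} on each atom, and reassembly. The one substantive difference is your initial ``freezing'' step, replacing $\scanset_j(\floor{\bt_i}_T)$ by the $T$-independent set $\scanset_j(\bt_i)$ before forming the atoms; the paper instead works directly with the $T$-dependent windows $\scanset_j(\floor{\bt_i}_T)$ and applies Theorem~\ref{clt_m_dependence} to the resulting $T$-dependent atoms $D_l=D_{l,T}$. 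Your version is arguably cleaner, since Theorem~\ref{clt_m_dependence} is stated for fixed sets and the paper's application tacitly relies on the same $O(1/T)$-shell argument you make explicit. You also spell out the disposal of atoms with $\lambda(D_l)=0$ and the Kolmogorov extension for the limiting process, both of which the paper leaves implicit. Otherwise the two arguments are identical in structure and in the key technical device.
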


\begin{Lemma}\label{modulus_of_continuity}
	Let $ \scanset_1,\ldots,\scanset_P\subset[0,1]^p $ fulfill Assumption \ref{ass_sets}.
	Let $ \left(\epsilon_{\bk}\right)_{\bk\in\Z^p} $ be a sequence of  $ M $-dependent random variables fulfilling Assumption \ref{ass_errors}, i.e.\ $ \EW{\epsilon_{\bk}}=0 $, $\EW{\epsilon_{\bk}^2}\in(0,\infty)  $ and let $ \EW{\abs{\epsilon_{\bk}}^r}<\infty $ for some $ r>2p $. 
	Let $ \danobipixel_{\bk,T}=\mu_{\bk,T}+\epsilon_{\bk} $ with $ \mu_{\bk,T}\in\R $,
	let $ \danobisum_{\scanset}(\floor{\bs}_T)= \danobisum_{\scanset}(\epsilon;\floor{\bs}_T) $ be as in \eqref{eq_def_sums} and let
	\[ 
	\Sprocess_T(\bs)=\left(\frac{1}{T^{p/2}}\danobisum_{\scanset_1}(\floor{\bs}_T),\ldots,\frac{1}{T^{p/2}}\danobisum_{\scanset_P}(\floor{\bs}_T)\right)^{\prime}.
	\]
	For any $ x>0 $ it holds that
	\[ 
	\lim\limits_{\delta\to 0}\limsup_{T\to\infty}\Prob{\supp{\left\|\bs-\bt\right\|_{\infty}<\delta}\left\|\Sprocess_T(\bs)-\Sprocess_T(\bt)\right\|_{\infty}\ge x}=0.
	\]
	\begin{proof}
	For arbitrary $ \bs,\bt\in[0,1]^p $, it holds that
	\[ 
	\left\|\Sprocess_T(\bs)-\Sprocess_T(\bt)\right\|_{\infty}=\max_{i=1,\ldots,P}\abs{\frac{1}{T^{p/2}}\danobisum_{\scanset_i}\left(\floor{\bs}_T\right)-\frac{1}{T^{p/2}}\danobisum_{\scanset_i}\left(\floor{\bt}_T\right)}.
	\]
	Therefore, showing
	\[ 
	\lim\limits_{\delta\to 0}\limsup_{T\to\infty}\Prob{\supp{\left\|\bs-\bt\right\|_{\infty}<\delta}\left\|\Sprocess_T(\bs)-\Sprocess_T(\bt)\right\|_{\infty}\ge x}=0
	\]
	for all $ x>0 $ is equivalent to showing
	\[ 
	\lim\limits_{\delta\to 0}\limsup_{T\to\infty}\Prob{\supp{\left\|\bs-\bt\right\|_{\infty}<\delta}\abs{\frac{1}{T^{p/2}}\danobisum_{\scanset_i}\left(\floor{\bs}_T\right)-\frac{1}{T^{p/2}}\danobisum_{\scanset_i}\left(\floor{\bt}_T\right)}\ge x}=0.
	\]
	for all $ x>0 $, $ i=1,\ldots,P $.
	
	For any $ \delta>0 $, define $ k_0=k_0(\delta) $  such that $ 2^{-(k_0+1)}<\delta\le2^{-k_0} $. Then letting $ \delta\to0 $ is equivalent to letting $ k_0\to\infty $. Let $ m=m(T)=\ceil{\left(2p/r+\varepsilon\right)\log_2 T} $ for some sufficiently small $ \varepsilon>0 $ such that $ 2^m=o(T) $ as $ 2p<r $. Since in the following, we are only going to use sets of the form $ \scanset^{\left(2^{-k}\right)} $, we will write $ \scanset^{k}=\scanset^{\left(2^{-k}\right)} $ for the ease of notation. For $ \bs,\bt\in[0,1]^p $ with $ \left\|\bs-\bt\right\|_{\infty}<\delta $, we can write
	\begin{align}
		&\ \danobisum_{\scanset_i}\left(\floor{\bs}_T\right)-\danobisum_{\scanset_i}\left(\floor{\bt}_T\right)\notag\\*
		=&\ \danobisum_{\scanset_i}\left(\floor{\bs}_T\right)-\danobisum_{\scanset_i}\left(\floor{\bs}_{2^m}\right)\label{step1}\\*
		&\ +\danobisum_{\scanset_i}\left(\floor{\bs}_{2^m}\right)-\danobisum_{\scanset_i^{m}}\left(\floor{\bs}_{2^m}\right)\label{step2}\\*
		&\ +\sum_{k=k_0}^{m-1} \left(\danobisum_{\scanset_i^{k+1}}\left(\floor{\bs}_{2^{k+1}}\right)-\danobisum_{\scanset_i^{k}}\left(\floor{\bs}_{2^{k}}\right)\right)\label{step3}\\* &\ +\danobisum_{\scanset_i^{k_0}}\left(\floor{\bs}_{2^{k_0}}\right) -\danobisum_{\scanset_i^{k_0}}\left(\floor{\bt}_{2^{k_0}}\right)\label{step4}\\* 
		&\ +\sum_{k=k_0}^{m-1} \left(\danobisum_{\scanset_i^{k}}\left(\floor{\bt}_{2^{k}}\right)-\danobisum_{\scanset_i^{k+1}}\left(\floor{\bt}_{2^{k+1}}\right)\right)\notag\\* &\ +\danobisum_{\scanset_i^{m}}\left(\floor{\bt}_{2^m}\right)-\danobisum_{\scanset_i}\left(\floor{\bt}_{2^m}\right)\notag\\*
		&\ +\danobisum_{\scanset_i}\left(\floor{\bt}_{2^m}\right)-\danobisum_{\scanset_i}\left(\floor{\bt}_T\right)\notag.
	\end{align}
	Note that this telescoping series consists of four separate elements:
	In the first step \eqref{step1}, the step function $ \danobisum_{\scanset_i}\left(\floor{\bs}_T\right) $ on a grid of spacing $ 1/T $ is replaced by the corresponding step function $ \danobisum_{\scanset_i}\left(\floor{\bs}_{2^m}\right) $ on a grid of spacing $ 2^{-m}>1/T $.
	In the second step \eqref{step2}, $ \scanset_i $ is replaced by a larger set of Lebesgue measure $ \lambda(\scanset_i)+O(2^{-m}) $, compare Lemma \ref{lem_covering_fattening} (a). 
	In the third step \eqref{step3} we continue to coarsen the grid and expand the sets around $ \bs $ until the distance between adjacent grid points is more than $ \left\|\bs-\bt\right\|_{\infty} $.
	In the fourth step \eqref{step4}, we move from $ \bs $ to $ \bt $ on sets of the form $ \scanset_i^{k_0} $. Then steps 3 to 1 are reversed for $ \bt. $
	In order to show the assertion, we will analyze \eqref{step1}-\eqref{step4} separately.\\\\
	For \eqref{step1}: For $ T $ large enough (by assumption, $ 2^m=o(T) $), it holds that 
	\begin{align}
		\left\|\floor{\bs}_T-\floor{\bs}_{2^m}\right\|_{\infty}\le\left\|\floor{\bs}_T-\bs\right\|_{\infty}+\left\|\bs-\floor{\bs}_{2^m}\right\|_{\infty}\le \frac{1}{T}+2^{-m}\le 2\cdot 2^{-m}=2^{-(m-1)}. \label{grids}
	\end{align}	
	Therefore, it holds  that
	\begin{align*}
		&\ \scanset_i\left(\floor{\bs}_T\right)\setminus \scanset_i\left(\floor{\bs}_{2^{m}}\right)\subset \scanset_i^{m-1}\left(\floor{\bs}_{2^{m}}\right)\setminus \scanset_i\left(\floor{\bs}_{2^{m}}\right),\\
		&\ \scanset_i\left(\floor{\bs}_{2^{m}}\right)\setminus \scanset_i\left(\floor{\bs}_T\right) \subset \scanset_i^{m-1}\left(\floor{\bs}_{T}\right)\setminus \scanset_i\left(\floor{\bs}_{T}\right),\\
		\intertext{and thus, it holds by Lemma \ref{lem_covering_fattening} (b) that}
		&\ \abs{\geschweift{\bk\in\Z^p:\frac{\bk}{T}\in \scanset_i\left(\floor{\bs}_T\right)\setminus \scanset_i\left(\floor{\bs}_{2^{m}}\right)}}\\
		\le&\ \abs{\geschweift{\bk\in\Z^p:\frac{\bk}{T}\in \scanset_i^{m-1}\left(\floor{\bs}_{2^{m}}\right)\setminus \scanset_i\left(\floor{\bs}_{2^{m}}\right)}}
		\le CT^p2^{-m}\\
		\intertext{and}
		&\ \abs{\geschweift{\bk\in\Z^p:\frac{\bk}{T}\in \scanset_i\left(\floor{\bs}_{2^{m}}\right)\setminus \scanset_i\left(\floor{\bs}_T\right)}}\\
		\le&\ \abs{\geschweift{\bk\in\Z^p:\frac{\bk}{T}\in \scanset_i^{m-1}\left(\floor{\bs}_{T}\right)\setminus \scanset_i\left(\floor{\bs}_{T}\right)}}
		\le CT^p2^{-m}
	\end{align*}
	with a suitable constant $ C>0 $.\\
	
	Therefore, we obtain by Markov's inequality and \eqref{moments}
	with suitable constants $ C_1,C_2>0 $ that
	\begin{align*}
		&\ \Prob{\abs{\frac{1}{T^{p/2}}\danobisum_{\scanset_i}\left(\floor{\bs}_T\right)-\frac{1}{T^{p/2}}\danobisum_{\scanset_i}\left(\floor{\bs}_{2^m}\right)}\ge x} \notag\\
		=&\ \Prob{\frac{1}{T^{p/2}}
			\abs{
				\sum\limits_{\frac{\bk}{T}\in\scanset_i\left(\floor{\bs}_T\right)}\epsilon_{\bk}- \sum\limits_{\frac{\bk}{T}\in\scanset_i\left(\floor{\bs}_{2^m}\right)}\epsilon_{\bk}	
			}\ge x}\notag\\
		=&\ \Prob{\frac{1}{T^{p/2}}
			\abs{
				\sum\limits_{\frac{\bk}{T}\in\scanset_i\left(\floor{\bs}_T\right)\setminus\scanset_i\left(\floor{\bs}_{2^m}\right)}\epsilon_{\bk}- \sum\limits_{\frac{\bk}{T}\in\scanset_i\left(\floor{\bs}_{2^m}\right)\setminus\scanset_i\left(\floor{\bs}_{T}\right)}\epsilon_{\bk}	
			}\ge x}\notag\\
		\le&\  
		\Prob{\frac{1}{T^{p/2}}\abs{\sum\limits_{\frac{\bk}{T}\in\scanset_i\left(\floor{\bs}_T\right)\setminus\scanset_i\left(\floor{\bs}_{2^m}\right)}\epsilon_{\bk}}\ge \frac{x}{2}}+\Prob{\frac{1}{T^{p/2}}\abs{\sum\limits_{\frac{\bk}{T}\in\scanset_i\left(\floor{\bs}_{2^m}\right)\setminus\scanset_i\left(\floor{\bs}_{T}\right)}\epsilon_{\bk}}\ge \frac{x}{2}}\notag\\
		\le&\ C_1\left(\frac{\abs{\geschweift{\bk:\frac{\bk}{T}\in\scanset_i\left(\floor{\bs}_T\right)\setminus\scanset_i\left(\floor{\bs}_{2^m}\right)}}^{r/2}}{T^{pr/2}x^r}	
		+\frac{\abs{\geschweift{\bk:\frac{\bk}{T}\in\scanset_i\left(\floor{\bs}_{2^m}\right)\setminus\scanset_i\left(\floor{\bs}_{T}\right)}}^{r/2}}{T^{pr/2}x^r}\right)
		\\
		\le&\ C_22^{-mr/2}\le C_2T^{-\frac{r}{2}\left(\frac{2p}{r}+\varepsilon\right)}=C_2T^{-(p+r\varepsilon/2)},
	\end{align*}

		since by definition, $ m=\ceil{\left(2p/r+\varepsilon\right)\log_2 T} $ and thus, $ 2^{-m}\le T^{-(2p/r+\varepsilon)} $.		
		We will now consider the supremum over $ \bs\in[0,1]^p $ of the above expression that only depends on $ \bs $ via $ \floor{\bs}_T $ and $ \floor{\bs}_{2^m} $. Thus, it is by \eqref{grids} sufficient to take the supremum over $ \floor{\bs}_T $ with $ \bs\in[0,1]^p $ and over all $ \floor{\bs}_{2^m} $ with $ \left\|\floor{\bs}_{2^m}-\floor{\bs}_T\right\|\le 2\cdot 2^{-m} $ for fixed $ \floor{\bs}_T $.\\
		The first supremum is taken over the set
		$ \geschweift{\floor{\bs}_T|\bs\in[0,1]^p} $, which has cardinality $ (T+1)^p\le2^pT^p $. The second supremum is taken over a set that  has finite cardinality as it can be easily verified that the number of points of the form $ \bk2^{-m} $, $ \bk\in\Z^p $ in the hypercube
		$ \left[\floor{\bs}_T-2\cdot2^{-m},\floor{\bs}_T+2\cdot2^{-m}\right] $ is finite.		
		Therefore, we obtain for any $ x>0 $ with a suitable constant $ C_3>0 $ potentially depending on $ x $ by subadditivity that
		\begin{align}
			&\ \Prob{\supp{\bs\in[0,1]^p}\abs{\frac{1}{T^{p/2}}\danobisum_{\scanset_i}\left(\floor{\bs}_T\right)-\frac{1}{T^{p/2}}\danobisum_{\scanset_i}\left(\floor{\bs}_{2^m}\right)}\ge x} \notag\\
			\le&\ \sum_{\floor{\bs}_T:\ \bs\in[0,1]^p}\ \sum_{\stackrel{\floor{\bs}_{2^m}:\ \bs\in[0,1]^p,}{  
					\left\|\floor{\bs}_T-\floor{\bs}_{2^m}\right\|_{\infty}\le 2\cdot2^{-m}}}  \Prob{\abs{\frac{1}{T^{p/2}}\danobisum_{\scanset_i}\left(\floor{\bs}_T\right)-\frac{1}{T^{p/2}}\danobisum_{\scanset_i}\left(\floor{\bs}_{2^m}\right)}\ge x}\notag\\
			\le&\ C_3T^pT^{-(p+r\varepsilon/2)}=C_3T^{-r\varepsilon/2}\to0  \label{danobi_term1}.
		\end{align}
		as $ T\to\infty $.\\\\
		For \eqref{step2}, it holds by Lemma \ref{lem_covering_fattening} (b) that
		\[ 
		\abs{\geschweift{\bk\in\Z^p:\frac{\bk}{T}\in\scanset_i^{m}\left(\floor{\bs}_{2^m}\right)\setminus\scanset_i\left(\floor{\bs}_{2^m}\right)}}\le CT^p2^{-m}.
		\]
		Therefore, we obtain analogously to \eqref{danobi_term1} with  suitable constants $ C_1,C_2>0 $ that
		\begin{align}
			&\ \Prob{\supp{\bs\in[0,1]^p} \abs{\frac{1}{T^{p/2}}\danobisum_{\scanset_i}\left(\floor{\bs}_{2^m}\right)-\frac{1}{T^{p/2}}\danobisum_{\scanset_i^{m}}\left(\floor{\bs}_{2^m}\right)}\ge x}\notag\\
			\le&\ \sum_{\floor{\bs}_{2^m}:\ \bs\in[0,1]^p} \Prob{\abs{\frac{1}{T^{p/2}}\danobisum_{\scanset_i}\left(\floor{\bs}_{2^m}\right)-\frac{1}{T^{p/2}}\danobisum_{\scanset_i^{m}}\left(\floor{\bs}_{2^m}\right)}\ge x}\notag\\
			\le &\ 2^p\cdot2^{pm} C_1 \frac{\abs{\geschweift{\bk\in\Z^p:\frac{\bk}{T}\in\scanset_i^{m}\left(\floor{\bs}_{2^m}\right)\setminus\scanset_i\left(\floor{\bs}_{2^m}\right)}}^{r/2}}{T^{pr/2}x^r}\notag\\
			\le&\ C_2 2^{pm}2^{-mr/2}=C_22^{m(p-r/2)}\to 0 \label{danobi_term2}
		\end{align}
		as $ T\to\infty $ since $ r>2p $ and $ m\to\infty $ as $ T\to\infty $.\\\\
		For \eqref{step3},it holds that for any $ k=k_0,\ldots,m-1 $ and any $ \floor{\bs}_{2^{k+1}} $, $ \floor{\bs}_{2^k} $ is uniquely determined: If $ \floor{\bs}_{2^{k+1}}=\bl2^{-(k+1)}=(\bl/2)2^{-k} $ for $ \bl\in\Z^p $, then $ \floor{\bs}_{2^{k}}=\floor{\bl/2}2^{-k} $, where $ \floor{\bl/2} $ denotes the componentwise integer part of $ \bl/2 $. 
		By construction it holds that
		$ 	\left\|\floor{\bs}_{2^k}-\floor{\bs}_{2^{k+1}}\right\|_{\infty}\le 2^{-(k+1)}$ 
		and therefore $ \scanset_i\left(\floor{\bs}_{2^k}\right)\subset \scanset_i^{k+1}\left(\floor{\bs}_{2^{k+1}}\right)$. Let $ \bx\in\scanset_i^{k+1}\left(\floor{\bs}_{2^{k+1}}\right) $. By definition, there exists $ \by_0\in\scanset_i\left(\floor{\bs}_{2^{k+1}}\right)  $ such that $ \left\|\bx-\by_0\right\|_{\infty}\le2^{-(k+1)} $. For $ \by_1=\by_0-\floor{\bs}_{2^{k+1}}+\floor{\bs}_{2^{k}} $ it holds that $ \by_1\in\scanset_i\left(\floor{\bs}_{2^{k}}\right) $ and
		\begin{align}
			&\ 
			\inff{\by\in\scanset_i\left(\floor{\bs}_{2^{k}}\right)}\left\|\bx-\by\right\|_{\infty}\le\left\|\bx-\by_1\right\|_{\infty}
			\le \left\|\bx-\by_0\right\|_{\infty}+\left\|\floor{\bs}_{2^k}-\floor{\bs}_{2^{k+1}}\right\|_{\infty}\notag\\
			\le&\ 2^{-(k+1)}+2^{-(k+1)}=2^{-k}. \label{inclusion}
		\end{align}
		Hence, $ \scanset_i^{k+1}\left(\floor{\bs}_{2^{k+1}}\right)\subset\scanset_i^{k}\left(\floor{\bs}_{2^{k}}\right) $ and thus it holds by Lemma \ref{lem_covering_fattening} (b) with a suitable constant $ C>0 $ that
		\begin{align*}
			&\ \abs{\geschweift{\bk\in\Z^p:\frac{\bk}{T}\in\scanset_i^{k}\left(\floor{\bs}_{2^{k}}\right)\setminus \scanset_i^{k+1}\left(\floor{\bs}_{2^{k+1}}\right)}}\\
			\le&\
			\abs{\geschweift{\bk\in\Z^p:\frac{\bk}{T}\in\scanset_i^{k}\left(\floor{\bs}_{2^{k}}\right)\setminus \scanset_i\left(\floor{\bs}_{2^{k}}\right)}}\le CT^p2^{-k}.
		\end{align*}
		
		Furthermore, for $ x>0 $, if $ \delta $ is small enough (and therefore, $ k_0 $ large enough), $ x\ge \sum_{k\ge k_0} 2^{k(p/(2r)-1/4)} $ since $ p/(2r)-1/4<0 $ by $ r>2p $.
		Analogously to \eqref{danobi_term1}, we obtain by $ \sigma $-subadditivity, Markov's inequality and \eqref{moments} with suitable constants $ C_1,C_2>0 $ that
		\begin{align}
			&\ \Prob{\supp{\bs\in[0,1]^p}\abs{\sum_{k=k_0}^{m-1} \left(\frac{1}{T^{p/2}}\danobisum_{\scanset_i^{k+1}}\left(\floor{\bs}_{2^{k+1}}\right)-\frac{1}{T^{p/2}}\danobisum_{\scanset_i^{k}}\left(\floor{\bs}_{2^k}\right)\right)}\ge x}\notag \\
			\le &\ \Prob{\sum_{k=k_0}^{m-1} \supp{\bs\in[0,1]^p}\abs{ \frac{1}{T^{p/2}}\danobisum_{\scanset_i^{k+1}}\left(\floor{\bs}_{2^{k+1}}\right)-\frac{1}{T^{p/2}}\danobisum_{\scanset_i^{k}}\left(\floor{\bs}_{2^k}\right)}\ge \sum_{k=k_0}^{m-1} 2^{k\left(\frac{p}{2r}-\frac{1}{4}\right)}}\notag \\
			\le&\ \Prob{\bigcup_{k=k_0}^{m-1}\geschweift{ \supp{\bs\in[0,1]^p}\abs{ \frac{1}{T^{p/2}}\danobisum_{\scanset_i^{k+1}}\left(\floor{\bs}_{2^{k+1}}\right)-\frac{1}{T^{p/2}}\danobisum_{\scanset_i^{k}}\left(\floor{\bs}_{2^k}\right)}\ge 2^{k\left(\frac{p}{2r}-\frac{1}{4}\right)}}}\notag\\
			\le&\ \sum_{k=k_0}^{m-1} \Prob{\supp{\bs\in[0,1]^p}\abs{ \frac{1}{T^{p/2}}\danobisum_{\scanset_i^{k+1}}\left(\floor{\bs}_{2^{k+1}}\right)-\frac{1}{T^{p/2}}\danobisum_{\scanset_i^{k}}\left(\floor{\bs}_{2^k}\right)}\ge 2^{k\left(\frac{p}{2r}-\frac{1}{4}\right)}}\notag\\
			\le&\ \sum_{k=k_0}^{m-1} \sum_{\floor{\bs}_{2^{k+1}}:\ \bs\in[0,1]^p} 
			\Prob{\abs{ \frac{1}{T^{p/2}}\danobisum_{\scanset_i^{k+1}}\left(\floor{\bs}_{2^{k+1}}\right)-\frac{1}{T^{p/2}}\danobisum_{\scanset_i^{k}}\left(\floor{\bs}_{2^k}\right)}\ge 2^{k\left(\frac{p}{2r}-\frac{1}{4}\right)}}\notag\\
			\le&\ \sum_{k=k_0}^{m-1}  \left(2^{k+1}+1\right)^pC_1 \frac{\abs{\geschweift{\bk\in\Z^p:\frac{\bk}{T}\in\scanset_i^{k}\left(\floor{\bs}_{2^k}\right)\setminus\scanset_i^{k+1}\left(\floor{\bs}_{2^{k+1}}\right)}}^{r/2}}{T^{pr/2}2^{kr\left(\frac{p}{2r}-\frac{1}{4}\right)}}\notag\\
			\le&\ C_2 \sum_{k=k_0}^{m-1} 2^{kp}\cdot 2^{-kr/2}\cdot 2^{-kr\left(\frac{p}{2r}-\frac{1}{4}\right)}=C_2 \sum_{k=k_0}^{m-1} 2^{\frac{2p-r}{4}k}\notag\\
			\le&\ C_2 \sum_{k=k_0}^{\infty} 2^{\frac{2p-r}{4}k}\to0\label{danobi_term3}
		\end{align}		
		as $ \delta\to 0 $ (and therefore $ k_0\to\infty $)	since $ r>2p $.\\\\
		For \eqref{step4}, it holds for $ \left\|\bs-\bt\right\|_{\infty}<\delta $ and due to $ \delta\le2^{-k_0} $ by definition of $ k_0 $ that
		\begin{align}
			&\ \left\|\floor{\bs}_{2^{k_0}}-\floor{\bt}_{2^{k_0}}\right\|_{\infty}\le
			\left\|\floor{\bs}_{2^{k_0}}-\bs\right\|_{\infty} + 	\left\|\bs-\bt\right\|_{\infty}+	\left\|\bt-\floor{\bt}_{2^{k_0}}\right\|_{\infty}\le  3\cdot 2^{-k_0}. \label{grids2}
		\end{align}
		since by definition of $ k_0 $, $ 2^{-(k_0+1)}<\delta\le 2^{-k_0} $.
		For $ \bx\in \scanset_i^{k_0}\left(\floor{\bs}_{2^{k_0}}\right) $, there exists $ \by_0\in \scanset_i\left(\floor{\bs}_{2^{k_0}}\right)$ such that $ \left\|\bx-\by_0\right\| _{\infty}\le2^{-k_0}$. For $ \by_1=\by_0+\left(\floor{\bt}_{2^{k_0}}\right)-\left(\floor{\bs}_{2^{k_0}}\right) $ it holds analogously to \eqref{inclusion} that
		\begin{align*}
			&\ 
			\inff{\by\in\scanset_i\left(\floor{\bt}_{2^{k_0}}\right)}\left\|\bx-\by\right\|_{\infty}\le\left\|\bx-\by_1\right\|_{\infty}
			\le \left\|\bx-\by_0\right\|_{\infty}+\left\|\floor{\bt}_{2^k_0}-\floor{\bs}_{2^{k_0}}\right\|_{\infty}\\
			\le&\ 2^{-k_0}+3\cdot2^{-k_0}=2^{-(k_0-2)}. 
		\end{align*}
		Hence, $ \scanset_i^{k_0}\left(\floor{\bs}_{2^{k_0}}\right)\subset \scanset_i^{k_0-2}\left(\floor{\bt}_{2^{k_0}}\right) $ and analogously we obtain that  $ \scanset_i^{k_0}\left(\floor{\bt}_{2^{k_0}}\right)\subset \scanset_i^{k_0-2}\left(\floor{\bs}_{2^{k_0}}\right) $.
		Therefore, it holds by Lemma \ref{lem_covering_fattening} (b) with a suitable constant $ C>0 $ that
		\begin{align*}
			&\ \abs{\geschweift{\bk\in\Z^p:\frac{\bk}{T}\in \scanset_i^{k_0}\left(\floor{\bs}_{2^{k_0}}\right)\setminus \scanset_i^{k_0}\left(\floor{\bt}_{2^{k_0}}\right)}}\\ 
			\le&\  \abs{\geschweift{\bk\in\Z^p:\frac{\bk}{T}\in \scanset_i^{k_0-2}\left(\floor{\bt}_{2^{k_0}}\right)\setminus \scanset_i\left(\floor{\bt}_{2^{k_0}}\right)}}\le CT^p2^{-k_0},\\
			\intertext{and}
			&\ \abs{\geschweift{\bk\in\Z^p:\frac{\bk}{T}\in\scanset_i^{k_0}\left(\floor{\bt}_{2^{k_0}}\right)\setminus \scanset_i^{k_0}\left(\floor{\bs}_{2^{k_0}}\right)}}\\
			\le&\  \abs{\geschweift{\bk\in\Z^p:\frac{\bk}{T}\in \scanset_i^{k_0-2}\left(\floor{\bs}_{2^{k_0}}\right)\setminus \scanset_i\left(\floor{\bs}_{2^{k_0}}\right)}}\le CT^p2^{-k_0}.
		\end{align*}				
		Analogously to \eqref{danobi_term1}, 
		we now consider the supremum over $ \bs,\bt\in[0,1]^p $ with $ \left\|\bs-\bt\right\|_{\infty}<\delta $ of the below expression that only depends on $ \bs,\bt $ via $ \floor{\bs}_{2^{k_0}} $ and $ \floor{\bt}_{2^{k_0}} $. Thus, it is by \eqref{grids2} sufficient to take the supremum over $ \floor{\bs}_{2^{k_0}} $ with $ \bs\in[0,1]^p $ and over all $ \floor{\bt}_{2^{k_0}} $ with $ \left\|\floor{\bs}_{2^{k_0}}-\floor{\bt}_{2^{k_0}}\right\|\le 3\cdot 2^{-k_0} $ for fixed $ \floor{\bs}_{2^{k_0}} $.\\
		The first supremum is taken over the set
		$ \geschweift{\floor{\bs}_{2^{k_0}}|\bs\in[0,1]^p} $, which has cardinality $ (2^{k_0}+1)^p\le2^p2^{pk_0} $. The second supremum is taken over a set that  has finite cardinality as it can be easily verified that the number of points of the form $ \bk2^{-k_0} $, $ \bk\in\Z^p $ in the hypercube
		$ \left[\floor{\bs}_{2^{k_0}}-3\cdot2^{-k_0},\floor{\bs}_{2^{k_0}}+3\cdot2^{-k_0}\right] $ is finite.		
		Thus we obtain for $ x>0 $ with $ \sigma $-subadditivity and suitable constants $ C_1,C_2>0 $ that
		\begin{align}
			&\ \Prob{\supp{\left\|\bs-\bt\right\|_{\infty}<\delta}\abs{\frac{1}{T^{p/2}}\danobisum_{\scanset_i^{k_0}}\left(\floor{\bs}_{2^{k_0}}\right) -\frac{1}{T^{p/2}}\danobisum_{\scanset_i^{k_0}}\left(\floor{\bt}_{2^{k_0}}\right)}\ge x}\notag\\
			\le&\ 		
			\sum_{\floor{\bs}_{2^{k_0}}:\ \bs\in[0,1]^p}\ \sum_{\stackrel{\floor{\bt}_{2^{k_0}}:\ \bt\in[0,1]^p,}{  
					\left\|\floor{\bs}_{2^{k_0}}-\floor{\bt}_{2^{k_0}}\right\|_{\infty}\le 3\cdot2^{-k_0}}}   \Prob{\frac{1}{T^{p/2}}\abs{\danobisum_{\scanset_i^{k_0}}\left(\floor{\bs}_{2^{k_0}}\right) -\danobisum_{\scanset_i^{k_0}}\left(\floor{\bt}_{2^{k_0}}\right)}\ge x}\notag\\
			\le&\  2^{pk_0} C_1 
			\frac{\abs{\geschweift{\bk\in\Z^p:\frac{\bk}{T}\in \scanset_i^{k_0}\left(\floor{\bs}_{2^{k_0}}\right)\setminus \scanset_i^{k_0}\left(\floor{\bt}_{2^{k_0}}\right)}}^{r/2}}	
			{T^{pr/2}x^r}\notag \\
			&+2^{pk_0} C_1 
			\frac{\abs{\geschweift{\bk\in\Z^p:\frac{\bk}{T}\in\scanset_i^{k_0}\left(\floor{\bt}_{2^{k_0}}\right)\setminus \scanset_i^{k_0}\left(\floor{\bs}_{2^{k_0}}\right)}}^{r/2}}	
			{T^{pr/2}x^r}\notag \\
			\le&\ C_2 2^{pk_0}2^{-k_0r/2}=C_2 2^{k_0(p-r/2)}\to 0 \label{danobi_term4}
		\end{align}		
		as $ \delta\to 0 $ (and therefore $ k_0\to\infty $)	since $ r>2p $.\\\\ 
		By subadditivity, \eqref{danobi_term1}, \eqref{danobi_term2}, \eqref{danobi_term3} and \eqref{danobi_term4}, we obtain for any $ x>0 $ that
		\begin{align*}
			&\ \lim\limits_{\delta\to 0}\limsup_{T\to\infty}\Prob{\supp{\left\|\bs-\bt\right\|_{\infty}<\delta}\abs{\frac{1}{T^{p/2}}\danobisum_{\scanset_i}\left(\floor{\bs}_T\right)-\frac{1}{T^{p/2}}\danobisum_{\scanset_i}\left(\floor{\bt}_T\right)}\ge 7x}\\
			\le&\ \lim\limits_{\delta\to 0}\limsup_{T\to\infty} 
			2\Prob{\supp{\bs\in[0,1]^p}\abs{\frac{1}{T^{p/2}}\danobisum_{\scanset_i}\left(\floor{\bs}_T\right)-\frac{1}{T^{p/2}}\danobisum_{\scanset_i}\left(\floor{\bs}_{2^m}\right)}\ge x}\\*
			& + \lim\limits_{\delta\to 0}\limsup_{T\to\infty}  2\Prob{\supp{\bs\in[0,1]^p}\abs{\frac{1}{T^{p/2}}\danobisum_{\scanset_i}\left(\floor{\bs}_{2^m}\right)-\frac{1}{T^{p/2}}\danobisum_{\scanset_i^{m}}\left(\floor{\bs}_{2^m}\right)}\ge x}\\*
			& + \lim\limits_{\delta\to 0}\limsup_{T\to\infty} 
			2\Prob{\supp{\bs\in[0,1]^p} \abs{\sum_{k=k_0}^{m-1} \frac{1}{T^{p/2}}\danobisum_{\scanset_i^{k+1}}\left(\floor{\bs}_{2^{k+1}}\right)-\frac{1}{T^{p/2}}\danobisum_{\scanset_i^{k}}\left(\floor{\bs}_{2^k}\right)}\ge x}\\*
			& + \lim\limits_{\delta\to 0}\limsup_{T\to\infty}  \Prob{\supp{\left\|\bs-\bt\right\|_{\infty}<\delta} \abs{\frac{1}{T^{p/2}}\danobisum_{\scanset_i^{k_0}}\left(\floor{\bs}_{2^{k_0}}\right) -\frac{1}{T^{p/2}}\danobisum_{\scanset_i^{k_0}}\left(\floor{\bt}_{2^{k_0}}\right)}\ge x}=0
		\end{align*}		
		thus showing the assertion.	
	\end{proof}	
\end{Lemma}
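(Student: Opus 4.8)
The plan is to reduce to a single coordinate and then run a multiscale chaining argument. First I would observe that
\[
\left\|\Sprocess_T(\bs)-\Sprocess_T(\bt)\right\|_{\infty}=\max_{i=1,\ldots,P}\abs{\tfrac{1}{T^{p/2}}\danobisum_{\scanset_i}(\floor{\bs}_T)-\tfrac{1}{T^{p/2}}\danobisum_{\scanset_i}(\floor{\bt}_T)},
\]
so by subadditivity it suffices to prove, for each fixed $i$ and each $x>0$, that
\[
\lim_{\delta\to0}\limsup_{T\to\infty}\Prob{\supp{\left\|\bs-\bt\right\|_{\infty}<\delta}\abs{\tfrac{1}{T^{p/2}}\danobisum_{\scanset_i}(\floor{\bs}_T)-\tfrac{1}{T^{p/2}}\danobisum_{\scanset_i}(\floor{\bt}_T)}\ge x}=0.
\]
For fixed $\bs,\bt$ the difference is a sum of errors over the symmetric difference of the two shifted windows, which lies in a thin annular region around $\partial\scanset_i$ of ``width'' $O(\delta)$; by Assumption~\ref{ass_sets} (Lipschitz boundary) and Lemma~\ref{lem_covering_fattening} such an annulus contains $O(T^p\delta)$ grid points, so Markov's inequality combined with the moment bound \eqref{moments} (which transfers from the i.i.d.\ case to $M$-dependent arrays by Remark~\ref{rem_partitions}) gives a pointwise probability bound of order $\delta^{r/2}x^{-r}$. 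The real work is to make this uniform over the uncountable index set.

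To that end I would pick an auxiliary fine scale $m=m(T)=\ceil{(2p/r+\varepsilon)\log_2 T}$ with $\varepsilon>0$ so small that $2^m=o(T)$ (possible since $r>2p$), and a terminal coarse scale $k_0=k_0(\delta)$ with $2^{-(k_0+1)}<\delta\le2^{-k_0}$, and write $\scanset^k=\scanset^{(2^{-k})}$ for the $2^{-k}$-fattening. Then I would decompose $\danobisum_{\scanset_i}(\floor{\bs}_T)-\danobisum_{\scanset_i}(\floor{\bt}_T)$ into a telescoping sum of seven groups: (1) pass from the $1/T$-grid to the $2^{-m}$-grid at $\bs$; (2) replace $\scanset_i$ by the fattening $\scanset_i^m$ at $\bs$; (3) coarsen the grid from $2^{-m}$ down to $2^{-k_0}$ at $\bs$ while simultaneously fattening, arranged so that each increment is a genuine inclusion $\scanset_i^{k+1}(\floor{\bs}_{2^{k+1}})\subset\scanset_i^k(\floor{\bs}_{2^k})$; (4) move from $\bs$ to $\bt$ on the coarse $2^{-k_0}$-grid using the sets $\scanset_i^{k_0}$; and (5)--(7) reverse (3)--(1) for $\bt$. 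The fattening in (2)--(4) forces every symmetric difference involved to sit inside a thin annulus whose point count is $O(T^p2^{-k})$ by Lemma~\ref{lem_covering_fattening}(b), while coarsening the grid keeps the number of admissible anchoring points at scale $2^{-k}$ down to $\le(2^k+1)^p$, so that a union bound over them is affordable.

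For each group I would apply $\sigma$-subadditivity over the (polynomially many) anchoring points, Markov's inequality at exponent $r$, and \eqref{moments}. Groups (1) and (2) give bounds of order $T^{-r\varepsilon/2}$ and $2^{m(p-r/2)}$ respectively, both tending to $0$ as $T\to\infty$ because $r>2p$. For group (3) I would allocate the fixed threshold $x$ across scales by assigning threshold $2^{k(p/(2r)-1/4)}$ to scale $k$ (admissible for $k_0$ large, since $p/(2r)-1/4<0$ by $r>2p$, so $\sum_{k\ge k_0}2^{k(p/(2r)-1/4)}\le x$); the per-scale probabilities then sum to a geometric series $\sum_{k\ge k_0}2^{(2p-r)k/4}$, which converges and vanishes as $k_0\to\infty$. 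Group (4), with $O(2^{pk_0})$ anchoring points and annulus count $O(T^p2^{-k_0})$, contributes $O(2^{k_0(p-r/2)})\to0$ as $\delta\to0$. Assembling the seven groups by subadditivity and letting first $T\to\infty$ and then $\delta\to0$ (equivalently $k_0\to\infty$) yields the claim.

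The main obstacle I anticipate is the geometric bookkeeping in step (3): one must verify that the chosen grid coarsening and set fattening are compatible, i.e.\ that $\left\|\floor{\bs}_{2^k}-\floor{\bs}_{2^{k+1}}\right\|_{\infty}\le2^{-(k+1)}$ indeed forces both $\scanset_i(\floor{\bs}_{2^k})\subset\scanset_i^{k+1}(\floor{\bs}_{2^{k+1}})$ and, conversely, $\scanset_i^{k+1}(\floor{\bs}_{2^{k+1}})\subset\scanset_i^k(\floor{\bs}_{2^k})$, so that the region at scale $k$ is a genuine thin annulus of width $\lesssim2^{-k}$ to which Lemma~\ref{lem_covering_fattening}(b) applies, and then to choose the cross-scale threshold allocation so that both the smoothness series over $k$ and the cardinalities entering the union bound remain summable. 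This balancing is exactly where the hypothesis $r>2p$ (the same moment condition as in Assumption~\ref{ass_errors}) is indispensable.
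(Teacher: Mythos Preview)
Your proposal is correct and follows essentially the same approach as the paper: the same choice of scales $m=\ceil{(2p/r+\varepsilon)\log_2 T}$ and $k_0$ with $2^{-(k_0+1)}<\delta\le2^{-k_0}$, the same seven-term telescoping decomposition through fattened sets $\scanset_i^{k}$ on dyadic grids, and the same cross-scale threshold allocation $2^{k(p/(2r)-1/4)}$ for the chaining sum in group~(3). The geometric inclusions you flag as the main obstacle are verified in the paper exactly as you anticipate, and the resulting bounds $T^{-r\varepsilon/2}$, $2^{m(p-r/2)}$, $\sum_{k\ge k_0}2^{(2p-r)k/4}$, and $2^{k_0(p-r/2)}$ match the paper's.
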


	\section{The space $ \cD([0,1]^p) $} \label{sec_D01}	
	\begin{Definition}{(Compare \cite{Wichura}, Definition 1)}\ \label{Wichura_def}\\
		Let $ \cD([0,1]^p) $ be the uniform closure of the set of all step functions that are constant on sets of the form $ \rechtsoffen{a_1,b_1}\times\ldots\times\rechtsoffen{a_p,b_p} $.
		Let $ \mathcal{A} $ be the $ \sigma $-algebra on $ D([0,1]^p) $ generated by the projection mappings $ \pi_t: f\to f(t) $, $ t\in[0,1]^p $.\\\\
		A sequence $ (P_n)_{n\ge1} $ of probability measures on $ (\cD([0,1]^p),\mathcal{A}) $ \emph{converges weakly in the U-topology} to a probability  measure $ P $ on $ (\cD([0,1]^p),\mathcal{A}) $ if for every measurable function $ f: \cD([0,1]^p)\to\R $ that is continuous in the topology of uniform convergence on $ \cD([0,1]^p) $, it holds that
		\[ 
		\int f\ dP_n\to\int f\ dP.
		\]
	\end{Definition}
	\begin{Theorem}(Compare \cite{Wichura}, Theorem 2) \label{convergence_D}\ \\
		A sequence $ \{X_T\} $ of stochastic processes in $ \cD([0,1]^p) $ is weakly convergent to a process $ X $ which a.s.~belongs to the class of uniformly continuous functions on $ [0,1]^p $ if and only if
		\begin{enumerate}[(i)]
			\item 
			\[ 
			\left(X_{\bt_1,T},\ldots,X_{\bt_n,T}\right)^{\prime}\dto\left(X_{\bt_1},\ldots,X_{\bt_n}\right)^{\prime}
			\]
			for all $ n\in\N $, $ \bt_1,\ldots,\bt_n\in I $, where $ I $ is a dense subset of $ [0,1]^p $ and
			\item
			\[ 
			\lim_{\delta\to 0}\limsup_{T\to\infty}\Prob{\supp{\|\bs-\bt\|_{\infty}<\delta}\abs{X_{\bs,T}-X_{\bt,T}}\ge x}=0
			\]
			for all $ x>0 $.
		\end{enumerate}
	\end{Theorem}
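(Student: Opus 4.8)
The statement is the multiparameter, uniform-topology analogue of Billingsley's classical tightness criterion for $ \cD[0,1] $, and since it is quoted verbatim from \cite{Wichura} the ``proof'' in the paper really amounts to a pointer to that reference; nonetheless, here is the route I would take to establish it. Throughout write $ w_f(\delta)=\supp{\maxnormarg{\bs-\bt}<\delta}\abs{f(\bs)-f(\bt)} $ for the modulus of continuity of $ f\in\cD([0,1]^p) $. The argument splits into the (routine) necessity direction and the (substantial) sufficiency direction.

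For necessity I would start from $ X_T\wto X $ with $ X $ a.s.\ uniformly continuous. Each projection $ \pi_{\bt} $ is continuous at every $ f\in\cD([0,1]^p) $ that is continuous on $ [0,1]^p $, and $ X $ is concentrated on such $ f $, so the continuous mapping theorem yields $ (X_{\bt_1,T},\ldots,X_{\bt_n,T})^{\prime}\dto(X_{\bt_1},\ldots,X_{\bt_n})^{\prime} $ for \emph{all} $ \bt_1,\ldots,\bt_n\in[0,1]^p $, in particular on any dense $ I $; this gives (i). For (ii) I would use that $ f\mapsto w_f(\delta) $ is $ 2 $-Lipschitz for $ \maxnormarg{\cdot} $, hence continuous, so that $ w_{X_T}(\delta)\dto w_X(\delta) $ as real random variables; a Portmanteau argument then gives $ \limsup_{T\to\infty}\Prob{w_{X_T}(\delta)\ge x}\le\Prob{w_X(\delta)\ge x/2} $, and letting $ \delta\to0 $ the right-hand side vanishes because $ w_X(\delta)\to0 $ a.s.\ by the uniform continuity of $ X $.

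For sufficiency the core is tightness in the U-topology. Fix $ m\in\N $ and let $ \psi_m:\cD([0,1]^p)\to\cD([0,1]^p) $ be the operator replacing $ f $ by the step function that is constant on each dyadic cell $ \rechtsoffen{\bk 2^{-m},(\bk+1)2^{-m}} $ with value $ f(\bk 2^{-m}) $. Then $ \maxnormarg{X_T-\psi_m(X_T)}\le w_{X_T}(2^{-m}) $, which by (ii) is uniformly small in $ T $ once $ m $ is large, while $ \psi_m(X_T) $ depends only on the finitely many values $ (X_{\bt,T})_{\bt\in\Pi_m} $ on the dyadic grid $ \Pi_m $. After a harmless rescaling one may take $ \Pi_m\subset I $ (or approximate each grid point by a nearby point of $ I $ and absorb the error through (ii)), so (i) forces $ \psi_m(X_T) $ to converge weakly as $ T\to\infty $. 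These two facts together give asymptotic equicontinuity plus finite-dimensional tightness, hence relative compactness of $ \{X_T\} $; for any subsequential limit $ X $, applying (ii) and Portmanteau to the closed set $ \geschweift{f:w_f(\delta)\ge x} $ shows $ \Prob{w_X(\delta)\ge x}\to0 $ as $ \delta\to0 $, so $ X $ is a.s.\ uniformly continuous, while (i) prescribes its finite-dimensional distributions on the dense set $ I $ and hence its law on the uniformly continuous functions; uniqueness of the limit then upgrades subsequential convergence to convergence of the whole sequence.

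The main obstacle is the tightness step: $ \cD([0,1]^p) $ is not separable for the uniform metric, so Prokhorov's theorem is not directly available and one must work inside Wichura's framework --- the $ \sigma $-algebra $ \mathcal{A} $ generated by the projections and weak convergence defined through continuous functionals, as in Definition~\ref{Wichura_def}. The delicate verifications are that $ \psi_m $ is measurable as a map into $ (\cD([0,1]^p),\mathcal{A}) $ and that the modulus-of-continuity control from (ii) genuinely upgrades to tightness in the U-topology. Rather than reprove these foundational points, I would cite \cite{Wichura} for them and only carry out the $ \psi_m $-approximation bookkeeping explicitly in our setting.
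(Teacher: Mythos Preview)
Your assessment is correct: the paper does not prove this theorem at all --- it is stated with the attribution ``(Compare \cite{Wichura}, Theorem 2)'' and no proof is given, so the paper's ``proof'' is exactly the pointer to Wichura that you anticipated. Your sketch is a reasonable outline of the standard argument (necessity via the continuous mapping theorem and Portmanteau, sufficiency via dyadic step-function approximation combined with the modulus-of-continuity control), and you rightly flag the non-separability issue that forces one to work within Wichura's framework rather than apply Prokhorov directly; there is nothing further to compare against.
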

	\begin{Lemma}\label{projection_continuous}
		Let $ \mathcal{D}\left([0,1]^p\right) $ be equipped with the topology induced by the maximum norm. For $ f=(f_1,\ldots,f_P)^{\prime}$, $ g=(g_1,\ldots,g_P)^{\prime} \in\left(\mathcal{D}\left([0,1]^p\right)\right)^P $ and $ \bx=(\bx_1,\ldots,\bx_n)^{\prime} $, $ \by=(\by_1,\ldots,\by_n)^{\prime}\in\left(\R^P\right)^n $ let
		\begin{align*}
			d_1(f,g)=&\ \max_{i=1,\ldots,P}\ \supp{\bt\in[0,1]^p}\abs{f_i(\bt)-g_i(\bt)}\\
			d_2(\bx,\by)=&\ \max_{i=1,\ldots,n}\ 
			\left\|\bx_i-\by_i\right\|_{\infty}.
		\end{align*}
		Denote for $n\in\N $, $ \bt_1,\ldots,\bt_n\in[0,1]^p $ by $ \pi_{\bt_1,\ldots,\bt_n}: \left(\mathcal{D}\left([0,1]^p\right)\right)^P\to \left(\R^{P}\right)^n $, $ \pi_{\bt_1,\ldots,\bt_n}(f)=(f(\bt_1),\ldots,f(\bt_n))^{\prime} $
		the projection mapping from $ \left(\mathcal{D}\left([0,1]^p\right)\right)^P $ to $ \left(\R^{P}\right)^n $. If $ \left(\mathcal{D}\left([0,1]^p\right)\right)^P $ is equipped with the metric $ d_1 $ and  $ \left(\R^{P}\right)^n $ is equipped with the metric $ d_2 $  then $ \pi_{\bt_1,\ldots,\bt_n} $ is continuous.
		\begin{proof}
			Let $ f=(f_1,\ldots,f_P)^{\prime}$, $ g=(g_1,\ldots,g_P)^{\prime} \in\left(\mathcal{D}\left([0,1]^p\right)\right)^P $. Then it holds that
			\begin{align*}
				&\ d_2\left( \pi_{\bt_1,\ldots,\bt_n}(f), \pi_{\bt_1,\ldots,\bt_n}(g)\right)= \max_{i=1,\ldots,P}\left\|\pi_{\bt_1,\ldots,\bt_n}(f_i)-\pi_{\bt_1,\ldots,\bt_n}(g_i)\right\|_{\infty}\\
				=&\ \max_{i=1,\ldots,P}\left\|(f_i(\bt_1),\ldots,f_i(\bt_n))^{\prime}-(g_i(\bt_1),\ldots,g_i(\bt_n))^{\prime}\right\|_{\infty}= \max_{i=1,\ldots,P}\max_{j=1\ldots,n}\abs{f_i(\bt_j)-g_i(\bt_j)}\\
				\le&\ \max_{i=1,\ldots,P}\ \supp{\bt\in[0,1]^p}\abs{f_i(\bt)-g_i(\bt)}=d_1(f,g),
			\end{align*}
			thus showing that $ \pi_{\bt_1,\ldots,\bt_n} $ is Lipschitz continuous with Lipschitz constant $ 1 $.
		\end{proof}
	\end{Lemma}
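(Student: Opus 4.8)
The plan is to prove the slightly stronger claim that $\pi_{\bt_1,\ldots,\bt_n}$ is Lipschitz continuous with Lipschitz constant $1$ for the metrics $d_1$ on $\left(\mathcal{D}([0,1]^p)\right)^P$ and $d_2$ on $\left(\R^P\right)^n$; continuity is then immediate. The whole argument is a direct comparison of the two product metrics, so there is no serious obstacle here — only some bookkeeping of which index runs over what.

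First I would fix arbitrary $f=(f_1,\ldots,f_P)^{\prime}$ and $g=(g_1,\ldots,g_P)^{\prime}$ in $\left(\mathcal{D}([0,1]^p)\right)^P$ and expand the definition of $d_2$ evaluated at their images. Since $\pi_{\bt_1,\ldots,\bt_n}(f)=(f(\bt_1),\ldots,f(\bt_n))^{\prime}$ with each $f(\bt_j)=(f_1(\bt_j),\ldots,f_P(\bt_j))^{\prime}\in\R^P$, and likewise for $g$, this yields
\[
d_2\bigl(\pi_{\bt_1,\ldots,\bt_n}(f),\pi_{\bt_1,\ldots,\bt_n}(g)\bigr)=\max_{j=1,\ldots,n}\left\|f(\bt_j)-g(\bt_j)\right\|_{\infty}=\max_{j=1,\ldots,n}\ \max_{i=1,\ldots,P}\abs{f_i(\bt_j)-g_i(\bt_j)}.
\]

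Next I would swap the two finite maxima and bound each pointwise difference by the supremum over the whole cube: for every $i$ and $j$ one has $\abs{f_i(\bt_j)-g_i(\bt_j)}\le\supp{\bt\in[0,1]^p}\abs{f_i(\bt)-g_i(\bt)}$ because $\bt_j\in[0,1]^p$. Taking $\max_i\max_j$ then gives
\[
d_2\bigl(\pi_{\bt_1,\ldots,\bt_n}(f),\pi_{\bt_1,\ldots,\bt_n}(g)\bigr)\le\max_{i=1,\ldots,P}\ \supp{\bt\in[0,1]^p}\abs{f_i(\bt)-g_i(\bt)}=d_1(f,g),
\]
which is exactly the claimed Lipschitz bound, hence continuity. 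The only points requiring care are keeping the roles of the index $i$ (over the $P$ coordinate functions) and $j$ (over the $n$ evaluation points) straight, and observing that the estimate passes from the finite set $\{\bt_1,\ldots,\bt_n\}\subset[0,1]^p$ up to the full supremum defining $d_1$; no structural property of $\mathcal{D}([0,1]^p)$ beyond its elements being real-valued functions on $[0,1]^p$ is used.
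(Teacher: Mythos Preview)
Your proposal is correct and follows essentially the same route as the paper's own proof: both establish the Lipschitz bound $d_2(\pi_{\bt_1,\ldots,\bt_n}(f),\pi_{\bt_1,\ldots,\bt_n}(g))\le d_1(f,g)$ by expanding $d_2$ into the double maximum $\max_i\max_j\abs{f_i(\bt_j)-g_i(\bt_j)}$ and then bounding each pointwise difference by the supremum over $[0,1]^p$. The only cosmetic difference is the order in which the two finite maxima are written before the swap.
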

	
	\begin{Definition}\label{Wiener_ppar}(Extension of \cite{CsorgoRevesz}, 1.11)
		Let $(W_{\bt})_{\bt\in[0,1]^p} $ be a $ p $-parameter, real-valued stochastic process. Let $ \bs=(s_1,\ldots,s_p)^{\prime} $, $ \bt=(t_1,\ldots,t_p)^{\prime}\in\R^p $ with $ s_i\le t_i $ for all $ i $. For the hyperrectangle $ R=\rechtsoffen{\bs,\bt} $ define with $\bs\odot\bt=(s_1t_1,\ldots,s_pt_p)^{\prime}$
		\[ 
		W_{R}=\sum_{\bd=(d_1,\ldots,d_p)^{\prime}\in\geschweift{0,1}}
		(-1)^{p-\sum_id_i}W_{\bs+\bd\odot(\bt-\bs)}.
		\]
		 We call $(W_{\bt})_{\bt\in[0,1]^p} $ a \emph{p-parameter Wiener process} if it fulfills
		\begin{enumerate}[(a)]
			\item For any hyperrectangle $ R\subset\rechtsoffen{0,\infty}^p $, it holds that
			\[ 
			W_{R}\sim\mathcal{N}(0,\lambda(R)).
			\]
			\item $ W_{\bt}=0 $ for all $ \bt=(t_1,\ldots,t_p)^{\prime} $ with $ t_i=0 $ for at least one $ i $.
			\item For pairwise disjoint rectangles $ R_1,\ldots,R_n\subset[0,1]^p $, $ W_{R_1},\ldots,W_{R_n} $ are independent.
			\item $(W_{\bt})_{\bt\in[0,1]^p} $ almost surely belongs to the class of uniformly continuous functions on $ [0,1]^p $.
		\end{enumerate} 
		Note that (a)-(c) implies that for $ \bs=(s_1,\ldots,s_p)^{\prime} $, $ \bt=(t_1,\ldots,t_p)^{\prime}\in\R^p $,
		\[ 
		\EW{W_{\bs}W_{\bt}}=\prod_{i=1}^{p}\min\geschweift{s_i,t_i}.
		\]
	\end{Definition}

\section{Algebraic properties of sets}	\label{sec_algebraic_properties}

\subsection{Fattening of sets, covering numbers}
\begin{Definition}\label{def_fattening}
	Let $A\subset[0,1]^p$ be compact and let $\gamma>0$. By 
	\begin{align*}
	\scanset^{(\gamma)}=&\ \bigcup\limits_{\by\in[-\gamma,\gamma]^p} \scanset(\by)=\geschweift{\bx\in\R^p\Big| \inff{\bz\in A}\left\|\bx-\bz\right\|_{\infty}\le \gamma}
\end{align*}we denote the \emph{$\gamma$-fattening} of $A$.
\end{Definition}

\begin{Definition}\label{def_covering_number}
	Let $\left(\mathcal{S},d\right)$ be a metric space and $A\subset\mathcal{S}$. For $\varepsilon>0$ let 
	\begin{align*}
		N(\varepsilon,A,d)=&\ \min\left\{n\in\N: \text{There exist }\bx_1,\ldots,\bx_n\in\mathcal{S}\text{ such that } A\subset\bigcup\limits_{i=1}^n B_{\varepsilon,d}\left(\bx_i\right)
		\right\}
	\end{align*}
be the \emph{covering number} of $A$, where $B_{\varepsilon,d}\left(\bx\right)=\geschweift{\by\in\mathcal{S}| d(\bx,\by)\le\varepsilon} $ denotes the closed ball of radius $\varepsilon$ around $\bx$ in the $d$-metric.
\end{Definition}

\begin{Lemma}\label{boundary_point}
	Let $A\subset[0,1]^p$ and $\gamma>0$. For any $\by\in A^{(\gamma)}\setminus A$, there exists $\by_0\in\partial A$ such that $\left\|\by-\by_0\right\|_{\infty}\le\gamma$.
	\begin{proof}
		For $\by\in A^{(\gamma)}\setminus A$ let $d=d(\by)=\inff{\bx\in A}\left\|\by-\bx\right\|_{\infty}$ such that $d\le\gamma$ by definition of $A^{(\gamma)}$. Furthermore, there exists a sequence $\left(\by_n\right)_{n\in\N}\subset A$ such that $\left\|\by-\by_n\right\|_{\infty}\downarrow d$ for $n\to\infty$. As $\left(\by_n\right)_{n\in\N}$ is a bounded sequence, there exists a subsequence $\left(\by_{a(n)}\right)_{n\in\N}$ with existing limit $\by_0 \in \closure{A}$, which implies $ \left\|\by-\by_0\right\|_{\infty}=d\le \gamma$ by the continuity of the sup-norm.\\
		We complete the proof by contradiction: To this end, assume that $\by_0\in\inner{A}$. Then there exists $d>\varepsilon>0$ such that $B_{\varepsilon,\infty}(\by_0)\subset A$. For $\bx_0=\by_0+\varepsilon/(2d)(\by-\by_0)$ it holds that 
		\begin{align*}
			&\ \left\|\bx_0-\by_0\right\|_{\infty}=\frac{\varepsilon}{2d}\left\|\by-\by_0\right\|_{\infty}=\frac{\varepsilon}{2},\\
			\intertext{hence $\bx_0\in A$, and}&\ 
			\left\|\bx_0-\by\right\|_{\infty}=\left(1-\frac{\varepsilon}{2d}\right)\left\|\by-\by_0\right\|_{\infty}=d-\frac{\varepsilon}{2}<d, 
		\end{align*}
	which is a contradiction to the minimality of $d$. We have thus shown that $\by_0\in\closure{A}\setminus\inner{A}=\partial A$, which completes the proof.
	\end{proof}
\end{Lemma}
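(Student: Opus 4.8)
The plan is to take $\by_0$ to be a nearest point of $A$ to $\by$ with respect to the supremum norm, and then argue that such a nearest point necessarily lies on $\partial A$.

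First I would set $d:=\inff{\bx\in A}\|\by-\bx\|_\infty$. By Definition \ref{def_fattening}, the hypothesis $\by\in A^{(\gamma)}$ gives $d\le\gamma$, and since $A$ is compact (hence closed) and $\by\notin A$, we have $d>0$. Picking a minimizing sequence $(\by_n)_{n\in\N}\subset A$ with $\|\by-\by_n\|_\infty\to d$ and passing to a convergent subsequence (possible since $A$ is bounded), the limit $\by_0$ lies in $A$ because $A$ is closed, and by continuity of $\bx\mapsto\|\by-\bx\|_\infty$ we obtain $\|\by-\by_0\|_\infty=d\le\gamma$. It then remains only to show $\by_0\in\partial A$.

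The key step is to rule out $\by_0\in\inner{A}$ by contradiction. If $\by_0\in\inner{A}$, there is some $\varepsilon\in(0,d)$ with $\{\bz\in\R^p:\|\bz-\by_0\|_\infty\le\varepsilon\}\subseteq A$, and I would move a small step from $\by_0$ towards $\by$, setting $\bx_0:=\by_0+\tfrac{\varepsilon}{2d}(\by-\by_0)$. One checks $\|\bx_0-\by_0\|_\infty=\tfrac{\varepsilon}{2d}\|\by-\by_0\|_\infty=\varepsilon/2<\varepsilon$, so $\bx_0\in A$, whereas $\|\bx_0-\by\|_\infty=\bigl(1-\tfrac{\varepsilon}{2d}\bigr)\|\by-\by_0\|_\infty=d-\varepsilon/2<d$, contradicting that $d$ is the infimum of the distances from $\by$ to points of $A$. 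Hence $\by_0\notin\inner{A}$, and since $\by_0\in A=\closure{A}$ it follows that $\by_0\in\closure{A}\setminus\inner{A}=\partial A$, finishing the proof.

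I do not anticipate a genuine obstacle: this is essentially the familiar fact that the metric projection of an exterior point onto a closed set lands on the boundary. The three points to be careful about are the existence of a nearest point (for which compactness of $A$ supplies both boundedness, used for the convergent subsequence, and closedness, so that the limit stays in $A$), the strict positivity $d>0$ (which is what legitimises the rescaling $\tfrac{\varepsilon}{2d}$), and the computation $\|\bx_0-\by\|_\infty=(1-\tfrac{\varepsilon}{2d})\|\by-\by_0\|_\infty$, which uses only the homogeneity of the norm along the segment $[\by_0,\by]$ and therefore works for the supremum norm — indeed for any norm — with no convexity assumption on $A$.
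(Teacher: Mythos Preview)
Your proposal is correct and follows essentially the same approach as the paper: take a nearest point $\by_0$ via a minimizing sequence and a convergent subsequence, then rule out $\by_0\in\inner{A}$ by moving along the segment towards $\by$ using the very same auxiliary point $\bx_0=\by_0+\tfrac{\varepsilon}{2d}(\by-\by_0)$. The only cosmetic difference is that you invoke compactness of $A$ to place $\by_0\in A$ and to ensure $d>0$, whereas the paper merely uses boundedness and places $\by_0\in\closure{A}$; since the lemma as stated does not assume $A$ closed, the paper's phrasing is marginally more general, but in the context of Definition~\ref{def_fattening} (which requires $A$ compact) your version is entirely adequate.
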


\begin{Lemma}\label{covering_number}
		Let $A\subset[0,1]^p$ be compact and $\gamma>0$. Let $ \varepsilon>0 $, $ N(\varepsilon) $, $ \bx_1,\ldots,\bx_{N(\varepsilon)} $ be such that 
	\[ 
	\partial A\subset\bigcup\limits_{i=1}^{N(\varepsilon)}B_{\varepsilon,\infty}(\bx_i).
	\]
	Then it holds that
	\[ 
	A^{(\gamma)}\setminus A\subset \bigcup\limits_{i=1}^{N(\varepsilon)}B_{\gamma+\varepsilon,\infty}(\bx_i).
	\]
	\begin{proof}
		By Lemma \ref{boundary_point}, for $ \by\in A^{(\gamma)}\setminus A$, there exists $ \by_0\in\partial A $ such that $ \left\|\by-\by_0\right\|_{\infty}\le \gamma $. For $ \by_0 $ there exists $ i\in\geschweift{1,\ldots,N(\varepsilon)} $ such that $ \by_0\in B_{\varepsilon,\infty}(\bx_{i}) $. Therefore,
		\[ 
		\left\|\by-\bx_i\right\|_{\infty}\le\left\|\by-\by_0\right\|_{\infty}+\left\|\by_0-\bx_i\right\|_{\infty}\le \gamma+\varepsilon.
		\]
		Thus, $ \by\in B_{\gamma+\varepsilon}(\bx_i) $ and the assertion follows.
	\end{proof}
\end{Lemma}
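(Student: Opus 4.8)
The plan is to reduce the claim to a one-point triangle-inequality argument, letting Lemma~\ref{boundary_point} carry all of the geometric content. Fix an arbitrary $\by\in A^{(\gamma)}\setminus A$; since $\by$ is arbitrary, it suffices to exhibit an index $i$ with $\by\in B_{\gamma+\varepsilon,\infty}(\bx_i)$.

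First I would invoke Lemma~\ref{boundary_point}: because $\by\in A^{(\gamma)}\setminus A$, there is a boundary point $\by_0\in\partial A$ with $\left\|\by-\by_0\right\|_{\infty}\le\gamma$. This is the only substantive input, and it is already established — it rests on the compactness of $A$ (to extract a nearest point of $\closure{A}$ to $\by$) together with a short contradiction argument showing that this nearest point cannot be interior. Then, since by hypothesis $\partial A\subset\bigcup_{i=1}^{N(\varepsilon)}B_{\varepsilon,\infty}(\bx_i)$, there exists $i\in\geschweift{1,\ldots,N(\varepsilon)}$ with $\by_0\in B_{\varepsilon,\infty}(\bx_i)$, i.e.\ $\left\|\by_0-\bx_i\right\|_{\infty}\le\varepsilon$.

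Finally, the triangle inequality for the supremum norm yields
\[
\left\|\by-\bx_i\right\|_{\infty}\le\left\|\by-\by_0\right\|_{\infty}+\left\|\by_0-\bx_i\right\|_{\infty}\le\gamma+\varepsilon,
\]
so $\by\in B_{\gamma+\varepsilon,\infty}(\bx_i)$ and the asserted inclusion follows. I do not anticipate a genuine obstacle: the delicate part — bounding the distance from a point of the $\gamma$-fattening to the boundary of $A$ — has been isolated into Lemma~\ref{boundary_point}, so what remains here is purely bookkeeping with covering balls. The only point to keep track of is that the fattening $A^{(\gamma)}$ and the balls $B_{\cdot,\infty}$ are defined with respect to $\left\|\cdot\right\|_{\infty}$ throughout, so the triangle inequality above applies without any norm-equivalence factors.
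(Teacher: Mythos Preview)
Your proof is correct and essentially identical to the paper's own argument: both invoke Lemma~\ref{boundary_point} to produce $\by_0\in\partial A$ with $\left\|\by-\by_0\right\|_{\infty}\le\gamma$, pick an index $i$ from the assumed cover of $\partial A$, and finish with the triangle inequality. There is no substantive difference.
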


\begin{Lemma}\label{lem_gridpoints_rectangle}
	Let $ \bs_T=\left(s_{1,T},\ldots,s_{p,T}\right)^{\prime} $, $ \bt_T=\left(t_{1,T},\ldots,t_{p,T}\right)^{\prime}$ be in $[0,1]^p$  with  $ s_{i,T}<t_{i,T}$  and $ \left(\min\limits_{i=1,\ldots,p}\left(t_{i,T}-s_{i,T}\right)\right)^{-1}=o(T) $. Then it holds that
	\[ 
	\abs{\geschweift{\bk\in\Z^p:\frac{\bk}{T}\in\left[\bs_T,\bt_T\right]}}=T^p\lambda\left(\left[\bs_T,\bt_T\right]\right)\left(1+O\left(\frac{1}{T}\left(\min\limits_{i=1,\ldots,p}\left(t_{i,T}-s_{i,T}\right)\right)^{-1}\right)\right).
	\]
	\begin{proof}
		In one dimension, it holds that
		\begin{align*}
	&\ \abs{\geschweift{k\in\Z:\frac{k}{T}\in[s,t]}}=\floor{tT}-\ceil{sT}+1\in\left[(t-s)T-1,(t-s)T+1\right]. \\
	&\ \text{Therefore, }\\
	&\ \abs{\geschweift{\bk\in\Z^p:\frac{\bk}{T}\in\left[\bs_T,\bt_T\right]}}\ge \prod\limits_{i=1}^p\left(\left(t_{i,T}-s_{i,T}\right)T-1\right)\\
	=&\ T^p\prod\limits_{i=1}^p\left(t_{i,T}-s_{i,T}\right)-T^{p-1}\sum\limits_{j=1}^p\prod\limits_{i\ne j}\left(t_{i,T}-s_{i,T}\right)+R_{1,T}\\
	=&\ T^p\lambda\left(\left[\bs_T,\bt_T\right]\right)-\sum\limits_{j=1}^p \frac{T^p\lambda\left(\left[\bs_T,\bt_T\right]\right)}{T\left(t_{j,T}-s_{j,T}\right)}+R_{1,T},\\
	&\  \text{where}\\
	&\ \frac{R_{1,T}}{T^p\lambda\left(\left[\bs_T,\bt_T\right]\right)}=
	O\left( \frac{1}{T^2}\left(\min\limits_{i\ne j} \left(t_{i,T}-s_{i,T}\right)\left(t_{j,T}-s_{j,T}\right)\right)^{-1}\right)= o\left(\frac{1}{T}\left(\min\limits_{i=1,\ldots,p}t_{i,T}-s_{i,T}\right)^{-1}\right)
\end{align*}
due to $ \left(\min\limits_{i=1,\ldots,p}(t_{i,T}-s_{i,T})\right)^{-1}=o(T) $.
Analogously, we obtain that
\begin{align*}
	&\ \abs{\geschweift{\bk\in\Z^p:\frac{\bk}{T}\in\left[\bs_T,\bt_T\right]}}\le 
	T^p\lambda\left(\left[\bs_T,\bt_T\right]\right)+\sum\limits_{j=1}^p \frac{T^p\lambda\left(\left[\bs_T,\bt_T\right]\right)}{T\left(t_{j,T}-s_{j,T}\right)}+R_{2,T},\\
	\intertext{with}&\ \frac{R_{2,T}}{T^p\lambda\left(\left[\bs_T,\bt_T\right]\right)}= o\left(\frac{1}{T}\left(\min\limits_{i=1,\ldots,p}t_{i,T}-s_{i,T}\right)^{-1}\right),
\end{align*}
thus showing the assertion.
	\end{proof}
\end{Lemma}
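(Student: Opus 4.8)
The plan is to reduce the $p$-dimensional counting problem to the one-dimensional case and then control the error terms via a product expansion. First I would establish the elementary one-dimensional identity: for any $0\le s<t\le 1$,
\[
\abs{\geschweift{k\in\Z:\tfrac{k}{T}\in[s,t]}}=\floor{tT}-\ceil{sT}+1,
\]
which lies in the interval $[(t-s)T-1,(t-s)T+1]$ since $\floor{tT}\ge tT-1$ and $\ceil{sT}\le sT+1$. Because the grid points inside a $p$-dimensional box factor as a product over coordinates, we immediately get
\[
\abs{\geschweift{\bk\in\Z^p:\tfrac{\bk}{T}\in[\bs_T,\bt_T]}}=\prod_{i=1}^p\bigl(\floor{t_{i,T}T}-\ceil{s_{i,T}T}+1\bigr),
\]
so the whole problem becomes bounding this product above and below by $T^p\lambda([\bs_T,\bt_T])(1+\text{error})$.

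Next I would expand the lower bound $\prod_{i=1}^p\bigl((t_{i,T}-s_{i,T})T-1\bigr)$ and the upper bound $\prod_{i=1}^p\bigl((t_{i,T}-s_{i,T})T+1\bigr)$ as polynomials in $T$. Writing $L_i=(t_{i,T}-s_{i,T})T$, the lower bound is $\prod_i(L_i-1)=\prod_i L_i-\sum_j\prod_{i\ne j}L_i+R_{1,T}$, where $R_{1,T}$ collects all terms missing at least two factors $L_i$. Dividing by $\prod_i L_i=T^p\lambda([\bs_T,\bt_T])$, the first-order correction $\sum_j\prod_{i\ne j}L_i/\prod_i L_i=\sum_j 1/L_j=\sum_j (T(t_{j,T}-s_{j,T}))^{-1}$ is $O\bigl(T^{-1}(\min_i(t_{i,T}-s_{i,T}))^{-1}\bigr)$, while each term in $R_{1,T}/\prod_i L_i$ is a product of at least two reciprocals $L_j^{-1}$, hence bounded by $(\min_i(t_{i,T}-s_{i,T}))^{-2}T^{-2}$; invoking the hypothesis $(\min_i(t_{i,T}-s_{i,T}))^{-1}=o(T)$ shows this is $o\bigl(T^{-1}(\min_i(t_{i,T}-s_{i,T}))^{-1}\bigr)$, i.e.\ dominated by the first-order term. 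The same expansion with $L_i+1$ in place of $L_i-1$ gives the matching upper bound with an analogous remainder $R_{2,T}$. Combining the two bounds yields the claimed asymptotic equality.

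The only mildly delicate point — and the one I would be most careful about — is verifying that the higher-order remainder $R_{j,T}$ is genuinely absorbed into the stated $O(\cdot)$ error, i.e.\ that $T^{-2}(\min_i(t_{i,T}-s_{i,T}))^{-2}=o\bigl(T^{-1}(\min_i(t_{i,T}-s_{i,T}))^{-1}\bigr)$. This is exactly the step where the assumption $\bigl(\min_i(t_{i,T}-s_{i,T})\bigr)^{-1}=o(T)$ is used, since it rearranges to $T^{-1}(\min_i(t_{i,T}-s_{i,T}))^{-1}=o(1)$, so multiplying that factor into the first-order correction term shows the second-order contribution is asymptotically negligible relative to it. Everything else is routine bookkeeping with finite products, and no probabilistic input is needed — this is a purely deterministic lattice-point count. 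I would therefore present the one-dimensional bound, the product factorization, and the two-sided polynomial expansion, and let the $o(\cdot)$ and $O(\cdot)$ accounting close the argument.
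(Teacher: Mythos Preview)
Your proposal is correct and follows essentially the same route as the paper: the one-dimensional bound $\floor{tT}-\ceil{sT}+1\in[(t-s)T-1,(t-s)T+1]$, the product factorization over coordinates, the two-sided polynomial expansion of $\prod_i(L_i\pm 1)$, and the use of the hypothesis $(\min_i(t_{i,T}-s_{i,T}))^{-1}=o(T)$ to absorb the second- and higher-order remainder into the first-order $O(\cdot)$ term. The only cosmetic difference is that you make the product factorization $\prod_i(\floor{t_{i,T}T}-\ceil{s_{i,T}T}+1)$ explicit, whereas the paper jumps directly to the inequality $\prod_i((t_{i,T}-s_{i,T})T-1)$.
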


\begin{Remark} \label{rem_gridpoints_rectangle}
In particular, Lemma \ref{lem_gridpoints_rectangle} implies that for $s_{i,T},t_{i,T}$ not depending on $T$ that there exists a constant $C>0$ not depending  on $T$ such that
	\[ 
\abs{\geschweift{\bk\in\Z^p:\frac{\bk}{T}\in\left[\bs_T,\bt_T\right]}}=
T^p\lambda\left(\left[\bs_T,\bt_T\right]\right)\left(1+O\left(\frac{1}{T}\right)\right)=
T^p\lambda\left(\left[\bs_T,\bt_T\right]\right)\left(1+r_T\left(\bs,\bt\right)\right).
\]
with $\abs{r_T\left(\bs,\bt\right)}\le C/T$. 
\end{Remark}

\subsection{Riemann-integrability and Jordan-measurable sets}	
\begin{Definition} (see \cite{DuistermattKolk}, Definition 6.3.1)\label{def_jordan}
	Let $ A\subset\R^p $ be bounded.
	
	\begin{enumerate}[(a)]
		\item If $ A $ is Jordan-measurable, then
		\[ 
		\vol(A)=\int\limits_{\bx\in\R^p} \mathds{1}_{A}(\bx) dx
		\]	
		is called \emph{Jordan measure}.
		\item $ A $ is \emph{negligible} if $ \vol(A)=0 $.
	\end{enumerate}
\end{Definition}
Note that Jordan-measurable sets are Lebesgue measurable, with the Jordan and the Lebesgue measure coinciding (compare e.g.\ \cite{Salamon}, Theorem 2.24).

\begin{Lemma}\label{lem_jordan_measurable}
	Let $ A,A_1,\ldots,A_n\subset[0,1]^p $ be compact and let $A^{\prime}$ be the complement of $A$ in $[0,1]^p$. 
	\begin{enumerate}[(a)]
		\item (see \cite{DuistermattKolk}, Theorem 6.3.2) $ A $ being Jordan-measurable is equivalent to $ \partial A $ being negligible.
		\item If $ A $ is Jordan-measurable, then $ A^{\prime} $ is Jordan-measurable.
		\item If $A_1,\ldots,A_n$ are Jordan-measurable, then $ A_1\cap\ldots\cap A_n$ is Jordan-measurable, as well. 
		\item (see \cite{MennucciDuci}, Lemma 4.1) If $ A $ is Jordan-measurable, then $ A^{(\gamma)} $ is Jordan-measurable, as well.
	\end{enumerate}
	\begin{proof}
		(b): Since both $ \partial A $ and $ \partial[0,1]^p $ are negligible and $ \partial A^{\prime}=\partial A\cup \partial[0,1]^p $, the assertion follows immediately from (a).\\\\
		(c) For two sets $A,B$ it holds that
		\begin{align*}
			&\ \partial(A\cap B)=\closure{A\cap B}\setminus\inner{(A\cap B)}=\closure{A\cap B}\setminus(\inner A\cap \inner B )
			=\closure{A\cap B}\cap \left(\left(\inner A\right)^{\prime}\cup \left(\inner B\right)^{\prime}\right)\\
			\subset&\ \left(\closure{A}\cap\closure{B}\right)\cap\left(\left(\inner A\right)^{\prime}\cup \left(\inner B\right)^{\prime}\right)\subset \left(\closure{A}\cap\left(\inner A\right)^{\prime}\right)\cup \left(\closure{B}\cap\left(\inner B\right)^{\prime}\right)=\partial A\cup\partial B,
		\end{align*}
	where $A^{\circ}$ and $\closure{A}$ are the interior and the closure of $A$, respectively. 
	It follows inductively that $\partial(A_1\cap\ldots\cap A_n)\subset\partial A_1\cup\ldots\cup \partial A_n$. Hence $\partial(A_1\cap\ldots\cap A_n) $ is negligible and the assertion follows by (a).

	\end{proof}
\end{Lemma}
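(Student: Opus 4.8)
The plan is short, since parts (a) and (d) are quoted from \cite{DuistermattKolk} (Theorem 6.3.2) and \cite{MennucciDuci} (Lemma 4.1), so that only (b) and (c) need an argument. Both reduce, via the characterization in (a) that Jordan-measurability is equivalent to a negligible boundary, to checking that the boundary of the set in question is negligible. The only input beyond (a) is that a finite union of negligible sets is negligible, which holds because each such boundary can be covered by finitely many cubes of arbitrarily small total volume and Jordan outer content is finitely subadditive.

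For (b): writing $A^{\prime}=[0,1]^p\setminus A$, I would first establish the inclusion $\partial A^{\prime}\subset\partial A\cup\partial[0,1]^p$. This holds because on the open cube $\inner{([0,1]^p)}$ the sets $A$ and $A^{\prime}$ are relative complements of one another and hence have the same relative boundary there, so any boundary point of $A^{\prime}$ is either a boundary point of $A$ or lies on $\partial[0,1]^p$. Now $\partial[0,1]^p$ is negligible because $[0,1]^p$ is Jordan-measurable, and $\partial A$ is negligible by (a); hence $\partial A^{\prime}$ is negligible, and $A^{\prime}$ is Jordan-measurable by (a).

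For (c): the key step is the two-set inclusion $\partial(A\cap B)\subset\partial A\cup\partial B$, which I would derive from $\closure{A\cap B}\subset\closure{A}\cap\closure{B}$ and $\inner{(A\cap B)}=\inner{A}\cap\inner{B}$ via the set computation
\[
\partial(A\cap B)=\closure{A\cap B}\setminus\inner{(A\cap B)}\subset\left(\closure{A}\cap\closure{B}\right)\cap\left(\left(\inner{A}\right)^{\prime}\cup\left(\inner{B}\right)^{\prime}\right)\subset\partial A\cup\partial B .
\]
An easy induction on $n$ then gives $\partial(A_1\cap\ldots\cap A_n)\subset\partial A_1\cup\ldots\cup\partial A_n$. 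Each $\partial A_i$ is negligible by (a), the finite union is therefore negligible, and one last application of (a) yields that $A_1\cap\ldots\cap A_n$ is Jordan-measurable.

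I do not expect a substantial obstacle: once (a) is available, the lemma is bookkeeping with topological boundaries. The only point requiring mild care is (b), where one must keep the complement relative to $[0,1]^p$ (not $\R^p$) distinct from the ordinary complement and correctly account for the extra term $\partial[0,1]^p$, which is harmless precisely because $[0,1]^p$ is itself Jordan-measurable.
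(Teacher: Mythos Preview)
Your proposal is correct and follows essentially the same route as the paper: for (b) you use $\partial A^{\prime}\subset\partial A\cup\partial[0,1]^p$ together with (a), and for (c) you derive the two-set inclusion $\partial(A\cap B)\subset\partial A\cup\partial B$ via $\closure{A\cap B}\subset\closure{A}\cap\closure{B}$ and $\inner{(A\cap B)}=\inner{A}\cap\inner{B}$, then induct --- exactly as the paper does. Your use of an inclusion in (b) rather than the paper's stated equality is in fact slightly more careful (the equality can fail, e.g.\ for $A=[0,1]^p$), but either version suffices for negligibility.
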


\begin{Lemma}
	Let $A\subset[0,1]^p$ be a Jordan-measurable set. Then for each $\varepsilon>0$  there exist $N_I(\varepsilon)$, $N_O(\varepsilon)$ and finite partitions of non-degenerate hyperrectangles $ \geschweift{B_{I,1},\ldots,B_{I,N_I(\varepsilon)}} $, $ \geschweift{B_{O,1},\ldots,B_{O,N_O(\varepsilon)}} $ such that
	\begin{align*}
		&\ \sum\limits_{i=1}^{N_I}B_{I,i}\subset A \subset \sum\limits_{i=1}^{N_O}B_{O,i},\ 
		\lambda\left(\sum\limits_{i=1}^{N_O}B_{O,i}\right)-\lambda\left(\sum\limits_{i=1}^{N_I}B_{I,i}\right)<\varepsilon/2.
	\end{align*}
\begin{proof}
	For any set $B$ it holds that
 
\begin{align}
		\supp{\bx\in B}\mathds{1}_A(\bx)=\mathds{1}_{\geschweift{A\cap B\ne\emptyset}},\quad \inff{\bx\in B}\mathds{1}_A(\bx)= \mathds{1}_{\geschweift{B\subset A}}. \label{eq_eins}
\end{align}
	 By the definition of Jordan-measurability, there exists a bounded hyperrectangle $B\subset\R^p$ with $A\subset B$ such that $\mathds{1}_A $ is Jordan-measurable over $B$. Therefore by the definition of Riemann integrability 
	 there exist $N_I(\varepsilon)$, $N_O(\varepsilon)$ and partitions $\mathcal{B}_{I,\varepsilon}=\geschweift{B_{I,1},\ldots,B_{I,N_I(\varepsilon)}}$, 
$\mathcal{B}_{O,\varepsilon}=\geschweift{B_{O,1},\ldots,B_{O,N_O(\varepsilon)}}$ such that
\begin{align*}
	&\ \overline{S}(\mathds{1}_A,\mathcal{B}_{O,\varepsilon})-\lambda(A)<\frac{\varepsilon}{2},\quad \lambda(A)-\underline{S}(\mathds{1}_A,\mathcal{B}_{I,\varepsilon})<\frac{\varepsilon}{2}.
\end{align*}
Let $I_I=\geschweift{i=1,\ldots,N_I(\varepsilon)| B_{I,i}\subset A}$, $I_O=\geschweift{i=1,\ldots,N_O(\varepsilon)|A\cap B_{O,i}\ne\emptyset}$. Then it holds by \eqref{eq_eins} that
\begin{align*}
	&\ \overline{S}(\mathds{1}_A,\mathcal{B}_{O,\varepsilon})=
	\sum\limits_{i=1}^{N_O(\varepsilon)}\supp{\bx\in B_{O,i}}\mathds{1}_A(\bx)\lambda(B_{O,i})=
	\sum\limits \limits_{i\in I_O} \lambda\left(B_{O,i}\right)=
	\lambda\left(\bigcup\limits_{i\in I_O}B_{O,i}\right)
	\\\intertext{and analogously}&\ 
	\underline{S}(\mathds{1}_A,\mathcal{B}_{I,\varepsilon})=\lambda\left(\bigcup\limits_{i\in I_I}B_{I,i}\right).
\end{align*}
Therefore the assertion follows with the partitions $\geschweift{B_{I,i}|i\in I_I}$ and  $\geschweift{B_{O,i}|i\in I_O}$.
\end{proof}
\end{Lemma}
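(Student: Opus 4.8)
The plan is to unwind the definition of Jordan measurability — for the bounded set $A$ this means that $\mathds{1}_A$ is Riemann integrable over any hyperrectangle containing $A$ (see \cite{DuistermattKolk}, Chapter 6) — and to read off the required inner and outer rectangular approximations directly from the Darboux sums of $\mathds{1}_A$.

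First I would fix a closed hyperrectangle $B \subset \R^p$ with $A \subset B$ over which $\mathds{1}_A$ is Riemann integrable, so that $\lambda(A) = \int_B \mathds{1}_A$. The key elementary observation is that, since $\mathds{1}_A$ takes only the values $0$ and $1$, for every sub-hyperrectangle $B'$ of $B$ one has $\sup_{\bx \in B'} \mathds{1}_A(\bx) = \mathds{1}_{\{A \cap B' \ne \emptyset\}}$ and $\inf_{\bx \in B'} \mathds{1}_A(\bx) = \mathds{1}_{\{B' \subset A\}}$. Consequently, for any finite partition $\mathcal{B} = \{B_1, \dots, B_N\}$ of $B$ into non-degenerate hyperrectangles, the upper Darboux sum $\overline{S}(\mathds{1}_A, \mathcal{B})$ equals the total volume of the cells that meet $A$, and the lower Darboux sum $\underline{S}(\mathds{1}_A, \mathcal{B})$ equals the total volume of the cells that are contained in $A$; since the cells have pairwise disjoint interiors, these total volumes are the Lebesgue measures of the corresponding unions, the union of the meeting cells contains $A$, and the union of the contained cells lies inside $A$.

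Next, given $\varepsilon > 0$, I would invoke Riemann integrability to choose a partition $\mathcal{B}_O$ with $\overline{S}(\mathds{1}_A, \mathcal{B}_O) < \lambda(A) + \varepsilon/4$ and a partition $\mathcal{B}_I$ with $\underline{S}(\mathds{1}_A, \mathcal{B}_I) > \lambda(A) - \varepsilon/4$. Taking $\{B_{O,i}\}_{i \le N_O}$ to be the cells of $\mathcal{B}_O$ that intersect $A$ and $\{B_{I,i}\}_{i \le N_I}$ the cells of $\mathcal{B}_I$ that are contained in $A$ (discarding any degenerate cell, which is a null set and changes neither the Darboux sums nor the unions), the previous step yields $\bigcup_i B_{I,i} \subset A \subset \bigcup_i B_{O,i}$ together with $\lambda(\bigcup_i B_{O,i}) - \lambda(\bigcup_i B_{I,i}) = (\overline{S}(\mathds{1}_A, \mathcal{B}_O) - \lambda(A)) + (\lambda(A) - \underline{S}(\mathds{1}_A, \mathcal{B}_I)) < \varepsilon/2$, which is exactly the assertion.

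I do not expect a serious obstacle here; the argument is essentially bookkeeping. The one point that deserves care is the identity expressing the supremum and infimum of $\mathds{1}_A$ on a cell through the indicators of $\{A \cap B' \ne \emptyset\}$ and $\{B' \subset A\}$, together with the harmless removal of degenerate cells so that the retained hyperrectangles are non-degenerate. One could alternatively pass to a common refinement of $\mathcal{B}_I$ and $\mathcal{B}_O$ and bound $\overline{S} - \underline{S}$ on it, but keeping two separate partitions is slightly cleaner and is permitted since the statement does not require $N_I = N_O$.
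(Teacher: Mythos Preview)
Your proposal is correct and follows essentially the same route as the paper: unwind Jordan measurability as Riemann integrability of $\mathds{1}_A$ over an enclosing box, use the identities $\sup_{B'}\mathds{1}_A=\mathds{1}_{\{A\cap B'\ne\emptyset\}}$ and $\inf_{B'}\mathds{1}_A=\mathds{1}_{\{B'\subset A\}}$ to interpret the Darboux sums as volumes of the outer and inner cell unions, and then select partitions making these sums close to $\lambda(A)$. Your choice of $\varepsilon/4$ per partition is in fact cleaner for hitting the stated bound $\varepsilon/2$, and your remark on discarding degenerate cells is a small extra care the paper leaves implicit.
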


\begin{Lemma}\label{lem_ball_covering}
	Let $A \subset[0,1]^p$ be a compact Jordan-measurable set with $\lambda(A)>0$. For $\varepsilon>0$, there exist $\delta=\delta(\varepsilon)>0$, $N(\delta)$ and $\bx_1,\ldots,\bx_{N(\delta)}$ such that 
	\begin{align*}
		\partial A\subset\bigcup\limits_{i=1}^{N(\delta)} B_{\delta,\infty}(\bx_i),\quad\lambda\left(\bigcup\limits_{i=1}^{N(\delta)} B_{\delta,\infty}(\bx_i)\right)<\varepsilon.
	\end{align*}
	\begin{proof}
		By \cite{DuistermattKolk}, Sec.\ 6.3, there exists a partitioning with hyperrectangles $\geschweift{B_{i},i=1,\ldots,N(\varepsilon,\partial A)}$  such that
		\begin{align}
			\partial A\subset\bigcup_{i=1}^{N(\varepsilon,\partial A)}B_i,\quad\lambda\left(\bigcup_{i=1}^{N(\varepsilon,\partial A)}B_i\right)<\frac{\varepsilon}{2^p}.
			\label{ball_covering0}
		\end{align}		 Let $\delta=\delta(\varepsilon)$ be the minimal side length of any of the $B_i$. There exist $N(\delta)$ and $\bx_1,\ldots,\bx_{N(\delta)}$ such that
		\begin{align}
			\bigcup\limits_{i=1}^{N(\delta)} B_{\delta,\infty}(\bx_i)\supset \bigcup_{i=1}^{N(\varepsilon,\partial A)}B_i,\quad \lambda\left(\bigcup\limits_{i=1}^{N(\delta)} B_{\delta,\infty}(\bx_i)\right)\le 2^p\lambda\left(\bigcup_{i=1}^{N(\varepsilon,\partial A)}B_i\right): \label{ball_covering}
		\end{align}		 
		In dimension $1$, it is possible to cover an interval $[a,a+\beta]$ by intervals $[b_i,b_i+\delta]$ with $\delta\le\beta$ such that $ \lambda\left(\bigcup\limits_j [b_j,b_j+\delta]\right)\le 2\beta$: One can e.g.\ choose $b_j=a+j\delta$ with $j=0,\ldots,\ceil{\beta/\delta}-1$. Then
		\[ 
		\bigcup\limits_{j=0}^{\ceil{\beta/\delta}-2}[b_j,b_j+\delta]\subset[a,a+\beta]\subset\bigcup\limits_{j=0}^{\ceil{\beta/\delta}-1}[b_j,b_j+\delta]=[a,a+\ceil{\beta/\delta}\delta]
		\]
		and since $\delta<\beta$, it holds that $\ceil{\beta/\delta}\delta\le2\beta $. Analogously, we obtain \eqref{ball_covering}.\\
		By combining \eqref{ball_covering0} and \eqref{ball_covering}, the assertion follows.
	\end{proof}
\end{Lemma}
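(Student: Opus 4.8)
The plan is to exploit that Jordan-measurability of $A$ is, by Lemma~\ref{lem_jordan_measurable}~(a), equivalent to $\partial A$ being negligible, i.e.\ having Jordan measure zero. By the definition of the Jordan measure (equivalently, Riemann-integrability of $\mathds{1}_{\partial A}$; see also \cite{DuistermattKolk}, Sec.~6.3), this means that $\partial A$ can be covered by finitely many hyperrectangles of arbitrarily small total volume. So I would first fix $\varepsilon>0$ and choose finitely many hyperrectangles $B_1,\ldots,B_N$ with $\partial A\subset\bigcupp_{i=1}^N B_i$ and $\lambda\big(\bigcupp_{i=1}^N B_i\big)<\varepsilon/2^p$.

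It then remains to replace these rectangles by a finite union of closed $\|\cdot\|_\infty$-balls (i.e.\ axis-parallel cubes) of a common radius, at the cost of a bounded volume inflation. I would set $\delta$ to be the smallest side length occurring among $B_1,\ldots,B_N$, so that $\delta\le\beta$ holds for every side $[a,a+\beta]$ of every $B_i$. The one-dimensional fact I would use is that such an interval is covered by the consecutive intervals $[a+j\delta,a+(j+1)\delta]$, $j=0,\ldots,\ceil{\beta/\delta}-1$, whose union equals $[a,a+\ceil{\beta/\delta}\delta]$ and hence has length $\ceil{\beta/\delta}\,\delta\le 2\beta$ (using $\delta\le\beta$). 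Taking products over the $p$ coordinate directions covers each $B_i$ by finitely many cubes of side $\delta$ of total volume at most $2^p\lambda(B_i)$, and each such cube is $B_{\delta/2,\infty}(\bx)$ with $\bx$ its midpoint. Collecting these midpoints over all $i$ produces $\bx_1,\ldots,\bx_{N(\delta)}$ with $\partial A\subset\bigcupp_{i=1}^{N(\delta)}B_{\delta/2,\infty}(\bx_i)$ and total volume $\le 2^p\lambda\big(\bigcupp_{i=1}^N B_i\big)<\varepsilon$; after renaming $\delta/2$ as $\delta$ this is the claim.

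I do not expect a genuine obstacle: the argument stays entirely within finite unions of elementary sets, so no limiting or measurability issues arise (and, incidentally, the hypothesis $\lambda(A)>0$ is not needed for the conclusion). The only point requiring a little care is that the \emph{same} $\delta$ must serve for all of $B_1,\ldots,B_N$ in the tiling step; this is exactly why $\delta$ is taken as the minimal side length, which is what makes the estimate $\ceil{\beta/\delta}\,\delta\le 2\beta$ available for every side that occurs. In short, the content is the equivalence ``Jordan-measurable $\iff$ negligible boundary'' together with this elementary tiling estimate.
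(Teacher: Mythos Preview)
Your proposal is correct and follows essentially the same route as the paper: cover $\partial A$ by finitely many hyperrectangles of total volume $<\varepsilon/2^p$ (using negligibility of $\partial A$), take $\delta$ to be the minimal side length, and tile each rectangle by axis-parallel cubes of side $\delta$ via the one-dimensional estimate $\ceil{\beta/\delta}\,\delta\le 2\beta$. Your version is in fact slightly more careful in distinguishing cube side $\delta$ from ball radius $\delta/2$ and then renaming; the paper glosses over this point. Your remark that $\lambda(A)>0$ is unused is also accurate.
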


\begin{Lemma}\label{lem_measure_fattening}
	Let $A\subset[0,1]^p$ be a compact Jordan-measurable set.  For $\gamma_T\to 0$ it holds that $\lambda\left(A^{(\gamma_T)}\setminus A\right)\to 0$ as $T\to\infty$.
	\begin{proof}
		By Lemma \ref{lem_jordan_measurable} (d), $A^{(\gamma)}$ is Jordan-measurable, hence $\lambda\left(A^{(\gamma_T)}\setminus A\right)$ is well-defined. For $\varepsilon>0$, by Lemma \ref{lem_ball_covering} there exist $\delta=\delta(\varepsilon)$ and a covering $ \geschweift{B_{\delta,\infty}(\bx_i),i=1,\ldots,N(\delta)}$ with balls of $\partial A$ such that
		\[ 
		\partial A\subset\bigcup\limits_{i=1}^{N(\delta)} B_{\delta,\infty}(\bx_i)\quad\text{and }\lambda\left(\bigcup\limits_{i=1}^{N(\delta)} B_{\delta,\infty}(\bx_i)\right)<\varepsilon/2^p.
		\]
		By Lemma \ref{covering_number} it holds for $T$ sufficiently large (and thus $\gamma_T$ sufficiently small) that
		\[ 
		A^{(\gamma_T)}\setminus A\subset \bigcup\limits_{i=1}^{N(\delta)} B_{\delta+\gamma_T,\infty}(\bx_i)\subset \bigcup\limits_{i=1}^{N(\delta)} B_{2\delta,\infty}(\bx_i).
		\]	
		Therefore it follows that
		\begin{align*}
			\lambda\left(A^{(\gamma_T)}\setminus A\right)\le \lambda\left(\bigcup\limits_{i=1}^{N(\delta)} B_{2\delta,\infty}(\bx_i)\right)<\varepsilon.
		\end{align*}
		As $\varepsilon$ was chosen arbitrarily, the assertion follows.
	\end{proof}
	
\end{Lemma}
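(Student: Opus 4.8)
The plan is to reduce the claim about the fattening $A^{(\gamma_T)}$ to a covering argument at the boundary $\partial A$. Since $A$ is compact and Jordan-measurable, Lemma \ref{lem_jordan_measurable} (a) tells us $\partial A$ is negligible, i.e.\ $\vol(\partial A)=0$; and by Lemma \ref{lem_jordan_measurable} (d) the set $A^{(\gamma_T)}$ is again Jordan-measurable, so $\lambda(A^{(\gamma_T)}\setminus A)$ is a well-defined (finite) number. The whole point is that the difference $A^{(\gamma_T)}\setminus A$ is contained in a thin shell around $\partial A$ whose measure can be made arbitrarily small.

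First I would fix $\varepsilon>0$ and invoke Lemma \ref{lem_ball_covering}: since $A$ is a compact Jordan-measurable set with $\lambda(A)>0$, there exist $\delta=\delta(\varepsilon)>0$, a number $N(\delta)$, and points $\bx_1,\ldots,\bx_{N(\delta)}$ with
\[
\partial A\subset\bigcup_{i=1}^{N(\delta)}B_{\delta,\infty}(\bx_i),\qquad
\lambda\left(\bigcup_{i=1}^{N(\delta)}B_{\delta,\infty}(\bx_i)\right)<\frac{\varepsilon}{2^p}.
\]
Next I would use Lemma \ref{covering_number}, which says precisely that a covering of $\partial A$ by $\varepsilon$-balls expands to a covering of $A^{(\gamma)}\setminus A$ by $(\gamma+\varepsilon)$-balls with the same centers. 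Since $\gamma_T\to 0$, for all $T$ large enough we have $\gamma_T\le\delta$, hence
\[
A^{(\gamma_T)}\setminus A\subset\bigcup_{i=1}^{N(\delta)}B_{\delta+\gamma_T,\infty}(\bx_i)\subset\bigcup_{i=1}^{N(\delta)}B_{2\delta,\infty}(\bx_i).
\]
Doubling the radius of an $\ell^\infty$-ball in $\R^p$ multiplies its volume by $2^p$, so
\[
\lambda\left(A^{(\gamma_T)}\setminus A\right)\le\lambda\left(\bigcup_{i=1}^{N(\delta)}B_{2\delta,\infty}(\bx_i)\right)\le 2^p\lambda\left(\bigcup_{i=1}^{N(\delta)}B_{\delta,\infty}(\bx_i)\right)<\varepsilon.
\]
Since this holds for all sufficiently large $T$ and $\varepsilon>0$ was arbitrary, $\limsup_{T\to\infty}\lambda(A^{(\gamma_T)}\setminus A)\le\varepsilon$ for every $\varepsilon$, which gives $\lambda(A^{(\gamma_T)}\setminus A)\to 0$.

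The only mildly delicate point — and the one I would be most careful about — is the step from a covering of $\partial A$ to a covering of $A^{(\gamma_T)}\setminus A$: it relies on the geometric fact (Lemma \ref{boundary_point}) that every point of $A^{(\gamma)}\setminus A$ lies within $\ell^\infty$-distance $\gamma$ of $\partial A$ (not merely of $A$), which in turn uses compactness of $A$ to extract a nearest boundary point. Everything else is bookkeeping: making sure the radius inflation $\delta+\gamma_T\le 2\delta$ is valid for large $T$, and that the crude bound $\lambda(\bigcup B_{2\delta})\le 2^p\lambda(\bigcup B_{\delta})$ suffices (one does not need a sharp constant, just a uniform one). No measurability subtlety arises because Lemma \ref{lem_jordan_measurable} (d) guarantees $A^{(\gamma_T)}$ is Jordan-measurable, so all the Lebesgue measures above are legitimate.
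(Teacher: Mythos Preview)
Your proof is correct and follows the paper's argument essentially verbatim: invoke Lemma \ref{lem_ball_covering} to cover $\partial A$ by balls of total measure less than $\varepsilon/2^p$, use Lemma \ref{covering_number} to pass to $A^{(\gamma_T)}\setminus A$, and absorb the radius doubling via the $2^p$ factor. The only difference is that you make the inequality $\lambda\big(\bigcup B_{2\delta,\infty}(\bx_i)\big)\le 2^p\lambda\big(\bigcup B_{\delta,\infty}(\bx_i)\big)$ explicit, whereas the paper leaves it implicit.
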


\begin{Lemma} \label{lem_jordan_assumptions}
	Let $ A,A_1,\ldots,A_n\subset[0,1]^p $ be compact Jordan-measurable sets not depending on $T$ with $ \lambda(A)>0 $. Then it holds that
\begin{align*}
	\intertext{(a)}
			\abs{\geschweift{\bk\in\Z^p:\frac{\bk}{T}\in A}}=&\ T^p\lambda(A)(1+o(1)),\\ 
			\intertext{(b)}
			\abs{\geschweift{\bk\in\Z^p:\frac{\bk}{T}\in \bigcap\limits_{i=1}^n A_i}}=&\ \begin{cases}
				T^p\lambda\left(\bigcap\limits_{i=1}^n A_i\right)(1+o(1)),&\text{ if $ \lambda\left(\bigcap\limits_{i=1}^n A_i\right)>0 $}\\
				o(T^p),&\text{ if $ \lambda\left(\bigcap\limits_{i=1}^n A_i\right)=0, $}\end{cases}\\
			\intertext{(c)} 
			\abs{\geschweift{\bk\in\Z^p:\frac{\bk}{T}\in A^{(\gamma_T)}\setminus A}}=&\ o(T^p) 
		\end{align*}	
	for $\gamma_T\to 0$ as $T\to\infty$.
		
	\begin{proof}
	(a) 
	We need to show that 
\begin{align*}
		&\ \abs{\frac{\abs{\geschweift{\bk\in\Z^p:\frac{\bk}{T}\in A}}}{T^p}-\lambda(A)}=o(1).
\end{align*}
	By the definition of Riemann-integrability, for $ A $ and $ \varepsilon>0 $, there exist $N_I=N_I(\varepsilon)$, $N_O=N_O(\varepsilon)$ and finite partitions of non-degenerate hyperrectangles $ \geschweift{B_{I,1},\ldots,B_{I,N_I}} $, $ \geschweift{B_{O,1},\ldots,B_{O,N_O}} $ such that
		\begin{align*}
		&\ \sum\limits_{i=1}^{N_I}B_{I,i}\subset A \subset \sum\limits_{i=1}^{N_O}B_{O,i},\ 
			\lambda\left(\sum\limits_{i=1}^{N_O}B_{O,i}\right)-\lambda\left(\sum\limits_{i=1}^{N_I}B_{I,i}\right)<\varepsilon/2.
			\end{align*}
		It holds that
		\begin{align*}
		&\ \abs{\geschweift{\bk\in\Z^p:\frac{\bk}{T}\in A}}\le
			\abs{\geschweift{\bk\in\Z^p:\frac{\bk}{T}\in \sum\limits_{i=1}^{N_O}B_{O,i}}}=\sum\limits_{i=1}^{N_O} \abs{\geschweift{\bk\in\Z^p:\frac{\bk}{T}\in B_{O,i}}}. 
\end{align*}
By Lemma \ref{lem_gridpoints_rectangle}, Remark \ref{rem_gridpoints_rectangle} and since there are finitely many $B_{O,i}$ there exists $C_{\varepsilon}>0$ not depending on $T$ such that
\begin{align*}
	&\ \abs{\frac{\abs{\geschweift{\bk\in\Z^p:\frac{\bk}{T}\in B_{O,i}}}}{T^p}-\lambda(B_{O,i})}\le\frac{C_{\varepsilon}}{T},
\end{align*}
hence for $\varepsilon>0$ there exists a $T_{\varepsilon}>0$ uniform in $i$ such that
\begin{align*}
		&\ \abs{\frac{\abs{\geschweift{\bk\in\Z^p:\frac{\bk}{T}\in B_{O,i}}}}{T^p}-\lambda(B_{O,i})}<\frac{\varepsilon}{2N_O}
\end{align*}
for $T\ge T_{\varepsilon}$ as $N_O$ is finite.
Therefore it holds for $T\ge T_{\varepsilon}$ that
\begin{align*}
	&\ \frac{\abs{\geschweift{\bk\in\Z^p:\frac{\bk}{T}\in A}}}{T^p}\le \frac{\sum\limits_{i=1}^{N_O} \abs{\geschweift{\bk\in\Z^p:\frac{\bk}{T}\in B_{O,i}}} }{T^p}<\sum\limits_{i=1}^{N_O} \lambda\left(B_{O,i}\right)+\varepsilon/2<\lambda(A)+\varepsilon.\\
	\intertext{Analogously it holds that}
	&\ \frac{\abs{\geschweift{\bk\in\Z^p:\frac{\bk}{T}\in A}}}{T^p}\ge\lambda(A)-\varepsilon
\end{align*}
for $T$ sufficiently large, thus showing the assertion.\\\\		
		(b): 
		By Lemma \ref{lem_jordan_measurable} (c), $ A_1\cap\ldots\cap A_n $ is Jordan-measurable hence $\lambda\left(A_1\cap\ldots\cap A_n\right)$ is well-defined and by the definition of Riemann-integrability, for $\varepsilon>0$ there exist
		$N_I=N_I(\varepsilon)$, $N_O=N_O(\varepsilon)$ and finite partitions of hyperrectangles $ \geschweift{B_{I,1},\ldots,B_{I,N_I}} $, $ \geschweift{B_{O,1},\ldots,B_{O,N_O}} $ such that
		\begin{align}
			&\ \sum\limits_{i=1}^{N_I}B_{I,i}\subset \bigcap\limits_{i=1}^n A_i \subset \sum\limits_{i=1}^{N_O}B_{O,i},\ 
			\lambda\left(\sum\limits_{i=1}^{N_O}B_{O,i}\right)-\lambda\left(\sum\limits_{i=1}^{N_I}B_{I,i}\right)<\varepsilon/2. \label{cover_jordan_intersect}
		\end{align} 
	  If $ \lambda\left(\bigcap\limits_{i=1}^n A_i\right)=0 $, \eqref{cover_jordan_intersect}, implies that for $ \varepsilon>0 $ there exists a covering $ \bigcup\limits_{i=1}^{N(\varepsilon)}B_i \supset\bigcap\limits_{i=1}^n A_i $ with hyperrectangles $ B_i $ such that
		$ 		\lambda\left(\bigcup\limits_{i=1}^{N(\varepsilon)}B_i\right)<\varepsilon/2.$
		Analogously to the proof of (a) there exists $T_{\varepsilon}$ such that
		\begin{align*}
			&\ \abs{\frac{\abs{\geschweift{\bk\in\Z^p:\frac{\bk}{T}\in B_{i}}}}{T^p}-\lambda(B_{i})}<\frac{\varepsilon}{2N(\varepsilon)}
		\end{align*}
		for $T\ge T_{\varepsilon}$. Therefore it holds for $T\ge T_{\varepsilon}$ that
		\begin{align*}
		\frac{\abs{\geschweift{\bk\in\Z^p:\frac{\bk}{T}\in \bigcap\limits_{i=1}^n A_i}}}{T^p}\le \frac{\sum\limits_{i=1}^{N(\varepsilon)}\abs{\geschweift{\bk\in\Z^p:\frac{\bk}{T}\in B_i}}}{T^p}<\sum\limits_{i=1}^{N(\varepsilon)}\lambda\left(B_i\right)+\varepsilon/2<\varepsilon.
			\end{align*}
		Since $ \varepsilon>0 $ was chosen arbitrarily, the assertion follows.\\\\
		If $ \lambda\left(\bigcap\limits_{i=1}^n A_i\right)>0 $, the assertion immediately  follows by (a).\\\\
		(c): By Lemma \ref{lem_jordan_measurable} (d), $A^{(\gamma_T)}$ is Jordan-measurable hence $\lambda\left(A^{(\gamma_T)}\setminus A\right)$ is well-defined. 
		Analogously to the proof of Lemma \ref{lem_measure_fattening}, for $\varepsilon>0$ there exist $\delta=\delta(\varepsilon)$, $N(\delta)$ and $\bx_1,\ldots,\bx_{N(\delta)}$ such that
		\[ 
		A^{(\gamma_T)}\setminus A\subset  \bigcup\limits_{i=1}^{N(\delta)} B_{\delta,\infty}(\bx_i)\text{ and }\lambda\left(\bigcup\limits_{i=1}^{N(\delta)} B_{\delta,\infty}(\bx_i)\right)<\varepsilon/2.
		\]
		Analogously to (a) there exists $T_{\varepsilon}>0$ such that
		\begin{align*}
	&\ \abs{\frac{\abs{\geschweift{\bk\in\Z^p:\frac{\bk}{T}\in B_{\delta,\infty}(\bx_i)}}}{T^p}-\lambda(B_{\delta,\infty}(\bx_i))}<\frac{\varepsilon}{2N(\delta)}
\end{align*}
		for $T\ge T_{\varepsilon}$. Therefore it holds for $T\ge T_{\varepsilon}$ that
		\begin{align*}
		&\ \frac{\abs{\geschweift{\bk:\frac{\bk}{T}\in A^{(\gamma_T)}\setminus A}}}{T^p}\le \sum\limits_{i=1}^{N(\delta)}\frac{\abs{\geschweift{\bk:\frac{\bk}{T}\in  B_{\delta,\infty}(\bx_i)}}}{T^p}
			< \sum\limits_{i=1}^{N(\delta)}\lambda\left(B_{\delta,\infty}(\bx_i)\right)+\varepsilon/2<\varepsilon.
			\end{align*}
		As $\varepsilon$ was chosen arbitrarily, the assertion follows.
		\end{proof}
\end{Lemma}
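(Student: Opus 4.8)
The plan is to reduce all three parts to the single elementary estimate for the number of grid points in a hyperrectangle, which has already been quantified in Lemma~\ref{lem_gridpoints_rectangle} and Remark~\ref{rem_gridpoints_rectangle}: for a fixed box $B$ one has $\abs{\{\bk\in\Z^p:\bk/T\in B\}}=T^p\lambda(B)(1+O(1/T))$, uniformly over any fixed finite family of boxes. For part (a) I would use that a compact Jordan-measurable set $A$ can, for any $\varepsilon>0$, be sandwiched between finite unions of non-degenerate hyperrectangles $\bigcup_{i\le N_I}B_{I,i}\subset A\subset\bigcup_{i\le N_O}B_{O,i}$ whose Lebesgue measures differ by at most $\varepsilon/2$; this is exactly the unnamed lemma on inner/outer rectangle partitions proved just above (essentially the definition of Riemann integrability of $\mathds{1}_A$). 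Summing the single-box count over the finitely many outer boxes gives $\abs{\{\bk/T\in A\}}/T^p\le\sum_i\lambda(B_{O,i})+o(1)<\lambda(A)+\varepsilon$ for $T$ large, and the lower bound follows symmetrically from the inner boxes; letting $\varepsilon\to0$ yields (a).

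For part (b) I would first invoke Lemma~\ref{lem_jordan_measurable}(c) so that $\bigcap_{i=1}^n A_i$ is again compact and Jordan-measurable, making $\lambda(\bigcap_iA_i)$ well defined. If this measure is positive, (b) is immediate by applying (a) to the set $\bigcap_iA_i$. If it is zero, the negligibility of $\bigcap_iA_i$ provides, for every $\varepsilon>0$, a finite cover by hyperrectangles of total volume $<\varepsilon/2$; the same box-counting argument as in (a) then bounds $\abs{\{\bk/T\in\bigcap_iA_i\}}/T^p$ by $\varepsilon$ for all large $T$, giving the $o(T^p)$ claim.

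For part (c) I would note that $A^{(\gamma_T)}$ is Jordan-measurable by Lemma~\ref{lem_jordan_measurable}(d), so the count is well posed, and exploit that $\partial A$ is negligible: by Lemma~\ref{lem_ball_covering} fix, for given $\varepsilon>0$, a radius $\delta=\delta(\varepsilon)$ and finitely many centres $\bx_1,\ldots,\bx_{N(\delta)}$ with $\partial A\subset\bigcup_iB_{\delta,\infty}(\bx_i)$ of total measure $<\varepsilon/2^p$. For $T$ large enough that $\gamma_T\le\delta$, Lemma~\ref{covering_number} gives $A^{(\gamma_T)}\setminus A\subset\bigcup_iB_{\delta+\gamma_T,\infty}(\bx_i)\subset\bigcup_iB_{2\delta,\infty}(\bx_i)$, a fixed finite union of cubes of total measure $<\varepsilon$; a final application of the box count bounds the grid-point number by $\varepsilon T^p$ for large $T$, hence $o(T^p)$.

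I expect the only subtlety — worth stating carefully rather than a genuine difficulty — to be keeping the $O(1/T)$ error in the single-box count uniform over the finite families of boxes (and over the fixed cover in (c)): the covers must be chosen once, depending on $\varepsilon$ but \emph{not} on $T$, before letting $T\to\infty$, and only afterwards may one send $\varepsilon\to0$. Everything else is a routine $\varepsilon$-chase built entirely from results already established above.
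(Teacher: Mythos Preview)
Your proposal is correct and follows essentially the same route as the paper's own proof: all three parts are reduced to the single-box count of Lemma~\ref{lem_gridpoints_rectangle}/Remark~\ref{rem_gridpoints_rectangle} via, respectively, the inner/outer rectangle sandwich from Riemann integrability, Lemma~\ref{lem_jordan_measurable}(c) for the intersection, and the ball-covering machinery of Lemmas~\ref{lem_ball_covering} and~\ref{covering_number} for the fattening. Your explicit remark about fixing the cover before sending $T\to\infty$ is exactly the point the paper handles by choosing $T_\varepsilon$ after the finite cover.
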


\subsection{Sets with piecewise Lipschitz boundary}
\begin{Lemma}(compare \cite{Alexdiss}, Lemma B.32 (b)) \label{lem_Alex}
	Let $ \mathcal{S}_1,\mathcal{S}_2 $ be sets and $ d_1,d_2 $ be semimetrices on  $ \mathcal{S}_1$ and $\mathcal{S}_2 $, respectively. Let $ f:\mathcal{S}_1\to\mathcal{S}_2 $ be surjective such that there exist $ a,b>0 $ with
	\[ 
	d_2(f(x),f(y))\le b^ad_1^a(x,y)\quad\text{for all }x,y\in\mathcal{S}_1. 
	\]
	Then 
	\[ 
	N\left(\varepsilon,\mathcal{S}_2,d_2\right)\le N\left(\frac{\varepsilon^{1/a}}{b},\mathcal{S}_1,d_1\right).
	\]
\end{Lemma}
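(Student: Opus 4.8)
The plan is to reduce the covering number bound for $(\mathcal{S}_2,d_2)$ directly to the covering number bound for $(\mathcal{S}_1,d_1)$ by pushing an optimal covering of $\mathcal{S}_1$ forward through $f$. First I would set $\eta = \varepsilon^{1/a}/b$ and take an optimal $\eta$-covering of $\mathcal{S}_1$ in the $d_1$-semimetric, i.e.\ choose $x_1,\ldots,x_N \in \mathcal{S}_1$ with $N = N(\eta,\mathcal{S}_1,d_1)$ such that $\mathcal{S}_1 \subset \bigcup_{i=1}^N B_{\eta,d_1}(x_i)$. The natural candidate cover of $\mathcal{S}_2$ is then $\{B_{\varepsilon,d_2}(f(x_i))\}_{i=1}^N$, and the goal is to check it actually covers $\mathcal{S}_2$.

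The key step is the verification of coverage. Let $z \in \mathcal{S}_2$ be arbitrary. Since $f$ is surjective, pick $x \in \mathcal{S}_1$ with $f(x) = z$. Because the $x_i$ form an $\eta$-covering, there is an index $i$ with $d_1(x,x_i) \le \eta$. Applying the Hölder-type contraction hypothesis gives
\[
d_2(z,f(x_i)) = d_2(f(x),f(x_i)) \le b^a d_1^a(x,x_i) \le b^a \eta^a = b^a \left(\frac{\varepsilon^{1/a}}{b}\right)^a = b^a \cdot \frac{\varepsilon}{b^a} = \varepsilon,
\]
so $z \in B_{\varepsilon,d_2}(f(x_i))$. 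Since $z$ was arbitrary, $\mathcal{S}_2 \subset \bigcup_{i=1}^N B_{\varepsilon,d_2}(f(x_i))$, which exhibits an $\varepsilon$-covering of $\mathcal{S}_2$ with $N = N(\varepsilon^{1/a}/b,\mathcal{S}_1,d_1)$ elements. By the minimality in the definition of the covering number, $N(\varepsilon,\mathcal{S}_2,d_2) \le N(\varepsilon^{1/a}/b,\mathcal{S}_1,d_1)$, which is the claim.

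There is essentially no serious obstacle here; the only subtlety worth a line of care is that $d_1,d_2$ are semimetrics rather than metrics, so one should not invoke any separation axiom — but the argument above only uses the triangle-type structure implicitly through the definition of balls and never needs $d_i(x,y)=0 \Rightarrow x=y$, so it goes through verbatim. One should also note the edge cases: if $N(\eta,\mathcal{S}_1,d_1) = \infty$ the inequality is trivial, and if $\mathcal{S}_1$ (hence $\mathcal{S}_2$, by surjectivity) is empty both sides are $0$; neither causes any difficulty. I would present the proof in the three short moves above: fix the covering radius on the source side, push the centers forward via $f$, and use the contraction estimate plus surjectivity to conclude coverage on the target side.
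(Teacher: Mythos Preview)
Your proof is correct and is the standard push-forward argument for covering numbers under a H\"older-type map. Note that the paper does not actually supply a proof of this lemma --- it simply cites \cite{Alexdiss}, Lemma~B.32(b) --- so there is nothing to compare against; your argument is exactly what one would write to fill in the details, and it respects the paper's Definition~\ref{def_covering_number} (centers $x_i\in\mathcal{S}_1$, hence $f(x_i)\in\mathcal{S}_2$).
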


\begin{Lemma}\label{covering_number_ass}
	If $A\subset[0,1]^p$ fulfills Assumption \ref{ass_sets}, then there exists a suitable constant $C>0$ such that
	\[ 
	N\left(\varepsilon,\partial A\right)\le\frac{C}{\varepsilon^{p-1}}.
	\]
	\begin{proof}
		Obviously, $N\left(\varepsilon,[0,1]^{p-1}\right)\le 1/\varepsilon^{p-1}$ as
		\begin{align}
			[0,1]^{p-1}\subset\bigcup\limits_{\bk\in\geschweift{1,\ldots,\floor{1/\varepsilon}}^{p-1}}B_{\varepsilon,\infty}\left(\bk\varepsilon\right).
		\end{align}
		By assumption there exist
		Lipschitz continuous functions $ f_1,\ldots,f_K $ with Lipschitz constant $L$ such that $\partial A\subset\bigcup\limits_{i=1}^K f_i\left([0,1]^{p-1}\right)$.  By Lemma \ref{lem_Alex} we obtain with a suitable constant $C>0$ only depending on $K$ and $L$ that
		\begin{align*}
			N(\varepsilon,\partial A)\le  K N\left(\varepsilon/L,[0,1]^{p-1}\right)\le \frac{C}{\varepsilon^{p-1}},
		\end{align*}
		thus showing the assertion.
	\end{proof}
	
\end{Lemma}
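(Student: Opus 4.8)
The plan is to bound the covering number of $\partial A$ in the $\|\cdot\|_\infty$-metric by reducing the problem, via Assumption~\ref{ass_sets}, to covering the unit cube $[0,1]^{p-1}$ and then pushing forward through the Lipschitz maps. First I would establish the trivial estimate $N(\varepsilon,[0,1]^{p-1})\le 1/\varepsilon^{p-1}$ by explicitly exhibiting the covering
\[
[0,1]^{p-1}\subset\bigcup_{\bk\in\{1,\ldots,\floor{1/\varepsilon}\}^{p-1}}B_{\varepsilon,\infty}\left(\bk\varepsilon\right),
\]
which holds because every point of $[0,1]^{p-1}$ lies within $\ell^\infty$-distance $\varepsilon$ of some grid point $\bk\varepsilon$ with $\bk$ in the stated index set, and the number of such grid points is at most $(1/\varepsilon)^{p-1}$.

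Next I would invoke Assumption~\ref{ass_sets}: there are finitely many Lipschitz maps $f_1,\ldots,f_K\colon[0,1]^{p-1}\to\R^p$, say with common Lipschitz constant $L$, whose images cover $\partial A$. Each $f_i$ satisfies the hypothesis of Lemma~\ref{lem_Alex} with $a=1$ and $b=L$ (taking $\mathcal{S}_1=[0,1]^{p-1}$, $\mathcal{S}_2=f_i([0,1]^{p-1})$, both with the sup-metric, and $f_i$ surjective onto its image). Lemma~\ref{lem_Alex} then yields $N(\varepsilon,f_i([0,1]^{p-1}))\le N(\varepsilon/L,[0,1]^{p-1})\le (L/\varepsilon)^{p-1}$. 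Since a covering of $\partial A$ is obtained by taking the union of coverings of the $K$ images $f_i([0,1]^{p-1})$, subadditivity of the covering number over finite unions gives
\[
N(\varepsilon,\partial A)\le\sum_{i=1}^K N\left(\varepsilon,f_i([0,1]^{p-1})\right)\le K\,N(\varepsilon/L,[0,1]^{p-1})\le \frac{K L^{p-1}}{\varepsilon^{p-1}},
\]
so the claim follows with $C=KL^{p-1}$.

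There is no real obstacle here; the only mild points of care are that the maps $f_i$ need not be surjective onto all of $\mathcal{S}_2$ in an abstract sense, so one applies Lemma~\ref{lem_Alex} with $\mathcal{S}_2$ taken to be exactly the image $f_i([0,1]^{p-1})$ (making surjectivity automatic), and that one should note $N(\cdot,A)\le N(\cdot,B)$ whenever $A\subset B$ together with finite subadditivity — both elementary consequences of the definition of the covering number. Everything else is a direct chain of the displayed inequalities, and the constant $C$ depends only on $K$ and $L$ as claimed.
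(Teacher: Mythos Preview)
Your proposal is correct and follows essentially the same route as the paper's proof: both first bound $N(\varepsilon,[0,1]^{p-1})$ by the explicit grid covering, then push forward through the Lipschitz maps via Lemma~\ref{lem_Alex} and use subadditivity over the $K$ pieces to obtain $N(\varepsilon,\partial A)\le K\,N(\varepsilon/L,[0,1]^{p-1})\le C/\varepsilon^{p-1}$. Your write-up is merely more explicit about the surjectivity point (taking $\mathcal{S}_2=f_i([0,1]^{p-1})$) and the monotonicity/subadditivity of covering numbers, and you give the constant $C=KL^{p-1}$ explicitly; otherwise the arguments coincide.
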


\begin{Lemma} \label{ass21_implies_jordan}
	If $A\subset[0,1]^p$ fulfills Assumption \ref{ass_sets}, $A$ is Jordan-measurable.
	\begin{proof}
		For $\varepsilon>0$, by Lemma \ref{covering_number_ass} there exists a covering
		\begin{align*}
			&\ \partial A\subset\bigcup\limits_{i=1}^{N(\varepsilon,\partial A)}B_{\varepsilon,\infty}(\bx_i)\\
			\intertext{with}
			&\ 	\lambda\left(\partial A\right)\le N(\varepsilon,\partial A)\varepsilon^p\le C\varepsilon.
		\end{align*}
		As $\varepsilon>0$ was chosen arbitrarily, it follows that $\lambda\left(\partial A\right)=0 $. Therefore, the assertion follows by Lemma \ref{lem_jordan_measurable} (a).
	\end{proof}
\end{Lemma}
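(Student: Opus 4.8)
The plan is to show that $\lambda(\partial A) = 0$ and then invoke Lemma~\ref{lem_jordan_measurable}~(a), which characterizes Jordan-measurability by negligibility of the boundary. To bound $\lambda(\partial A)$, I would first observe that $\partial A$ is contained in a union of $N(\varepsilon,\partial A)$ closed supremum-norm balls of radius $\varepsilon$, by the very definition of the covering number $N(\varepsilon,\partial A)$. Each such ball has Lebesgue measure $(2\varepsilon)^p$ (or $\varepsilon^p$ depending on the normalization of the ball; the paper seems to use a convention where the bound $N(\varepsilon,\partial A)\varepsilon^p$ appears, so I will follow that), so by subadditivity $\lambda(\partial A) \le N(\varepsilon,\partial A)\,\varepsilon^p$.

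Next I would bring in Lemma~\ref{covering_number_ass}: since $A$ fulfills Assumption~\ref{ass_sets}, there is a constant $C>0$ with $N(\varepsilon,\partial A) \le C/\varepsilon^{p-1}$. Combining the two bounds gives
\[
\lambda(\partial A) \le \frac{C}{\varepsilon^{p-1}}\cdot \varepsilon^p = C\varepsilon.
\]
Since $\varepsilon>0$ was arbitrary, letting $\varepsilon\to 0$ yields $\lambda(\partial A) = 0$, i.e.\ $\partial A$ is negligible. Then Lemma~\ref{lem_jordan_measurable}~(a) immediately gives that $A$ is Jordan-measurable, completing the proof.

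There is essentially no obstacle here — the whole argument is a two-line chaining of the covering-number estimate from Lemma~\ref{covering_number_ass} with the trivial volume bound for a union of balls, followed by the boundary characterization of Jordan-measurability. The only point that requires a modicum of care is keeping the exponents straight: the covering number scales like $\varepsilon^{-(p-1)}$ because $\partial A$ is (covered by images of) a $(p-1)$-dimensional domain, while each covering ball has $p$-dimensional volume of order $\varepsilon^p$, and it is precisely the mismatch of these two powers that produces the extra factor $\varepsilon\to 0$. If one wanted to be fully explicit about constants one would track the factor $2^p$ coming from the side length $2\varepsilon$ of a closed sup-norm ball of radius $\varepsilon$, but since we only need the limit this constant can be absorbed into $C$.
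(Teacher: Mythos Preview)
Your proposal is correct and follows exactly the same argument as the paper's proof: cover $\partial A$ by $N(\varepsilon,\partial A)$ sup-norm balls, use Lemma~\ref{covering_number_ass} to bound $N(\varepsilon,\partial A)\le C/\varepsilon^{p-1}$, conclude $\lambda(\partial A)\le C\varepsilon\to 0$, and finish with Lemma~\ref{lem_jordan_measurable}~(a). Your remark about the absorbed constant $2^p$ is the only addition, and it does not change the argument.
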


\begin{Lemma}\label{convex_jordan}
	If $A\subset\R^p$ is a bounded convex set with $0<\lambda(A)<\infty$, then $A$ is Jordan-measurable.
	\begin{proof}
		By \cite{Widmer}, Lemma 3.1, $A$ fulfills Assumption \ref{ass_sets}. Hence the assertion follows by Lemma \ref{ass21_implies_jordan}.
	\end{proof}
\end{Lemma}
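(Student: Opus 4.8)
The plan is to deduce this from the two structural facts about convex bodies and Jordan content already recorded in the paper, after first removing the restriction to the unit cube. Since $A$ is bounded it lies in some hypercube $[-R,R]^p$, and the affine bijection $\bx\mapsto(\bx+R\mathbf{1})/(2R)$ maps $[-R,R]^p$ onto $[0,1]^p$, sends convex sets to convex sets, maps $\partial A$ onto the boundary of the image, and multiplies $p$-dimensional Lebesgue content (and Jordan content, where defined) by $(2R)^{-p}>0$. Hence this map preserves the hypotheses $0<\lambda(A)<\infty$ as well as the conclusion that $A$ is Jordan-measurable, so we may assume $A\subset[0,1]^p$; replacing $A$ by its closure $\closure{A}$ if necessary (compact, convex, with the same boundary and $\lambda(\closure{A})=\lambda(A)>0$) we may further assume $A$ is compact.

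Now I would simply invoke Lemma~\ref{convex_sets} (that is, \cite{Widmer}, Lemma~3.1): the bounded convex set $A\subset[0,1]^p$ with $0<\lambda(A)<\infty$ satisfies Assumption~\ref{ass_sets}, i.e.\ there are $K\in\N$ and Lipschitz continuous $f_1,\dots,f_K:[0,1]^{p-1}\to\R^p$ with $\partial A\subset\bigcup_{i=1}^K f_i\left([0,1]^{p-1}\right)$. Lemma~\ref{ass21_implies_jordan} then applies and yields that $A$ is Jordan-measurable. For completeness, the mechanism behind Lemma~\ref{ass21_implies_jordan} is the covering count of Lemma~\ref{covering_number_ass}: covering $[0,1]^{p-1}$ by $O(\varepsilon^{-(p-1)})$ cubes of side $\varepsilon$ and pushing them forward through each $f_i$ (Lipschitz with some constant $L$) covers $\partial A$ by $O(\varepsilon^{-(p-1)})$ sets of diameter $O(\varepsilon)$, hence by a union of balls of total volume $O(\varepsilon^{-(p-1)})\cdot O(\varepsilon^p)=O(\varepsilon)$; letting $\varepsilon\to0$ shows $\partial A$ is negligible, and by Lemma~\ref{lem_jordan_measurable}~(a) negligibility of $\partial A$ is equivalent to Jordan-measurability of $A$.

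I do not expect a genuine obstacle here: the only substantial input is the purely geometric statement that the boundary of a bounded convex body with nonempty interior is a finite union of Lipschitz images of $[0,1]^{p-1}$ — locally $\partial A$ is the graph of a convex (hence locally Lipschitz) function over a supporting hyperplane, and compactness of $\partial A$ turns an a priori locally finite family of such charts into a finite one — and this is exactly \cite{Widmer}, Lemma~3.1, which we are free to cite. Consequently the lemma reduces to a one-line composition of Lemma~\ref{convex_sets} and Lemma~\ref{ass21_implies_jordan}, and the affine reduction to $A\subset[0,1]^p$ in the first paragraph is the only thing one genuinely has to check.
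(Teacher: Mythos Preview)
Your proof is correct and follows essentially the same route as the paper: invoke \cite{Widmer}, Lemma~3.1 to verify Assumption~\ref{ass_sets}, then apply Lemma~\ref{ass21_implies_jordan}. Your affine reduction to $A\subset[0,1]^p$ and passage to the closure are careful extras that make the hypotheses of the cited lemmas match exactly (the paper's one-line proof silently glosses over this), but the argument is otherwise identical.
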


\begin{Lemma}\label{lem_covering_fattening}
	If $ A\subset[0,1]^p $ fulfills Assumption \ref{ass_sets}, it holds with a suitable constant $C>0$ that
\begin{align*}
	\intertext{(a)}
	\lambda\left(A^{(\gamma_T)}\setminus A\right)\le&\ C\gamma_T,
	\intertext{(b)}
		\abs{\geschweift{\bk\in\Z^p:\frac{\bk}{T}\in A^{(\gamma_T)}\setminus A}}\le&\ CT^p \gamma_T
\end{align*}	for $ \gamma_T^{-1}=o(T)$, but bounded.
	\begin{proof}
		(a)
		By assumption there exist Lipschitz continuous functions $ f_1,\ldots,f_K $ with Lipschitz constant $L$ such that $A\subset\bigcup\limits_{i=1}^K f_i([0,1]^p)$.  By Lemma \ref{covering_number_ass} we obtain with a suitable constant $C>0$ that
		$	N(\gamma_T)=N(\gamma_T,\partial A)\le C/\gamma_T^{p-1}.	$	
		By Lemma \ref{covering_number}, we obtain that for a covering of $\partial A$  
		\begin{align*}
			&\ \partial A\subset\bigcup\limits_{i=1}^{N(\gamma_T)}B_{\gamma_T,\infty}(\bx_i),\\
			\intertext{it holds that}
			&\ A^{(\gamma_T)}\setminus A\subset \bigcup\limits_{i=1}^{N(\gamma_T)}B_{2\gamma_T,\infty}(\bx_i).\\
			\intertext{Furthermore, for each $\bx_i$, there exists $\bk_i\in\Z^p$ such that $\left\|\bx_i-\bk_i/T\right\|_{\infty}\le 1/T$, hence for $T$ sufficiently large,}
			&\ A^{(\gamma_T)}\setminus A\subset \bigcup\limits_{i=1}^{N(\gamma_T)}B_{3\gamma_T,\infty}(\bk_i/T)
		\end{align*}
		as $\gamma_T^{-1}=o(T)$. 
		Therefore it holds with a suitable constant $C_1>0$ that
		\begin{align*}
			\lambda\left(A^{(\gamma_T)}\setminus A\right)\le N(\gamma_T)(3\gamma_T)^p\le C_1\gamma_T.
		\end{align*}
		(b) Analogously to (a) it holds by Lemma \ref{lem_gridpoints_rectangle}  with suitable constants $C_1,C_2>0$ that
		\begin{align*}
			&\ \abs{\geschweift{\bk\in\Z^p:\frac{\bk}{T}\in A^{(\gamma_T)}\setminus A}}\le \abs{\geschweift{\bk\in\Z^p:\frac{\bk}{T}\in \bigcup\limits_{i=1}^{N(\gamma_T)}B_{3\gamma_T,\infty}(\bk_i/T)}}\\
			\le&\ \sum\limits_{i=1}^{N(\gamma_T)}\abs{\geschweift{\bk\in\Z^p:\frac{\bk}{T}\in B_{3\gamma_T,\infty}(\bk_i/T)}}=N(\gamma_T)\abs{\geschweift{\bk\in\Z^p:\frac{\bk}{T}\in B_{3\gamma_T,\infty}(0)}}\\
			\le&\ C_1 N(\gamma_T)T^p(3\gamma_T)^p\le C_2T^{p}\gamma_T, 
		\end{align*}
		thus showing the assertion.
	\end{proof}
\end{Lemma}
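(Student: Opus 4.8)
\begin{proof}[Proof of Lemma~\ref{lem_covering_fattening}]
\renewcommand{\qedsymbol}{}
\end{proof}

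The plan is to leverage Assumption~\ref{ass_sets} only through the covering-number estimate for $\partial A$ supplied by Lemma~\ref{covering_number_ass}, and then to turn a covering of the boundary into control of the ``shell'' $A^{(\gamma_T)}\setminus A$ by means of Lemma~\ref{covering_number}. First I would apply Lemma~\ref{covering_number_ass} with $\varepsilon=\gamma_T$ to obtain points $\bx_1,\ldots,\bx_{N(\gamma_T)}$ with $N(\gamma_T):=N(\gamma_T,\partial A)\le C\gamma_T^{-(p-1)}$ and $\partial A\subset\bigcup_{i=1}^{N(\gamma_T)}B_{\gamma_T,\infty}(\bx_i)$; here $C$ depends only on $p$ and on the number $K$ and common Lipschitz constant $L$ of the boundary charts, not on $T$. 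Applying Lemma~\ref{covering_number} with both of its radii equal to $\gamma_T$ then yields $A^{(\gamma_T)}\setminus A\subset\bigcup_{i=1}^{N(\gamma_T)}B_{2\gamma_T,\infty}(\bx_i)$, i.e.\ the shell is contained in a union of $O(\gamma_T^{-(p-1)})$ axis-parallel cubes, each of side $O(\gamma_T)$.

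Part (a) is then immediate: $\lambda\bigl(A^{(\gamma_T)}\setminus A\bigr)\le N(\gamma_T)\,(4\gamma_T)^p\le C'\,\gamma_T^{-(p-1)}\gamma_T^{p}=C'\gamma_T$. For part (b) the extra issue is that one needs the number of grid points $\bk/T$ in these cubes, not their volume. I would first replace each centre $\bx_i$ by a grid point $\bk_i/T$ with $\left\|\bx_i-\bk_i/T\right\|_{\infty}\le 1/T$; since the hypothesis $\gamma_T^{-1}=o(T)$ means $1/T=o(\gamma_T)$, for $T$ large one has $B_{2\gamma_T,\infty}(\bx_i)\subset B_{3\gamma_T,\infty}(\bk_i/T)$, hence $A^{(\gamma_T)}\setminus A\subset\bigcup_{i=1}^{N(\gamma_T)}B_{3\gamma_T,\infty}(\bk_i/T)$. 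Each such cube has all side lengths equal to $6\gamma_T$ with $(6\gamma_T)^{-1}=o(T)$, and since $\gamma_T$ is bounded the cubes lie in a fixed bounded box, so Lemma~\ref{lem_gridpoints_rectangle} (equivalently Remark~\ref{rem_gridpoints_rectangle}) gives $\abs{\geschweift{\bk\in\Z^p:\bk/T\in B_{3\gamma_T,\infty}(\bk_i/T)}}\le C\,T^p(6\gamma_T)^p$ uniformly in $i$. Summing the $N(\gamma_T)\le C\gamma_T^{-(p-1)}$ contributions gives $\abs{\geschweift{\bk\in\Z^p:\bk/T\in A^{(\gamma_T)}\setminus A}}\le C''\,T^p\gamma_T$, which is the claimed bound.

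I do not expect a genuine conceptual obstacle, as the argument is essentially bookkeeping built on the three cited lemmas; the point that requires the most care is the interplay of the two growth conditions in part (b). Concretely, absorbing the centre-rounding error $1/T$ into the radius of the cube is legitimate exactly because $\gamma_T T\to\infty$, and this same condition is what makes the error term in Lemma~\ref{lem_gridpoints_rectangle} negligible relative to the leading term $T^p\gamma_T^p$; one should also keep track that none of the constants produced along the way depend on $T$, so that the final estimates can be stated with a single constant $C$ as in the statement.
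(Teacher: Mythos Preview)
Your proposal is correct and follows essentially the same route as the paper: cover $\partial A$ via Lemma~\ref{covering_number_ass}, enlarge to cover the shell via Lemma~\ref{covering_number}, and for~(b) recentre the cubes on grid points (using $\gamma_T^{-1}=o(T)$) so that Lemma~\ref{lem_gridpoints_rectangle} applies. The only cosmetic difference is that the paper already passes to grid-centred cubes within part~(a), whereas you (more economically) do so only in part~(b) where it is actually needed.
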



\bibliography{Bib_Paper_Danobi} 
\addcontentsline{toc}{chapter}{References}

	\end{document}